\DeclareMathOperator{\vac}{|0\rangle}
\DeclareMathOperator{\Bh}{\mathbbm{h}}
\DeclareMathOperator{\Sat}{Sat}
\DeclareMathOperator{\Irr}{Irr}
\DeclareMathOperator{\RTmin}{RTmin}
\DeclareMathOperator{\RT}{RT}
\DeclareMathOperator{\KL}{KL}
\DeclareMathOperator{\charp}{char}
\newcommand{\ubar}[1]{\underline{#1}}
\DeclareMathOperator{\cl}{\mathbf{cl}}
\DeclareMathOperator{\AV}{AV}
\DeclareMathOperator{\Stab}{Stab}
\DeclareMathOperator{\Gal}{Gal}
\DeclareMathOperator{\val}{val}
\DeclareMathOperator{\Rep}{Rep}
\DeclareMathOperator{\Mat}{Mat}
\newcommand{\xeq}[1]{\overset{\mathrm{#1}}{=\joinrel=}}
\newcommand{\twi}[1]{\textcolor{white}{#1}}
\newcommand{\laurent}[1]{(\!({\ensuremath{#1}})\!)}
\newcommand{\series}[1]{[\![{\ensuremath{#1}}]\!]}
\newsavebox{\sembox}
\title[Cyclotomic levels and associated varieties]{Cyclotomic level maps and associated varieties of simple affine vertex algebras}
\author{Peng Shan$^{1,2}$}
\author{Wenbin Yan$^1$} 
\author{Qixian Zhao$^1$}
\address{\scriptsize{$^1$} Yau Mathematical Sciences Center, Tsinghua University, Beijing, 100084, China}
\address{\scriptsize{$^2$} Department of Mathematical Sciences, Tsinghua University, Beijing, 100084, China}
\begin{document}
%\allowdisplaybreaks

\begin{abstract}
	In this paper, we introduce and study two cyclotomic level maps defined respectively on the set of nilpotent orbits $\ubar \cN$ in a complex semi-simple Lie algebra $\fg$ and the set of conjugacy classes $\ubar W$ in its Weyl group, with values in positive integers. We show that these maps are compatible under Lusztig's map $\ubar W \to \ubar \cN$, which is also the minimal reduction type map as shown by Yun. We also discuss their relationship with two-sided cells in affine Weyl groups.
	
	We use these maps to formulate a conjecture on the associated varieties of simple affine vertex algebras attached to $\fg$ at non-admissible integer levels, and provide some evidence for this conjecture.
\end{abstract}

\maketitle

\tableofcontents

%=============
\section{Introduction}

Let $\fg$ be a finite dimensional semi-simple complex Lie algebra. For any complex number $k$, the simple affine vertex algebra $L_k= L_k(\fg)$ is defined as the simple quotient of the universal affine vertex algebra $V^k(\fg)$ at level $k$. They form an important family of vertex algebras which has a close relationship with representations of affine Lie algebras, conformal field theory, and the $4d$/VOA correspondences arising from physics \cite{BLLPRvR}. The goal of this paper and a few future ones is to propose a Langlands style framework for representation theory of $L_k$ at non-admissible levels.

The first goal of this paper concerns the associated variety of $L_k$, which is an important invariant capturing many key structures of $L_k$. Recall that the associated variety $X_{L_k}$ of $L_k$ is defined as the reduced spectrum of its Zhu's $C_2$-algebra. Since $L_k$ a quotient of the universal affine vertex algebra $V^k$, $X_{L_k}$ is a closed conic $G$-invariant Poisson subscheme in $\fg^\ast$. Throughout the paper, we identify $\fg^*$ with $\fg$ using the Killing form, so that $X_{L_k} \subset \fg$. Let $\check \Bh$ be the dual Coxeter number of $\fg$. Write $m = k+\check\Bh$. If $m \not\in \mathbb{Q}_{\geqslant 0}$, then $L_k$ is the universal vertex algebra $V^k(\fg)$ itself, so $X_{L_k}=\fg$. When $m=0$, i.e. when $k$ is the critical level, $X_{L_k}$ equals to nilpotent cone $\cN$ in $\fg$. If $k$ is admissible, then $X_{L_k}$ is the closure of a nilpotent orbit explicitly determined in \cite{Arakawa:C2}. When $k$ is non-admissible, the structure of $X_{L_k}$ is quite mysterious. Some particular cases have been computed using different methods \cite{Arakawa-Moreau:Omin, Arakawa-Moreau:sheets, Arakawa-Moreau:irred, AFK, Jiang-Song, ADFLM}. There were also some predictions on $X_{L_k}$ from physics \cite{Xie-Yan:4dN2}. But there was not a uniform description of what the variety $X_{L_k}$ should be in general. 

In this paper, we propose a conjecture which gives a uniform description for $X_{L_k}$ for simply-laced $\fg$ and for all integer level $k$ above the critical level, that is for $m\in\BZ_{\ge 1}$. The key ingredient for its formulation is a map from the set $\ubar \cN$ of adjoint nilpotent orbits in the Lie algebra $\fg$ to positive integers $\BZ_{\ge 1}$, called the cyclotomic level map
\begin{equation*}
	\cl_n: \ubar \cN \aro \BZ_{\ge 1}.
\end{equation*}
Its definition is given in terms of the highest weight for a well-chosen $\fsl_2$-triple associated to the orbits, see Definition \ref{def:cln}, and is inspired by \cite[Proposition 9.2]{KL:Fl} and \cite[\S 7.3]{stable-grading}. Alternatively, $\cl_n$ can be defined using the largest size of the Jordan blocks of nilpotent elements, see Lemma \ref{lem:cln=order}. This map has the following remarkable properties which, in terms of the latter definition of $\cl_n$, are due to \cite{VIGRE}.

\begin{theorem}
	Let $m$ be any positive integer. Then
	\begin{equation*}
		\cl_n\inv([1,m]) = \overline{\BO(m)}.
	\end{equation*}
	In other words,
	\begin{equation*}
		\overline{\BO(m)} = \bigcup_{\cl_n(\BO) \le m} \BO.
	\end{equation*}
\end{theorem}
This appears as Theorem \ref{thm:O(m)}. It gives us a way to attach a nilpotent orbit to each positive integer and allows us to formulate the following conjecture.

\begin{conjecture}[Conjecture \ref{conj:AV}]
	Suppose $\fg$ is of simply-laced type. Write
	\begin{equation*}
		\check \cl_n: \ubar {\check \cN} \aro \BZ_{\ge 1}
	\end{equation*}
	for the cyclotomic level map defined on nilpotent orbits of $\check \fg$. Let $m$ be an integer with $m \ge 1$, and let $k = m - \check \Bh$. Let $\check \BO(m)$ be the orbit attached to $m$ as described above, and write $\check \BO(m) = \Sat_{\check L}^{\check G} \BO_{\check L}$ where $\BO_{\check L}$ is distinguished in $\check \fl$. Then
	\begin{equation*}
		X_{L_k} = \overline{\cS(\fl, \bd \BO_{\check L})}.
	\end{equation*}	
	%In particular, if $\check \BO(m_0)$ is distinguished in $\check \fg$ (so that $\check L = \check G$ and $\BO_{\check L} = \check \BO(m_0)$), then
	%\begin{equation*}
	%	X_{L_k} = \overline{\bd \check \BO(m_0)}.
	%\end{equation*}
\end{conjecture}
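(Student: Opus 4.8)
Since $L_k$ is the quotient of $V^k(\fg)$ by its maximal proper ideal $N_k$, and the $C_2$-functor gives a surjection of Poisson algebras $\BC[\fg^*] \thra R_{V^k(\fg)} \thra R_{L_k}$, the variety $X_{L_k}$ is the zero locus of the radical of the ideal generated by the $C_2$-symbols of elements of $N_k$. I would therefore split the proof into two inclusions: an \emph{upper bound} $X_{L_k} \subseteq \overline{\cS(\fl,\bd\BO_{\check L})}$, proved by exhibiting explicit elements of $N_k$ and computing the zero locus of their symbols; and a \emph{lower bound} $X_{L_k} \supseteq \overline{\cS(\fl,\bd\BO_{\check L})}$, proved by producing $L_k$-modules whose associated varieties exhaust the claimed sheet closure. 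The sheet closure itself is a union of a nilpotent orbit closure with transverse abelian directions, so it is natural to treat the nilpotent part and the $G$-invariant part of $R_{L_k}$ separately: the nilpotent part of $\overline{\cS(\fl,\bd\BO_{\check L})}$ is $\overline{\mathrm{Ind}_\fl^\fg \bd\BO_{\check L}}$, which by standard compatibilities of Lusztig--Spaltenstein induction with Barbasch--Vogan duality and by Theorem~\ref{thm:O(m)} is the orbit closure predicted by the level map; the transverse directions are detected by the Feigin--Frenkel center and the central character of $L_k$.

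\textbf{Upper bound.} For $k = m-\check\Bh$ with $m \in \BZ_{\ge 1}$, the vacuum module $V^k(\fg)$ is reducible, with a singular vector of Kac--Kazhdan/Malikov--Feigin--Fuchs type associated to the affine reflection through the hyperplane on which $\langle \lambda+\rho,\alpha_0^\vee\rangle$ takes the value $m$. I would first identify, for each such $m$, a singular vector (or a conformally graded vector in $N_k$) whose $C_2$-symbol in $\BC[\fg^*]$ cuts out exactly the sheet closure $\overline{\cS(\fl,\bd\BO_{\check L})}$; here one uses that the defining ideal of a sheet closure is generated by $G$-invariants together with a controlled set of covariants, and that the relevant Levi $\fl$ and the distinguished orbit $\BO_{\check L}$ are read off from $m$ via the level map on the Langlands dual side exactly as in Conjecture~\ref{conj:AV}, with $\overline{\BO(m_0)}$ supplied by Theorem~\ref{thm:O(m)}. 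The cleanest organization is to pass to the Zhu algebra and compare with the finite $W$-algebra side: $\overline{\cS(\fl,\bd\BO_{\check L})}$ should be the associated variety of the simple quotient of a suitable parabolically induced module, which reduces the symbol computation to a Levi $\fl$ and a \emph{distinguished} orbit, the case where the singular-vector computation is most tractable and overlaps with the known families \cite{Arakawa-Moreau:Omin, Arakawa-Moreau:sheets, AFK, ADFLM}.

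\textbf{Lower bound.} For the reverse inclusion I would realize the sheet geometrically. One route is via relative semi-infinite (Drinfeld--Sokolov) reduction: the reduction of $L_k$ along a nilpotent $f$ in the open orbit of $\overline{\cS(\fl,\bd\BO_{\check L})}\cap\cN$ should be a nonzero simple affine $W$-algebra at a non-degenerate level, and nonvanishing of this reduction for all $f$ in a given orbit closure forces that orbit closure, hence (taking the closure of the sheet) the whole of $\overline{\cS(\fl,\bd\BO_{\check L})}$, to lie in $X_{L_k}$; the transverse abelian directions then come for free from the nondegeneracy of the corresponding central character. An alternative, more elementary route is to construct highest weight $L_k$-modules in $\mathrm{KL}_k$ (or via free-field/parabolic realizations) whose $C_2$-modules have support a full orbit closure in the sheet, and to take the union over such modules. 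Either way, combined with the upper bound and with the equality $\overline{\BO(m)} = \bigcup_{\cl_n(\BO)\le m}\BO$ of Theorem~\ref{thm:O(m)}, one obtains the asserted equality $X_{L_k} = \overline{\cS(\fl,\bd\BO_{\check L})}$, and the whole argument can be cross-checked against \cite{Arakawa-Moreau:irred, Jiang-Song, ADFLM} and the physical predictions of \cite{Xie-Yan:4dN2}.

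\textbf{Main obstacle.} The genuine difficulty, and the reason the statement is only a conjecture, is the lower bound: one must show that $N_k$ is \emph{generated} by the explicit singular vectors of the upper-bound step, equivalently that $R_{L_k}$ is reduced with exactly the predicted spectrum and that no unexpected extra relations collapse $X_{L_k}$ further. At non-admissible integer levels there is no Kac--Kazhdan-type classification of all singular vectors, the category $\mathrm{KL}_k$ is not semisimple, and the nonvanishing of Drinfeld--Sokolov reduction of $L_k$ at these levels is itself open. The realistic plan is thus to prove the conjecture unconditionally in the families where one of these inputs is available --- Levi $\fl$ of small semisimple rank, $m$ close to $1$ or to $\check\Bh$, and types where sheet closures are explicitly understood --- to verify the numerical shadow (dimensions, leading terms of characters, consistency with the admissible-level boundary case of \cite{Arakawa:C2}) in general, and to record the matching with the known computations and with physics as evidence for the full statement.
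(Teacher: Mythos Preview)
The statement is a \emph{conjecture}, and the paper does not prove it. What the paper actually does is (i) verify the conjecture in every case where $X_{L_k}$ has already been computed in the literature (Table~\ref{tbl:known-AV} and the surrounding clauses), and (ii) establish structural results---Theorem~\ref{thm:O(m)}, Theorem~\ref{thm:cl-triangle}, and Theorem~\ref{thm:O(m)-n-cells}---that make the conjectural formula coherent and tie $\check\BO(m_0)$ to minimal reduction types and to two-sided cells in $W_{aff}$. There is no attempt in the paper at a direct singular-vector upper bound or a Drinfeld--Sokolov nonvanishing lower bound.

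Your proposal is therefore not a comparison target but a genuine (and reasonable) outline for attacking an open problem. You correctly identify the real obstruction in your final paragraph: at non-admissible integer levels there is no control on the full submodule lattice of $V^k(\fg)$, so neither the generation of $N_k$ by explicit singular vectors nor the nonvanishing of $H^0_{DS,f}(L_k)$ is available in general. That is precisely why the paper leaves the statement as a conjecture. One small correction to your upper-bound sketch: the nilpotent part of $\overline{\cS(\fl,\bd\BO_{\check L})}$ is $\overline{\Ind_L^G \bd\BO_{\check L}}$, but this orbit is \emph{not} in general equal to $\BO(m_0)$ or to $\bd\check\BO(m_0)$---the relationship between $\check\BO(m_0)$ and the induced orbit on the $\fg$-side goes through the Bala--Carter decomposition and duality on the Levi, and Theorem~\ref{thm:O(m)} by itself does not supply that identification. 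So even the upper bound would require more than what you have written.
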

Here $\Sat_{\check L}^{\check G}$ denotes the saturation of orbits, sending a nilpotent orbit $\BO_{\check L}$ in the Levi subalgebra $\check \fl$ to the orbit $\Ad(\check G) \cdot \BO_{\check L}$ in $\check \fg$. The map $\bd$ is the Barbasch-Vogan-Lusztig-Spaltenstein duality on nilpotent orbits of $\check \fl$, see \ref{cls:BV}. The notation $\overline{\cS(\fl, \bd \BO_{\check L})}$ denotes the \textit{sheet} attached to the pair $(\fl, \bd \BO_{\check L})$, which is defined to be the closure of the image of $G \times_P (\bd \BO_{\check L} \times \fz(\fl) \times \fu)$ under the moment map
\begin{equation*}
	G \times_P \fp \aro \fg, \quad (g,x) \mapsto \Ad(g) \cdot x.
\end{equation*}
Here $P = LU$ is any parabolic in $G$ containing $L$ as its Levi, and $\fp = \fl \oplus \fu$ is its Lie algebra.

We list $\check \BO(m)$ in all types, providing an explicit description of the sheets $\overline{\cS(\fl, \bd \BO_{\check L})}$ appearing in the conjecture. Based on these computations, we show that this conjecture conforms with all the examples in which $X_{L_k}$ has been computed. A list of them is given in Table \ref{tbl:known-AV}.

This conjecture also gives a way to detect when the vertex algebra $L_k$ is quasi-lisse. Recall that a vertex algebra is called \emph{quasi-lisse} if its associated variety has only finitely many symplectic leaves \cite{Arakawa-Kawasetsu}. This family of vertex algebras is remarkable, as they have only finitely many simple ordinary modules, whose normalised characters enjoys modular invariance properties \cite{Arakawa-Kawasetsu}. In the case of $L_k$, it is quasi-lisse if and only if $X_{L_k}$ is contained in the nilpotent cone. The variety $\overline{\cS(\fl, \bd \BO_{\check L})}$ is contained in the nilpotent cone if and only if the center $\fz(\fl)$ is trivial. This happens if and only if $L=G$. So our conjecture implies the following explicit criterion on the quasi-lisseness of $L_k$.

\begin{namedtheorem}[Corollary of Conjecture,]
	Suppose $\fg$ is of simply-laced type and $k$ is an integer with $k > - \check \Bh$. Then $L_k$ is quasi-lisse if and only if $\check \BO(m)$ is a distinguished orbit.
\end{namedtheorem}

As a byproduct, we also obtain a conjectural description for the associated varieties of affine $W$-algebras obtained as quantized Drinfeld-Sokolov reductions of $L_k$ and a criterion for their quasi-lisseness, see Remark \ref{rmk:W-algs}.

\medskip

The second goal of this paper is to prove results which provide links between $L_k$, affine Springer fibers, and Kazhdan-Lusztig cells. In the finite dimensional setting, associated varieties of primitive ideals of simple highest weight modules have close relationships with cells in the finite Weyl group and Springer fibers, see for example \cite{Lusztig:char,Barbasch-Vogan:unipotent} (some of this is mentioned in \S \ref{subsec:KL-cells}). This motivated us to try to find a similar picture in the affine situation. Further elaboration on this picture will appear in a future work; in this paper, we prove two compatibility results, which can be viewed as conceptual evidence for the conjecture above.

Recall that Lusztig \cite{Lusztig:Phi} defined a map $\Phi$ from the set $\ubar W$ of conjugacy classes in the Weyl group $W$ to $\ubar \cN$. Yun \cite{Yun:RTmin-arXiv} gave another construction of this map using the so called minimal reduction type map $\RTmin$ for regular semi-simple elements in the affine Lie algebra $\fg_{aff}$, see \S \ref{subsec:reduction}, and the latter plays an important role in understanding affine Springer representations, see \cite{Jakob-Yun,Chua,Chua:RTmin} as examples. Our next result relates the map $\cl_n$ with $\RTmin$ through an explicitly map
\begin{equation*}
	\cl_W: \ubar W \aro \BZ_{\ge 1}
\end{equation*}
defined using the maximal order of eigenvalues of $W \acts \fh$, see Definition \ref{def:clW}. %More precisely, we have the following theorem.

\begin{theorem}[Theorem \ref{thm:cl-triangle}]
	The following diagram commutes
	\begin{equation*}
		\begin{tikzcd}
			\ubar W \ar[dr, "\RTmin"', two heads] \ar[rr, "\cl_W"] 
			&& \BZ_{\ge 1}\\
			& \ubar \cN \ar[ur, "\cl_n"', end anchor=south west]
		\end{tikzcd}.
	\end{equation*}
\end{theorem}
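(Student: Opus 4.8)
The plan is to reduce the commutativity of the triangle to an explicit, case-by-case comparison on each conjugacy class $[w] \in \ubar W$, using the known explicit formulas for both maps. Concretely, $\cl_W([w])$ is read off from the cyclotomic factors of the characteristic polynomial of $w$ on the reflection representation (Definition \ref{def:clW}), while $\cl_n$ applied to $\RTmin([w])$ is computed from the $\fsl_2$-data of the nilpotent orbit $\RTmin([w]) = \Phi([w])$ (Definition \ref{def:cln}). So the task splits as: (i) understand $\cl_W$ well enough that it is invariant under whatever combinatorial operations relate conjugacy classes across Levi subgroups; (ii) understand $\Phi = \RTmin$ well enough to match it with the orbit side; and (iii) check the numerical identity $\cl_W([w]) = \cl_n(\Phi([w]))$.

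The key structural reduction is the following. Both $\cl_W$ and $\Phi$ interact well with \emph{elliptic} conjugacy classes and with passage to Levi subgroups: $\Phi([w])$ for a general $[w]$ is obtained by inducing (via $\Sat$, or Lusztig-Spaltenstein induction) from an elliptic class in a minimal Levi $W_I$ meeting $[w]$, and correspondingly $\cl_W([w])$ should only depend on the "elliptic part" of $w$ in that Levi. So the first real step is a lemma: $\cl_W([w])$ equals $\cl_{W_I}$ of the elliptic class of $w$ in $W_I$, where $W_I$ is the smallest standard parabolic (up to conjugacy) containing a representative of $[w]$. This is where the reflection-representation definition is convenient — the characteristic polynomial of $w$ on $V$ factors as (char. poly. on $V^{W_I}$, which is $(t-1)^{\dim V^{W_I}}$) times (char. poly. of the elliptic part on $V/V^{W_I}$), and one checks the relevant cyclotomic factor governing $\cl_W$ is unchanged, or changes in exactly the way that matches the behaviour of $\cl_n$ under $\Sat$ (which by Theorem \ref{thm:O(m)} and the compatibility results should be essentially $\cl_n(\Sat^{\check G}_{\check L}\BO) = \cl_n(\BO)$ or a controlled modification). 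So in parallel I would need the matching statement on the nilpotent side: $\cl_n$ is compatible with saturation/induction of orbits in the precise sense needed. Presumably this compatibility of $\cl_n$ with $\Sat$ is recorded among the properties proved alongside Theorem \ref{thm:O(m)}.

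Having made that reduction, the problem becomes: verify $\cl_W = \cl_n \circ \Phi$ on \emph{elliptic} conjugacy classes in each irreducible Weyl group. For the classical types $A_n, B_n, C_n, D_n$ this is a combinatorial computation: elliptic classes are indexed by partitions (or pairs of partitions), $\Phi$ on them is given by an explicit partition recipe (Lusztig's tables, or the description via $\RTmin$), and both $\cl_W$ (cyclotomic factors of the characteristic polynomial, which for a product of cycles is a product of cyclotomic polynomials $\Phi_d(t)$ with $d$ running over cycle lengths) and $\cl_n$ (largest part, or largest "jump", of the dual partition) reduce to elementary statements about partitions — essentially that the relevant invariant on both sides is controlled by the largest cycle length / largest part. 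I would organize type $A$ first as the model case, where $\Phi$ sends the cycle type $\lambda$ to the orbit $\lambda$, $\cl_W$ is governed by $\max \lambda$, and $\cl_n$ of the orbit $\lambda$ is governed by $\max \lambda$ as well, making the identity transparent; then push through $B/C/D$ with the extra bookkeeping for signs and the very even partitions. For the exceptional types $G_2, F_4, E_6, E_7, E_8$ the cleanest route is a finite check: list the elliptic classes (there are finitely many — Carter's tables), compute $\Phi$ from the published tables of Lusztig/Yun, compute $\cl_n$ of the resulting orbit from the Bala-Carter / weighted Dynkin data, compute $\cl_W$ from the characteristic polynomial, and confirm equality. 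This is routine but lengthy; I would present it as a table.

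The main obstacle I anticipate is not any single deep step but rather the exceptional-type verification together with getting the Levi-reduction lemma exactly right: the subtlety is that $\RTmin$/$\Phi$ does \emph{not} simply commute with $\Sat$ on the nose (Lusztig-Spaltenstein induction of the zero orbit gives the Richardson orbit, not something obviously matching the elliptic recipe), so I must be careful to use the correct compatibility — namely that $\Phi([w])$ for $[w]$ meeting $W_I$ elliptically is $\Sat^G_L$ of $\Phi_{W_I}$ of the elliptic class — and then match it against the correct statement for $\cl_n$. If the needed compatibility "$\cl_n \circ \Sat^{\check G}_{\check L} = \cl_n$ (or a one-line correction of it)" has already been established in the course of proving Theorem \ref{thm:O(m)}, then the whole argument is a reduction plus a finite check; if not, proving that compatibility of $\cl_n$ with saturation becomes the real content, and I would attack it via the $\fsl_2$-triple description, relating the weighted Dynkin diagram of $\Sat^{\check G}_{\check L}\BO_{\check L}$ to that of $\BO_{\check L}$.
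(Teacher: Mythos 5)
Your structural reduction to elliptic classes via a minimal Levi is exactly what the paper does: for $[w]\in\ubar W$, take $M$ the centralizer of $\fh^w$, so $w$ is elliptic in $W_M$, use the commutativity of $\RTmin$ with the map $\ubar W_M\to \ubar W$ and the saturation $\Sat_M^G$ (from Yun's Corollary~3.4.(2) --- so your worry that ``$\Phi$ does not simply commute with $\Sat$'' is unfounded; it does, this is precisely the needed compatibility), and observe that $\cl_n$ is insensitive to saturation because its definition is already in terms of a Bala-Carter Levi. Your Lemma on invariance of $\cl_W$ under passage to Levi subgroups is the paper's Lemma~\ref{lem:clW-in-Levi} and the verification you sketch via the factorization of the characteristic polynomial as $\charp_L(w)(x)\cdot(x-1)^{\dim V'_L}$ is correct.

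Where you diverge is the elliptic case itself. You propose a type-by-type verification against explicit tables for $\Phi$ on elliptic classes, with classical types done by hand via partitions and exceptional types by a finite table check. The paper instead proves a uniform statement, Proposition~\ref{prop:rt-val-of-lift} ($\cl_W(\KL(\BO))=\cl_n(\BO)$ for every $\BO$), with no case analysis and no need to know the explicit values of $\Phi$ or $\KL$. The mechanism is the root-valuation reinterpretation of $\cl_W$ (Lemma~\ref{lem:cl=root-val}: $1/\cl_W([w])$ is the minimum of the minimal root valuations on $\fh_w(\cO)$), combined with a lattice argument (Lemma~\ref{lem:lattice}) giving a two-sided estimate: for $\gamma\in e+t\fg(\cO)$ with $e$ distinguished in a Levi $\fl$ with top $h$-weight $2a$, every nonzero eigenvalue of $\ad\gamma$ has valuation $\geqslant 2/(2a+3)$, and there is a specific $\gamma^0$ whose nonzero eigenvalues all have valuation exactly $1/(a+1)$. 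The first estimate together with an fp-constructibility argument for Proposition~\ref{prop:yun6.1} pins $\cl_W(\KL(\BO))$ between $2/(2a+3)$ and $1/(a+1)$; the second estimate (plus a counting argument with the functions $\chi_m$) closes the gap. This is a modification of the argument in \cite[Proposition~9.11]{KL:Fl} for distinguished orbits, pushed to general orbits via the Levi reduction. The only genuinely case-by-case ingredient is Lemma~\ref{lem:max-h-hts}, which is a simpler numerical statement about weighted Dynkin diagrams.

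Your route is in principle viable: for elliptic $[w]$ in classical types the combinatorics of $\cl_W$ (cyclotomic factors from cycle lengths with signs) and of $\cl_n$ (largest part or largest-repeated-part of the partition of $\Phi([w])$) can be matched directly against the explicit descriptions in \S\ref{subapp:partition}, and the exceptional elliptic classes are finitely many and tabulated. But this would replace one small lemma (\ref{lem:max-h-hts}) with a substantially larger case analysis, and it bypasses the conceptual identity that $1/\cl_W$ equals a minimal root valuation --- which is what makes the whole picture fit with the affine Springer fiber motivation of the paper. If you go your way, be careful in type $A$: the only elliptic class is the Coxeter class, so the real combinatorial load is in $B/C/D$ where elliptic classes are indexed by pairs of partitions, not single partitions as your sketch suggests.
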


For the last result, recall that as a $\fg_{aff}$-module, $L_k$ is the unique simple highest weight module with highest weight $k\Lambda_0$. Let $\rho$ denote the half sum of positive roots in $\fg$, and let $\hat \rho = \check \Bh \Lambda_0 + \rho$. Then $k \Lambda_0 + \hat \rho = m \Lambda_0 + \rho$. Let $\xi_m$ be the unique dominant affine weight in the orbit of $m\Lambda_0 + \rho$ for the action of the affine Weyl group $W_{aff}$ on the dual of the affine Cartan subalgebra. Let $W_m$ be the stabilizer of $\xi_m$ in $W_{aff}$, and let $w_m$ be the longest element in $W_m$. Lusztig \cite{Lusztig:aff-cells-4} constructed a bijection between the set of two-sided cells in $W_{aff}$ and the set $\ubar{\check \cN}$ of nilpotent orbits in $\check \fg$, see \ref{cls:L-bij}. Our third theorem states that the image of $\check \BO(m)$ under this bijection can be constructed using $\xi_m$ and $w_m$. 

\begin{theorem}[Theorem \ref{thm:O(m)-n-cells}]
	Suppose $\fg$ is of simply-laced type. Let $m$ be an integer with $m \ge 1$. Then Lusztig's bijection sends the two-sided cell $\ubar \bc(w_m)$ of $w_m$ to the orbit $\check \BO(m)$.
\end{theorem}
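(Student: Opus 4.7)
The plan is to identify both $\ubar \bc(w_m)$ and $\check \BO(m_0)$ in terms of a common pseudo-Levi $\check H_m \subset \check G$ coming from the $\BZ/m$-grading of $\check \fg$ attached to $\xi_m$; in simply-laced type we freely identify $\fg$ with $\check \fg$. The first step is to describe the stabilizer $W_m$ explicitly. Since $m$ is a positive integer and $\xi_m$ is the dominant affine weight in the $W_{aff}$-orbit of $m\Lambda_0+\rho$, $W_m$ is a finite reflection subgroup of $W_{aff}$ generated by the affine reflections $s_{\alpha,j}$ with $\langle \xi_m, \alpha^\vee\rangle = -jm$, equivalently, with $\langle \xi_m/m, \alpha^\vee\rangle \in \BZ$. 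Via the Borel--de Siebenthal / Kac correspondence between points in the fundamental alcove and conjugacy classes of order dividing $m$ in $\check G$, this group is canonically the Weyl group of a pseudo-Levi $\check H_m \subset \check G$, and $w_m$ is its longest element.

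The second step is to match $\check \BO(m_0)$ with the same pseudo-Levi. By Definition~\ref{def:cln} and the proof of Theorem~\ref{thm:O(m)}, the orbit $\check \BO(m_0)$ is the unique maximal orbit with $\check \cl_n$-value $\le m$, and is singled out by an $\mathfrak{sl}_2$-triple $(e,h,f)$ whose semisimple element $h$ generates exactly the $\BZ/m$-grading of $\check \fg$ determined by $\xi_m/m$. Consequently the pseudo-Levi arising from this grading coincides with $\check H_m$, and $\check \BO(m_0)$ is the saturation to $\check G$ of the principal (resp.\ appropriate distinguished) nilpotent orbit of $\check H_m$.

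Finally, I would invoke Lusztig's bijection. Since $w_m$ is the longest element of a finite reflection subgroup, $\mathbf{a}(w_m) = \ell(w_m) = |\Phi^+(\check H_m)|$, and Lusztig's formula equates this value with $\dim \check \cB_{\check e}$ for $\check e$ in the orbit attached to $\ubar \bc(w_m)$. In simply-laced type this forces the attached orbit to have dimension $\dim \check G/\check H_m$, matching the saturation identified in the previous step. To upgrade the dimension equality to an identification of orbits, I would invoke the characterization from \cite{Lusztig:aff-cells-4} of the orbit attached to the cell of the longest element of a pseudo-Levi Weyl group (equivalently, Bezrukavnikov's geometric interpretation of the asymptotic affine Hecke algebra). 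The main obstacle is the second step: showing rigorously that the pseudo-Levi and the distinguished orbit produced by the $\mathfrak{sl}_2$-triple of Definition~\ref{def:cln} match the ones coming from $\xi_m$. This requires a careful dictionary between the Kac coordinates of $\xi_m/m$ and the weighted Dynkin diagram data underlying $\cl_n$; once this dictionary is in place, together with the fact that several orbits of the same dimension can only be separated via Lusztig's bijection proper, the theorem follows from the three-step argument above.
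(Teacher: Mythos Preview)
Your outline has the right spirit---both sides of the equality are governed by the same parabolic-type subgroup $W_m$---but the second and third steps contain genuine gaps, and the paper's proof proceeds quite differently.

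First, the description of $\check\BO(m_0)$ in step~2 is not correct as stated. You assert that $\check\BO(m_0)$ is the \emph{saturation} to $\check G$ of a principal (or distinguished) orbit of the pseudo-Levi $\check H_m$. But $\check\BO(m_0)$ is in general a Richardson-type orbit, not a saturation: for instance in type $A_n$ with $m\mid n+1$ one has $\check\BO(m)=\BO_{(m^b)}$, which is Richardson for the Levi of type $mA_{b-1}$, not the saturation of its principal orbit. Likewise, the orbit attached by Lusztig's bijection to $\ubar\bc(w_m)$ is characterized (see Lemma~\ref{lem:L-bij-as-jind}) by $j_{W_m}^W(\sgn)$ under Springer, i.e.\ by Lusztig--Spaltenstein induction from $\check P_m$ when $W_m$ is special, not by saturation. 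So the common object you should be aiming for is an \emph{induced} orbit, and the claimed link between the semisimple element $h$ of the $\fsl_2$-triple in Definition~\ref{def:cln} and the Kac coordinates of $\xi_m/m$ does not follow from anything you have written; establishing that ``dictionary'' is essentially the whole content of the theorem, not a preliminary. Your step~3 also does not close: the equality $\mathbf{a}(w_m)=\ell(w_m)=\dim\cB_{\check e}$ is exactly Lemma~\ref{lem:L-bij-as-jind}(3), but dimension alone does not determine a nilpotent orbit, and the vague appeal to \cite{Lusztig:aff-cells-4} or Bezrukavnikov does not produce the specific identification with $\check\BO(m_0)$.

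The paper does not attempt a uniform conceptual argument. Instead it uses Lemma~\ref{lem:L-bij-as-jind} to reduce the problem to two concrete computations: (a) determine the Kac diagram of the dominant translate $\xi_m$ and hence the type of $W_m$; (b) compute the orbit attached to $j_{W_m}^W(\sgn)$ (as a Richardson orbit when $W_m$ is special, and via a commutative diagram with the Kazhdan--Lusztig map $\KL_{\check G}^{\check\bP_m}$ in the remaining type~$D$ cases), then compare with the explicit partitions/labels for $\check\BO(m_0)$ already tabulated in \S\ref{subsec:cln}. These computations are carried out in full for types $A$ and $D$ in \S\ref{subapp:Kac}, and via \textsf{atlas} in type $E$. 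Your approach, if the dictionary in step~2 could be made precise and proved uniformly, would be more conceptual; but as written it is a plan with the hard step unfinished, whereas the paper's case-by-case verification is complete.
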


We remark that when $m \in \im \check \cl_n$, the assignment $\check \BO(m) \mapsto \xi_m \mapsto \ubar \bc(w_m)$ is similar in spirit to the bijection in classical representation theory between special nilpotent orbits in $\check \fg$ and two-sided cells in the finite Weyl group $W$, see \ref{cls:aff-duality}. This analogy will be explored further in the sequel.% \cite{SYZ:spr} where we conjecture an affine analog to the classical relationship between Springer representations and primitive ideals.

The framework we envision is partially motivated by ideas from conformal field theory. From the point of view of the $4d$/VOA duality in physics \cite{BLLPRvR}, $L_k$ (for some $k$) should arise as the vertex algebra associated with some four dimensional $\cN=2$ superconformal field theory, whose Higgs branch is expected to be the associated variety $X_{L_k}$ and the Coulomb branch is expected to be related to certain moduli space $\cM_k$ of Higgs bundles of on $\BP^1$ with an irregular singularity at $\infty$ and a regular singularity at zero. The $4d$ mirror symmetry proposal in \cite{Shan-Xie-Yan:phys} predicts several correspondences between representations of $L_k$ and the geometry of torus fixed manifolds in certain Hitchin moduli space $\cM_k$. In \cite{Shan-Xie-Yan}, it was proved that for the boundary admissible level $k$, there is a natural bijection between simple $L_k$-modules and fixed points for a $\BC^\times$-action on an affine Springer fiber (which is homeomorphic to the central fiber of $\cM_k$), and that this bijection is compatible with modularity properties. One of the goals of the current paper is to extend this correspondence to non-admissible levels by first providing a description of $X_{L_k}$ using data coming from an appropriate affine Springer fiber, and by building foundations for constructing a map (with several good properties) from the top homology of the said fiber to the Grothendieck group of $L_k$-modules.

\smallskip

\subsection{Structure of the paper}

In \S \ref{subsec:aff-alg}-\ref{subsec:AV} we fix notations and recall definitions for the affine Lie algebra, affine vertex algebra, and associated varieties. In \S \ref{subsec:cln} we define the cyclotomic level map $\cl_n$, list its values in all cases, and prove Theorem \ref{thm:O(m)}. The main conjecture is contained in \S \ref{subsec:conj-AV} along with a discussion of known cases. The relation of cyclotomic levels with minimal reduction types is studied in \S \ref{sec:RTmin}. We recall minimal reduction types map in \S \ref{subsec:reduction}. In \S \ref{subsec:rs-elem} we give a review on the root valuation strata discovered by Goresky-Kottwitz-MacPherson. They will be used in the proof of Theorem \ref{thm:cl-triangle} in \S \ref{subsec:cl-RTmin}. In \S \ref{sec:cells} the interaction between $\cl_n$ and two-sided cells is explored. The last section \S \ref{sec:co} contains combinatorics for proofs in classical types.

\subsection*{Acknowledgements} We would like to thank Tomoyuki Arakawa, Gurbir Dhillon, Zhiwei Yun for helpful discussions. The third author would like to thank the \textsf{atlas} team for creating the \textsf{atlas} software which helped tremendously in our computations. He would like to especially thank Jeffrey Adams for answering many questions regarding the \textsf{atlas} software, and for generously providing codes for producing closure diagrams for nilpotent orbits.

PS is supported by NSFC Grant No.~12225108 and by the New Cornerstone Science Foundation through the Xplorer Prize.

WY is supported by national key research
and development program of China (No.~2022ZD0117000).

%===========
\section{Associated varieties of simple affine vertex algebras}\label{sec:AV}

In this section, we first define a map 
\begin{equation*}
	\cl_n: \ubar \cN \aro \BZ_{\ge 1},
\end{equation*}
which we call the \emph{cyclotomic level} map, from nilpotent orbits to positive integers, compute its values and describe one of its main properties. This map allows us to formulate our main conjecture \ref{conj:AV} on associated varieties of the simple affine vertex algebras $L_k(\fg)$, see \S \ref{subsec:conj-AV}. The two subsections \S \ref{subsec:aff-alg}-\ref{subsec:AV} contain a review of the players involved in the conjecture and set notations for the rest of the paper.

%--------
\subsection{Cyclotomic levels of nilpotent orbits}\label{subsec:cln}

Let us first fix Lie theoretic notation to be used throughout the paper.

\begin{clause}[Lie theoretic notations]
	Write $\fg$ for a complex simple Lie algebra, $\fh \subset \fg$ for a Cartan subalgebra, $\fb \subset \fg$ for a Borel containing $\fh$, $W$ for the Weyl group, and $\ubar W$ for the set of conjugacy classes in $W$. Let $G$ be the connected simply-connected algebraic group with Lie algebra $\fg$. The root system of $(\fg,\fh)$ is denoted by $R = R(\fg,\fh)$, and the set of positive roots determined by $\fb$ is written as $R^+$. We write $\rho$ for the half sum of positive roots. Let $\check \fg$ be the Langlands dual Lie algebra containing a Cartan $\check \fh$ which is identified with $\fh^*$. 
	
	We fix an invariant symmetric bilinear form $(-,-)$ on $\fg$ so that the induced form on $\fh^*$ satisfies $(\theta , \theta) = 2$, where $\theta \in R^+$ is the highest root. We write $\Bh$ and $\check \Bh$ for the Coxeter number and the dual Coxeter number of $\fg$. 
	
	%We let $\cU(\fg)$ denote the universal enveloping algebra of $\fg$, and $\cZ(\fg)$ denotes the center of $\cU(\fg)$. Via the Harish-Chandra isomorphism $\cZ(\fg) \bij \Sym(\fh)^W$, central characters $\chi_\lambda: \cZ(\fg) \to \BC$ (called an \textit{infinitesimal character}) are parameterized by $W$-orbits in $\fh^*$. We follow the convention that $\chi_\lambda$ corresponds to the dot-$W$-orbit of $\lambda$, or equivalently the $W$-orbit of $\lambda+\rho$ under the usual (non-dot) action.
	
	We write $\cN$ and $\check \cN$ for the nilpotent cone of $\fg$ and $\check \fg$, and write $\ubar \cN$, $\ubar{\check \cN}$ for the set of adjoint orbits in $\cN$ and $\check \cN$, respectively. The partial order $\BO \preccurlyeq \BO'$ on $\ubar \cN$ means $\BO \subseteq \overline{\BO'}$. The Barbasch-Vogan-Lusztig-Spaltenstein duality map of nilpotent orbits is denoted by $\bd: \ubar{\check \cN} \to \ubar \cN$, its definition will be recalled in \ref{cls:BV}. Given a Levi $L \subset G$ and an orbit $\BO_L \in \ubar \cN_L$, we write $\Sat_L^G \BO_L$ for the orbit $\Ad (G) \cdot \BO_L$ ($\Sat$ stands for ``saturation''), and write $\Ind_L^G \BO_L$ for the Lusztig-Spaltenstein induction, which is the unique open orbit in the image of the moment map $G \times_P (\overline{\BO_L} \times \fu) \to \fg$. Here $P = LU$ is any parabolic containing $L$ as its Levi, and $\fp = \fl \oplus \fu$ is its Lie algebra. Given any orbit $\BO \in \ubar \cN$, there is a unique conjugacy classes of Levi $L$ so that $\BO= \Sat_L^G \BO_L$ where $\BO_L$ is \textit{distinguished} in $\fl$, i.e. $\BO_L$ is not the saturation of a strictly smaller Levi. In this case $L$ will be called a \textit{Bala-Carter Levi} of $\BO$.
\end{clause}

\begin{definition}\label{def:cln}
	The \textbf{cyclotomic level} map
	\begin{equation*}
		\cl_n: \cN \aro \BZ_{\ge 1}	
	\end{equation*}
	is defined as follows. For any $e \in \cN$, let $\fl \subset \fg$ be a smallest Levi containing $e$, so that $e$ is distinguished in $\fl$ (i.e. $\fl$ is a \textit{Bala-Carter Levi} of $e$). Let $\{h,e,f\}$ be an $\fsl_2$-triple in $\fl$, and let $2a$ be the largest $\ad h$-weight in $\fl$. It is known that $2a$ is an even non-negative integer, see, for example, \cite[Proposition 5.1]{Barbasch-Vogan:unipotent}.	We then set
	\begin{equation*}
		\cl_n(e) := a+1.
	\end{equation*}
	Clearly the map $\cl_n$ descends to a map on orbits
	\begin{equation*}
		\cl_n: \ubar \cN \aro \BZ_{\ge 1}.
	\end{equation*}
	We write $\cl_{n,G}$ for $\cl_n$ when it is necessary to specify the group.
\end{definition}

The name for the map comes from its alternative definition, see Definition \ref{def:clW} and Theorem \ref{thm:cl-triangle}. This definition of $\cl_n$ has been considered before in various settings, for example \cite[Proposition 9.2]{KL:Fl} and \cite[\S 7.3]{stable-grading}. We would like to present another equivalent definition. For this, we need the following lemma.

\begin{lemma}\label{lem:max-h-hts}
	Let $\fl \subset \fg$ be a Levi, let $\{h,e,f\} \subset \fl$ be an $\fsl_2$-triple where $h \in \fh$ and $e$ is distinguished in $\fl$. Then
	\begin{equation*}
		\max_{\alpha \in R(\fl,\fh)} \langle \alpha, h \rangle \le 
		\max_{\beta \in R(\fg,\fh)} \langle \beta, h \rangle \le 
		\max_{\alpha \in R(\fl,\fh)} \langle \alpha, h \rangle +1.
	\end{equation*}
	In other words, if the highest $h$-weight on $\fl$ is $2a$, then the highest $h$-weight on $\fg$ is either $2a$ or $2a+1$.
\end{lemma}

The proof is a type-by-type verification. We postpone the proof to \S \ref{subapp:partition}

\begin{lemma}\label{lem:cln=order}
	For $e \in \cN$, we have
	\begin{equation*}
		\cl_n(e) = \min\{m \mid (\ad e)^{2m} = 0 \}.
	\end{equation*}
\end{lemma}

\begin{proof}
	Suppose $\cl_n(e) = a+1$. By the definition of $\cl_n$ and the preceding lemma, the largest $\ad h$-weight on $\fg$ is either $2a$ or $2a+1$. Let $\fs \subset \fg$ be an $\fsl_2$-subalgebra spanned by an $\fsl_2$-triple containing $e$. In the first case, the largest irreducible $\ad \fs$-submodule in $\fg$ has highest weight $2a$ and is $(2a+1)$-dimensional. So $(\ad e)^{2a+1} = 0$ and $(\ad e)^{2a} \neq 0$ on this submodule, and hence also on $\fg$. Therefore
	\begin{equation}\label{eqn:cln=order}
		\min\{m \mid (\ad e)^{2m} = 0 \} = a+1.
	\end{equation}
	In the second case, the largest irreducible $\fs$-submodule is $(2a+2)$-dimensional, and hence $(\ad e)^{2a+2} = 0$, $(\ad e)^{2a+1} \neq 0$. We again get (\ref{eqn:cln=order}).
\end{proof}

We have the following remarkable properties of $\cl_n$.

\begin{theorem}\label{thm:O(m)}
	Let $m_1$, $m_2$ be in the image of $\cl_n$, and let $m$ be any positive integer.
	\begin{enumerate}
		\item There is a unique maximal orbit $\BO(m)$ in $\cl_n\inv([1,m])$.
		
		\item If $\BO_1 \preccurlyeq \BO_2$, then $\cl_n(\BO_1) \le \cl_n(\BO_2)$. 
		
		\item If $m_1 < m_2$, then $\BO(m_1) \prec \BO(m_2)$.
		
		\item We have
		\begin{equation*}
			\overline{\BO(m)} = \bigcup_{\cl_n(\BO) \le m} \BO.
		\end{equation*}
	\end{enumerate}
\end{theorem}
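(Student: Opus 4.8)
The plan is to first observe that parts (3) and (4) follow formally from (1) and (2), concentrating the real work in the latter two. Granting (2), the subset $\cl_n\inv([1,m]) \subseteq \ubar\cN$ is an order ideal for $\preccurlyeq$. For (4): from (2) and $\cl_n(\BO(m)) \le m$ one gets $\{\BO : \BO \preccurlyeq \BO(m)\} \subseteq \cl_n\inv([1,m])$, while the reverse inclusion holds because, by (1), $\BO(m)$ is the \emph{unique} maximal element of the finite poset $\cl_n\inv([1,m])$, hence every orbit in it lies below $\BO(m)$; since $\{\BO : \BO \preccurlyeq \BO(m)\}$ is exactly the set of orbits contained in $\overline{\BO(m)}$, this gives (4). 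For (3): if $m_1 < m_2$ then $\cl_n\inv([1,m_1]) \subseteq \cl_n\inv([1,m_2])$, so $\BO(m_1) \preccurlyeq \BO(m_2)$ by (1); strictness follows because, choosing (via $m_2 \in \im\cl_n$) an orbit $\BO$ with $\cl_n(\BO) = m_2$, we have $\BO \preccurlyeq \BO(m_2)$ but cannot have $\BO \preccurlyeq \BO(m_1)$, as (2) would then give $m_2 = \cl_n(\BO) \le \cl_n(\BO(m_1)) \le m_1$.

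\textbf{Monotonicity (2).} Next I would unwind Definition \ref{def:cln}. If the Bala--Carter Levi of $\BO$ is $\fl = \fz(\fl) \oplus \bigoplus_i \fl_i$ with the $\fl_i$ simple and $\BO_{L_i}$ the ($\fl_i$-distinguished) component of $\BO_L$, then the neutral element decomposes as $h = \sum_i h_i$ with $h_i \in \fl_i$, and the largest $\ad h$-weight on $\fl$ is the maximum over $i$ of the largest $\ad h_i$-weights on $\fl_i$; hence
\[
	\cl_n(\BO) \;=\; \max_i \, \cl_{n,L_i}(\BO_{L_i}),
\]
the empty maximum being $1 = \cl_n(\text{zero orbit})$. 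Thus $\cl_n$ is determined by two pieces of data: the Bala--Carter Levi of $\BO$, and the finite list of values of $\cl_n$ on distinguished orbits of simple Lie algebras. In the classical types this yields a closed formula for $\cl_n(\BO_\lambda)$ in terms of the partition $\lambda$; since the closure order there is dominance order on partitions (with the appropriate parity constraints), (2) becomes an inequality between dominance-comparable partitions, to be proved by the partition combinatorics assembled in \S\ref{sec:co}. In the exceptional types I would instead tabulate $\cl_n$ on every nilpotent orbit using the two pieces of data above and check (2) directly against the known Hasse diagrams of the closure order --- a finite verification.

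\textbf{Existence of $\BO(m)$ (1).} For (1) I would, for each $m \in \im\cl_n$, produce the candidate $\BO(m)$ explicitly and then verify (a) $\cl_n(\BO(m)) \le m$ and (b) $\BO' \preccurlyeq \BO(m)$ for every $\BO'$ with $\cl_n(\BO') \le m$. In type $A_{n-1}$ the candidate is the orbit of $(m^q,r)$ with $n = qm + r$, $0 \le r < m$, and (b) reduces to the elementary fact that $(m^q,r)$ dominates every partition of $n$ with all parts $\le m$. In types $B$, $C$, $D$ the candidate is the collapse, to a partition obeying the relevant parity constraint, of the analogous ``top-heavy'' partition, and (b) is again a dominance estimate treated in \S\ref{sec:co}. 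In the exceptional types the candidate and both verifications are read off the tables produced in the previous step.

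\textbf{Main obstacle.} The genuine content lies in part (1), and within it in (b): a monotone $\BZ_{\ge 1}$-valued function on an arbitrary finite poset need not have the property that the preimage of every initial interval is a \emph{principal} order ideal, so (1) really uses the fine structure of $\ubar\cN$ together with the explicit form of $\cl_n$. In the classical types, the parity constraints of $B$, $C$, $D$ make both the correct description of $\BO(m)$ and the dominance estimate in (b) delicate --- one must argue with the collapse operation --- and everything downstream depends on having the partition formula for $\cl_n$ exactly right, i.e. on correctly reading off the Bala--Carter Levi and the $\fsl_2$-weights of its distinguished part. In the exceptional types there is no conceptual difficulty, but the bookkeeping (orbits, their Bala--Carter Levis, and closure diagrams) is extensive; this is precisely where the \textsf{atlas} computations acknowledged above become essential.
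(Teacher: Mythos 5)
Your proposal is essentially the same as the paper's argument: the paper also proceeds by computing explicit formulas for $\cl_n(\BO)$ (via the Bala--Carter Levi and the $\fsl_2$-weight bound, exactly the reduction $\cl_n(\BO)=\max_i\cl_{n,L_i}(\BO_{L_i})$ you state) and explicit partitions $q(m)$ for $\BO(m)$ in classical types (\S\ref{subapp:partition}) and via \textsf{atlas} in exceptional types, then checking the assertions case by case. The one small difference is an economy in your reductions: the paper verifies (1), (2), (3) directly from the tabulated values and only derives (4) from them, whereas you correctly observe that (3) is itself a formal consequence of (1) and (2) — using that $\BO(m)$, being the unique maximal element of the finite poset $\cl_n\inv([1,m])$, is its maximum, and that monotonicity forces $\cl_n(\BO(m_1))\le m_1<m_2$. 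This shaves a little off the case-checking but does not change the substance; the genuine content remains the type-by-type verification of (1) and (2), which you correctly identify as the crux and which is where the paper expends its effort.
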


\begin{proof}
	Part (2) is clear from the alternative description of $\cl_n$ given in Lemma \ref{lem:cln=order}. Part (1) and (3) both follow easily from (4). Part (4) is proven in \cite{VIGRE} by type-based calculations (see also \cite[Theorem 5.8.1]{Arakawa:C2}.
\end{proof}

%Clearly if $m$ is not in the image of $\cl_n$, then the unique maximal orbit in $\cl_n\inv([1,m])$ is $\BO(m_0)$, where $m_0$ is the largest number in $\im \cl_n$ that is $\le m$. Hence it suffices to prove the theorem for those $m$ in $\im \cl_n$. We do this by computing the explicit values of $\cl(\BO)$ and $\BO(m)$ in each type and verify it directly. It is possible that a uniform proof exists, but we have not been able to find one.

\begin{remark}
	In classical types, the partition for $\BO(m)$ is roughly the ``most rectangular'' partition with as many parts $\approx m$ as possible.
\end{remark}

We now list the values of $\cl(\BO)$ and $\BO(m)$ in classical types. The values $\cl_n(\BO)$ are computed based on the description of $\max_{\alpha \in R(\fl,\fh)} \langle \alpha, h\rangle$ in terms of the partitions of $\BO$, which is contained in the proof of Lemma \ref{lem:max-h-hts} in \S \ref{subapp:partition}. The partition $q(m)$ corresponding to $\BO(m)$ can be found in \cite[\S3.3 Theorem]{VIGRE}.

\begin{notation}[Partitions]\label{notn:partition}
	Partitions are written as $p = (p_1, p_2, \ldots, p_r)$, where $p_1 \ge p_2 \ge \cdots \ge p_r \ge 1$. Each $p_i$ is said to be a \textit{part} of $p$. The number of parts is written $\# p = r$, and the sum of parts is $|p| = p_1 + \cdots + p_r$. If the multiplicity of a part $x$ of $p$ is $m$, i.e. if $x$ is repeated $m$ times, we will abbreviate $(\ldots, \underbrace{x,\ldots,x}_{m \text{ times}}, \ldots)$ by $(\ldots, x^m, \ldots)$. We write $\BO_p$ for the nilpotent orbit corresponding to partition $p$.
	
	For partitions $p = (p_1, \ldots, p_r)$ and $q = (q_1,\ldots, q_s)$, let $p \cup q$ denote the union of parts, i.e. up to reordering, $p \cup q = (p_1,\ldots, p_r,q_1,\ldots, q_s)$. Let $p+q$ denote the addition of parts, i.e. $p+q = (p_1+q_1, p_2+q_2,\ldots)$.
	
	A partition $p$ is said to be \textit{symplectic} if each odd part appears even number of times. A partition $p$ is said to be \textit{orthogonal} if each even part appears even number of times. 
\end{notation}

\begin{namedtheorem}[Values in type A,]\label{values:A}
	Let $\fg = \fsl_n$. Then 
	\begin{equation*}
		\cl_n(\BO_p) = p_1.
	\end{equation*}
	The image of $\cl_n$ is equal to $[1,n]$. The partition for $\BO(m)$ is equal to
	\begin{equation*}
		q(m) = (m^b, v),
	\end{equation*}
	where $b$ is the largest integer so that $bm \le n$, and $v = n-bm$ is the unique integer so that $q(m)$ is a partition of $n$.
\end{namedtheorem}

\begin{namedtheorem}[Values in type B,]\label{values:B}
	Let $\fg = \fso(2n+1)$. Then
	\begin{align*}
		\cl_n(\BO_p) 
		&= \max \{ (\text{1st repeated part of }p), (\text{1st non-repeated part of } p)-1 \}\\
		&= \begin{cases}
			p_1 & \text{if } p_1 = p_2,\\
			p_1-1 & \text{if } p_1 > p_2.
		\end{cases}
	\end{align*}
	The image of $\cl$ is equal to $[1,n] \cup (2\BZ \cap [n+1,2n])$. The partition $q(m)$ for $\BO(m)$ is described as follows. If $m$ is even, let $k \ge 0$ be the largest integer so that $(m+1)+2km \le 2n+1$; if $m$ is odd, let $b \ge 0$ be the largest integer so that $bm \le 2n+1$ (then $b \ge 2$). Then 
	\begin{equation*}
		q(m) =
		\begin{cases}
			(m+1,m^{2k}) & m \text{ even}, (2k+1)m = 2n\\
			(m+1,m^{2k}, v,1) & m \text{ even}, (2k+1)m < 2n \le (2k+2)m\\
			(m+1,m^{2k},m-1,v) & m \text{ even}, (2k+2)m < 2n\\
			(m^b) & m \text{ odd}, b \ge 3 \text{ odd}, bm = 2n+1 \\
			(m^b, v, 1) & m \text{ odd}, b \ge 3 \text{ odd}, bm < 2n+1\\
			(m^b, v) & m \text{ odd}, b \ge 2 \text{ even}.
		\end{cases}
	\end{equation*}
	Here $v$ is the unique number so that the above tuples are partitions of $2n+1$. 
\end{namedtheorem}

\begin{namedtheorem}[Values in type C,]\label{values:C}
	Let $\fg = \fsp(2n)$. Then
	\begin{equation*}
		\cl_n(\BO_p) = p_1.
	\end{equation*}
	The image of $\cl$ is equal to $[1,n] \cup (2\BZ \cap [n+1,2n])$. The partition $q(m)$ for $\BO(m)$ is described as follows. If $m$ is odd, let $k \ge 0$ be the largest integer so that $2km \le 2n$; if $m$ is even, let $b \ge 0$ be the largest integer so that $bm \le 2n$. Then 
	\begin{equation*}
		q(m) =
		\begin{cases}
			(m^{2k}) & m \text{ odd}, 2km = 2n\\
			(m^{2k}, v) & m \text{ odd}, 2n+1-m \le 2km < 2n\\
			(m^{2k}, m-1, v) & m \text{ odd}, 2n+1-m > 2km < 2n\\
			(m^b) & m \text{ even}, bm=2n\\
			(m^b, v) & m \text{ even}, bm < 2n.
		\end{cases}
	\end{equation*}
	Here $v$ is the unique number so that the above tuples are partitions of $2n$. 
\end{namedtheorem}

\begin{namedtheorem}[Values in type D,]\label{values:D}
	Let $\fg = \fso(2n)$. Then 
	\begin{align*}
		\cl_n(\BO_p) 
		&= \max \{ (\text{1st repeated part of }p), (\text{1st non-repeated part of } p)-1 \}\\
		&= \begin{cases}
			p_1 & \text{if } p_1 = p_2,\\
			p_1-1 & \text{if } p_1 > p_2.
		\end{cases}
	\end{align*}
	The image of $\cl$ is equal to $[1,n] \cup (2\BZ \cap [n+1,2n-2])$. The partition $q(m)$ for $\BO(m)$ is described as follows. If $m$ is even, let $k \ge 0$ be the largest integer so that $(m+1)+2km \le 2n$; if $m$ is odd, let $b \ge 0$ be the largest integer so that $bm \le 2n$ (then $b \ge 2$). Then 
	\begin{equation*}
		q(m) =
		\begin{cases}
			(m+1,m^{2k}, v) & m \text{ even}, (2k+2)m \ge 2n\\
			(m+1,m^{2k},m-1,v,1) & m \text{ even}, (2k+2)m < 2n\\
			(m^b) & m \text{ odd}, b \ge 2 \text{ even}, bm = 2n \\
			(m^b, v, 1) & m \text{ odd}, b \ge 2 \text{ even}, bm < 2n\\
			(m^b, v) & m \text{ odd}, b \ge 3 \text{ odd}.
		\end{cases}
	\end{equation*}
	Here $v$ is the unique number so that the above tuples are partitions of $2n$. Note that $q(m)$ can never be very even, so $q(m)$ uniquely determines an orbit $\BO(m)$.
\end{namedtheorem}

The values in exceptional types are computed using the \textsf{atlas} software.

\begin{namedtheorem}[Values in exceptional types,]\label{values:exc}
	The values of $\cl_n(\BO)$ and $\BO(m)$ in exceptional types are contained in Figures \ref{values:E6}-\ref{values:FG}. We explain here how to read them.
	
	Regarding the nodes:
	\begin{itemize}
		\item Each node denotes a nilpotent orbit $\BO$.
		\item The first label is the Bala-Carter name of the orbit $\BO$.
		\item The second label is $\cl_n(\BO)$. If $\BO = \BO(m)$, then the second label is put in bracket $[m]$.
		\item Dark blue filling means $\BO$ is even.
		\item Light blue filling means $\BO$ is special but not even.
		\item White filling means $\BO$ is not special.
		\item Red border means $\bd \BO$ is even.
	\end{itemize}
	For example, in E6 (Figure \ref{values:E6}), the node
	\begin{equation*}
		\scalebox{0.6}{
			\begin{tikzpicture}[every text node part/.style={align=center}]
				\definecolor{fillcolor}{rgb}{0.82,0.94,1.0};
				\node [draw=black,fill=fillcolor,ellipse] {\normalfont{\textbf{A4+A1}} \\ \normalfont{\textbf{[5]}}};
			\end{tikzpicture}
		}
	\end{equation*}
	denotes the orbit $\BO$ with Bala-Carter label A4+A1 which is special, non-even, with $\bd \BO$ non-even, and $\cl_n(\BO) = 5$, $\BO = \BO(5)$.
	
	Regarding the edges:
	\begin{itemize}
		\item Edges denote the closure relations between the orbits.
		\item Green edge means the orbits connected by the edge are in the same special piece.
	\end{itemize}
\end{namedtheorem}

\begin{figure}
	\caption{Values of $\cl_n$ in E6}\label{values:E6}	
	\vspace{\baselineskip}
	\scalebox{0.65}{
		\begin{tikzpicture}[>=latex',line join=bevel, every text node part/.style={align=center, font=\bfseries}, scale=0.7, xscale=1.2]
			\node (1) at (191.3bp,19.94bp) [draw=red,fill=blue,ellipse] {\twi 0 \\ \twi{[1]}};
			\definecolor{fillcolor}{rgb}{0.82,0.94,1.0};
			\node (2) at (191.3bp,95.82bp) [draw=red,fill=fillcolor,ellipse] {A1 \\ 2};
			\definecolor{fillcolor}{rgb}{0.82,0.94,1.0};
			\node (3) at (191.3bp,171.7bp) [draw=red,fill=fillcolor,ellipse] {2A1 \\ 2};
			\node (4) at (191.3bp,247.58bp) [draw=red,fill=white,ellipse] {3A1 \\ {[2]}};
			\node (5) at (191.3bp,323.46bp) [draw=red,fill=blue,ellipse] {\twi{A2} \\ \twi 3};
			\definecolor{fillcolor}{rgb}{0.82,0.94,1.0};
			\node (6) at (191.3bp,391.34bp) [draw=black,fill=fillcolor,ellipse] {A2+A1 \\ 3 };
			\node (7) at (284.3bp,459.22bp) [draw=red,fill=blue,ellipse] {\twi{2A2} \\ \twi 3};
			\definecolor{fillcolor}{rgb}{0.82,0.94,1.0};
			\node (8) at (99.296bp,459.22bp) [draw=black,fill=fillcolor,ellipse] {A2+2A1 \\ 3 };
			\definecolor{fillcolor}{rgb}{0.82,0.94,1.0};
			\node (9) at (67.296bp,535.1bp) [draw=red,fill=fillcolor,ellipse] {A3 \\ 4};
			\node (10) at (284.3bp,535.1bp) [draw=red,fill=white,ellipse] {2A2+A1  \\ {[3]}};
			\node (11) at (175.3bp,610.98bp) [draw=red,fill=white,ellipse] {A3+A1 \\ 4};
			\node (12) at (175.3bp,686.86bp) [draw=red,fill=blue,ellipse] {\twi{D4(a1)} \\ \twi{[4]}};
			\node (13) at (99.296bp,762.74bp) [draw=black,fill=blue,ellipse] {\twi{A4} \\ \twi 5};
			\node (14) at (252.3bp,762.74bp) [draw=red,fill=blue,ellipse] {\twi{D4} \\ \twi 6};
			\definecolor{fillcolor}{rgb}{0.82,0.94,1.0};
			\node (15) at (99.296bp,830.62bp) [draw=black,fill=fillcolor,ellipse] {A4+A1 \\ {[5]}};
			\node (16) at (76.296bp,898.5bp) [draw=red,fill=white,ellipse] {A5 \\ 6};
			\definecolor{fillcolor}{rgb}{0.82,0.94,1.0};
			\node (17) at (252.3bp,898.5bp) [draw=black,fill=fillcolor,ellipse] {D5(a1) \\ 6};
			\node (18) at (164.3bp,974.38bp) [draw=red,fill=blue,ellipse] {\twi{E6(a3)} \\ \twi{[6]}};
			\node (19) at (164.3bp,1042.3bp) [draw=black,fill=blue,ellipse] {\twi{D5} \\ \twi{[8]}};
			\node (20) at (164.3bp,1102.1bp) [draw=black,fill=blue,ellipse] {\twi{E6(a1)} \\ \twi{[9]}};
			\node (21) at (164.3bp,1170.0bp) [draw=red,fill=blue,ellipse] {\twi{E6} \\ \twi{[12]}};
			\draw [black,] (2) ..controls (191.3bp,64.549bp) and (191.3bp,51.059bp)  .. (1);
			\draw [black,] (3) ..controls (191.3bp,140.43bp) and (191.3bp,126.94bp)  .. (2);
			\draw [black,] (4) ..controls (191.3bp,216.31bp) and (191.3bp,202.82bp)  .. (3);
			\draw [green,] (5) ..controls (191.3bp,292.19bp) and (191.3bp,278.7bp)  .. (4);
			\draw [black,] (6) ..controls (191.3bp,369.47bp) and (191.3bp,355.23bp)  .. (5);
			\draw [black,] (7) ..controls (242.32bp,428.48bp) and (221.23bp,413.54bp)  .. (6);
			\draw [black,] (8) ..controls (131.82bp,434.93bp) and (158.67bp,415.7bp)  .. (6);
			\draw [black,] (9) ..controls (81.536bp,501.22bp) and (89.457bp,482.93bp)  .. (8);
			\draw [black,] (10) ..controls (284.3bp,503.83bp) and (284.3bp,490.34bp)  .. (7);
			\draw [black,] (10) ..controls (203.74bp,501.93bp) and (155.89bp,482.82bp)  .. (8);
			\draw [black,] (11) ..controls (131.17bp,579.79bp) and (109.64bp,565.06bp)  .. (9);
			\draw [green,] (11) ..controls (219.2bp,580.22bp) and (239.96bp,566.15bp)  .. (10);
			\draw [green,] (12) ..controls (175.3bp,655.59bp) and (175.3bp,642.1bp)  .. (11);
			\draw [black,] (13) ..controls (122.4bp,739.28bp) and (141.41bp,720.79bp)  .. (12);
			\draw [black,] (14) ..controls (221.37bp,732.07bp) and (206.84bp,718.13bp)  .. (12);
			\draw [black,] (15) ..controls (99.296bp,806.25bp) and (99.296bp,786.87bp)  .. (13);
			\draw [black,] (16) ..controls (86.981bp,866.89bp) and (91.988bp,852.55bp)  .. (15);
			\draw [black,] (17) ..controls (252.3bp,863.1bp) and (252.3bp,810.95bp)  .. (14);
			\draw [black,] (17) ..controls (198.68bp,874.41bp) and (152.96bp,854.72bp)  .. (15);
			\draw [green,] (18) ..controls (128.65bp,943.45bp) and (111.58bp,929.11bp)  .. (16);
			\draw [black,] (18) ..controls (203.0bp,940.88bp) and (224.96bp,922.44bp)  .. (17);
			\draw [black,] (19) ..controls (164.3bp,1020.4bp) and (164.3bp,1006.1bp)  .. (18);
			\draw [black,] (20) ..controls (164.3bp,1079.6bp) and (164.3bp,1064.8bp)  .. (19);
			\draw [black,] (21) ..controls (164.3bp,1138.1bp) and (164.3bp,1123.8bp)  .. (20);
		\end{tikzpicture}
	}
\end{figure}

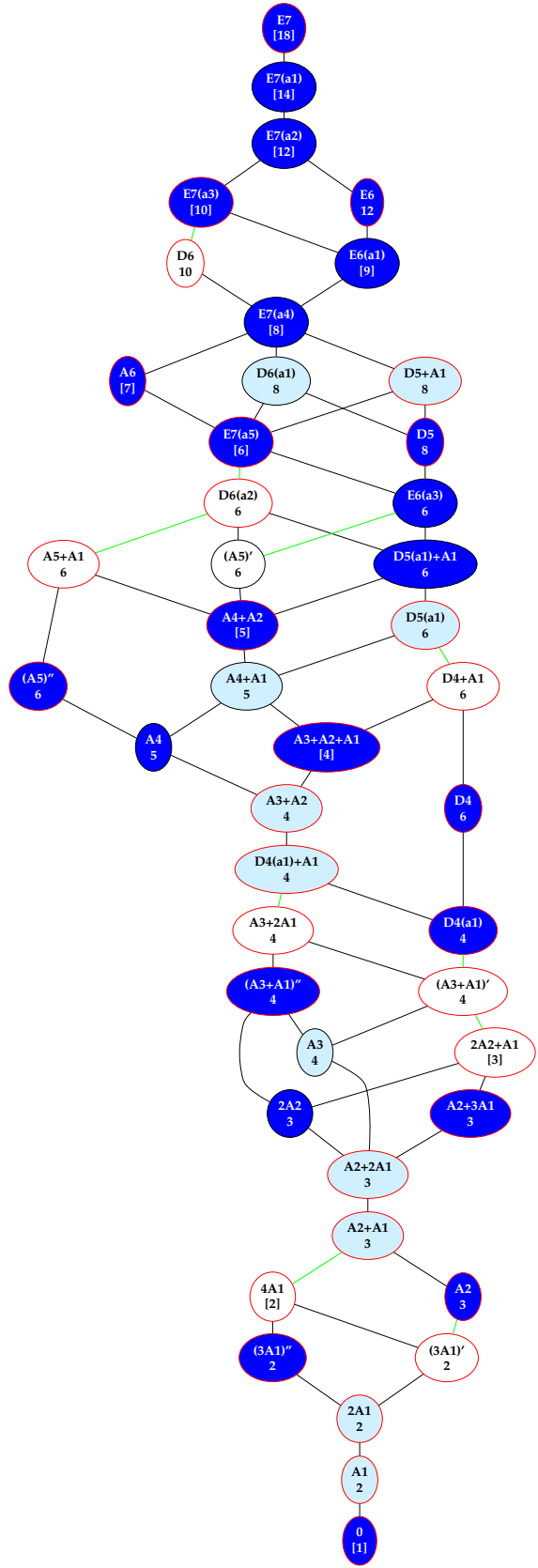
\begin{figure}
	\caption{Values of $\cl_n$ in E7}\label{values:E7}	
	\vspace{\baselineskip}
	\scalebox{0.53}{
		\begin{tikzpicture}[>=latex',line join=bevel, every text node part/.style={align=center, font=\bfseries}, scale=0.21, xscale=0.3]
			\node (1) at (4648.5bp,92.146bp) [draw=red,fill=blue,ellipse] {\twi 0 \\ \twi{[1]}};
			\definecolor{fillcolor}{rgb}{0.82,0.94,1.0};
			\node (2) at (4648.5bp,312.44bp) [draw=red,fill=fillcolor,ellipse] {A1 \\ 2};
			\definecolor{fillcolor}{rgb}{0.82,0.94,1.0};
			\node (3) at (4648.5bp,532.73bp) [draw=red,fill=fillcolor,ellipse] {2A1 \\ 2};
			\node (4) at (3602.5bp,753.02bp) [draw=red,fill=blue,ellipse] {\twi{(3A1)''} \\ \twi 2};
			\node (5) at (5695.5bp,753.02bp) [draw=red,fill=white,ellipse] {(3A1)'  \\ 2};
			\node (6) at (5890.5bp,973.31bp) [draw=red,fill=blue,ellipse] {\twi{A2} \\ \twi 3};
			\node (7) at (3602.5bp,973.31bp) [draw=red,fill=white,ellipse] {4A1  \\ {[2]}};
			\definecolor{fillcolor}{rgb}{0.82,0.94,1.0};
			\node (8) at (4746.5bp,1193.6bp) [draw=red,fill=fillcolor,ellipse] {A2+A1  \\ 3};
			\definecolor{fillcolor}{rgb}{0.82,0.94,1.0};
			\node (9) at (4746.5bp,1413.9bp) [draw=red,fill=fillcolor,ellipse] {A2+2A1  \\ 3};
			\definecolor{fillcolor}{rgb}{0.82,0.94,1.0};
			\node (10) at (4110.5bp,1854.5bp) [draw=black,fill=fillcolor,ellipse] {A3 \\ 4};
			\node (11) at (3810.5bp,1634.2bp) [draw=black,fill=blue,ellipse] {\twi{2A2} \\ \twi 3};
			\node (12) at (5980.5bp,1634.2bp) [draw=red,fill=blue,ellipse] {\twi{A2+3A1} \\ \twi 3};
			\node (13) at (3607.5bp,2074.8bp) [draw=red,fill=blue,ellipse] {\twi{(A3+A1)''} \\ \twi 4};
			\node (14) at (6275.5bp,1854.5bp) [draw=red,fill=white,ellipse] {2A2+A1  \\ {[3]}};
			\node (15) at (5890.5bp,2074.8bp) [draw=red,fill=white,ellipse] {(A3+A1)'  \\ 4};
			\node (16) at (5892.5bp,2295.1bp) [draw=red,fill=blue,ellipse] {\twi{D4(a1)} \\ \twi 4};
			\node (17) at (3607.5bp,2295.1bp) [draw=red,fill=white,ellipse] {A3+2A1 \\ 4};
			\node (18) at (5892.5bp,2735.6bp) [draw=red,fill=blue,ellipse] {\twi{D4} \\ \twi 6};
			\definecolor{fillcolor}{rgb}{0.82,0.94,1.0};
			\node (19) at (3769.5bp,2515.4bp) [draw=red,fill=fillcolor,ellipse] {D4(a1)+A1 \\ 4};
			\definecolor{fillcolor}{rgb}{0.82,0.94,1.0};
			\node (20) at (3769.5bp,2735.6bp) [draw=red,fill=fillcolor,ellipse] {A3+A2 \\ 4};
			\node (21) at (2170.5bp,2955.9bp) [draw=black,fill=blue,ellipse] {\twi{A4} \\ \twi 5};
			\node (22) at (4252.5bp,2955.9bp) [draw=red,fill=blue,ellipse] {\twi{A3+A2+A1} \\ \twi{[4]}};
			\node (23) at (783.52bp,3176.2bp) [draw=red,fill=blue,ellipse] {\twi{(A5)''} \\ \twi 6};
			\node (24) at (5892.5bp,3176.2bp) [draw=red,fill=white,ellipse] {D4+A1 \\ 6};
			\definecolor{fillcolor}{rgb}{0.82,0.94,1.0};
			\node (25) at (3289.5bp,3176.2bp) [draw=black,fill=fillcolor,ellipse] {A4+A1 \\ 5};
			\definecolor{fillcolor}{rgb}{0.82,0.94,1.0};
			\node (26) at (5438.5bp,3396.5bp) [draw=red,fill=fillcolor,ellipse] {D5(a1) \\ 6};
			\node (27) at (3238.5bp,3396.5bp) [draw=red,fill=blue,ellipse] {\twi{A4+A2} \\ \twi{[5]}};
			\node (28) at (3187.5bp,3616.8bp) [draw=black,fill=white,ellipse] {(A5)' \\ 6};
			\node (29) at (1087.5bp,3616.8bp) [draw=red,fill=white,ellipse] {A5+A1 \\ 6};
			\node (30) at (5438.5bp,3616.8bp) [draw=black,fill=blue,ellipse] {\twi{D5(a1)+A1} \\ \twi 6};
			\node (31) at (3187.5bp,3837.1bp) [draw=red,fill=white,ellipse] {D6(a2) \\ 6};
			\node (32) at (5435.5bp,3837.1bp) [draw=black,fill=blue,ellipse] {\twi{E6(a3)}\\ \twi 6};
			\node (33) at (5435.5bp,4057.4bp) [draw=red,fill=blue,ellipse] {\twi{D5} \\ \twi 8};
			\node (34) at (3223.5bp,4057.4bp) [draw=red,fill=blue,ellipse] {\twi{E7(a5)}  \\ \twi{[6]}};
			\node (35) at (1858.5bp,4277.7bp) [draw=red,fill=blue,ellipse] {\twi{A6} \\ \twi{[7]}};
			\definecolor{fillcolor}{rgb}{0.82,0.94,1.0};
			\node (36) at (5435.5bp,4277.7bp) [draw=red,fill=fillcolor,ellipse] {D5+A1 \\ 8};
			\definecolor{fillcolor}{rgb}{0.82,0.94,1.0};
			\node (37) at (3645.5bp,4277.7bp) [draw=black,fill=fillcolor,ellipse] {D6(a1) \\ 8};
			\node (38) at (3645.5bp,4490.0bp) [draw=black,fill=blue,ellipse] {\twi{E7(a4)}  \\ \twi{[8]}};
			\node (39) at (2553.5bp,4702.3bp) [draw=red,fill=white,ellipse] {D6 \\ 10};
			\node (40) at (4738.5bp,4702.3bp) [draw=black,fill=blue,ellipse] {\twi{E6(a1)}  \\ \twi{[9]} };
			\node (41) at (4738.5bp,4922.6bp) [draw=red,fill=blue,ellipse] {\twi{E6} \\ \twi{12}};
			\node (42) at (2741.5bp,4922.6bp) [draw=red,fill=blue,ellipse] {\twi{E7(a3)}  \\ \twi{[10]}};
			\node (43) at (3739.5bp,5134.8bp) [draw=black,fill=blue,ellipse] {\twi{E7(a2)}  \\ \twi{[12]}};
			\node (44) at (3739.5bp,5339.1bp) [draw=black,fill=blue,ellipse] {\twi{E7(a1)}  \\ \twi{[14]}};
			\node (45) at (3739.5bp,5551.4bp) [draw=red,fill=blue,ellipse] {\twi{E7}  \\ \twi{[18]}};
			\draw [black,] (2) ..controls (4648.5bp,208.14bp) and (4648.5bp,196.2bp)  .. (1);
			\draw [black,] (3) ..controls (4648.5bp,428.43bp) and (4648.5bp,416.49bp)  .. (2);
			\draw [black,] (4) ..controls (4073.2bp,653.78bp) and (4158.7bp,635.96bp)  .. (3);
			\draw [black,] (5) ..controls (5208.4bp,650.46bp) and (5131.3bp,634.37bp)  .. (3);
			\draw [green,] (6) ..controls (5798.4bp,869.2bp) and (5787.6bp,857.12bp)  .. (5);
			\draw [black,] (7) ..controls (3602.5bp,869.02bp) and (3602.5bp,857.08bp)  .. (4);
			\draw [black,] (7) ..controls (4518.2bp,876.81bp) and (4762.3bp,851.36bp)  .. (5);
			\draw [black,] (8) ..controls (5282.8bp,1090.3bp) and (5367.8bp,1074.1bp)  .. (6);
			\draw [green,] (8) ..controls (4210.6bp,1090.3bp) and (4125.9bp,1074.2bp)  .. (7);
			\draw [black,] (9) ..controls (4746.5bp,1309.6bp) and (4746.5bp,1297.7bp)  .. (8);
			\draw [black,] (10) ..controls (4664.3bp,1769.7bp) and (4706.9bp,1750.1bp)  .. (4727.5bp,1726.3bp) .. controls (4779.0bp,1667.0bp) and (4778.2bp,1575.0bp)  .. (9);
			\draw [black,] (11) ..controls (4217.1bp,1538.4bp) and (4295.2bp,1520.2bp)  .. (9);
			\draw [black,] (12) ..controls (5432.3bp,1536.2bp) and (5327.9bp,1517.7bp)  .. (9);
			\draw [black,] (13) ..controls (3849.6bp,1968.7bp) and (3887.0bp,1952.5bp)  .. (10);
			\draw [black,] (13) ..controls (3265.8bp,1976.0bp) and (3252.6bp,1962.2bp)  .. (3241.5bp,1946.6bp) .. controls (3194.0bp,1879.9bp) and (3190.3bp,1826.3bp)  .. (3241.5bp,1762.3bp) .. controls (3259.1bp,1740.4bp) and (3295.8bp,1722.0bp)  .. (11);
			\draw [black,] (14) ..controls (5144.3bp,1753.3bp) and (4772.5bp,1720.4bp)  .. (11);
			\draw [black,] (14) ..controls (6136.1bp,1750.3bp) and (6119.2bp,1737.8bp)  .. (12);
			\draw [black,] (15) ..controls (5045.6bp,1970.1bp) and (4831.9bp,1943.9bp)  .. (10);
			\draw [green,] (15) ..controls (6072.1bp,1970.8bp) and (6094.0bp,1958.4bp)  .. (14);
			\draw [green,] (16) ..controls (5891.6bp,2190.8bp) and (5891.5bp,2178.8bp)  .. (15);
			\draw [black,] (17) ..controls (3607.5bp,2190.8bp) and (3607.5bp,2178.8bp)  .. (13);
			\draw [black,] (17) ..controls (4562.9bp,2202.7bp) and (4855.1bp,2174.8bp)  .. (15);
			\draw [black,] (18) ..controls (5892.5bp,2568.2bp) and (5892.5bp,2462.4bp)  .. (16);
			\draw [black,] (19) ..controls (4712.6bp,2417.4bp) and (4951.7bp,2392.8bp)  .. (16);
			\draw [green,] (19) ..controls (3693.0bp,2411.2bp) and (3684.0bp,2399.2bp)  .. (17);
			\draw [black,] (20) ..controls (3769.5bp,2631.3bp) and (3769.5bp,2619.4bp)  .. (19);
			\draw [black,] (21) ..controls (2843.6bp,2863.1bp) and (3023.3bp,2838.5bp)  .. (20);
			\draw [black,] (22) ..controls (4025.5bp,2852.3bp) and (3996.9bp,2839.4bp)  .. (20);
			\draw [black,] (23) ..controls (1397.8bp,3078.5bp) and (1561.5bp,3052.8bp)  .. (21);
			\draw [black,] (24) ..controls (5892.5bp,3008.8bp) and (5892.5bp,2903.0bp)  .. (18);
			\draw [black,] (24) ..controls (5187.3bp,3081.4bp) and (5006.4bp,3057.3bp)  .. (22);
			\draw [black,] (25) ..controls (2771.1bp,3074.1bp) and (2661.2bp,3052.7bp)  .. (21);
			\draw [black,] (25) ..controls (3722.5bp,3077.1bp) and (3799.0bp,3059.7bp)  .. (22);
			\draw [green,] (26) ..controls (5652.0bp,3292.8bp) and (5679.7bp,3279.5bp)  .. (24);
			\draw [black,] (26) ..controls (4550.0bp,3305.3bp) and (4277.0bp,3277.5bp)  .. (25);
			\draw [black,] (27) ..controls (3263.3bp,3289.5bp) and (3266.7bp,3274.8bp)  .. (25);
			\draw [black,] (28) ..controls (3210.3bp,3518.1bp) and (3213.8bp,3503.3bp)  .. (27);
			\draw [black,] (29) ..controls (972.13bp,3449.3bp) and (898.8bp,3343.5bp)  .. (23);
			\draw [black,] (29) ..controls (2003.0bp,3522.9bp) and (2284.3bp,3494.4bp)  .. (27);
			\draw [black,] (30) ..controls (5438.5bp,3518.1bp) and (5438.5bp,3503.3bp)  .. (26);
			\draw [black,] (30) ..controls (4504.0bp,3523.1bp) and (4211.5bp,3494.1bp)  .. (27);
			\draw [black,] (31) ..controls (3187.5bp,3730.1bp) and (3187.5bp,3715.4bp)  .. (28);
			\draw [green,] (31) ..controls (2282.0bp,3742.0bp) and (1997.1bp,3712.4bp)  .. (29);
			\draw [black,] (31) ..controls (4147.2bp,3743.0bp) and (4470.3bp,3711.7bp)  .. (30);
			\draw [green,] (32) ..controls (4451.2bp,3740.5bp) and (4134.6bp,3709.8bp)  .. (28);
			\draw [black,] (32) ..controls (5436.9bp,3735.8bp) and (5437.1bp,3718.1bp)  .. (30);
			\draw [black,] (33) ..controls (5435.5bp,3950.4bp) and (5435.5bp,3935.6bp)  .. (32);
			\draw [green,] (34) ..controls (3206.5bp,3953.1bp) and (3204.5bp,3941.2bp)  .. (31);
			\draw [black,] (34) ..controls (4214.5bp,3958.6bp) and (4490.8bp,3931.3bp)  .. (32);
			\draw [black,] (35) ..controls (2460.0bp,4180.5bp) and (2585.4bp,4160.4bp)  .. (34);
			\draw [black,] (36) ..controls (5435.5bp,4173.4bp) and (5435.5bp,4161.4bp)  .. (33);
			\draw [black,] (36) ..controls (4527.9bp,4187.1bp) and (4231.2bp,4157.8bp)  .. (34);
			\draw [black,] (37) ..controls (4389.6bp,4185.9bp) and (4631.5bp,4156.4bp)  .. (33);
			\draw [black,] (37) ..controls (3458.1bp,4179.7bp) and (3428.0bp,4164.2bp)  .. (34);
			\draw [black,] (38) ..controls (2835.8bp,4393.7bp) and (2633.1bp,4369.8bp)  .. (35);
			\draw [black,] (38) ..controls (4456.1bp,4393.7bp) and (4659.1bp,4369.9bp)  .. (36);
			\draw [black,] (38) ..controls (3645.5bp,4391.2bp) and (3645.5bp,4376.3bp)  .. (37);
			\draw [black,] (39) ..controls (3046.1bp,4606.4bp) and (3142.4bp,4587.9bp)  .. (38);
			\draw [black,] (40) ..controls (4236.4bp,4604.7bp) and (4144.8bp,4587.0bp)  .. (38);
			\draw [black,] (41) ..controls (4738.5bp,4815.6bp) and (4738.5bp,4800.8bp)  .. (40);
			\draw [green,] (42) ..controls (2652.6bp,4818.3bp) and (2642.1bp,4806.1bp)  .. (39);
			\draw [black,] (42) ..controls (3623.1bp,4825.2bp) and (3855.8bp,4799.8bp)  .. (40);
			\draw [black,] (43) ..controls (4191.0bp,5038.8bp) and (4273.0bp,5021.5bp)  .. (41);
			\draw [black,] (43) ..controls (3294.6bp,5040.1bp) and (3219.4bp,5024.2bp)  .. (42);
			\draw [black,] (44) ..controls (3739.5bp,5243.0bp) and (3739.5bp,5231.1bp)  .. (43);
			\draw [black,] (45) ..controls (3739.5bp,5447.1bp) and (3739.5bp,5435.1bp)  .. (44);
		\end{tikzpicture}
	}
\end{figure}

\begin{figure}
	\caption{Values of $\cl_n$ in E8, part I}\label{values:E8a}
	\vspace{\baselineskip}
	\scalebox{0.7}{
		\begin{tikzpicture}[>=latex',line join=bevel, every text node part/.style={align=center, font=\bfseries}, scale=0.7]
			\node (42) at (270.77bp,19.94bp) [draw=red,fill=blue,ellipse] {\twi{E8(a7)}  \\ \twi{[6]}};
			\node (43) at (384.77bp,95.82bp) [draw=red,fill=blue,ellipse] {\twi{A6} \\ \twi 7};
			\definecolor{fillcolor}{rgb}{0.82,0.94,1.0};
			\node (44) at (157.77bp,95.82bp) [draw=red,fill=fillcolor,ellipse] {D6(a1) \\ 8};
			\definecolor{fillcolor}{rgb}{0.82,0.94,1.0};
			\node (45) at (399.77bp,163.7bp) [draw=black,fill=fillcolor,ellipse] {A6+A1 \\ {[7]} };
			\definecolor{fillcolor}{rgb}{0.82,0.94,1.0};
			\node (46) at (157.77bp,163.7bp) [draw=black,fill=fillcolor,ellipse] {E7(a4) \\ 8};
			\node (47) at (157.77bp,307.46bp) [draw=black,fill=blue,ellipse] {\twi{E6(a1)} \\ \twi 9};
			\node (48) at (397.77bp,231.58bp) [draw=red,fill=blue,ellipse] {\twi{D5+A2}  \\ \twi 8};
			\node (49) at (602.77bp,307.46bp) [draw=red,fill=white,ellipse] {D6 \\ 10};
			\node (50) at (76.767bp,459.22bp) [draw=red,fill=blue,ellipse] {\twi{E6} \\ \twi{12}};
			\definecolor{fillcolor}{rgb}{0.82,0.94,1.0};
			\node (51) at (395.77bp,307.46bp) [draw=black,fill=fillcolor,ellipse] {D7(a2)  \\ 8};
			\node (52) at (208.77bp,383.34bp) [draw=red,fill=white,ellipse] {A7 \\ {[8]}};
			\definecolor{fillcolor}{rgb}{0.82,0.94,1.0};
			\node (53) at (452.77bp,383.34bp) [draw=black,fill=fillcolor,ellipse] {E6(a1)+A1  \\ 9};
			\definecolor{fillcolor}{rgb}{0.82,0.94,1.0};
			\node (54) at (546.77bp,459.22bp) [draw=red,fill=fillcolor,ellipse] {E7(a3) \\ 10};
			\node (55) at (299.77bp,459.22bp) [draw=red,fill=blue,ellipse] {\twi{E8(b6)}  \\ \twi{[9]} };
			\node (56) at (420.77bp,535.1bp) [draw=black,fill=blue,ellipse] {\twi{D7(a1)}  \\ \twi{10}};
			\node (57) at (187.77bp,535.1bp) [draw=red,fill=white,ellipse] {E6+A1 \\ 12};
			\node (58) at (187.77bp,610.98bp) [draw=red,fill=white,ellipse] {E7(a2) \\ 12};
			\node (59) at (427.77bp,610.98bp) [draw=black,fill=blue,ellipse] {\twi{E8(a6)}  \\ \twi{[10]}};
			\node (60) at (427.77bp,686.86bp) [draw=red,fill=white,ellipse] {D7 \\ 12};
			\node (61) at (214.77bp,686.86bp) [draw=red,fill=blue,ellipse] {\twi{E8(b5)}  \\ \twi{12}};
			\definecolor{fillcolor}{rgb}{0.82,0.94,1.0};
			\node (62) at (214.77bp,762.74bp) [draw=black,fill=fillcolor,ellipse] {E7(a1) \\ 14};
			\node (63) at (418.77bp,762.74bp) [draw=red,fill=blue,ellipse] {\twi{E8(a5)}  \\ \twi{[12]}};
			\node (64) at (316.77bp,830.62bp) [draw=black,fill=blue,ellipse] {\twi{E8(b4)}  \\ \twi{[14]}};
			\node (65) at (215.77bp,898.5bp) [draw=red,fill=white,ellipse] {E7 \\ 18};
			\node (66) at (417.77bp,898.5bp) [draw=black,fill=blue,ellipse] {\twi{E8(a4)}  \\ \twi{[15]}};
			\node (67) at (316.77bp,974.38bp) [draw=red,fill=blue,ellipse] {\twi{E8(a3)}  \\ \twi{[18]}};
			\node (68) at (316.77bp,1042.3bp) [draw=black,fill=blue,ellipse] {\twi{E8(a2)}  \\ \twi{[20]}};
			\node (69) at (316.77bp,1102.1bp) [draw=black,fill=blue,ellipse] {\twi{E8(a1)}  \\ \twi{[24]}};
			\node (70) at (316.77bp,1170.0bp) [draw=red,fill=blue,ellipse] {\twi{E8}  \\ \twi{[30]}};
			\draw [black,] (43) ..controls (339.02bp,65.169bp) and (317.02bp,50.918bp)  .. (42);
			\draw [black,] (44) ..controls (203.41bp,64.977bp) and (225.13bp,50.775bp)  .. (42);
			\draw [black,] (45) ..controls (395.02bp,141.83bp) and (391.77bp,127.59bp)  .. (43);
			\draw [black,] (46) ..controls (231.55bp,141.28bp) and (288.46bp,124.77bp)  .. (43);
			\draw [black,] (46) ..controls (157.77bp,141.83bp) and (157.77bp,127.59bp)  .. (44);
			\draw [black,] (47) ..controls (157.77bp,268.84bp) and (157.77bp,202.22bp)  .. (46);
			\draw [black,] (48) ..controls (398.71bp,199.63bp) and (399.14bp,185.41bp)  .. (45);
			\draw [black,] (48) ..controls (294.34bp,202.19bp) and (234.74bp,185.83bp)  .. (46);
			\draw [black,] (49) ..controls (525.61bp,278.65bp) and (480.13bp,262.26bp)  .. (48);
			\draw [black,] (50) ..controls (78.204bp,418.69bp) and (82.212bp,386.98bp)  .. (95.767bp,363.4bp) .. controls (106.38bp,344.95bp) and (125.44bp,329.47bp)  .. (47);
			\draw [black,] (51) ..controls (396.38bp,283.77bp) and (396.87bp,265.75bp)  .. (48);
			\draw [black,] (52) ..controls (288.72bp,350.75bp) and (337.7bp,331.4bp)  .. (51);
			\draw [black,] (53) ..controls (354.1bp,357.63bp) and (254.87bp,332.78bp)  .. (47);
			\draw [black,] (53) ..controls (433.4bp,357.23bp) and (415.26bp,333.72bp)  .. (51);
			\draw [green,] (54) ..controls (569.71bp,429.31bp) and (578.37bp,416.23bp)  .. (583.77bp,403.28bp) .. controls (594.17bp,378.33bp) and (599.01bp,347.26bp)  .. (49);
			\draw [black,] (54) ..controls (505.42bp,425.73bp) and (481.96bp,407.28bp)  .. (53);
			\draw [green,] (55) ..controls (262.64bp,428.08bp) and (245.43bp,414.1bp)  .. (52);
			\draw [black,] (55) ..controls (366.84bp,425.83bp) and (405.48bp,407.17bp)  .. (53);
			\draw [black,] (56) ..controls (460.01bp,511.09bp) and (491.94bp,492.37bp)  .. (54);
			\draw [black,] (56) ..controls (383.35bp,511.25bp) and (353.26bp,492.88bp)  .. (55);
			\draw [black,] (57) ..controls (142.67bp,504.08bp) and (120.94bp,489.62bp)  .. (50);
			\draw [black,] (57) ..controls (232.88bp,504.34bp) and (254.21bp,490.27bp)  .. (55);
			\draw [black,] (58) ..controls (285.0bp,579.15bp) and (348.5bp,559.01bp)  .. (56);
			\draw [green,] (58) ..controls (187.77bp,579.71bp) and (187.77bp,566.22bp)  .. (57);
			\draw [black,] (59) ..controls (425.39bp,584.87bp) and (423.16bp,561.36bp)  .. (56);
			\draw [black,] (60) ..controls (427.77bp,652.59bp) and (427.77bp,634.46bp)  .. (59);
			\draw [green,] (61) ..controls (203.81bp,655.89bp) and (198.8bp,642.17bp)  .. (58);
			\draw [black,] (61) ..controls (305.41bp,654.42bp) and (361.63bp,634.92bp)  .. (59);
			\draw [black,] (62) ..controls (214.77bp,739.05bp) and (214.77bp,721.03bp)  .. (61);
			\draw [green,] (63) ..controls (422.45bp,731.47bp) and (424.1bp,717.98bp)  .. (60);
			\draw [black,] (63) ..controls (338.85bp,732.79bp) and (295.66bp,717.15bp)  .. (61);
			\draw [black,] (64) ..controls (280.59bp,806.25bp) and (250.58bp,786.87bp)  .. (62);
			\draw [black,] (64) ..controls (349.42bp,808.53bp) and (372.13bp,793.86bp)  .. (63);
			\draw [black,] (65) ..controls (261.35bp,867.76bp) and (284.26bp,852.82bp)  .. (64);
			\draw [black,] (66) ..controls (382.06bp,874.21bp) and (352.59bp,854.98bp)  .. (64);
			\draw [green,] (67) ..controls (275.74bp,943.36bp) and (255.96bp,928.9bp)  .. (65);
			\draw [black,] (67) ..controls (361.19bp,940.88bp) and (386.4bp,922.44bp)  .. (66);
			\draw [black,] (68) ..controls (316.77bp,1020.4bp) and (316.77bp,1006.1bp)  .. (67);
			\draw [black,] (69) ..controls (316.77bp,1079.6bp) and (316.77bp,1064.8bp)  .. (68);
			\draw [black,] (70) ..controls (316.77bp,1138.1bp) and (316.77bp,1123.8bp)  .. (69);
		\end{tikzpicture}
	}
\end{figure}

\begin{figure}
	\caption{Values of $\cl_n$ in E8, part II}\label{values:E8b}
	\vspace{\baselineskip}
	\scalebox{0.55}{
		\begin{tikzpicture}[>=latex',line join=bevel,  every text node part/.style={align=center, font=\bfseries}, scale=0.6]
			\node (1) at (232.64bp,19.94bp) [draw=red,fill=blue,ellipse] {\twi 0  \\  \twi{[1]}};
			\definecolor{fillcolor}{rgb}{0.82,0.94,1.0};
			\node (2) at (232.64bp,95.82bp) [draw=red,fill=fillcolor,ellipse] {A1  \\  2};
			\definecolor{fillcolor}{rgb}{0.82,0.94,1.0};
			\node (3) at (232.64bp,171.7bp) [draw=red,fill=fillcolor,ellipse] {2A1  \\ 2};
			\node (4) at (232.64bp,247.58bp) [draw=red,fill=white,ellipse] {3A1  \\  2};
			\node (5) at (123.64bp,323.46bp) [draw=red,fill=blue,ellipse] {\twi{A2} \\ \twi 3};
			\node (6) at (342.64bp,323.46bp) [draw=red,fill=white,ellipse] {4A1  \\  {[2]}};
			\definecolor{fillcolor}{rgb}{0.82,0.94,1.0};
			\node (7) at (232.64bp,399.34bp) [draw=red,fill=fillcolor,ellipse] {A2+A1  \\  3};
			\definecolor{fillcolor}{rgb}{0.82,0.94,1.0};
			\node (8) at (232.64bp,475.22bp) [draw=red,fill=fillcolor,ellipse] {A2+2A1  \\  3};
			\definecolor{fillcolor}{rgb}{0.82,0.94,1.0};
			\node (9) at (126.64bp,626.98bp) [draw=black,fill=fillcolor,ellipse] {A3 \\  4};
			\node (10) at (320.64bp,551.1bp) [draw=red,fill=white,ellipse] {A2+3A1  \\  3};
			\node (11) at (339.64bp,626.98bp) [draw=red,fill=blue,ellipse] {\twi{2A2} \\ \twi 3};
			\node (12) at (339.64bp,702.86bp) [draw=red,fill=white,ellipse] {2A2+A1  \\  3};
			\node (13) at (126.64bp,778.74bp) [draw=red,fill=white,ellipse] {A3+A1  \\ 4};
			\node (14) at (118.64bp,854.62bp) [draw=red,fill=blue,ellipse] {\twi{D4(a1)} \\ \twi 4};
			\node (15) at (118.64bp,1006.4bp) [draw=red,fill=blue,ellipse] {\twi{D4} \\ \twi 6};
			\node (16) at (364.64bp,778.74bp) [draw=red,fill=white,ellipse] {2A2+2A1  \\  {[3]}};
			\node (17) at (364.64bp,854.62bp) [draw=red,fill=white,ellipse] {A3+2A1  \\ 4};
			\definecolor{fillcolor}{rgb}{0.82,0.94,1.0};
			\node (18) at (349.64bp,930.5bp) [draw=red,fill=fillcolor,ellipse] {D4(a1)+A1  \\ 4};
			\definecolor{fillcolor}{rgb}{0.82,0.94,1.0};
			\node (19) at (349.64bp,1006.4bp) [draw=red,fill=fillcolor,ellipse] {A3+A2 \\  4};
			\node (20) at (250.64bp,1082.3bp) [draw=black,fill=blue,ellipse] {\twi{A4} \\ \twi 5};
			\node (21) at (487.64bp,1082.3bp) [draw=red,fill=white,ellipse] {A3+A2+A1  \\ 4};
			\node (22) at (118.64bp,1158.1bp) [draw=red,fill=white,ellipse] {D4+A1 \\ 6};
			\node (23) at (487.64bp,1158.1bp) [draw=red,fill=blue,ellipse] {\twi{D4(a1)+A2} \\ \twi 4};
			\definecolor{fillcolor}{rgb}{0.82,0.94,1.0};
			\node (24) at (262.64bp,1226.0bp) [draw=black,fill=fillcolor,ellipse] {A4+A1  \\ 5};
			\node (25) at (494.64bp,1226.0bp) [draw=black,fill=white,ellipse] {2A3  \\  {[4]}};
			\definecolor{fillcolor}{rgb}{0.82,0.94,1.0};
			\node (26) at (175.64bp,1293.9bp) [draw=red,fill=fillcolor,ellipse] {D5(a1) \\ 6};
			\definecolor{fillcolor}{rgb}{0.82,0.94,1.0};
			\node (27) at (427.64bp,1293.9bp) [draw=black,fill=fillcolor,ellipse] {A4+2A1  \\ 5};
			\node (28) at (420.64bp,1369.8bp) [draw=red,fill=blue,ellipse] {\twi{A4+A2} \\ \twi{5}};
			\node (29) at (358.64bp,1505.5bp) [draw=black,fill=white,ellipse] {A5 \\ 6};
			\definecolor{fillcolor}{rgb}{0.82,0.94,1.0};
			\node (30) at (255.64bp,1437.7bp) [draw=black,fill=fillcolor,ellipse] {D5(a1)+A1 \\ 6};
			\definecolor{fillcolor}{rgb}{0.82,0.94,1.0};
			\node (31) at (567.64bp,1437.7bp) [draw=black,fill=fillcolor,ellipse] {A4+A2+A1 \\ 5};
			\node (32) at (791.64bp,1505.5bp) [draw=red,fill=blue,ellipse] {\twi{D4+A2} \\ \twi 6};
			\node (33) at (279.64bp,1581.4bp) [draw=black,fill=blue,ellipse] {\twi{E6(a3)} \\ \twi 6};
			\node (34) at (279.64bp,1733.2bp) [draw=red,fill=blue,ellipse] {\twi{D5} \\ \twi 8};
			\node (35) at (567.64bp,1505.5bp) [draw=red,fill=white,ellipse] {A4+A3  \\  {[5]}};
			\node (36) at (513.64bp,1581.4bp) [draw=red,fill=white,ellipse] {A5+A1  \\ 6};
			\node (37) at (756.64bp,1581.4bp) [draw=red,fill=white,ellipse] {D5(a1)+A2  \\ 6};
			\node (38) at (756.64bp,1657.3bp) [draw=red,fill=white,ellipse] {D6(a2) \\ 6};
			\node (39) at (507.64bp,1657.3bp) [draw=red,fill=white,ellipse] {E6(a3)+A1 \\ 6};
			\node (40) at (514.64bp,1733.2bp) [draw=red,fill=white,ellipse] {E7(a5) \\ 6};
			\node (41) at (279.64bp,1809.1bp) [draw=red,fill=white,ellipse] {D5+A1 \\ 8};
			\node (42) at (521.64bp,1809.1bp) [draw=red,fill=blue,ellipse] {\twi{E8(a7)}  \\ \twi{[6]}};
			\definecolor{fillcolor}{rgb}{0.82,0.94,1.0};
			\node (44) at (400.64bp,1884.9bp) [draw=red,fill=fillcolor,ellipse] {D6(a1) \\ 8};
			\draw [black,] (2) ..controls (232.64bp,64.549bp) and (232.64bp,51.059bp)  .. (1);
			\draw [black,] (3) ..controls (232.64bp,140.43bp) and (232.64bp,126.94bp)  .. (2);
			\draw [black,] (4) ..controls (232.64bp,216.31bp) and (232.64bp,202.82bp)  .. (3);
			\draw [green,] (5) ..controls (167.8bp,292.53bp) and (188.95bp,278.2bp)  .. (4);
			\draw [black,] (6) ..controls (298.37bp,292.72bp) and (276.95bp,278.34bp)  .. (4);
			\draw [black,] (7) ..controls (188.33bp,368.3bp) and (167.45bp,354.15bp)  .. (5);
			\draw [green,] (7) ..controls (277.36bp,368.3bp) and (298.43bp,354.15bp)  .. (6);
			\draw [black,] (8) ..controls (232.64bp,443.95bp) and (232.64bp,430.46bp)  .. (7);
			\draw [black,] (9) ..controls (152.62bp,589.28bp) and (196.63bp,527.1bp)  .. (8);
			\draw [black,] (10) ..controls (285.2bp,520.34bp) and (268.44bp,506.27bp)  .. (8);
			\draw [green,] (11) ..controls (331.86bp,595.71bp) and (328.39bp,582.22bp)  .. (10);
			\draw [black,] (12) ..controls (339.64bp,671.59bp) and (339.64bp,658.1bp)  .. (11);
			\draw [black,] (13) ..controls (126.64bp,726.71bp) and (126.64bp,664.38bp)  .. (9);
			\draw [green,] (13) ..controls (210.33bp,748.71bp) and (255.83bp,732.93bp)  .. (12);
			\draw [green,] (14) ..controls (121.92bp,823.35bp) and (123.38bp,809.86bp)  .. (13);
			\draw [black,] (15) ..controls (118.64bp,957.35bp) and (118.64bp,903.53bp)  .. (14);
			\draw [black,] (16) ..controls (354.5bp,747.77bp) and (349.86bp,734.05bp)  .. (12);
			\draw [black,] (17) ..controls (271.55bp,824.72bp) and (218.97bp,808.4bp)  .. (13);
			\draw [green,] (17) ..controls (364.64bp,823.35bp) and (364.64bp,809.86bp)  .. (16);
			\draw [black,] (18) ..controls (257.64bp,900.07bp) and (209.17bp,884.57bp)  .. (14);
			\draw [green,] (18) ..controls (355.79bp,899.23bp) and (358.53bp,885.74bp)  .. (17);
			\draw [black,] (19) ..controls (349.64bp,975.11bp) and (349.64bp,961.62bp)  .. (18);
			\draw [black,] (20) ..controls (281.26bp,1058.4bp) and (305.88bp,1040.0bp)  .. (19);
			\draw [black,] (21) ..controls (432.1bp,1051.5bp) and (405.23bp,1037.1bp)  .. (19);
			\draw [black,] (22) ..controls (118.64bp,1109.1bp) and (118.64bp,1055.3bp)  .. (15);
			\draw [black,] (22) ..controls (253.77bp,1130.1bp) and (347.44bp,1111.3bp)  .. (21);
			\draw [green,] (23) ..controls (487.64bp,1126.9bp) and (487.64bp,1113.4bp)  .. (21);
			\draw [black,] (24) ..controls (259.46bp,1187.4bp) and (253.82bp,1120.8bp)  .. (20);
			\draw [black,] (24) ..controls (334.87bp,1203.9bp) and (388.03bp,1188.3bp)  .. (23);
			\draw [black,] (25) ..controls (492.43bp,1204.1bp) and (490.91bp,1189.9bp)  .. (23);
			\draw [green,] (26) ..controls (144.35bp,1264.4bp) and (133.68bp,1251.6bp)  .. (127.64bp,1238.0bp) .. controls (119.24bp,1218.9bp) and (117.62bp,1194.9bp)  .. (22);
			\draw [black,] (26) ..controls (215.61bp,1262.6bp) and (234.71bp,1248.2bp)  .. (24);
			\draw [black,] (27) ..controls (369.57bp,1269.7bp) and (320.84bp,1250.3bp)  .. (24);
			\draw [green,] (27) ..controls (451.4bp,1269.5bp) and (471.12bp,1250.1bp)  .. (25);
			\draw [black,] (28) ..controls (423.8bp,1335.5bp) and (425.51bp,1317.4bp)  .. (27);
			\draw [black,] (29) ..controls (374.59bp,1470.1bp) and (398.76bp,1418.0bp)  .. (28);
			\draw [black,] (30) ..controls (235.74bp,1401.4bp) and (203.31bp,1343.9bp)  .. (26);
			\draw [black,] (30) ..controls (308.58bp,1415.5bp) and (346.53bp,1400.4bp)  .. (28);
			\draw [black,] (31) ..controls (520.64bp,1415.6bp) and (487.17bp,1400.6bp)  .. (28);
			\draw [black,] (32) ..controls (707.11bp,1489.3bp) and (694.53bp,1487.3bp)  .. (682.64bp,1485.6bp) .. controls (563.76bp,1468.7bp) and (425.52bp,1454.5bp)  .. (30);
			\draw [black,] (32) ..controls (695.66bp,1476.3bp) and (640.36bp,1460.0bp)  .. (31);
			\draw [green,] (33) ..controls (306.58bp,1555.2bp) and (331.92bp,1531.5bp)  .. (29);
			\draw [black,] (33) ..controls (272.56bp,1558.2bp) and (267.52bp,1540.8bp)  .. (264.64bp,1525.5bp) .. controls (259.63bp,1498.7bp) and (257.22bp,1466.6bp)  .. (30);
			\draw [black,] (34) ..controls (279.64bp,1681.1bp) and (279.64bp,1618.8bp)  .. (33);
			\draw [black,] (35) ..controls (567.64bp,1473.6bp) and (567.64bp,1459.4bp)  .. (31);
			\draw [black,] (36) ..controls (446.15bp,1548.2bp) and (406.06bp,1529.1bp)  .. (29);
			\draw [green,] (36) ..controls (535.55bp,1550.4bp) and (545.57bp,1536.7bp)  .. (35);
			\draw [black,] (37) ..controls (770.84bp,1550.4bp) and (777.34bp,1536.7bp)  .. (32);
			\draw [green,] (37) ..controls (681.34bp,1551.0bp) and (642.11bp,1535.6bp)  .. (35);
			\draw [green,] (38) ..controls (662.09bp,1627.5bp) and (607.97bp,1611.1bp)  .. (36);
			\draw [green,] (38) ..controls (756.64bp,1626.0bp) and (756.64bp,1612.5bp)  .. (37);
			\draw [black,] (39) ..controls (410.49bp,1624.8bp) and (350.07bp,1605.2bp)  .. (33);
			\draw [green,] (39) ..controls (510.1bp,1626.0bp) and (511.2bp,1612.5bp)  .. (36);
			\draw [green,] (39) ..controls (605.58bp,1627.2bp) and (659.58bp,1611.2bp)  .. (37);
			\draw [green,] (40) ..controls (608.67bp,1703.5bp) and (662.33bp,1687.1bp)  .. (38);
			\draw [green,] (40) ..controls (511.78bp,1701.9bp) and (510.5bp,1688.4bp)  .. (39);
			\draw [black,] (41) ..controls (279.64bp,1777.8bp) and (279.64bp,1764.3bp)  .. (34);
			\draw [black,] (41) ..controls (371.12bp,1779.3bp) and (422.9bp,1763.0bp)  .. (40);
			\draw [green,] (42) ..controls (518.78bp,1777.8bp) and (517.5bp,1764.3bp)  .. (40);
			\draw [green,] (44) ..controls (351.63bp,1854.0bp) and (328.15bp,1839.7bp)  .. (41);
			\draw [black,] (44) ..controls (449.52bp,1854.1bp) and (472.78bp,1839.9bp)  .. (42);
		\end{tikzpicture}
	}
\end{figure}

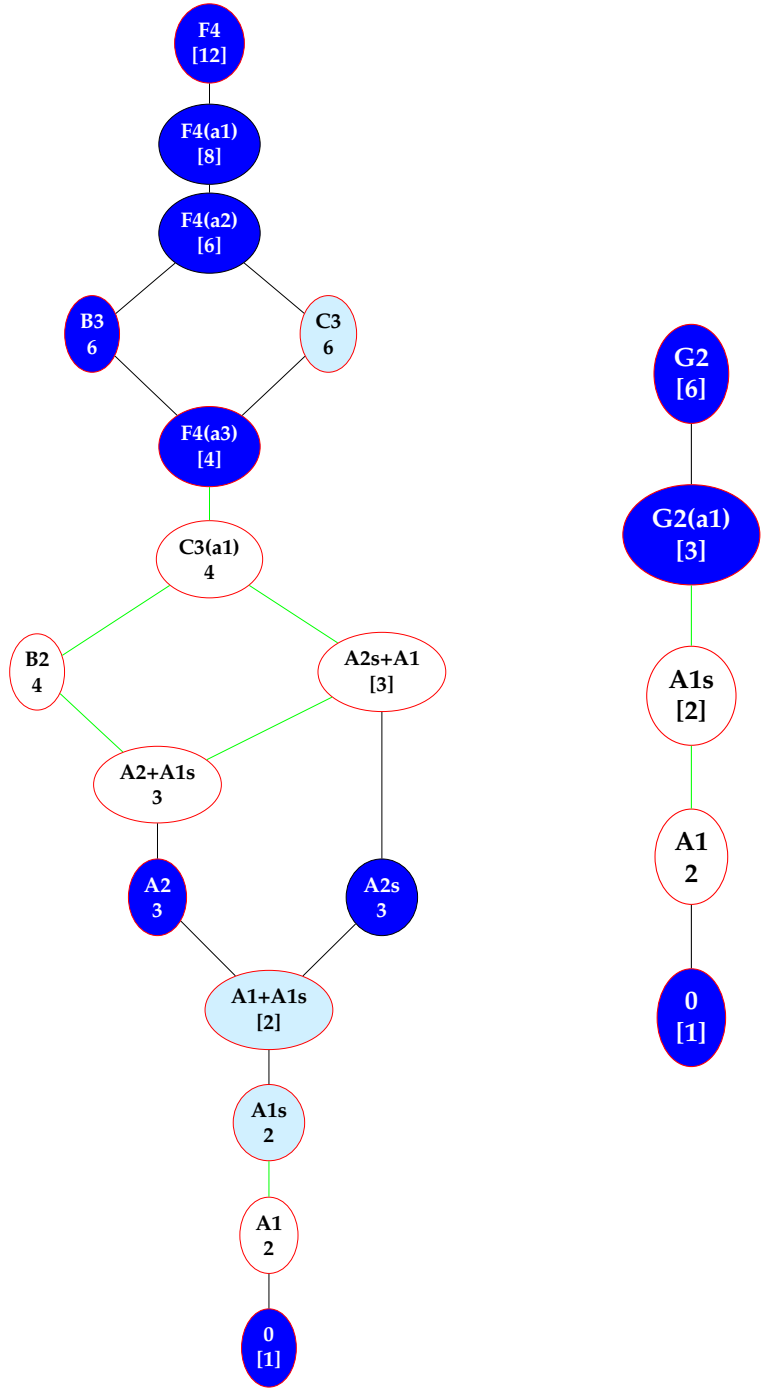
\begin{figure}
	\caption{Values of $\cl_n$ in F4 and G2}\label{values:FG}
	\vspace{\baselineskip}
	\begin{minipage}{0.45\linewidth}
		\scalebox{0.8}{
			\begin{tikzpicture}[>=latex',line join=bevel, every text node part/.style={align=center, font=\bfseries}, scale=0.7]
				\node (1) at (250.79bp,19.94bp) [draw=red,fill=blue,ellipse] {\twi 0 \\ \twi{[1]}};
				\node (2) at (250.79bp,95.82bp) [draw=red,fill=white,ellipse] {A1 \\ 2};
				\definecolor{fillcolor}{rgb}{0.82,0.94,1.0};
				\node (3) at (250.79bp,171.7bp) [draw=red,fill=fillcolor,ellipse] {A1s\\2};
				\definecolor{fillcolor}{rgb}{0.82,0.94,1.0};
				\node (4) at (250.79bp,247.58bp) [draw=red,fill=fillcolor,ellipse] {A1+A1s\\{[2]}};
				\node (5) at (175.79bp,323.46bp) [draw=red,fill=blue,ellipse] {\twi{A2}\\ \twi 3};
				\node (6) at (326.79bp,323.46bp) [draw=black,fill=blue,ellipse] {\twi{A2s}\\ \twi 3};
				\node (7) at (175.79bp,399.34bp) [draw=red,fill=white,ellipse] {A2+A1s\\3};
				\node (8) at (94.79bp,475.22bp) [draw=red,fill=white,ellipse] {B2\\4};
				\node (9) at (326.79bp,475.22bp) [draw=red,fill=white,ellipse] {A2s+A1\\{[3]}};
				\node (10) at (210.79bp,551.1bp) [draw=red,fill=white,ellipse] {C3(a1)\\4};
				\node (11) at (210.79bp,626.98bp) [draw=red,fill=blue,ellipse] {\twi{F4(a3)}\\ \twi{[4]}};
				\node (12) at (131.79bp,702.86bp) [draw=red,fill=blue,ellipse] {\twi{B3}\\ \twi 6};
				\definecolor{fillcolor}{rgb}{0.82,0.94,1.0};
				\node (13) at (290.79bp,702.86bp) [draw=red,fill=fillcolor,ellipse] {C3\\6};
				\node (14) at (210.79bp,770.74bp) [draw=black,fill=blue,ellipse] {\twi{F4(a2)}\\ \twi{[6]}};
				\node (15) at (210.79bp,830.62bp) [draw=black,fill=blue,ellipse] {\twi{F4(a1)}\\ \twi{[8]}};
				\node (16) at (210.79bp,898.5bp) [draw=red,fill=blue,ellipse] {\twi{F4} \\ \twi{[12]}};
				\draw [black,] (2) ..controls (250.79bp,64.549bp) and (250.79bp,51.059bp)  .. (1);
				\draw [green] (3) ..controls (250.79bp,140.43bp) and (250.79bp,126.94bp)  .. (2);
				\draw [black,] (4) ..controls (250.79bp,216.31bp) and (250.79bp,202.82bp)  .. (3);
				\draw [black,] (5) ..controls (205.91bp,292.79bp) and (220.06bp,278.85bp)  .. (4);
				\draw [black,] (6) ..controls (303.37bp,299.69bp) and (284.65bp,281.49bp)  .. (4);
				\draw [black,] (7) ..controls (175.79bp,368.07bp) and (175.79bp,354.58bp)  .. (5);
				\draw [green] (8) ..controls (127.41bp,444.46bp) and (142.84bp,430.39bp)  .. (7);
				\draw [black,] (9) ..controls (326.79bp,423.19bp) and (326.79bp,360.86bp)  .. (6);
				\draw [green] (9) ..controls (266.23bp,444.59bp) and (236.46bp,430.02bp)  .. (7);
				\draw [green] (10) ..controls (163.8bp,520.17bp) and (141.29bp,505.84bp)  .. (8);
				\draw [green] (10) ..controls (257.65bp,520.26bp) and (279.94bp,506.05bp)  .. (9);
				\draw [green] (11) ..controls (210.79bp,595.71bp) and (210.79bp,582.22bp)  .. (10);
				\draw [black,] (12) ..controls (163.2bp,672.48bp) and (178.35bp,658.32bp)  .. (11);
				\draw [black,] (13) ..controls (258.98bp,672.48bp) and (243.64bp,658.32bp)  .. (11);
				\draw [black,] (14) ..controls (185.41bp,748.58bp) and (167.65bp,733.76bp)  .. (12);
				\draw [black,] (14) ..controls (236.49bp,748.58bp) and (254.48bp,733.76bp)  .. (13);
				\draw [black,] (15) ..controls (210.79bp,808.03bp) and (210.79bp,793.29bp)  .. (14);
				\draw [black,] (16) ..controls (210.79bp,866.55bp) and (210.79bp,852.33bp)  .. (15);
			\end{tikzpicture}
		}
	\end{minipage}
	\begin{minipage}{0.3\linewidth}
		\qquad
		\begin{tikzpicture}[>=latex',line join=bevel, every text node part/.style={align=center, font=\bfseries}, scale=0.8]
			\node (1) at (46.113bp,19.94bp) [draw=red,fill=blue,ellipse] {\twi 0 \\ \twi{[1]}};
			\node (2) at (46.113bp,95.82bp) [draw=red,fill=white,ellipse] {A1 \\ 2 };
			\node (3) at (46.113bp,171.7bp) [draw=red,fill=white,ellipse] {A1s \\ {[2]}};
			\node (4) at (46.113bp,247.58bp) [draw=red,fill=blue,ellipse] {\twi{ G2(a1)} \\ \twi{[3]}};
			\node (5) at (46.113bp,323.46bp) [draw=red,fill=blue,ellipse] {\twi{ G2} \\ \twi {[6]}};
			\draw [black,] (2) ..controls (46.113bp,64.549bp) and (46.113bp,51.059bp)  .. (1);
			\draw [green,] (3) ..controls (46.113bp,140.43bp) and (46.113bp,126.94bp)  .. (2);
			\draw [green,] (4) ..controls (46.113bp,216.31bp) and (46.113bp,202.82bp)  .. (3);
			\draw [black,] (5) ..controls (46.113bp,292.19bp) and (46.113bp,278.7bp)  .. (4);
		\end{tikzpicture}
	\end{minipage}
\end{figure}

%With these explicit values, Theorem \ref{thm:O(m)}(1)-(3) can now be easily verified, and part (4) of the theorem is a direct consequence of (1), (2) and (3). 

%----------
\subsection{Affine Lie algebras and affine vertex algebras}\label{subsec:aff-alg}

\begin{clause}[Notations on affine Lie algebras]\label{cls:aff-alg-notations}	
	Let $F = \BC \laurent{t}$ be the field of Laurent series and let $\cO = \BC \series{t}$ be the ring of power series. We have the valuation map $\val: \cO \to \BZ_{\ge 0} \cup \{\infty\}$ sending a nonzero element $f \in \cO$ to the lowest exponent of $t$ appearing in $f$ and $0 \in \cO$ to $\infty$. This can be extended to a valuation on $F$. Fix the algebraic closure of $F$ to be
	\begin{equation*}
		\bar F = \bigcup_{k \ge 0} \BC \laurent{t^{1/k}}.
	\end{equation*}
	There is a unique automorphism $\sigma \in \Gal(\bar F/F)$ sending $t^{1/k}$ to $e^{2\pi i/k} t^{1/k}$.
	The valuation map can also be extended to $\bar F$ by allowing values in rational numbers and set $\val(t^{1/k})=1/k$.
	
	We write $G(F)$, $G(\cO)$, $I$, and $I^+$ for the loop group, the arc group, the Iwahori subgroup corresponding to $B$, and the unipotent radical of $I$. Their Lie algebras are denoted by $\fg(F)$, $\fg(\cO)$, $\fI$, and $\fI^+$, respectively. The \textit{affine Lie algebra} is a central extension
	\begin{equation*}
		0 \aro \BC K \aro \fg_{aff} \aro \fg(F) \aro 0
	\end{equation*}
	and the \textit{affine Kac-Moody algebra} is a further semi-direct product
	\begin{equation*}
		\fg_{KM} = \fg_{aff} \rtimes \BC d
	\end{equation*}
	where the bracket is given by
	\begin{equation*}
		[at^m, bt^n] = [a,b] t^{m+n} + (a,b) m \delta_{m,-n} K,\quad
		[K, \fg_{KM}] = 0,\quad
		[d, at^n] = a nt^n.
	\end{equation*}
	We fix Cartan subalgebras
	\begin{align*}
		\fh_{aff} &= \fh \oplus \BC K,\\
		\fh_{KM} &= \fh \oplus \BC K \oplus \BC d.
	\end{align*}
	We write the dual of $\fh_{KM}$ as
	\begin{equation*}
		\fh_{KM}^* = \fh^* \oplus \BC \Lambda_0 \oplus \BC \delta
	\end{equation*}
	where $\Lambda_0$ and $\delta$ are dual elements to $K$ and $d$, respectively. An element $\Lambda \in \fh_{aff}^*$ is said to have \textit{level $k \in \BC$} if $\Lambda \in k\Lambda_0 + \fh^*$.
	
	Let $W_{aff}$ be the affine Weyl group of $G$. Since $G$ is assumed to be simply-connected, $W_{aff}$ is a Coxeter group. It acts naturally on $\fh_{aff}^*$.
	
	Let $\check G$ be the Langlands dual group of $G$ with Lie algebra $\check \fg$.  
\end{clause}

%-------
%\subsection{Affine vertex algebras}\label{subsec:aff-VA}

\begin{clause}[Affine vertex algebras]
	For the definition of vertex algebras and its properties, we refer readers to \cite{Kac:VA, Frenkel-Ben-Zvi, Arakawa:W-alg-notes}. Given a vertex algebra $V$, we write $\vac \in V$ for the vacuum vector, and for $A \in V$ write $Y(A,z) = \sum_{n \in \BZ} A_{(n)} z^{-n-1} \in \End(V) \series{z,z\inv}$ for the corresponding field.
	
	Fix a number $k \in \BC$. Let
	\begin{equation*}
		V^k = V^k(\fg) = \cU(\fg_{aff}) \dotimes_{\cU(\fg(\cO)) \oplus \BC K} \BC_k,
	\end{equation*}
	where $\fg(\cO)$ acts trivially on $\BC_k$ and $K$ acts on $\BC_k$ by $k$. This is a highest weight module of $\fg_{aff}$ with highest weight $k \Lambda_0$ of level $k$. It is spanned by the elements $\vac = 1 \otimes 1$ and $a_1 t^{-n_1} \cdots a_m t^{-n_m} \vac$ for $n_i \ge 1$,where $a_1,\ldots, a_m$ are elements of $\fg$. 
	It has the structure of a vertex algebra (see, for example, \cite{Frenkel-Zhu} or \cite[\textsection 2.4]{Frenkel-Ben-Zvi}).
%	 where the vacuum vector is $\vac = 1 \otimes 1$ and the state-field correspondence is given by
%	\begin{align*}
%		Y(\vac,z) 
%		&= \Id\\
%		Y(at^{-1} \vac,z) 
%		&= \sum_n (at^n) z^{-n-1},
%		\quad \text{i.e. } (at^{-1} \vac)_{(n)} = \text{left multiplication by } a t^n\\
%		Y(at^{-n} \vac,z) 
%		&= \tfrac1{(n-1)!} \partial_z^{n-1} Y(at^{-1} \vac,z) \\
%		Y( a_1 t^{-n_1} \cdots a_m t^{-n_m} \vac ,z) 
%		&= : Y(a_1 t^{-n_1} \vac,z) \cdots Y(a_m t^{-n_m} \vac,z) :\\
%		&= \tfrac1{(n_1-1)! \cdots (n_m-1)!} :\partial_z^{n_1-1} Y(a_1 t^{-1} \vac,z) \cdots \partial_z^{n_m-1} Y(a_m t^{-1} \vac,z) :
%	\end{align*}
%	where the enclosed colons $:-:$ denote the normally ordered product. 
	The unique irreducible quotient of $V^k$ as a $\fg_{aff}$-module inherits a structure of a vertex algebra from the one on $V^k$. We denote the resulting vertex algebra by 
	\begin{equation*}
		L_k = L_k(\fg).
	\end{equation*}
\end{clause}

%--------
\subsection{Associated varieties}\label{subsec:AV}

The associated variety of a vertex algebra is defined in a similar spirit to the associated varieties of associated algebras. It may be helpful to review the latter in order to draw the similarities between the two notions. Suppose we have an associative Noetherian $\BC$-algebra $A$ with generators $\{a_i\}_{i \in I}$. Then $A$ is equipped with an increasing filtration $F_0 A \subseteq F_1 A \subseteq \cdots$, where
\begin{equation*}
	F_p A = \operatorname{span}_\BC \{ \text{a product of $\le p$ generators} \}.
\end{equation*}
Suppose moreover that $F_\bullet A$ is \textit{almost commutative}, i.e. that $\gr^F A$ is a commutative ring. Let $M$ be a finitely generated $A$-module. Then for any choice of a good filtration $F_\bullet M$ on $M$, we may define the \textit{associative variety of $M$} to be $\Spec \gr^F M$ (this is independent of the choice of good filtration, see \cite[Appendix D]{HTT}).% In the example of $A = \cU(\fg)$ and $M = \cU(\fg)/J$ ($J$ is a two-sided ideal), the generators can be taken to be a basis of $\fg$, the good filtration on $M$ can be taken to be the image of the order filtration of $\cU(\fg)$ under the quotient map $\cU(\fg) \to \cU(\fg)/J$, and the associated variety of $M$ is equal to the associated variety $\AV(J)$ of $J$ mentioned in \ref{cls:prim-invariants}.

\sloppy Returning to the vertex algebra world: suppose $V$ is a vertex algebra equipped with a set $\{A_i\}_{i \in I}$ of \textit{strong generators}. This means that $V$ is spanned by the vacuum vector $\vac$ and elements of the form $A_{i_1,(-n_1-1)} \cdots A_{i_r,(-n_r-1)} \vac$ with $r \ge 0$, $i_j \in I$, $n_j \ge 0$. The \textbf{Li filtration} on $V$ is a decreasing filtration $V = F^0 V \supseteq F^1 V \supseteq \cdots$, where
\begin{equation*}
	F^p V = \operatorname{span}_\BC \big\{ A_{i_1,(-n_1-1)} \cdots A_{i_r,(-n_r-1)} \vac \mid n_j \ge 0, \sum_j n_j \ge p \big\}.
\end{equation*}

\begin{proposition}[{\cite{Li:VA, Li:abVA}}]~
	\begin{enumerate}
		\item $F^\bullet V$ does not depend on the choice of strong generators.
		
		\item $\gr_F V$ is a commutative Poisson vertex algebra.
		
		\item $R_V := \gr^0_F V = V/ F^1 V$ is a commutative Poisson associative algebra with multiplication $a \cdot b = a_{(-1)} b$ and Poisson bracket $\{a,b\} = a_{(0)}b$.
		
		\item $R_V = \gr^0_F V$ generates $\gr_F V$ as a differential algebra.
	\end{enumerate}
\end{proposition}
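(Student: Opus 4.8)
This is Li's theorem on the canonical (Li) filtration \cite{Li:VA, Li:abVA}, and the plan is to recover its proof from two structural facts about a vertex algebra $V$: the translation identity $(Ta)_{(n)}=-n\,a_{(n-1)}$, which gives as operators on $V$ the reduction formula $a_{(-n-1)}=\tfrac{1}{n!}(T^{n}a)_{(-1)}$ for all $n\ge 0$, and the Borcherds (Jacobi) identity together with skew-symmetry $Y(a,z)b=e^{zT}Y(b,-z)a$. The reduction formula already rewrites every spanning vector $A_{i_1,(-n_1-1)}\cdots A_{i_r,(-n_r-1)}\vac$ of $F^{p}V$ as a scalar multiple of the iterated $(-1)$-product $\bigl(T^{n_1}A_{i_1}\bigr)_{(-1)}\cdots\bigl(T^{n_r}A_{i_r}\bigr)_{(-1)}\vac$ with $\sum_j n_j\ge p$; this is the normal form we keep returning to. The technical core, from which all four parts follow, is a pair of order estimates: for $a\in F^{p}V$ and $b\in F^{q}V$ one has $a_{(-j-1)}b\in F^{p+q+j}V$ for $j\ge 0$ and $a_{(n)}b\in F^{p+q-n}V$ for $n\ge 0$ (with the convention $F^{i}V=V$ for $i\le 0$). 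I would prove these by induction on the length of $a$ in normal form: when $a=T^{m}A_{i}$ the estimates reduce to the translation identity and the commutator formula for $A_{i,(n)}$, while for the inductive step one writes $a=u_{(-k-1)}a'$ with $u=T^{m}A_{i}$ and $a'$ shorter, expands $(u_{(-k-1)}a')_{(n)}b$ by the iterate form of the Borcherds identity into terms of the shapes $u_{(*)}\bigl(a'_{(*)}b\bigr)$ and $a'_{(*)}\bigl(u_{(*)}b\bigr)$, applies the inductive hypothesis, and checks that the binomial coefficients distribute the order as claimed. Granting this, part~(1) is short: the space $\widetilde F^{p}V$ obtained from the same formula but allowing \emph{arbitrary} vectors of $V$ in place of strong generators is visibly generator-independent and contains $F^{p}V$, while an $r$-fold application of the first estimate (beginning from $\vac\in F^{0}V$) puts any spanning vector $a^{1}_{(-n_1-1)}\cdots a^{r}_{(-n_r-1)}\vac$ of $\widetilde F^{p}V$ into $F^{n_1+\cdots+n_r}V\subseteq F^{p}V$, so $F^{p}V=\widetilde F^{p}V$.

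For part~(2), the first estimate with $j=0$ gives $F^{p}V\cdot_{(-1)}F^{q}V\subseteq F^{p+q}V$, and with one index raised gives $F^{p+1}V\cdot_{(-1)}F^{q}V+F^{p}V\cdot_{(-1)}F^{q+1}V\subseteq F^{p+q+1}V$, so $a\cdot b:=\overline{a_{(-1)}b}$ is a well-defined product $\gr^p_F V\times\gr^q_F V\to\gr^{p+q}_F V$. It is commutative: skew-symmetry gives $a_{(-1)}b-b_{(-1)}a=\sum_{j\ge 1}\tfrac{(-1)^{j}}{j!}T^{j}(b_{(j-1)}a)$, and each summand lies in $F^{p+q+1}V$ because $b_{(j-1)}a\in F^{p+q-j+1}V$ by the second estimate and $T^{j}$ raises the filtration by $j$; associativity is the analogous reduction of weak associativity on $V$. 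Since $T$ maps $F^{p}V$ into $F^{p+1}V$, it descends to a degree-one derivation $\partial$ of this product. The second estimate and its one-index shift show that $\{\bar a_\lambda \bar b\}:=\sum_{n\ge 0}\tfrac{\lambda^{n}}{n!}\,\overline{a_{(n)}b}$ is well defined and polynomial in $\lambda$ (by the truncation axiom $a_{(n)}b=0$ for $n\gg 0$), and the Poisson vertex algebra axioms --- sesquilinearity, skew-symmetry, Jacobi, and the Leibniz rule relating the $\lambda$-bracket to the product --- are obtained by reducing the corresponding identities for the $(n)$-products on $V$ (Borcherds and skew-symmetry) modulo the appropriate filtration step, the order estimates ensuring the discarded terms vanish in $\gr_F V$.

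Part~(3) rests on the identification $F^{1}V=C_{2}(V):=\operatorname{span}_\BC\{a_{(-2)}b:a,b\in V\}$. Here $C_{2}(V)\subseteq F^{1}V$ is the $j=1$ case of the first estimate; conversely, a normal-form spanning vector of $F^{1}V$ has some $n_j\ge 1$, and using $A_{i_j,(-n_j-1)}=\tfrac{1}{n_j!}\bigl(T(T^{n_j-1}A_{i_j})\bigr)_{(-1)}$, the relation $(Tu)_{(-1)}w=u_{(-2)}w$, and the fact that $F^{1}V$ absorbs $(-1)$-multiplication by $F^{0}V$ (again the first estimate), one sees it lies in $C_{2}(V)$. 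Hence $R_{V}=\gr^0_F V=V/C_{2}(V)$, with induced commutative product $\overline{a_{(-1)}b}$ and Poisson bracket $\overline{a_{(0)}b}$; the Poisson algebra axioms follow by reducing the Borcherds and skew-symmetry identities modulo $C_{2}(V)$, the correction terms lying in the image of $T$ and hence in $C_{2}(V)$. Finally part~(4) is immediate from the normal form: modulo $F^{p+1}V$, a spanning vector of $F^{p}V$ with $\sum_j n_j=p$ has class a scalar multiple of $\partial^{n_1}(\bar A_{i_1})\cdots\partial^{n_r}(\bar A_{i_r})\in\gr^p_F V$, so $\gr_F V$ is generated under the product and $\partial$ by the classes $\bar A_i\in\gr^0_F V=R_V$, which in turn generate $R_V$ as a ring since $V$ is strongly generated by the $A_i$.

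The step I expect to be the main obstacle is the order-estimate induction: both arranging the reduction of a general element of $F^{p}V$ to normal form so the induction is well-founded, and bookkeeping the many terms produced by the binomial expansions in the Borcherds identity so that each carries exactly the asserted order. Once the estimates are in hand, everything downstream --- commutativity and associativity on $\gr_F V$, the Poisson vertex algebra axioms, and parts~(1), (3), (4) --- is a mechanical, if somewhat lengthy, unwinding of identities already valid on $V$.
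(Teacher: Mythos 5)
The paper does not prove this proposition; it is quoted directly from Li's papers \cite{Li:VA, Li:abVA}, so there is no internal proof to compare against. Your sketch is an accurate reconstruction of Li's own argument: the reduction formula $a_{(-n-1)}=\tfrac{1}{n!}(T^n a)_{(-1)}$, the two order estimates $u_{(-n-1)}F^sV\subseteq F^{r+s+n}V$ and $u_{(n)}F^sV\subseteq F^{r+s-n}V$ (Li's Lemma~2.9 in the abelianization paper), the comparison of $F^\bullet V$ with the generator-free filtration $\widetilde F^\bullet V$ for part (1), and the identification $F^1V=C_2(V)$ for part (3) are all the right moves, and you correctly flag the order-estimate induction as the real work.

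One small gloss: in your argument that a normal-form spanning vector of $F^1V$ lies in $C_2(V)$, when the leftmost index with $n_j\geqslant 1$ is not the first one, you invoke ``$F^1V$ absorbs $(-1)$-multiplication by $F^0V$ (again the first estimate).'' The order estimate only gives $V_{(-1)}C_2(V)\subseteq F^1V$, which is what you are trying to identify with $C_2(V)$ in the first place; to land back in $C_2(V)$ you actually need the commutator formula $[a_{(-1)},b_{(-2)}]=\sum_{j\ge 0}\binom{-1}{j}(a_{(j)}b)_{(-3-j)}$, whose right-hand side visibly sits in $C_2(V)$ because every mode index is $\leqslant -3$, together with the observation $u_{(-n)}w=\tfrac{1}{(n-2)!}(T^{n-2}u)_{(-2)}w\in C_2(V)$ for $n\geqslant 2$. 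This is exactly what Li does; it is a local patch, not a change of strategy.
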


The ring $R_V$ was first studied by Zhu \cite{Zhu:modular} and is called \textit{Zhu's $C_2$-algebra}.

\begin{definition}
	The \textbf{associated scheme} (resp. \textbf{associated variety}) of $V$ is defined to be 
	\begin{equation*}
		\tilde X_V := \Spec R_V,\quad
		(\text{resp. } X_V := (\Spec R_V)_{red}).
	\end{equation*}
	Here the subscript $(-)_{red}$ denotes taking the reduced scheme. In particular, $X_V$ is a Poisson variety. The vertex algebra $V$ is said to be \textbf{quasi-lisse} if $X_V$ has finitely many symplectic leaves \cite{Arakawa-Kawasetsu}.
\end{definition}

In the case of $V = V^k(\fg)$, there is an isomorphism of Poisson algebras
\begin{equation*}
	\BC[\fg^*] \bijects R_{V^k},\quad
	a \cdots b \mapsto
	(at^{-1}) \cdots (b t^{-1}) \vac \quad \text{where } a,\ldots,b \in \fg = \fg^{**} \subset \BC[\fg^*].
\end{equation*}
Hence
\begin{equation*}
	X_{V^k} = \big( \Spec R_{V^k} \big)_{red} = \Spec \BC[\fg^*] = \fg^* \cong \fg.
\end{equation*}
Finally, the surjection $V^k \surj L_k$ induces a surjection $R_{V^k} \surj R_{L_k}$, and hence an inclusion of Poisson varieties
\begin{equation*}
	X_{L_k} \injects \fg^* \cong \fg.
\end{equation*}
Since $V^k$ is naturally $\BZ_{\ge 0}$-graded and the quotient $V^k \surj L_k$ preserves gradings, $X_{L_k}$ is conical with respect to the $\BC^\times$ dilation on $\fg$. Hence $X_{L_k}$ is contained in the nilpotent cone $\cN \subset \fg$ if and only if $X_{L_k}$ has finitely many symplectic leaves, i.e. if and only if $L_k$ is quasi-lisse.

%---------
\subsection{Conjecture on associated varieties}\label{subsec:conj-AV}

To state the conjecture, we need one more notation. Given a parabolic $\fp = \fl \oplus \fu \subset \fg$ and a nilpotent orbit $\BO_L \subset \fl$, the \textbf{sheet attached to the pair $(\fl,\BO_L)$}\footnotemark, denoted by $\overline{\cS(\fl, \BO_L)}$, is the closure of the image of the map
\begin{equation*}
	G \times_P \big( \BO_L \times \fz(\fl) \times \fu \big) \subset G \times_P \fp \aro \fg.
\end{equation*}
Here $\fz(\fl)$ is the center of $(\fl)$, and the second map is the moment map, sending $(g, \xi) \in G \times_P \fp$ to $\Ad(g) \xi \in \fg$.

\footnotetext{Our (closed) sheet $\overline{\cS(\fl, \BO_L)}$ is equal to the closure of a Jordan class (or a ``decomposition class'' in \cite{Borho-Kraft:deform}). If $\BO_L$ is rigid in $L$, then $\overline{\cS(\fl, \BO_L)}$ is the closure of sheet in Borho-Kraft's sense.}

We write $\check \cl_n: \ubar{\check \cN} \to \BZ$ and $\check \BO(m)$ for the objects defined on the dual side.

\begin{conjecture}\label{conj:AV}
	Suppose $\fg$ is of simply-laced type. Let $m \in \BZ_{\ge 1}$, write $\check \BO(m) = \Sat_{\check L}^{\check G} \BO_{\check L}$ where $\BO_{\check L}$ is distinguished in $\check \fl$. Let $k = m - \check \Bh$. Then
	\begin{equation*}
		X_{L_k(\fg)} = \overline{\cS(\fl, \bd \BO_{\check L})}.
	\end{equation*}
	In particular, if $\check \BO(m)$ is distinguished (so that $\check L = \check G$ and $\BO_{\check L} = \check \BO(m)$), then
	\begin{equation*}
		X_{L_k(\fg)} = \overline{\bd \check \BO(m)}
	\end{equation*}
	and $L_k(\fg)$ is quasi-lisse.
\end{conjecture}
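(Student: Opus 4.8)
The plan is to prove the two inclusions $X_{L_k(\fg)} \subseteq \overline{\cS(\fl, \bd\BO_{\check L})}$ and $X_{L_k(\fg)} \supseteq \overline{\cS(\fl, \bd\BO_{\check L})}$ separately, since a uniform argument seems out of reach and the two directions need genuinely different inputs; I do not expect a complete proof with current techniques, so alongside the strategy I would assemble the evidence described below. Throughout one works with the explicit form of $\check\BO(m_0) = \Sat_{\check L}^{\check G}\BO_{\check L}$ produced by the ``Values of $\cl(\BO)$ and $\BO(m)$'' statements and Figures \ref{values:E6}--\ref{values:FG}, so that the target $\overline{\cS(\fl, \bd\BO_{\check L})}$ is a completely explicit conic $G$-invariant Poisson subvariety of $\fg$ and the conjecture becomes a concrete assertion to be checked in each type. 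Recall that since $m_0 \in \im\check\cl_n$ one has $\check\cl_n(\check\BO(m_0)) = m_0$ by Theorem \ref{thm:O(m)} (1) and (2), so an $\fsl_2$-triple $\{h,e,f\}$ with $e \in \BO_{\check L}$ has largest $\ad h$-weight $2(m_0-1)$ in $\check\fl$; this numerology is what must be matched on the vertex-algebra side.

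For the inclusion $\subseteq$, the idea is to exhibit singular vectors in the universal vertex algebra $V^k(\fg)$ whose symbols in Zhu's $C_2$-algebra $R_{V^k}$, identified with $\BC[\fg^*] \cong \BC[\fg]$, cut out $\overline{\cS(\fl,\bd\BO_{\check L})}$ after taking radicals and $G$-saturation. As $X_{L_k}$ is a priori a closed conic $G$-invariant Poisson subscheme of $\fg$, it is enough to (i) determine the Malikov-Feigin-Fuchs / Kac-Kazhdan singular vectors of $V^k$ at the integer level $k = m-\check\Bh$, which for simply-laced $\fg$ are governed by the affine root $\delta - \theta$ and carry powers controlled by $m$, and (ii) show that the resulting constraints are equivalent to the equations of the sheet closure. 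The matching is plausible precisely because the invariant $2(m_0-1)$ built into $\cl_n$ (compare the inspiration in \cite[Proposition 9.2]{KL:Fl} and \cite[\S 7.3]{stable-grading}) controls both sides. For every entry of Table \ref{tbl:known-AV} this inclusion is automatic, and one merely verifies that the known $X_{L_k}$ agrees with the sheet closure predicted here.

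For the reverse inclusion $\supseteq$ --- that $X_{L_k}$ is at least as large as the sheet closure --- the plan is to construct $L_k(\fg)$-modules whose associated varieties (in Arakawa's sense), all contained in $X_{L_k}$, jointly exhaust the orbits and the central $\fz(\fl)$-direction making up $\overline{\cS(\fl,\bd\BO_{\check L})}$: relaxed or spectral-flow-twisted highest-weight modules for the semisimple directions, and modules arising from (inverse) quantized Drinfeld-Sokolov reduction, in the spirit of Remark \ref{rmk:W-algs}, for the nilpotent part. The decisive input, to be developed in \cite{SYZ:spr}, is geometric: the top Borel-Moore homology of the affine Springer fiber attached to a generic lift $\gamma$ of a nilpotent in $\check\BO(m_0)$ should map onto the Grothendieck group of $L_k$-modules with supports sweeping out exactly the sheet closure. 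The bookkeeping is supplied by the present paper: the commuting triangle relating $\cl_W$, $\RTmin$ and $\cl_n$ (Theorem \ref{thm:cl-triangle}) identifies $\gamma$ with the correct element of the affine Lie algebra, and the identification of the two-sided cell $\ubar \bc(w_m)$ with $\check\BO(m_0)$ (Theorem \ref{thm:O(m)-n-cells}) singles out the correct cell module.

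The main obstacle is this reverse inclusion. Even granting $\subseteq$ --- which by itself requires a uniform handle on non-admissible singular vectors, currently available only in scattered cases --- ruling out a collapse of $X_{L_k}$ onto a proper subvariety of the sheet closure demands a supply of $L_k$-modules with prescribed large associated varieties, constructed uniformly in $\fg$ and $m$, while the category of $L_k$-modules at non-admissible integer level is poorly understood. I would therefore not attempt a full proof but instead: (a) verify the identification against every entry of Table \ref{tbl:known-AV}; (b) check the degenerate cases --- $m=1$, where $\bd$ of the zero orbit in the Cartan $\fh$ forces $X_{L_k}$ to be the closure of the image of the Grothendieck-Springer map $G \times_B \fb \to \fg$, i.e.\ all of $\fg$, and $m = \check\Bh$, where $\bd$ of the regular orbit equals $\{0\}$ and forces $X_{L_0} = \{0\}$ --- together with the broader compatibility with Arakawa's admissible-level computations \cite{Arakawa:C2}; (c) confirm that the quasi-lisseness dichotomy, $\check\BO(m_0)$ distinguished $\iff \fz(\fl) = 0 \iff X_{L_k} \subseteq \cN$, matches all known quasi-lisse examples and the physics predictions of \cite{Xie-Yan:4dN2}; and (d) record the structural corroboration from Theorems \ref{thm:cl-triangle} and \ref{thm:O(m)-n-cells}, deferring an honest proof of $\supseteq$ to the affine Springer framework of \cite{SYZ:spr}.
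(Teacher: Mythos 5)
This statement is a \emph{conjecture}; the paper does not prove it but only assembles supporting evidence, which is exactly what you propose: verification against the cases in Table \ref{tbl:known-AV}, the degenerate cases $m=1$ and $m=\check\Bh$ (and the subregular case), the quasi-lisseness criterion, and the structural corroboration from Theorems \ref{thm:cl-triangle} and \ref{thm:O(m)-n-cells}, with the affine Springer fiber mechanism deferred to \cite{SYZ:spr}. Your sketched two-inclusion strategy (singular vectors of $V^k$ for $\subseteq$; relaxed/DS-reduced modules and affine Springer fiber homology for $\supseteq$) goes a little beyond what the paper attempts but is consistent with the authors' stated motivation, so overall this matches the paper's own treatment.
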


\begin{remark}
	In the associated variety of $L_k(\fg)$ at admissible rational levels, the orbits $\BO(m)$ also appear, but on the $\fg$ side rather than the $\check \fg$ side \cite{Arakawa:C2}. The two kinds of appearances of $\BO(m)$ will be unified in the future paper \cite{SYZ:rat}.
\end{remark}

\begin{remark}[Affine W-algebras]\label{rmk:W-algs}
	Let $f$ be a nilpotent element of $\fg$. Let $H_f^0(L_k(\fg))$ be the vertex algebra associated with $f$ by applying the generalized quantized Drinfeld-Sokolov reduction to $L_k(\fg)$ \cite{Feigin-Frenkel, Kac-Roan-Wakimoto}. When nonzero, it admits a quotient onto the simple affine W-algebra $W_k(\fg,f)$, and it is conjectured to be isomorphic to $W_k(\fg,f)$. Then associated variety of $H_f^0(L_k(\fg))$ is \cite{Arakawa:C2}
	\begin{equation}
		X_{H_f^0(L_k(\fg))} = X_{L_k(\fg)} \cap S_f,
	\end{equation}
	where $S_f$ is the Slodowy slice at $f$. Hence Conjecture \ref{conj:AV} also provides a description of the associated variety of $H_f^0(L_k(\fg))$ with $k > - \check \Bh$, it is then not difficult to determine which $H_f^0(L_k(\fg))$ is quasi-lisse. In particular, if $X_{L_k(\fg)}=\overline{\BO_f}$ where $\BO_f$ is the $G$-orbit of $f$, then $H_f^0(L_k(\fg))$ is \emph{lisse} \cite{Arakawa:C2}, and hence so is $W_k(\fg,f)$. %Thus Conjecture \ref{conj:AV} also provides  a list of lisse W-algebras at level $-\check \Bh+1\leq k \leq -1$.
\end{remark}

\begin{example}
	In type $D_4$, the associated varieties predicted by the conjecture are summarized in the following table:
	\begin{table}[H]
		\caption{Associated varieties of $L_k(D_4)$ predicted by Conjecture \ref{conj:AV}}\label{tbl:D4-prediction}
		\begin{tabular}{ccccccc}
			\hline $m$ & $k$ & $\check \BO(m)$ & $\check L$ & $\BO_{\check L}$ & conjectural $X_{L_k}$ & confirmed by
			\\ \hline
			$6$ & $0$ & $(7,1)$ & $D_4$ & $(7,1)$ & $\{0\}$ & \cite[Theorem 5.7.1]{Arakawa:C2}\\
			$5$ & $-1$ & $(5,3)$ & $D_4$ & $(5,3)$ & $\overline{\BO_{min}}$ & \cite[Theorem 1.1]{Arakawa-Moreau:Omin}\\
			$4$ & $-2$ & $(5,3)$ & $D_4$ & $(5,3)$ & $\overline{\BO_{min}}$ & \cite[Theorem 1.1]{Arakawa-Moreau:Omin}\\
			$3$ & $-3$ & $(3,3,1,1)$ & $A_2$ & $(3)$ & $\overline{\cS(A_2, \{0\})}$\\
			$2$ & $-4$ & $(3,2,2,1)$ & $3A_1$ & $(2)\times (2) \times (2)$ & $\overline{\cS(3A_1, \{0\})}$\\
			$1$ & $-5$ & $(1^8)$ & $T$ & $\{0\}$ & $\fg$ & \cite[Theorem 0.2.1]{Gorelik-Kac}
		\end{tabular}
	\end{table}
\end{example}

We now list all the known cases to the conjecture, followed by explanations in each case.

\begin{table}[H]
	\caption{Known cases of Conjecture \ref{conj:AV}}\label{tbl:known-AV} 
	\begin{tabular}{ccccc}
		\hline $\fg$ & $k$ & $m$ & $X_{L_k(\fg)}$ & reference \\ \hline
		%-------------
		$\fg$ & $-h^\vee+1$ & $1$ & $\fg$ & \cite{Gorelik-Kac} \\ 
		$A_n$ & $-1$ & $n$ & $\overline{\cS(A_{n-1},\{0\})}$ &  \cite{Jiang-Song, Arakawa-Moreau:sheets, AFK}\\ 
		$A_{2n-1}$ & $-n$ & $n$ & $\overline{\cS(2A_{n-1},\{0\})}$ & \cite{AFK} \\ 
		$D_n$ & $-1,-2$ & $2n-3,2n-4$ & $\overline{\BO_{min}}$ & \cite{Arakawa-Moreau:Omin}  \\ 
		$D_n$, $n$ odd & $-n+2$ & $n$ & $\overline{\cS(A_{n-1},\{0\})}$ & \cite{Arakawa-Moreau:sheets}, \cite{AFK} \\ 
		$D_n$, $n$ even & $-n+2$ & $n$ & $\overline{\BO_{(2^{n-2},1^4)}}$ & \cite{Arakawa-Moreau:irred, AFK} \\ 
		$E_6$ & $-1,-2,-3$ & $12,10,9$ & $\overline{\BO_{min}}$ & \cite{Arakawa-Moreau:Omin}  \\ 
		$E_6$ & $-4$ & $8$ & $\overline{\cS(D_5,\{0\})}$ & \cite{AFK} \\ 
		$E_7$ & $-1,\ldots,-4$ & $17,\ldots,14$ & $\overline{\BO_{min}}$ & \cite{Arakawa-Moreau:Omin}  \\ 
		$E_7$ & $-6$ & $12$ & $\overline{\BO_{2A_1}}$ & \cite{AFK}\\ 
		$E_8$ & $-1,\ldots,-6$ & $29,\ldots,24$ & $\overline{\BO_{min}}$ &\cite{Arakawa-Moreau:Omin} 
	\end{tabular}	
\end{table}

\begin{clause}[Regular case]
	Let $m \ge \check \Bh$. Then $\check \BO(m) = \check \BO(\check \Bh) = \check \BO_{reg}$ is the regular orbit, and the level $k = m - \check \Bh \ge 0$ is \textit{integrable}, meaning that $L(k \Lambda_0)$ is an integrable representation of $\fg_{aff}$ in the sense of Kac-Wakimoto \cite{Kac-Wakimoto:modular-reps, Kac-Wakimoto:rat-W}. According to \cite[Theorem 5.7.1]{Arakawa:C2}, we have
	\begin{equation*}
		X_{L_k(\fg)} = \{0\} = \overline{\bd \check \BO_{reg}}
	\end{equation*}
	which verifies Conjecture \ref{conj:AV}. 
\end{clause}

\begin{clause}[Subregular case]\label{cls:sreg-AV}
	Let $m$ be so that $\check \cl_n(\check \BO_{sreg}) \le m < \check \Bh$ where $\check \BO_{sreg}$ denotes the subregular orbit. Then $\check \BO(m) = \check \BO_{sreg}$. Indeed, if this is not the case, then by Theorem \ref{thm:O(m)} we would have $\check \BO(m) \succ \check \BO_{sreg}$. But the only orbit strictly larger than the subregular orbit is the regular orbit $\check \BO_{reg}$, and in all types $\check\cl_n(\check \BO_{reg})$ is strictly greater than any other $\cl_n(\check \BO)$ (this can be checked in each type, or one can use the argument at the end of proof of Theorem \ref{thm:cl-triangle} below), so $\check\cl_n(\check \BO_{reg}) > \check\cl_n(\check \BO_{sreg})$, a contradiction. 
	
	\textbf{Type $A$.}
	If $\fg$ has type $A_n$, then $m = n$, $k = -1$, and the subregular orbit is not distinguished. In fact, $\check \BO_{sreg} = \Sat_{\check L}^{\check G} \check \BO_{\check L, reg}$ where $\check L$ is any Levi subgroup of type $A_{n-1}$ and $\BO_{\check L, reg}$ is the regular orbit in $\check L$. By \cite[Theorem 1.2]{Jiang-Song} and \cite[Theorem 1.1]{Arakawa-Moreau:sheets}, or by \cite[Theorem 3.24, Table 6]{AFK}, we have
	\begin{equation*}
		X_{L_k(\fg)} = \overline{\cS(\fl, \{0\}_L)} = \overline{\cS(\fl, \bd \BO_{\check L, reg})}
	\end{equation*}
	which verifies Conjecture \ref{conj:AV}.
	
	\textbf{Type $D$ and $E$.}
	Suppose $\fg$ has type $D$ or $E$. Let $m_0 = \check\cl_n(\check \BO_{sreg})$. Then we are looking at the following cases
	\begin{center}
		\begin{tabular}{c|cc}
			$\fg$ & $m_0$  \\ \hline
			$D_n$ & $2n-4$ \\
			$E_6$ & $9$ \\
			$E_7$ & $14$\\
			$E_8$ & $24$ 
		\end{tabular}
	\end{center}
	In these types the subregular orbit is distinguished. Arakawa-Moreau \cite[Theorem 1.1]{Arakawa-Moreau:Omin} showed that
	\begin{equation*}
		X_{L_k(\fg)} = \overline{\BO_{min}} = \overline{\bd \check \BO_{sreg}} = \overline{\bd \check \BO(m)}
	\end{equation*}
	for any $k$ so that $m_0 \le m := k+\check \Bh \le \check \Bh-1$. Here $\BO_{min}$ denotes the minimal orbit of $\fg$. This verifies Conjecture \ref{conj:AV} for those levels.
\end{clause}

\begin{clause}[The case $m=1$]
	Let $m = 1$, $k = 1 - \check \Bh$. Then $\check \BO(1) = \{0\}$ is the zero orbit. Indeed, from the definition of $\cl_n$ in \ref{def:cln}, we have $\cl_n(\check \BO) = 1$ if and only if the highest $h$-weight in the Bala-Carter Levi $\check \fl$ of $\check \BO$ is $0$. This happens if and only if $\check \fl$ is a Cartan subalgebra in $\check \fg$, in which case $\check \BO_L = \{0\}$ and $\check \BO = \{0\}$.
	
	By \cite[Theorem 0.2.1]{Gorelik-Kac}, $V^k(\fg)$ is irreducible. Therefore $V^k(\fg) = L_k(\fg)$, and 
	\begin{equation*}
		X_{L_k(\fg)} = X_{V^k(\fg)} = \fg.
	\end{equation*}
	This verifies Conjecture \ref{conj:AV}. In fact, we have $\check \BO(1) = \{0\} = \Sat_{\check H}^{\check G} \{0\}_{\check H}$ where $\check H$ is any Cartan subgroup, and $\{0\}_{\check H}$ is distinguished in $\check \fh$. The sheet in Conjecture \ref{conj:AV} is 
	\begin{equation*}
		\overline{\cS(\fl, \bd \{0\}_{\check H})}
		= \overline{\cS(\fh, \{0\}_H)}
		= \overline{\im\big( G \times_B \fb \to \fg \big)} = \fg
	\end{equation*}
	which equals $X_{L_k}$.
\end{clause}

\begin{clause}[Evidence from Arakawa-Futorny-Kri\v zka]
	In \cite[Theorem 3.24, Table 6]{AFK} the associated varieties of several families of simple affine vertex algebras were computed. The simply-laced cases in \textit{loc. cit.} not covered by previous discussions are reproduced in the following table.
	
	\begin{table}[H]
		\caption{\cite{AFK} versus Conjecture \ref{conj:AV}}\label{tbl:AFK}
		{\small
			\begin{tabular}{ccccccc}
				\hline
				$\fg$ &  $k$ & $m$ & $X_{L_k(\fg)}$ & $\check \BO(m)$ & $\check \fl$ & $\BO_{\check L}$ \\ \hline
				%----------------------
				$A_{2n-1}$  
				& $-n$ & $n$ & $\overline{\cS(\fl, \{0\}_L)}$ 
				& $\BO_{(n,n)}$ 
				& $2A_{n-1}$
				& $\BO_{(n)} \times \BO_{(n)}$\\ [5ex]
				%---------------
				$D_n$, $n$ odd 
				& $-n+2$ & $n$ & $\overline{\cS(\fl, \{0\}_L)}$ 
				& $\BO_{(n,n)}$
				& $A_{n-1}$ 
				& $\BO_{(n)}$\\ [5ex]
				%---------------
				$D_n$, $n$ even 
				& $-n+2$ & $n$ & $\overline{\BO_{(2^{n-2},1^4)}}$ 
				& $\BO_{(n+1,n-1)}$
				& $D_n$ 
				& $\BO_{(n+1,n-1)}$\\ [5ex]
				%---------------
				$E_6$
				& $-4$ & $8$ & $\overline{\cS( \fl, \{0\}_L)}$ 
				& $\BO_{D_5}$ 
				& $D_5$ & $\BO_{(9,1)}$\\ [5ex]
				%--------------
				$E_7$
				& $-6$ & $12$ & $\overline{\BO_{2 A_1}}$ 
				& $\BO_{E_7(a2)}$ 
				& $E_7$	
				& $\BO_{E_7(a2)}$
			\end{tabular}
		}
	\end{table}
	
	The first four columns contain relevant information taken from \cite[Table 6]{AFK}; they are the type of $\fg$, the number $k$, the number $m = k + \check \Bh$, and the associated variety $X_{L_k(\fg)}$ (the subregular cases in \textit{loc. cit.} are not shown here since they are already covered in \ref{cls:sreg-AV}). The last three columns contain information on the orbit $\check \BO(m)$, the Bala-Carter Levi $\check \fl$ of $\check \BO(m)$, and the orbit $\BO_{\check L}$ in $\check \fl$ so that $\check \BO(m) = \Sat_{\check L}^{\check G} \BO_{\check L}$. It can be easily checked that the associated varieties $X_{L_k(\fg)}$ are indeed equal to $\overline{\cS(\fl, \bd \BO_{\check L})}$. This verifies Conjecture \ref{conj:AV} in these cases.
	
	We note that the $D_n$ cases in the table have been computed earlier in \cite[Theorem 1.2]{Arakawa-Moreau:sheets} and \cite[Theorem 6.1]{Arakawa-Moreau:irred}.
\end{clause}

\section{Relation to reduction types}\label{sec:RTmin}

In this section, we define another cyclotomic level map $\cl_W$. We show that the maps $\cl_W$ and $\cl_n$ are related by the minimal reduction types map $\RTmin$, namely the following triangle commutes
\begin{equation*}
	\begin{tikzcd}
		\ubar W \ar[dr, "\RTmin"', two heads] \ar[rr, "\cl_W"] 
		&& \BZ_{1 \le \bullet \le \Bh}\\
		& \ubar \cN \ar[ur, "\cl_n"', end anchor=south west]
	\end{tikzcd},
\end{equation*}
see Theorem \ref{thm:cl-triangle}. 
This property will allow us to determine an affine Springer fiber and relate its top homology to modules over $L_k(\fg)$ in a sequential paper \cite{SYZ:spr}.

We will first review root valuation strata in $\fg(F)$, the minimal reduction types map $\RTmin$, the Kazhdan-Lusztig map $\KL$, and related constructions in \S \ref{subsec:rs-elem}-\ref{subsec:reduction}. Then we proceed to the proof of Theorem \ref{thm:cl-triangle}.

%---------------
\subsection{Regular semisimple elements in the loop Lie algebra}\label{subsec:rs-elem}

Recall the \textit{discriminant map}
\begin{equation*}
	\Delta: \fg(\bar F) \aro \bar F,\quad
	a \mapsto \mathrm{tr}\left(\Exterior^{\dim \fg - \rank \fg}(\ad (a): \fg(\bar F) \to \fg(\bar F))\right).
\end{equation*}
This is a $G(\bar F)$-invariant function whose restriction to the Cartan is
\begin{equation*}
	\Delta: \fh(\bar F) \aro \bar F,\quad
	h \mapsto \prod_{\alpha \in R} \langle \alpha, h\rangle.
\end{equation*}

\begin{definition}
	An element $\gamma \in \fg(F)$ is said to be \textbf{regular semisimple} if it is regular semisimple as an element in $\fg(\bar F)$. Equivalently, if $\Delta(\gamma)\neq 0$.
Such a $\gamma$ can always be conjugated to an element $\gamma_{diag} \in \fh(\bar F)$ by an element of $G(\bar F)$. 
	
	An element $\gamma$ is said to be \textbf{topologically nilpotent} if $\ad(\gamma)^r \to 0$ in $\End \fg(F)$ as $r \to \infty$ in the $t$-adic topology. Equivalently, $\gamma$ can be $G(F)$-conjugated to an element in $\cN + t \fg(\cO) \subset \fg(\cO)$.		
	The set of regular semisimple (resp. topologically nilpotent) elements in $\fg(F)$ will be denoted by $\fg(F)_{rs}$ (resp. $\fg(F)_{tn}$). We write $\fg(F)_{rstn}=\fg(F)_{rs}\cap \fg(F)_{tn}$.
\end{definition}

\smallskip

\begin{clause}[Cartan subalgebras in $\fg(F)$]\label{cls:tori-in-g(F)}
	For any regular semisimple $\gamma$ in $\fg(F)$, its centralizer $Z_{\fg(F)}(\gamma)$ is a Cartan subalgebra in $\fg(F)$. By \cite[\textsection 1 Lemma 2]{KL:Fl}, the set of conjugacy classes of Cartan subalgebras in $\fg(F)$ are parameterized by the set $\ubar W$ of conjugacy classes in $W$. If the centralizer $Z_{\fg(F)}(\gamma)$ of $\gamma$ is a Cartan corresponding to the class $[w] \in \ubar W$, we say $\gamma$ is \textbf{of type $[w]$}. We write $\fg(F)_{[w]}$ for the subset of regular semisimple elements of type $[w]$.
	
	For an element $w \in W$, the Cartan subalgebra corresponding to the class $[w]$ can be conjugated over $\bar F$ to 
	\begin{equation*}
		\fh_w(F) := 
		\big\{ \gamma_{diag} \in \fh\laurent{t^{1/n}} \mid w \sigma\inv(\gamma_{diag}) = \gamma_{diag} \big\} \subset \fh(\bar F)
	\end{equation*}
	\cite[\textsection 4]{GKM:rval}, where $n$ is the order of $w$, and $\sigma \in \Gal(\bar F/F)$ is the automorphism defined in \ref{cls:aff-alg-notations}. Moreover, for any $\gamma \in \fg(F)_{[w],tn}$ the entries of its diagonalization $\gamma_{diag} \in \fh(\bar F)$ over $\bar F$ involve only strictly positive powers of $t^{1/n}$. Therefore $\gamma_{diag}$ belongs to the $\cO$-form $\fh_w(\cO)$ of $\fh_w(F)$ for some $w \in [w]$, where
	\begin{equation*}
		\fh_w(\cO) := \fh_w(F) \cap \fh \series{t^{1/n}}.
	\end{equation*}
\end{clause}

\smallskip

\begin{clause}[Topological notions for subsets of arc spaces, {\cite[\textsection5]{GKM:rval}}]\label{cls:fp-top}
	For a scheme $X$ of finite type over $\BC$ and $n\ge 0$, let $L^+_n X$ be its truncated arc space representing the functor $A\mapsto X(A\series{t}/t^{n+1})$ for any commutative $\BC$-algerbra $A$. The arc space $L^+X$ is the limit $\varprojlim_n L^+_nX$ with natural projections $\pi_n: L^+X\to L^+_n X$.
	
	A subset $Z\subset L^+X(\BC)=X(\BC\series{t})$ is called fp constructible (where fp stands for ``finite presentation'') if there exists $n \ge 0$ and a Zariski constructible subset $Z_n \subset L^+_nX(\BC)=X(\cO/t^{n+1})$ such that $Z=\pi_n^{-1}(Z_n)$. Similarly, there is the notion of fp open, fp closed, fp locally closed subsets of $L^+X$. 
	
	If $Z \subset L^+X(\BC)$ is fp constructible, say $Z=\pi_n^{-1}(Z_n)$ for a constructible $Z_n \subset L^+_nX(\BC)$, then the closure $\overline{Z}$ of $Z$ inside $L^+X(\BC)$ is defined to be the fp closed subset $\pi_n^{-1}(\overline{Z_n})$, where $\overline{Z_n}$ is the Zariski closure of $Z_n$ in  $L^+_nX(\BC)$. This definition is independent of the choices of $n$ and $Z_n$.
\end{clause}

\smallskip

\begin{clause}[Root valuation strata, {\cite{GKM:rval}}]\label{cls:rt-val-str}
      The set $\fg(F)_{rstn}$ can be partitioned into strata which is in some sense analogous to the partition of the nilpotent cone $\cN$ into nilpotent orbits.	Given a regular semisimple topologically nilpotent element $\gamma \in \fg(F)$ we can define a pair of combinatorial invariants $(w,r)$ as follows. Choose a conjugate $\gamma_{diag}$ of $\gamma$ in the diagonal Cartan $\fh(\bar F)$. Then $w$ is the unique element in $W$ so that $w \sigma\inv(\gamma_{diag}) = \gamma_{diag}$ where $\sigma \in \Gal(\bar F/F)$ is the element defined in \ref{cls:aff-alg-notations} sending $t^{1/k}$ to $e^{2\pi i/k} t^{1/k}$. The second invariant is $r: R \to \BQ$ defined by $r(\alpha) = \val( \alpha(\gamma_{diag}))$, the \textit{$\alpha$-valuation} of $\gamma_{diag}$. Since $\gamma$ is topologically nilpotent, $r$ in fact lands in $\BQ_{>0}$. Since different choices of $\gamma_{diag}$ differ by an element of $W$, the pair $(w,r)$ is well-defined up to $W$-conjugacy. We say the map $r$ is the \textbf{root valuation map attached to $\gamma$}.
	
	Let us write $\fg(F)_{(w,r)}$ for the set of regular semisimple topologically nilpotent elements in $\fg(F)$ whose associated invariant is equal to the $W$-conjugacy class of $(w,r)$. These are called the \textbf{root valuation stratum} of $(w,r)$. Note that the conjugacy class $[w]$ is the type of $\gamma$ in the sense of Section \ref{cls:tori-in-g(F)}. In particular, an element in $\fg(F)_{(w,r)}$ is $G(F)$ conjugate to an element in $\fh_w(\cO)$. Let $\fh_w(\cO)_r= \fh_w(\cO) \cap  \Ad(G(\bar F)) \cdot \fg(F)_{(w,r)}$. Whenever it is nonempty, we say that $(w,r)$ is a GKM pair.

Let $\fh_w(\cO)_{tn}=\fh_w(\cO)\cap \Ad(G(\bar F)) \cdot\fg(F)_{tn}$ and $\fh_w(\cO)_{rstn}=\fh_w(\cO)\cap \Ad(G(\bar F)) \cdot\fg(F)_{rstn}$.

\begin{proposition}\label{prop:open-stratum}~
	\begin{enumerate}
		\item For a GKM pair $(w,r)$, the subset $\fh_w(\cO)_r\subset\fh_w(\cO)$ is fp constructible. Its fp closure is given by
		\[\overline{\fh_w(\cO)_r}=\{\gamma\in \fh_w(\cO)\mid \val(\alpha(\gamma))\geqslant r(\alpha),\quad\forall\ \alpha\in R\}.\]
		%\item The set $\fh_w(\cO)_{tn}$ is fp irreducible.
		\item There is a unique open root valuation stratum $\fh_w(\cO)_{r_{min}}$ in $\fh_w(\cO)_{tn}$ such that for any GKM pair $(w,r)$, we have $r_{min}(\alpha)\leqslant r(\alpha)$ for all root $\alpha$. The set $\fh_w(\cO)_{r_{min}}$ is fp irreducible.
	\end{enumerate}
\end{proposition}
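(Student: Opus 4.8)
The plan is to reduce everything to linear algebra on $\fh_w(\cO)$, viewed as the arc space it is, using the description of its topologically nilpotent locus from \ref{cls:tori-in-g(F)} and \cite{GKM:rval}. First I would fix coordinates: writing $n$ for the order of $w$, $\zeta = e^{2\pi i/n}$, and $\fh^{(j)} \subseteq \fh$ for the $\zeta^j$-eigenspace of $w$, one has
\[
\fh_w(\cO) = \Big\{ \gamma = \textstyle\sum_{j \geqslant 0} h_j\, t^{j/n} \ \Big|\ h_j \in \fh^{(j \bmod n)} \Big\},
\]
which, after grouping the coordinates $h_j$ into consecutive blocks of $n$, is the set of $\BC$-points $L^+\mathfrak a(\BC)$ of the arc space of an affine space $\mathfrak a \cong \fh$, so that the formalism of \ref{cls:fp-top} applies verbatim. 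The key point is that the coefficient of $t^{j/n}$ in $\alpha(\gamma) = \langle\alpha,\gamma\rangle$ is the \emph{linear} functional $\gamma \mapsto \langle\alpha, h_j\rangle = (\alpha|_{\fh^{(j\bmod n)}})(h_j)$ in the coordinate $h_j$; hence for any $c$ the conditions $\val(\alpha(\gamma)) \geqslant c$ and $\val(\alpha(\gamma)) = c$ are, respectively, a linearly-closed and a locally-closed condition involving only the finitely many coordinates with $j/n \leqslant c$. I would also record two facts: $\fh_w(\cO)_{tn}$ is exactly the sub-arc-space $\{h_0 = 0\}$ (immediate from \ref{cls:tori-in-g(F)}, since $\ad\gamma$ is topologically nilpotent iff the constant term $h_0 \in \fh$ of $\gamma$ vanishes); and for a GKM pair $(w,r)$ one has $r \in \BQ_{>0}^R$ and
\[
\fh_w(\cO)_r = \{\gamma \in \fh_w(\cO) \mid \val(\alpha(\gamma)) = r(\alpha)\ \text{for all}\ \alpha \in R\},
\]
because the right-hand side forces $\Delta(\gamma) = \prod_\alpha\alpha(\gamma) \ne 0$ (so $\gamma$ is regular semisimple, whence $w$ is recovered from $w\sigma\inv(\gamma) = \gamma$ as the stabiliser in $W$ of a regular element is trivial) and forces $h_0 = 0$ (so $\gamma$ is topologically nilpotent), so that $\gamma$ has combinatorial invariant exactly $(w,r)$.

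For part (1), I would set $Z_{\geqslant r} = \{\gamma \in \fh_w(\cO) \mid \val(\alpha(\gamma)) \geqslant r(\alpha)\ \forall\alpha\}$. By the above this is the sub-arc-space cut out by the finitely many linear equations $\langle\alpha, h_j\rangle = 0$ for $j/n < r(\alpha)$; it is therefore fp closed and fp irreducible, and it lies inside $\fh_w(\cO)_{tn}$ because $r > 0$. Now $\fh_w(\cO)_r$ is obtained from $Z_{\geqslant r}$ by deleting the finite union $\bigcup_\alpha\big(Z_{\geqslant r} \cap \{\val(\alpha(\gamma)) \geqslant r(\alpha) + 1/n\}\big)$ of fp closed linear subspaces; each of these is \emph{proper} in $Z_{\geqslant r}$ precisely because $\fh_w(\cO)_r \ne \emptyset$ — which is exactly what ``GKM pair'' means — so it contains points where $\val(\alpha(\gamma)) = r(\alpha)$. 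Hence $\fh_w(\cO)_r$ is fp constructible, and it is a nonempty fp open, hence dense, subset of the irreducible $Z_{\geqslant r}$, which gives $\overline{\fh_w(\cO)_r} = Z_{\geqslant r}$ as asserted.

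For part (2), I would define, for each $\alpha \in R$, $d(\alpha) = \min\{j \in \{1,\dots,n\} \mid \alpha|_{\fh^{(j\bmod n)}} \ne 0\}$ — well defined since $\alpha$ does not vanish on $\fh = \bigoplus_{j=1}^n \fh^{(j\bmod n)}$ — and put $r_{min}(\alpha) = d(\alpha)/n \in \BQ_{>0}$. Two things then need checking. First, $r_{min} \leqslant r$ for every GKM pair $(w,r)$: any $\gamma \in \fh_w(\cO)_{tn}$ has the coefficient of $t^{j/n}$ in $\alpha(\gamma)$ equal to $0$ for all $j < d(\alpha)$ (for $j=0$ because $h_0 = 0$, for $1 \leqslant j < d(\alpha) \leqslant n$ because $\alpha|_{\fh^{(j)}} = 0$ by minimality of $d(\alpha)$), so $\val(\alpha(\gamma)) \geqslant r_{min}(\alpha)$; applying this to a point of $\fh_w(\cO)_r$ gives $r(\alpha) \geqslant r_{min}(\alpha)$. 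Second, $\fh_w(\cO)_{r_{min}}$ is an open stratum of $\fh_w(\cO)_{tn}$: by the displayed description of strata it equals $\{h_0=0\} \cap \bigcap_\alpha \{\langle\alpha, h_{d(\alpha)}\rangle \ne 0\}$, and since $h_{d(\alpha)}$ is a free coordinate on which $\alpha|_{\fh^{(d(\alpha)\bmod n)}}$ is a nonzero functional, each factor is a nonempty fp open subset of the irreducible set $\fh_w(\cO)_{tn}$; the finite intersection is therefore a nonempty dense fp open subset of $\fh_w(\cO)_{tn}$, in particular fp irreducible, and it is the unique such stratum because two nonempty fp open subsets of an irreducible space meet while distinct strata are disjoint. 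Together with the first point, this is exactly (2).

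Both statements are in substance due to \cite{GKM:rval}, and I do not expect any serious obstacle: the only points requiring care are bookkeeping — verifying that $\fh_w(\cO)$ and its locus $\{h_0=0\}$ genuinely fall under the arc-space formalism of \ref{cls:fp-top}, tracking the periodicity $j \mapsto j \bmod n$ of the eigenspaces in the definition of $d(\alpha)$, and invoking \ref{cls:tori-in-g(F)} for the identification $\fh_w(\cO)_{tn} = \{h_0 = 0\}$. Once these are in place, every stratum is visibly a nonempty open subset of an explicit linear (hence irreducible) sub-arc-space, and both assertions fall out.
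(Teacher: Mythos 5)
Your proof is correct, and it takes a genuinely more explicit route than the paper's. The paper handles part (1) by citing \cite[Proposition~6.0.1]{GKM:rval}, and part (2) by citing \cite[Theorem~8.2.2]{GKM:rval} for fp-irreducibility of each stratum $\fh_w(\cO)_r$ and \cite[3.4.4]{Bouthier-Kazhdan-Varshavsky} for fp-irreducibility of $\fh_w(\cO)_{tn}$, then deduces uniqueness of the open dense stratum from irreducibility and the inequality $r_{\min} \leqslant r$ from part (1). You instead work directly in coordinates $h_j \in \fh^{(j \bmod n)}$ on the arc space $\fh_w(\cO)$, observe that $\val(\alpha(\gamma)) \geqslant c$ and $=c$ are conditions involving only finitely many of the $h_j$'s, and obtain all the needed irreducibility statements for free since $Z_{\geqslant r}$ and $\fh_w(\cO)_{tn} = \{h_0 = 0\}$ are linear sub-arc-spaces. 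This bypasses the (much harder, and far more general) irreducibility theorems of GKM and Bouthier--Kazhdan--Varshavsky, and it yields the explicit formula $r_{\min}(\alpha) = d(\alpha)/n$ as a byproduct, which the paper leaves implicit (it only surfaces in the proof of Lemma~\ref{lem:cl=root-val}). Two small points worth making explicit in a careful write-up: (a) the identification $\fh_w(\cO)_{tn} = \{h_0 = 0\}$ requires noting that $h_0 = 0$ is equivalent to all $\alpha(\gamma)$ having positive valuation, since the roots span $\fh^*$, and then invoking the description in \ref{cls:tori-in-g(F)}; and (b) the identity $\fh_w(\cO)_r = \{\gamma \in \fh_w(\cO) \mid \val(\alpha(\gamma)) = r(\alpha)\ \forall\alpha\}$ is the reading of the definition that makes the closure formula come out exactly as stated -- for a regular $\gamma \in \fh_w(\cO)$ the element $w$ is pinned down uniquely by $w\sigma\inv(\gamma) = \gamma$, so the pair $(w,r)$ is not merely well-defined up to $W$-conjugacy but is determined on the nose, and this is the identification you are using. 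With those two observations spelled out, the argument is complete.
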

\begin{proof}
Part (1) is \cite[Proposition~6.0.1]{GKM:rval}.
For part (2), for each GKM pair $(w,r)$, the set $\fh_w(\cO)_r$ is fp irreducible by \cite[Theorem~8.2.2]{GKM:rval}. Since $\fh_w(\cO)_{tn}$ is also fp irreducible
by \cite[3.4.4]{Bouthier-Kazhdan-Varshavsky}, it has a unique open dense stratum, denoted $\fh_w(\cO)_{r_{min}}$. Then it follows from part (1) that the function $r_{min}$ has the desired property.
%see also \cite[0.6.(b)(c)]{Finkelberg-Kazhdan-Varshavsky}.
\end{proof}

In \cite[Definition~1.6]{Yun:RTmin-arXiv}, an element $\gamma\in \fh_w(\cO)_{rstn}$ is called \emph{shallow} if $\Delta(\gamma)$ attains the minimal valuation among all elements in $\fh_w(\cO)_{rstn}$.
By part (2) of the previous proposition, elements in $\fh_w(\cO)_{r_{min}}$ are precisely the shallow elements.

\end{clause}

%----------
\subsection{Reduction to nilpotent cone and the Kazhdan-Lusztig map}\label{subsec:reduction}

In \ref{cls:rt-val-str} we have associated to each element $\gamma \in \fg(F)_{rstn}$ an invariant $(w,r)$. The type $[w]$ of $\gamma$, is a courser invariant. There is an even courser invariant, namely the minimal reduction type of $\gamma$.

Recall that if $\gamma \in \fg(F)$ is regular semisimple topologically nilpotent, then any conjugate $\gamma'$ of $\gamma$ inside $\fg(\cO)$ lies in $\cN+ t \fg(\cO)$. Therefore the $G$-orbit of $\gamma' (\modulo\; t) \in \cN$ is a nilpotent orbit.

\begin{definition}
	For $\gamma \in \fg(F)_{rstn}$, its \textit{reduction type} is the set
	\begin{equation*}
		\RT(\gamma) = \big\{ \BO \in \cN \mid \thereis \gamma' \in \fg(\cO) \cap \Ad G(F) \cdot \gamma \text{ such that } \gamma' (\modulo\; t) \in \BO \big\}.
	\end{equation*}
	%	Alternatively, $\RT(\gamma)$ is the image of the reduction map
	%	\begin{equation*}
		%		\red: \cFL_\gamma \aro \ubar \cN,\quad
		%		gI \mapsto \Ad(g)\inv \gamma (\modulo\; t)
		%	\end{equation*}
	%	(recall that if $gI \in \cFL_\gamma$, then $\Ad(g)\inv \gamma \in \fI \subset \fg(\cO)$). 
	The \textit{minimal reduction type} $\RTmin(\gamma)$ of $\gamma$ is the set of minimal elements in $\RT(\gamma)$.
\end{definition}

\begin{theorem}[{\cite[Theorem 1.11]{Yun:RTmin-arXiv}}]\label{thm:RTmin}
	For each $[w] \in \ubar W$, the intersection
	\begin{equation*}
		\bigcap_{\gamma \in \fg(F)_{(w,r_{min})}} \RTmin(\gamma)
	\end{equation*}
	is a singleton $\{\BO\}$. Write $\RTmin([w]) = \BO$. This defines a map
	\begin{equation*}
		\RTmin: \ubar W \aro \ubar \cN.
	\end{equation*}
\end{theorem}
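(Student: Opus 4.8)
The plan is to study the assignment $\gamma \mapsto \RTmin(\gamma)$ after transporting everything to the model space $\fh_w(\cO)_{r_{min}}$ of shallow elements of type $[w]$, which is fp irreducible by Proposition~\ref{prop:open-stratum}(2). Since $\RTmin(\gamma)$ depends only on the $G(F)$-conjugacy class of $\gamma$, and since by \ref{cls:rt-val-str} every $\gamma\in\fg(F)_{(w,r_{min})}$ is $G(F)$-conjugate to an element of $\fh_{w'}(\cO)_{r_{min}}$ for some $w'\in[w]$, it is enough to prove two statements: (i) there is a single nilpotent orbit $\BO$ with $\RTmin(\gamma)=\{\BO\}$ for all $\gamma$ in a dense fp open subset of $\fh_w(\cO)_{r_{min}}$; and (ii) this $\BO$ belongs to $\RTmin(\gamma)$ for \emph{every} $\gamma$ in the stratum. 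Indeed, (i) forces $\bigcap_\gamma\RTmin(\gamma)\subseteq\{\BO\}$ and (ii) forces the reverse inclusion, so the intersection is the singleton $\{\BO\}$, and we may set $\RTmin([w])=\BO$.

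For (i), the first step is constructibility: for each nilpotent orbit $\BO'$ the locus $\{\gamma\in\fh_w(\cO):\BO'\in\RT(\gamma)\}$ is fp constructible. This is because the defining condition --- that some $G(F)$-conjugate of $\gamma$ lies in $\cN+t\fg(\cO)$ and reduces modulo $t$ into $\overline{\BO'}$ --- can be tested on a finite truncation once one knows, via finiteness of affine Springer fibers (Kazhdan--Lusztig \cite{KL:Fl}, together with the equidimensionality results of Goresky--Kottwitz--MacPherson and of Bezrukavnikov), that the conjugating elements needed move in a finite-dimensional algebraic family as $\gamma$ varies over the stratum. Granting this, $\gamma\mapsto\RT(\gamma)$ takes only finitely many values, each on a constructible set, so by fp irreducibility some value $S$ is attained on a dense fp open $U$; then $\RTmin$ is also constant $=S$ on $U$. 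The crux --- and the step I expect to be hardest --- is to show $|S|=1$. Here one uses the affine Springer fiber $X_\gamma=\{gG(\cO):\Ad(g^{-1})\gamma\in\fg(\cO)\}$ and the reduction map sending $gG(\cO)$ to the $G$-orbit of $\Ad(g^{-1})\gamma\bmod t$, whose image is exactly $\RT(\gamma)$. For $\gamma$ shallow, $X_\gamma$ is equidimensional, and the point is that the reduction map carries the generic point of every top-dimensional component into one and the same orbit $\BO=\KL([w])$ --- the Kazhdan--Lusztig orbit --- while reductions into orbits not containing $\BO$ in their closure occur only on loci of smaller dimension, which reduce into $\overline{\BO}\setminus\BO$. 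Thus $\BO$ is the unique minimal element of $\RT(\gamma)$ for generic $\gamma$, giving $S=\{\BO\}$; this is precisely the identification of $\RTmin$ with $\KL$ on the open stratum, following \cite{Yun:RTmin-arXiv}.

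For (ii), one runs a semicontinuity argument inside the stratum. By definition of the minimal stratum (\ref{cls:rt-val-str}), the valuation of the discriminant $\Delta$ is constant on $\fh_w(\cO)_{r_{min}}$, so the dimension of $X_\gamma$ is constant there by the GKM dimension formula. If $\gamma_0$ is an arbitrary element of the stratum and $\gamma$ a nearby generic one, the top-dimensional components of $X_{\gamma_0}$ arise as limits of those of $X_\gamma$, so the orbit $\BO$ --- the generic reduction of the top components of $X_\gamma$ --- still appears as the reduction of a top-dimensional component of $X_{\gamma_0}$, and still has no orbit of $\RT(\gamma_0)$ strictly below it; hence $\BO\in\RTmin(\gamma_0)$. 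Combined with (i), this yields $\bigcap_\gamma\RTmin(\gamma)=\{\BO\}$ and hence the map $\RTmin:\ubar W\to\ubar\cN$. The main obstacle throughout is the singleton statement in (i): it requires understanding the fibers of the reduction map $X_\gamma\to\ubar\cN$ stratum by stratum and matching the generic outcome with the Kazhdan--Lusztig orbit, which is the technical heart of \cite{Yun:RTmin-arXiv}.
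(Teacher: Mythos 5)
The paper does not prove this statement; it is quoted from \cite[Theorem 1.11]{Yun:RTmin-arXiv}, so there is no internal proof to compare against. Judged on its own merits, the overall architecture of your sketch (transport to the model $\fh_w(\cO)_{r_{min}}$, exploit fp irreducibility to get a constancy statement on a dense open, then a semicontinuity step to get the reverse inclusion) is a reasonable skeleton, but there is a substantive conceptual error at the heart of step~(i).

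You write that for generic shallow $\gamma$ of type $[w]$, the reduction carries every top-dimensional component of $X_\gamma$ into ``one and the same orbit $\BO = \KL([w])$''. This does not parse: $\KL$ maps $\ubar\cN \to \ubar W$, not the other way around. The charitable reading is ``the orbit $\BO$ with $\KL(\BO) = [w]$'', but $\KL = \Psi$ is a \emph{section} of $\RTmin = \Phi$, injective but not surjective, so such a $\BO$ exists only when $[w]\in\im\Psi$, i.e.\ when $[w]$ is the most elliptic class in its $\Phi$-fiber. For a general $[w]$ there is no such $\BO$; the correct target is $\BO = \Phi([w]) = \RTmin([w])$, and then $\KL(\BO)=\Psi(\Phi([w]))$ is \emph{not} $[w]$. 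Your closing remark that this ``is precisely the identification of $\RTmin$ with $\KL$'' treats the two maps as mutually inverse bijections, which they are not --- and that asymmetry is exactly the content being established. Assuming it away short-circuits the argument precisely where the real work lies.

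Two further issues. First, even with the correct $\BO$, step~(i) asserts $\RTmin(\gamma) = \{\BO\}$ (a singleton) for generic $\gamma$; this is strictly stronger than the theorem's conclusion, which concerns only the intersection over all $\gamma$ in the stratum, and it does not follow from the dimension-stratification heuristic you give --- different top-dimensional components of $X_\gamma$ can a priori reduce to incomparable orbits. Second, the limit argument in~(ii), that top-dimensional components of $X_{\gamma_0}$ ``arise as limits'' of those of $X_\gamma$ and inherit the same generic reduction, is not rigorous as stated: affine Springer fibers over a GKM stratum do not obviously form a flat proper family, and even when a component persists in a limit, the orbit into which it generically reduces can jump. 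Both gaps are genuine; \cite{Yun:RTmin-arXiv} reaches the conclusion by a different route.
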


\begin{clause}[Kazhdan-Lusztig map, {\cite[\textsection 9]{KL:Fl}}]\label{cls:KL-map}
	To state more properties of $\RTmin$, we need to recall the \textit{Kazhdan-Lusztig map}
	\begin{equation*}
		\KL: \ubar \cN \aro \ubar W
	\end{equation*}
	which is defined as follows: for $\BO\in \ubar \cN$, take any $e \in \BO$. Then there is a fp open dense subset $U \subset e + t \fg(\cO)$ and a unique class $[w] \in \ubar W$ such that every element $\gamma \in U$ is regular semisimple topologically nilpotent of the same type $[w]$. Then $\KL(\BO) := [w]$.
	
	We will also need the following fact from \cite[Proof of Theorem 6.1]{Yun:RTmin-arXiv}
	
	\begin{proposition}\label{prop:yun6.1}
	There is an open dense subset of $\BO+ t \fg(\cO)$ consisting of shallow elements of type $[w]=\KL(\BO)$, in other words, elements in $\fg(F)_{(w,r_{min})}$.
	\end{proposition}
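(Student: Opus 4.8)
The plan is to produce an fp open dense subset of $\BO + t\fg(\cO)$ all of whose members are shallow of type $[w] := \KL(\BO)$. Fix $e \in \BO$. By the definition of $\KL$ in \ref{cls:KL-map} there is already an fp open dense $U \subseteq \BO + t\fg(\cO)$ on which every element is regular semisimple topologically nilpotent of type $[w]$. Set $d_w := \sum_{\alpha \in R} r_{min}(\alpha)$. First I would record, using Proposition \ref{prop:open-stratum} and the discussion in \ref{cls:rt-val-str}, that an element $\gamma \in \fg(F)_{rstn}$ of type $[w]$ always satisfies $\val \Delta(\gamma) \ge d_w$, with equality exactly when $\gamma$ is shallow: conjugating $\gamma$ into $\fh_w(\cO)$ one has $\val \Delta(\gamma) = \sum_{\alpha \in R} r(\alpha)$ where $r$ is its root valuation, and $r_{min} \le r$ pointwise with equality only for $r = r_{min}$. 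Thus the shallow elements of type $[w]$ inside $U$ are precisely $S := \{ \gamma \in U \mid \val \Delta(\gamma) \le d_w \}$, and since $\BO + t\fg(\cO)$ is fp irreducible (it is the preimage of the irreducible locally closed $\BO \subseteq \fg$ under the reduction map $\fg(\cO) \to \fg$, whose fibres are affine spaces), it will be enough to show that $S$ is fp open and nonempty, as then $S$ is automatically fp dense.

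For fp openness: writing $\gamma = \sum_{i \ge 0} \gamma_i t^i$, the coefficient of $t^j$ in $\Delta(\gamma) \in \cO$ is a fixed polynomial in $\gamma_0, \dots, \gamma_j$ because $\Delta$ is polynomial, so for each $N$ the locus $\val \Delta(\gamma) \le N$ is fp open, being the non-vanishing of one of finitely many polynomials on a truncation of $\gamma$; intersecting with $U$ shows $S$ is fp open.

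The remaining point — and I expect this to be the main obstacle — is that $S \ne \emptyset$, i.e. that a generic element of $\BO + t\fg(\cO)$ is shallow, equivalently that after conjugation into $\fh_w(\cO)$ it lands in the open stratum $\fh_w(\cO)_{r_{min}}$ rather than over some non-minimal stratum $\fh_w(\cO)_r$ (lying over such a stratum would, by Proposition \ref{prop:open-stratum}(1), force $\val(\alpha(\gamma_{diag})) \ge r(\alpha)$ for all $\alpha$). This is exactly the geometric computation of Kazhdan--Lusztig \cite[\S 9]{KL:Fl}: for $e \in \BO$ the generic element of $e + t\fg(\cO)$ has discriminant valuation equal to the minimal value $d_w$ in its type $[w] = \KL(\BO)$ — one obtains it by combining their dimension formula for affine Springer fibres with its value along the slice $e + t\fg(\cO)$ — and the same input drives the proof of \cite[Theorem 6.1]{Yun:RTmin-arXiv}. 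Granting it, the generic point of $\BO + t\fg(\cO)$ lies in $S$, so $S$ is fp open, nonempty, hence fp dense, which is the assertion.

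An alternative route to the last step, avoiding \cite{KL:Fl}, would be an fp-dimension count: compare the dimension of the open stratum $\fh_w(\cO)_{r_{min}}$, computed in \cite{GKM:rval} together with \cite{Bouthier-Kazhdan-Varshavsky}, with that of $\BO + t\fg(\cO)$ modulo the conjugating group, so as to see that the generically defined conjugation map $\BO + t\fg(\cO) \to \fh_w(\cO)$ is dominant onto a set meeting $\fh_w(\cO)_{r_{min}}$. Either way, all other steps are routine fp-topology bookkeeping on arc spaces, and the one place the argument genuinely uses geometry is this generic-valuation (equivalently, dominance) statement.
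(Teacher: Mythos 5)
The paper gives no independent proof here; this statement is stated and used as a citation, attributed to the proof of \cite[Theorem 6.1]{Yun:RTmin-arXiv}. Your proposal is a reduction-plus-citation of essentially the same shape, and I'd call it a faithful unpacking rather than a new argument. The bookkeeping you supply is correct: the equivalence \emph{shallow of type $[w]$} $\Longleftrightarrow$ $\val\Delta(\gamma) = d_w := \sum_{\alpha} r_{min}(\alpha)$ follows from $\val\Delta(\gamma_{diag}) = \sum_\alpha r(\alpha)$ together with $r \geqslant r_{min}$ pointwise (Proposition \ref{prop:open-stratum}); the locus $\{\val\Delta \leqslant \lfloor d_w\rfloor\}$ is fp open since each Taylor coefficient of $\Delta(\gamma)$ is a polynomial on a finite truncation; and fp irreducibility of $\BO + t\fg(\cO)$ turns nonemptiness of $S = U \cap \{\val\Delta \leqslant d_w\}$ into fp density. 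All of that is sound.

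The gap, as you yourself flag, is that nonemptiness of $S$ (equivalently, that a generic element of $e + t\fg(\cO)$ has discriminant valuation exactly $d_w$) is precisely the nontrivial geometric content of the proposition, and you outsource it to Kazhdan--Lusztig's discriminant computation and Yun's proof. So the proposal does not bypass the external input; it isolates it. That is consistent with what the paper does. Two smaller points worth tightening: (1) the definition of $\KL$ in \ref{cls:KL-map} furnishes a dense fp open $U$ inside $e + t\fg(\cO)$ for a fixed $e$, not in $\BO + t\fg(\cO)$; to get the version for $\BO + t\fg(\cO)$ you should either argue on a single slice $e + t\fg(\cO)$ and invoke $G$-equivariance together with irreducibility of $\BO$, or saturate $U$ under the $G$-action using that type and shallowness are conjugation-invariant. (2) Kazhdan--Lusztig's Proposition 9.2 handles distinguished $e$; for general $\BO$ the generic-discriminant statement requires a reduction to Levi subalgebras, which is carried out in Yun's argument, so the clean citation for the general case is Yun's proof of Theorem 6.1, as the paper indicates, rather than \cite{KL:Fl} alone.
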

	
\end{clause}

	\medskip

\begin{clause}[Lusztig's maps $\Phi$ and $\Psi$, {\cite{Lusztig:Phi, Lusztig:Phi2}}]
	Lusztig constructed in \cite{Lusztig:Phi, Lusztig:Phi2} two maps
	\begin{align*}
		&\Phi: \ubar W \aro \ubar \cN,\\
		&\Psi: \ubar \cN \aro \ubar W.
	\end{align*}	
	For the definition of $\Phi$, take $[w] \in \ubar W$, let $w \in [w]$ be an element with minimal length. There is a unique unipotent conjugacy class $\cC \subset G$ that is minimal among all unipotent conjugacy classes intersecting the Bruhat cell $BwB$. Let $\BO \in \ubar \cN$ be the nilpotent orbit corresponding to $\cC$. Then $\Phi([w]) := \BO$. Recall a conjugacy class $[w] \in \ubar W$ is said to be \textbf{elliptic} if any element $w \in [w]$ is not contained in any proper Levi subgroup of $W$. The map $\Phi$ restricts to an injection on the set $\ubar W_{ell}$ of elliptic conjugacy classes in $W$, and the image of $\ubar W_{ell}$ contains all distinguished nilpotent orbits. 
	
	The map $\Psi$ is defined by sending $\BO \in \ubar \cN$ to the unique \textit{most elliptic} class in $\Phi\inv(\BO)$, i.e. compared to other classes $[w'] \in \Phi\inv(\BO)$, $\dim \fh^w < \dim \fh^{w'}$ for any $w \in [w]$ and any $w' \in [w']$.
\end{clause}

\begin{theorem}~
	\begin{enumerate}
		\item \cite{Lusztig:Phi, Lusztig:Phi2} $\Phi$ is surjective and $\Psi$ is a section of $\Phi$.
		
		\item \cite{Yun:RTmin-arXiv} The pair of maps $(\RTmin, \KL)$ is equal to $(\Phi,\Psi)$.
	\end{enumerate}
\end{theorem}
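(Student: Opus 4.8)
Both parts of this theorem are established in the cited works of Lusztig and Yun; the plan here is to indicate the shape of their arguments rather than to reprove them from scratch.

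For part (1), the plan is to first reduce surjectivity of $\Phi$ to a statement about elliptic classes. Via the Bala--Carter parametrization, every orbit $\BO \in \ubar\cN$ is $\Sat_L^G$ of a distinguished orbit in a Levi, so using that $\Phi$ is compatible with the inclusion $\ubar W_L \hookrightarrow \ubar W$ it suffices to hit all distinguished orbits, and for these one works inside the elliptic locus $\ubar W_{ell}$. The key input is the theory of minimal-length elements in conjugacy classes of $W$ (Geck--Pfeiffer, Geck--Michel, He--Nie): every class contains ``good'' minimal-length representatives whose Bruhat cells are well understood, and one checks case by case that $\Phi|_{\ubar W_{ell}}$ is injective with image containing all distinguished orbits. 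Once $\Phi$ is known to be surjective, the section property of $\Psi$ is almost formal: $\Psi(\BO)$ is by construction an element of the nonempty fiber $\Phi\inv(\BO)$, so $\Phi \circ \Psi = \mathrm{id}$; the only remaining point is that this fiber has a unique ``most elliptic'' member, which follows by comparing the numbers $\dim \fh^w$ over the fiber.

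For part (2), the plan is to pass through the root valuation strata of Goresky--Kottwitz--MacPherson recalled in \ref{cls:rt-val-str}. By Theorem \ref{thm:RTmin}, $\RTmin([w])$ is the nilpotent orbit common to $\RTmin(\gamma)$ for all shallow $\gamma$ of type $[w]$, i.e. for all $\gamma$ in the open stratum $\fh_w(\cO)_{r_{min}}$. One then computes this common orbit: for a shallow regular semisimple $\gamma$ of type $[w]$, one analyses the $G(F)$-translates of $\gamma$ that land in $\fg(\cO)$, reduces them mod $t$, and shows that the minimal nilpotent orbit so obtained is exactly the minimal unipotent class meeting the Bruhat cell $BwB$ of a minimal-length $w \in [w]$ --- that is, $\Phi([w])$. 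The input here is the geometry of the affine flag variety (affine Springer fibers together with the Kazhdan--Lusztig analysis of Bruhat cells). Dually, one shows $\KL = \Psi$: Proposition \ref{prop:yun6.1} provides a dense set of shallow elements of type $\KL(\BO)$ inside $\BO + t\fg(\cO)$, and combining this with the identification $\RTmin = \Phi$ and the defining extremal property of $\Psi(\BO)$ forces $\KL(\BO) = \Psi(\BO)$.

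The main obstacle is part (2), specifically the identification $\RTmin = \Phi$: this is not a formal manipulation but requires genuinely controlling which nilpotent orbits arise as mod-$t$ reductions of the $\fg(\cO)$-conjugates of a shallow regular semisimple element, and matching the smallest such orbit with the purely Bruhat-theoretic invariant $\Phi([w])$. Surjectivity of $\Phi$ in part (1), though classical, similarly depends on nontrivial case analysis, particularly in the exceptional types.
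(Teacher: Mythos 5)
The paper does not prove this theorem; it is stated as a citation to Lusztig and to Yun, so there is no internal argument against which to compare your sketch. You correctly recognize this and offer a plausible overview of the shape of the cited proofs, which is the right thing to do here. The reduction of surjectivity of $\Phi$ to elliptic classes via Levi compatibility, and the route through Goresky--Kottwitz--MacPherson root valuation strata for $\RTmin$, are indeed the organizing ideas in those sources.

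One place where your sketch glosses over a real point: at the end of part (2), Proposition \ref{prop:yun6.1} plus $\RTmin=\Phi$ only gives that $\BO$ belongs to the reduction type $\RT(\gamma)$ of a generic shallow $\gamma$ of type $\KL(\BO)$, i.e. that $\RTmin(\KL(\BO)) \preccurlyeq \BO$. To conclude $\KL(\BO) \in \Phi\inv(\BO)$ one must rule out a strictly smaller minimal reduction type, and then to identify $\KL(\BO)$ with $\Psi(\BO)$ one must separately show it is the unique most elliptic class in that fiber (e.g. by comparing $\dim \fh^w$ with the dimension of the centralizer of $e \in \BO$). These are genuine steps in Yun's argument, not consequences ``forced'' by what precedes. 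A smaller quibble on part (1): the machinery of good minimal-length representatives (Geck--Pfeiffer, He--Nie) postdates and supplements Lusztig's original construction of $\Phi$, which rests more directly on his unipotent support and special-piece technology; this does not affect correctness of the sketch but is worth flagging if you intend to attribute the method.
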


%----------------
\subsection{Cyclotomic levels and reduction types}\label{subsec:cl-RTmin}

For each integer $m$, we denote by $\zeta_m$ a primitive $m$-th roots of unity in $\BC$, and let $\Phi_m\in\BQ[x]$ be the cyclotomic polynomial which is the minimal polynomial of $\zeta_m$ over $\BQ$.

\begin{definition}\label{def:clW}
	Define the \textbf{cyclotomic level} map by
	\begin{equation*}
		\cl_W : W \aro \BZ_{\ge 1},\quad
		w \mapsto \max\{ \text{orders of eigenvalues of } w \acts \fh\}
	\end{equation*}
	where $\fh$ is the reflection representation. Alternatively, for any $w \in W$, let $\charp(w)$ be the characteristic polynomial of $w \acts\fh$. Since the reflection representation is defined over $\BQ$, $\charp(w)$ is a product of cyclotomic polynomials. Let $m$ be the largest integer such that $\Phi_m$ divides $\charp(w)$. Then we set 
	\begin{equation*}
		\cl_W(w) := m.
	\end{equation*}
	It clearly descends to a map on conjugacy classes
	\begin{equation*}
		\cl_W: \ubar W \aro \BZ_{\ge 1}.
	\end{equation*}
\end{definition}

First we record two immediate facts about $\cl_W$.

\begin{lemma}\label{lem:clW-in-Levi}~
	\begin{enumerate}
		\item If $w \in W$ is contained in a Levi subgroup $W_L$, then the values $\cl_W(w)$ and $\cl_{W_L}(w)$ computed in $W$ and in $W_L$ are the same.
		
		\item Suppose $L \subset G$ is a Levi, $L_1, \ldots,L_s$ are the simple factors of $L$, $w_i \in W_{L_i}$ and $w = w_1 \cdots w_s$. Then 
		\begin{equation*}
			\cl_W(w) = \max\{\cl_W(w_1),\ldots,\cl_W(w_s)\}.
		\end{equation*}
	\end{enumerate}	
\end{lemma}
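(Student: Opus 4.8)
\textbf{Proof plan for Lemma \ref{lem:clW-in-Levi}.}

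The plan is to reduce everything to elementary facts about characteristic polynomials and the decomposition of the reflection representation. For part (1), the key observation is that if $w$ lies in a Levi subgroup $W_L$ with associated Levi subalgebra $\fl = \fz(\fl) \oplus [\fl,\fl]$, then as a $W_L$-module the reflection representation $\fh$ of $W$ decomposes as $\fh = \fh_L \oplus \fh^{W_L}$, where $\fh_L$ is a Cartan of $[\fl,\fl]$ (the reflection representation of $W_L$) and $\fh^{W_L} = \fz(\fl)$ is fixed pointwise by $W_L$. Hence the characteristic polynomial of $w$ on $\fh$ factors as $\charp_W(w) = \charp_{W_L}(w) \cdot (x-1)^{\dim \fz(\fl)}$. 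Since the extra factor $(x-1)^{\dim \fz(\fl)}$ only contributes powers of $\Phi_1 = x-1$, and since $\Phi_1$ already divides $\charp_{W_L}(w)$ whenever $w$ has any nonzero fixed vector in $\fh_L$ (and otherwise the two largest indices being compared are both trivially $\ge 1$), the largest $m$ with $\Phi_m \mid \charp_W(w)$ equals the largest $m$ with $\Phi_m \mid \charp_{W_L}(w)$. The one point to check carefully is the edge case where $\charp_{W_L}(w)$ is a pure power of $x-1$ (i.e. $w$ acts unipotently, hence trivially, on $\fh_L$): then $\cl_{W_L}(w) = 1 = \cl_W(w)$, so the equality still holds. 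I would spell this out in a sentence.

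For part (2), I would use that $W_L = W_{L_1} \times \cdots \times W_{L_s}$ and that the reflection representation of $[\fl,\fl]$ is the direct sum $\fh_{L_1} \oplus \cdots \oplus \fh_{L_s}$ of the reflection representations of the simple factors, on which $w = w_1 \cdots w_s$ acts block-diagonally with $w_i$ acting on $\fh_{L_i}$ and trivially on the other blocks. Therefore, computing inside $W_L$, we have $\charp_{W_L}(w) = \prod_{i=1}^s \charp_{W_{L_i}}(w_i)$. By part (1), $\cl_W(w) = \cl_{W_L}(w)$ and $\cl_W(w_i) = \cl_{W_{L_i}}(w_i)$, so it remains to show that the largest $m$ with $\Phi_m \mid \prod_i \charp_{W_{L_i}}(w_i)$ equals $\max_i \{\text{largest } m_i \text{ with } \Phi_m \mid \charp_{W_{L_i}}(w_i)\}$. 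This is immediate: $\Phi_m$ is irreducible over $\BQ$, so $\Phi_m$ divides the product if and only if it divides one of the factors, whence the largest such $m$ over the product is the maximum of the largest such $m$ over the individual factors.

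There is essentially no obstacle here — both statements are formal once the decomposition $\fh|_{W_L} = \fh_L \oplus \fz(\fl)$ and the multiplicativity of characteristic polynomials over a direct sum are in hand. The only mildly delicate point is the bookkeeping around $\Phi_1 = x-1$ in part (1): one must make sure that adjoining extra factors of $(x-1)$ never increases the invariant, which is clear since $1$ is the smallest possible value of $\cl_W$ and $\Phi_1$ has the smallest index among all cyclotomic polynomials. I would therefore present the proof concisely, emphasizing the module decomposition and the irreducibility of cyclotomic polynomials, and dispatching the $(x-1)$-power edge case in one line.
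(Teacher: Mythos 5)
Your proposal is correct and follows essentially the same route as the paper: decompose the reflection representation of $W$ under $W_L$ as $\fh = \fh_L \oplus \fz(\fl)$ to get $\charp_W(w) = \charp_{W_L}(w)\cdot\Phi_1^{\dim\fz(\fl)}$, observe that adjoining powers of $\Phi_1$ cannot change the largest cyclotomic index, and then for part (2) factor $\charp_{W_L}(w)$ over the simple factors and use irreducibility of $\Phi_m$ over $\BQ$. Your extra care with the edge case where $w$ acts trivially on $\fh_L$ is a nice precaution but is already implicit in the paper's remark that $\Phi_d$ with $d>1$ divides $\charp_W(w)$ iff it divides $\charp_{W_L}(w)$.
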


\begin{proof}~
	\begin{enumerate}
		\item Write $V$, $V_L$ for the reflection representation of $W$ and $W_L$, respectively, and write $\charp_L(w)$ for the characteristic polynomial of $w$ on $V_L$. Then $V = V_L \oplus V_L'$ where $V_L'$ is the intersection of reflection hyperplanes of reflections in $W_L$. The element $w$ preserves this decomposition and acts as identity on $V_L'$. So $\charp(w)(x) = \charp_L(w)(x) \cdot (x-1)^{\dim V_L'} = \charp_L(w)(x) \cdot \Phi_1^{\dim V_L'}$, and $\Phi_d$ ($d >1$) divides $\charp(w)$ if and only if it divides $\charp_L(w)$. So (1) follows.
		
		\item By (1), $\cl_W(w)$ can be computed from $\charp_L(w)$. So (2) follows from the decomposition $\charp_L(w) = \charp_{L_1}(w) \cdots \charp_{L_s}(w) = \charp_{L_1}(w_1) \cdots \charp_{L_s}(w_s)$. \qedhere
	\end{enumerate}	
\end{proof}

For the next fact, recall that a positive integer $m$ is said to be a \textit{regular number} for $W$ if there exists an element $w \in W$ of order $m$ that admits a regular eigenvector in the reflection representation. Such elements have been extensively studied in \cite{Springer:reg}. In particular, there is a unique conjugacy class $W[m]$ in $W$ consisting of regular elements of order $m$. Regular numbers are classified in each type in \textit{loc. cit.}:
\begin{itemize}
	\item Type $A_n$: divisors of $n$ and $n+1$
	\item Type $B_n$: divisors of $2n$
	\item Type $C_n$: divisors of $2n$
	\item Type $D_n$: divisors of $2n-2$ and $n$
	\item Type $E_6$: $2, 3, 4, 6, 8, 9, 12$
	\item Type $E_7$: $2, 3, 6, 7, 9, 14, 18$
	\item Type $E_8$: $2, 3, 4, 5, 6, 8, 10, 12, 15, 20, 24, 30$
	\item Type $F_4$: $2, 3, 4, 6, 8, 12$
	\item Type $G_2$: $2, 3, 6$.
\end{itemize}

%\begin{remark}[Relation to 4D mirror symmetry]\label{rmk:4D-mirror}
%	One of our motivations is a conjectural ``4D mirror symmetry'' proposed in \cite{Shan-Xie-Yan:phys, Shan-Xie-Yan}. The main players in that setting are homogeneous affine Springer fibers and simple affine vertex algebras, which is a special case of the setup of the current paper. Here we say the fiber $\cFL_\gamma$ (see \S \ref{subsec:ASF}) is \textit{homogeneous} of degree $1/m$ if $\gamma$ can be conjugated (over $\bar F$) to an element inside $\fh \otimes t^{1/m}$. This forces the number $m$ to be a regular number.
%	
%	In the homogeneous setting, it is expected by the first two authors \textcolor{blue}{(??? and Dan Xie)} of the current paper that the relationship between Springer fibers and vertex algebras is much richer than what is described in this paper. This will be discussed in more details in a future work.
%\end{remark}

\begin{lemma}\label{lem:clW(reg)}
	Let $m$ be a regular number of $W$. Then $\cl_W(W[m]) = m$.
\end{lemma}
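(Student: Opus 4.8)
The plan is to use the defining property of a regular element together with Springer's theory of regular elements in \cite{Springer:reg}. Recall that if $w \in W[m]$ has order $m$ and admits a regular eigenvector $v \in \fh$ with eigenvalue $\zeta$, then $\zeta$ is a primitive $m$-th root of unity; so $\Phi_m$ already divides $\charp(w)$, and it remains only to show that no $\Phi_{m'}$ with $m' > m$ divides $\charp(w)$. Since $w$ has order $m$, every eigenvalue of $w$ on $\fh$ is an $m$-th root of unity, hence a root of $\prod_{d \mid m} \Phi_d$; therefore any cyclotomic factor $\Phi_{m'}$ of $\charp(w)$ must satisfy $m' \mid m$, in particular $m' \le m$. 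Combining the two observations gives $\cl_W(W[m]) = m$ immediately.

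The only point requiring a little care is the first claim, namely that a primitive $m$-th root of unity actually occurs among the eigenvalues of $w$ on $\fh$ — equivalently, that $\Phi_m \mid \charp(w)$. This follows from the definition of a regular element: $w$ has order exactly $m$ and possesses an eigenvector $v$ not lying on any reflection hyperplane, with $w v = \zeta v$ for some root of unity $\zeta$; Springer shows (\cite{Springer:reg}, Theorem 4.2 and its corollaries) that such $\zeta$ is necessarily a primitive $m$-th root of unity, so $\zeta$ is a root of $\Phi_m$ and $\Phi_m \mid \charp(w)$. The first paragraph then applies verbatim.

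An alternative, entirely elementary route that avoids invoking the finer structure of Springer's theory: simply verify the statement type by type using the list of regular numbers reproduced in the excerpt and the known characteristic polynomials of regular elements (for instance, in type $A_n$ a regular element of order $m$ with $m \mid n+1$ is an $((n+1)/m)$-fold product of $m$-cycles times nothing, so its characteristic polynomial on $\fh$ is $(x^m - 1)^{(n+1)/m}/(x-1)$, whose largest cyclotomic factor is $\Phi_m$; and similarly for the other types using the degrees and the fact that the eigenvalues of a $\zeta$-regular element are $\zeta^{1-d_i}$ where the $d_i$ are the degrees). I expect the conceptual argument of the first two paragraphs to be the cleanest, so I would present that; the main (minor) obstacle is simply citing the correct statement from \cite{Springer:reg} that pins down the eigenvalue of the regular eigenvector as a \emph{primitive} $m$-th root of unity, rather than merely an $m$-th root of unity.
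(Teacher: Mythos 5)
Your proof is correct and rests on the same pillars as the paper's: both halves of the argument go through Springer's Theorem 4.2. The upper bound (all cyclotomic factors $\Phi_{m'}$ have $m' \mid m$, since $w$ has order $m$) is identical. For the lower bound — producing a primitive $m$-th root of unity among the eigenvalues of $w$ on $\fh$ — you take a slightly more direct route: you observe that the eigenvalue $\zeta$ of the regular eigenvector is itself primitive of order $m$, since its order $d$ must equal the order of $w$ by Theorem 4.2(i). The paper instead exhibits a primitive $m$-th root of unity among the eigenvalues $\zeta_m^{d_i - 1}$ by combining two other consequences of the same theorem: that $m$ divides some degree $d_i$, and that the eigenvalues of $w$ are exactly $\zeta_m^{d_1-1}, \ldots, \zeta_m^{d_n-1}$. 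Your variant is a little leaner (no appeal to the degrees of $W$), but the two proofs are really the same in spirit, and your citation to Theorem 4.2 does cover the claim you need. One small wording caution: Theorem 4.2 is stated with primitivity of $\zeta$ as a hypothesis, not a conclusion; the precise deduction is that $\zeta$, being a root of unity of some order $d$, is primitive of that order, and then 4.2(i) forces $d = \operatorname{ord}(w) = m$ — worth spelling out if you write this up.
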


\begin{proof}
	Let $d_1 \le \cdots \le d_n$ be the degrees of $W$ (i.e. the degrees of homogenous algebraically independent generator of $\BC[\fh]^W$). Let $w \in W$ be a regular element with regular eigenvector $v$ with eigenvalue $\zeta_m$. Recall the following facts from \cite[Theorem 4.2]{Springer:reg}:
	\begin{enumerate}
		\item $m$ divides at least one of the degrees $d_i$ (in fact, the dimension of the $\zeta_m$-eigenspace of $w$ is equal to the number of $d_i$'s divisible by $m$); 
		
		\item The order of $w$ is equal to $m$.
		
		\item The eigenvalues of $w$ are precisely given by 
		\begin{equation*}
			\zeta_m{}^{d_1-1}, \ldots, \zeta_m{}^{d_n-1}.
		\end{equation*}
	\end{enumerate}
	
	Now property (2) implies that the minimal polynomial $\min(w)$ of $w$ divides $x^m -1$. The factors of $x^m-1$ are precisely the cyclotomic polynomials $\Phi_a$ for $a | m$. We claim that $\Phi_m$ appears in $\min(w)$. Indeed, since $m$ divides some $d_i$ by property (1), we have $\zeta_m{}^{d_i} = 1$. So $\zeta_m{}^{d_i-1}$ is a primitive $m$-th root of unity which is an eigenvalue of $w$. Since $\Phi_m$ is the minimal polynomial of $\zeta_m{}^{d_i-1}$ over $\BQ$, it must divide $\min(w)$. This proves the claim. 
	
	Finally, since all irreducible factors of $\charp(w)$ appear in $\min(w)$, they are all of the form $\Phi_a$, $a|m$, and clearly $\Phi_m$ has the largest subscript among them.
\end{proof}

\medskip

Next, we provide an alternative description of $\cl_W([w])$ in terms of root valuations. This fact is implicit in the proof of \cite[Lemma 2.2]{Yun:RTmin-arXiv}. We include a proof here for completeness. Recall from \ref{cls:rt-val-str} the notion of root valuations.

\begin{lemma}\label{lem:cl=root-val}
	Let $[w] \in \ubar W$, and let $\gamma\in \fh_w(\cO)_{r_{min}}$. Then
	\begin{enumerate}
		\item For any $\alpha$, there is a cyclotomic factor $\Phi_{d_\alpha}$ of $\charp(w)$ so that $\val(\alpha(\gamma)) =\frac1{d_\alpha}$.
		\item We have $\min \{ r_{min}(\alpha) \mid \alpha \in R\} = \frac1{\cl_W([w])}$.
	\end{enumerate}
\end{lemma}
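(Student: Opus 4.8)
The strategy is to diagonalize $\gamma$ over $\bar F$ and track how the Galois twist by $w$ forces the valuations of the root coordinates to be rational numbers of a very restricted form. Recall from \ref{cls:tori-in-g(F)} that after conjugating by an element of $G(\bar F)$ we may assume $\gamma = \gamma_{diag} \in \fh_w(\cO)$, so $w\sigma^{-1}(\gamma_{diag}) = \gamma_{diag}$, where $n = \operatorname{ord}(w)$ and $\sigma(t^{1/n}) = e^{2\pi i/n} t^{1/n}$. Writing $\gamma_{diag} = \sum_{j \ge 1} h_j t^{j/n}$ with $h_j \in \fh$, the condition $w\sigma^{-1}(\gamma_{diag}) = \gamma_{diag}$ becomes $w(h_j) = e^{2\pi i j/n} h_j = \zeta_n^{\,j} h_j$ for each $j$; that is, $h_j$ lies in the $\zeta_n^{\,j}$-eigenspace $\fh_{(j)}$ of $w$ on $\fh$. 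For a root $\alpha \in R$, the coordinate $\alpha(\gamma_{diag}) = \sum_j \alpha(h_j) t^{j/n}$, and $\val(\alpha(\gamma_{diag})) = j_\alpha/n$ where $j_\alpha$ is the least $j \ge 1$ with $\alpha(h_j) \ne 0$.

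For part (1), I would argue that for $\gamma$ in the open (shallow) stratum $\fh_w(\cO)_{r_{min}}$ the vectors $h_j$ may be taken as generic as possible in their eigenspaces: concretely, the open stratum is obtained by choosing each $h_j$ generically in $\fh_{(j)}$, so $\alpha(h_{j_\alpha}) \ne 0$ precisely when $j_\alpha$ is the smallest index $j$ such that $\alpha$ does not vanish identically on $\fh_{(j)}$. Set $d_\alpha := n/\gcd(n, j_\alpha)$, an integer dividing $n$. Then $\val(\alpha(\gamma)) = j_\alpha/n = 1/d_\alpha' $ only if $\gcd(n,j_\alpha)$ makes this a unit fraction — here I must be careful: $j_\alpha/n$ need not be a unit fraction a priori. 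The correct statement to extract is that $\zeta_n^{\,j_\alpha}$ is a root of $\charp(w)$ (since $\fh_{(j_\alpha)} \ne 0$, as $\alpha$ is nonzero there), hence a primitive $d$-th root of unity for $d = \operatorname{ord}(\zeta_n^{j_\alpha}) = n/\gcd(n,j_\alpha)$, and $\Phi_d \mid \charp(w)$; moreover one checks $\val(\alpha(\gamma)) = j_\alpha/n$. To get the clean form $\val(\alpha(\gamma)) = 1/d_\alpha$ claimed in the statement, I expect one uses that in the \emph{minimal} stratum the relevant exponent $j_\alpha$ is in fact the minimal $j$ in the congruence class of $j_\alpha$ mod $n$ for which $\fh_{(j)}$ pairs nontrivially with $\alpha$, and a re-indexing argument (or the fact that the $\zeta_n^j$-eigenspace for $\gcd(n,j)=n/d$ is the same as the primitive part) shows $j_\alpha = n/d_\alpha$; this is exactly the computation in \cite[Lemma 2.2]{Yun:RTmin-arXiv} and I would follow it, being careful to cite the genericity of the shallow stratum from Proposition \ref{prop:open-stratum}(2).

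For part (2), combining (1), we have $r_{min}(\alpha) = 1/d_\alpha$ with each $d_\alpha$ the order of some eigenvalue of $w$, i.e. $\Phi_{d_\alpha} \mid \charp(w)$, so $d_\alpha \le \cl_W([w])$ for all $\alpha$, giving $\min_\alpha r_{min}(\alpha) = 1/\max_\alpha d_\alpha \ge 1/\cl_W([w])$. For the reverse inequality I need a root $\alpha$ with $d_\alpha = \cl_W([w]) =: m$. Let $v \in \fh$ be an eigenvector of $w$ with eigenvalue a primitive $m$-th root of unity $\zeta$; since $w$ is realized on the root lattice and $\fh$ is spanned by coroots, $v$ cannot be annihilated by every root, so there is $\alpha$ with $\alpha(v) \ne 0$. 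Taking $j$ with $\zeta_n^{\,j} = \zeta$ (so $n/\gcd(n,j) = m$) and choosing $h_j$ to involve $v$ in the generic point of the stratum forces $j_\alpha \le j$, hence $\val(\alpha(\gamma)) \le j/n$; and minimality of the stratum together with part (1) pins this down to $\val(\alpha(\gamma)) = 1/m$. Therefore $\min_\alpha r_{min}(\alpha) \le 1/m$, completing the proof.

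\textbf{Main obstacle.} The delicate point is establishing the unit-fraction form $\val(\alpha(\gamma)) = 1/d_\alpha$ in part (1) rather than the weaker $j_\alpha/n$: one must show that in the shallow (minimal-valuation) stratum the smallest nonvanishing exponent for $\alpha$ is $n/d_\alpha$ and not merely some $j_\alpha$ with $n/\gcd(n,j_\alpha) = d_\alpha$. This requires knowing that the eigenspace $\fh_{(n/d_\alpha)}$ already pairs nontrivially with $\alpha$ whenever any $\fh_{(j)}$ with $\gcd(n,j) = n/d_\alpha$ does — essentially that $w$ and its powers generating the same cyclic subgroup have compatible eigenspace decompositions, which follows from $\fh_{(j)} = \fh_{(j')}$ when $\zeta_n^j, \zeta_n^{j'}$ have the same order only if they are Galois-conjugate, so one genuinely needs the genericity/irreducibility of $\fh_w(\cO)_{r_{min}}$ from Proposition \ref{prop:open-stratum}(2) to average over Galois conjugates. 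I would treat this carefully by reducing to the cyclic group $\langle w \rangle$ and invoking that a generic point of the stratum has each $h_j$ a generic vector of $\fh_{(j)}$, so that $j_\alpha = \min\{ j \ge 1 : \alpha|_{\fh_{(j)}} \ne 0\}$ and then identifying this minimum with $n/d_\alpha$ via the structure of cyclotomic cosets.
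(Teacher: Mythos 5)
Your plan is correct and essentially matches the paper's proof: diagonalize $\gamma$ over $\bar F$, decompose $\fh_w(\cO)$ into $w$-eigenspaces $\fh^{w-\zeta^j}t^{j/n}$, use the minimality from Proposition \ref{prop:open-stratum}(2) to pin $\val(\alpha(\gamma))$ down to $j_\alpha/n$, and then use $\BQ$-rationality of $w$ and $\alpha$ to identify $j_\alpha = n/d_\alpha$. One correction to your ``main obstacle'' discussion: the Galois-conjugation step (that $\alpha$ nonvanishing on $\fh^{w-\zeta^{j_\alpha}}$ forces $\alpha$ nonvanishing on $\fh^{w-\zeta^{n/d_\alpha}}$, since both lie in the same $\Gal(\BQ(\zeta_n)/\BQ)$-orbit) is pure linear algebra over $\BQ$ and does not need Proposition \ref{prop:open-stratum}(2) at all; that proposition enters earlier, where the paper constructs an explicit witness $\gamma' \in \fh_w(\cO)_{tn}$ achieving $\val(\alpha(\gamma')) = j_\alpha/n$, from which the shallow-stratum minimality gives $\val(\alpha(\gamma)) = j_\alpha/n$ rather than relying on a ``each $h_j$ generic'' heuristic. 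Your argument for part (2) works once you either take $j = n/d_\alpha$ directly or invoke the Galois step again to replace the exponent $j$ by $n/m$.
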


In the case where $m$ is a regular number, it was shown in \cite[4.9]{GKM:rval} that the minimal valuation function $r_{min}$ attached to the conjugacy class $W[m]$ is the constant function $r_{min} = 1/m$. 
%In other words, all root valuations of any $\gamma \in \fc(F)_{(W[m],r_{min})}$ are equal to $1/m$.

\begin{proof}
	Let $n$ be the order of $w$. Let $\zeta=\zeta_n$ be a fixed primitive $n$-th root of unity. For a polynomial $f(x)\in\BC[x]$, we write $\fh^{f(w)}$ for the kernel of $f(w): \fh \to \fh$. In particular, $\fh^{w - \zeta^j}$ is the $w$-eigenspace with eigenvalue $\zeta^j$, and $\fh^{\Phi_d(w)}$ is the sum of $w$-eigenspaces whose eigenvalues are primitive $d$-th roots of unity. 
	
	Recall that $\fh_w(\cO)$ is the set of fixed points in $\fh \series{t^{1/n}}$ under the automorphism $h(t^{1/n}) \mapsto wh(\zeta\inv t^{1/n})$. We have
	\begin{equation*}
		\fh_w(\cO) = \sum_{j \ge 0} \fh^{w-\zeta^j} t^{j/n} .
	\end{equation*}
	Indeed, if $h \in \fh^{w- \zeta^j}$ is nonzero, then $h t^{k/n}$ is fixed under the above automorphism if and only if $h t^{k/n} = (wh) (\zeta\inv t^{1/n})^k = \zeta^{j-k} h t^{k/n}$ which forces $j \in k + n\BZ$. Hence $\gamma$ can be written as
	\begin{equation*}
		\gamma = \sum_{j \ge 0} \gamma_j t^{j/n} \text{ with } \gamma_j \in \fh^{w-\zeta^j}.
	\end{equation*}
	For any root $\alpha$ of $\fg$, we have $\alpha(\gamma) = \sum_{j \ge 0} \alpha(\gamma_j) t^{j/n}$ where $\alpha(\gamma_j) \in \BC$. So $\val(\alpha(\gamma)) = j/n$ where $j$ is the smallest number so that $\alpha(\gamma_j) \neq 0$. 
	
	Let $j_\alpha$ be the smallest number so that $\alpha$ is nonzero on $\fh^{w- \zeta^{j_\alpha}}$. In particular $0\leqslant j_\alpha\leqslant n-1$, and $\val(\alpha(\gamma))\geqslant j_\alpha/n$. We claim that $\val(\alpha(\gamma)) = j_\alpha/n$. Indeed, $\gamma$ belongs to $\fh_w(\cO)_{r_{min}}$ implies that for each root $\alpha$, it has the smallest $\alpha$-valuation among all the regular elements in $\fh_w(\cO)_{tn}$ by Proposition \ref{prop:open-stratum}(2). On the other hand, there exists an element $\gamma' \in \fh_w(\cO)_{tn}$ so that the equality $\val(\alpha(\gamma')) = j_\alpha/n$ is achieved. To see this, we pick a regular element $h' \in \fh$ and write it as $h' = \sum_{j=0}^{n-1} h_j'$ according to the decomposition $\fh = \bigoplus_{j=0}^{n-1} \fh^{w- \zeta^j}$. Let $h_{j_\alpha} \in \fh^{w-\zeta^{j_\alpha}}$ be any element such that $\alpha(h_{j_\alpha}) \neq 0$, and set $\gamma'= h_{j_\alpha} t^{j_\alpha/n} + \sum_{j=0}^{n-1} h_j' t^{(j+n)/n} \in \fh_w(\cO)_{tn}$. Then $\gamma'$ is regular semisimple. Indeed, if $\beta$ is a root so that $\beta(\gamma') = 0$, then $\beta(h_{j_\alpha}) t^{j_\alpha/n} + \sum_{j=0}^{n-1} \beta(h_j') t^{(j+n)/n} = 0$. This forces $\beta(h_j') =0$ for all $j$, and hence $\beta(h') =0$, contradicting the regularity of $h'$. By construction, we have $\val(\alpha(\gamma')) = j_\alpha/n$. Hence by the minimality of the $\alpha$-valuation of $\gamma$, we have $\val(\alpha(\gamma)) \leqslant j_\alpha/n$. So the claim is proved.
	
	\smallskip
	
	We now show that for any root $\alpha$, the $\alpha$-root valuation of $\gamma$ is of the form $1/d_\alpha$, where $d_\alpha$ is an integer depending on $\alpha$. Indeed, let $d_\alpha$ be the order of $\zeta^{j_\alpha}$. Then $d_\alpha | n$. Since both $w$ and $\alpha$ are defined on a rational form $\fh_\BQ$ of $\fh$, and $\zeta^{j_\alpha}$ is an eigenvalue of $w$, we deduce that the cyclotomic polynomial $\Phi_{d_\alpha}$ divides $\charp(w)$, and $\alpha$ is nonzero on any $\fh^{w-\zeta'}$ for any $\zeta'$ a root of $\Phi_{d_\alpha}$. Such $\zeta'$ are of the form $\zeta^{j'}$ with the smallest $j'$ being $n/d_{\alpha}$. Thus, $n/d_\alpha=j_\alpha$. We deduce $j_\alpha/n=1/d_\alpha$. This proves (1).

\smallskip

	Part (2) easily follows from (1) and the definition of $\cl_W([w])$.
\end{proof}

The remainder of this subsection is devoted to proving the following theorem.

\begin{theorem}\label{thm:cl-triangle}
	The following diagram commutes
	\begin{equation*}
		\begin{tikzcd}
			\ubar W \ar[dr, "\RTmin"', two heads] \ar[rr, "\cl_W"] 
			&& \BZ_{1 \le \bullet \le \Bh}\\
			& \ubar \cN \ar[ur, "\cl_n"', end anchor=south west]
		\end{tikzcd}.
	\end{equation*}
\end{theorem}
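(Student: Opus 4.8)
The claim is that $\cl_n \circ \RTmin = \cl_W$ on $\ubar W$. The strategy is to compute both sides on a fixed class $[w]$ by passing to a convenient ambient object, namely a shallow regular semisimple topologically nilpotent element $\gamma \in \fh_w(\cO)_{r_{\min}}$. The right-hand side, $\cl_W([w])$, has by Lemma~\ref{lem:cl=root-val}(2) a purely valuation-theoretic description: $\cl_W([w]) = 1/\min_{\alpha \in R} r_{\min}(\alpha)$. So the task reduces to showing that $\cl_n(\RTmin([w]))$ equals the reciprocal of the smallest root valuation appearing among shallow elements of type $[w]$.

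\smallskip

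First I would set $\BO := \RTmin([w])$ and pick a nilpotent $e \in \BO$. The key geometric input is Proposition~\ref{prop:yun6.1}: the set $\BO + t\fg(\cO)$ contains a dense open subset of shallow elements of type $\KL(\BO)$. Combined with Yun's theorem that $(\RTmin,\KL) = (\Phi,\Psi)$ and that $\Psi$ is a section of $\Phi = \RTmin$, one gets that $\KL(\BO) = \KL(\RTmin([w]))$, which for $[w]$ in the image of $\Psi$ recovers $[w]$ itself; in general one only needs that there \emph{exists} a shallow element of type $[w]$ lying in $\BO + t\fg(\cO)$, which is what I would extract. The point of landing in $e + t\fg(\cO)$ is that one can then estimate root valuations directly: choosing an $\fsl_2$-triple $\{h,e,f\}$ adapted to $\BO$ (via its Bala-Carter Levi $\fl$, as in Definition~\ref{def:cln}), one uses the grading by $\ad h$-weights. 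Conjugating a generic element of $e + t\fg(\cO)$ by the cocharacter $t^{\ad h/2}$ (or rather analyzing the characteristic polynomial of $\ad$ of such an element) shows that the eigenvalues of $\gamma_{diag}$ scale like $t^{1/(2a+2)}$ where $2a$ is the top $\ad h$-weight in $\fl$ — this is precisely the normal form behind the identity $\cl_n(\BO) = a+1$ and the computation in \cite[Proposition 9.2]{KL:Fl}. This gives $\min_\alpha \val(\alpha(\gamma)) = 1/(a+1) = 1/\cl_n(\BO)$ for a \emph{generic} (hence shallow, by density) element in $\BO + t\fg(\cO)$.

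\smallskip

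Putting the two computations together: for a shallow $\gamma$ of type $[w]$ lying in $\BO + t\fg(\cO)$, Lemma~\ref{lem:cl=root-val}(2) gives $\min_\alpha \val(\alpha(\gamma)) = 1/\cl_W([w])$, while the Slodowy/Kazhdan–Lusztig normal form analysis gives $\min_\alpha \val(\alpha(\gamma)) = 1/\cl_n(\BO) = 1/\cl_n(\RTmin([w]))$. Equating yields the theorem. An alternative, perhaps cleaner, route avoids the explicit normal form: since the theorem is stated for all $[w]$, and $\RTmin = \Phi$ is surjective, it suffices to verify the identity on each class; one can reduce to elliptic classes using Lemma~\ref{lem:clW-in-Levi} together with the compatibility $\RTmin$ with Levi induction (if $[w]$ comes from $W_L$ then $\RTmin([w])$ is the saturation of $\RTmin_L([w])$, matching the "distinguished-in-$\fl$" bookkeeping in Definition~\ref{def:cln}), and then handle distinguished orbits / elliptic classes either by the $\fsl_2$ argument above or by the explicit tables already assembled in \S\ref{subsec:cln}. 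I would also invoke Lemma~\ref{lem:clW(reg)} to pin down the regular-number cases, where $r_{\min} \equiv 1/m$ by \cite[4.9]{GKM:rval} and $\cl_W(W[m]) = m$, giving the sharpest consistency check.

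\smallskip

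The main obstacle I anticipate is the second step: making rigorous the passage from "$\gamma$ generic in $\BO + t\fg(\cO)$" to a controlled statement about $\min_\alpha \val(\alpha(\gamma))$, i.e. showing this minimal valuation is exactly $1/(a+1)$ and not merely $\le 1/(a+1)$ or $\ge$. The upper bound $\le 1/(a+1)$ should follow by exhibiting an explicit $\gamma$ (add $f\cdot t$ to $e$, or a regular semisimple lift built from the $\fsl_2$ triple, mimicking the construction of $\gamma'$ in the proof of Lemma~\ref{lem:cl=root-val}); the lower bound — that no root valuation can be smaller — is the delicate part and is where the shallowness hypothesis (minimality of $\val \Delta(\gamma)$) and Proposition~\ref{prop:open-stratum} must be used, together with the fact that the $\ad h$-weights in $\fl$ do not exceed $2a$. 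If the direct argument gets unwieldy, the fallback is to cite the computation in \cite[Proposition 9.2]{KL:Fl} (which is essentially this statement for the KL map) and \cite[\S7.3]{stable-grading}, and then feed it through Yun's identification of $\RTmin$ with $\Phi$ and of $\KL$ with $\Psi$.
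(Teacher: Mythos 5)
Your overall strategy matches the paper's: identify $1/\cl_W([w])$ as the smallest shallow root valuation via Lemma~\ref{lem:cl=root-val}(2), compare with $1/\cl_n(\BO)$ via an $\ad h$-weight analysis of lifts of $e \in \BO$ inside $e + t\fg(\cO)$ (using Proposition~\ref{prop:yun6.1} to place shallow elements there), and close the loop for general $[w]$ by reduction to a Levi where $[w]$ is elliptic. The key references you list (Lemmas~\ref{lem:cl=root-val}, \ref{lem:clW-in-Levi}, Proposition~\ref{prop:yun6.1}, Yun's $\RTmin_M$--$\Sat_M^G$ compatibility, and the Kazhdan--Lusztig computation \cite[Proposition 9.11]{KL:Fl}) are indeed the ones the paper uses. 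But there are two substantive gaps.

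First, in your primary route you write that ``one only needs that there exists a shallow element of type $[w]$ lying in $\BO + t\fg(\cO)$.'' For general $[w]$ this is false: the dense open set of shallow elements in $\BO + t\fg(\cO)$ all have the single type $\KL(\BO)$, which generically differs from $[w]$. So the direct valuation comparison only proves $\cl_W(\KL(\BO)) = \cl_n(\BO)$ (the paper's Proposition~\ref{prop:rt-val-of-lift}), not the theorem. Your ``alternative, cleaner route'' is actually not optional but necessary: one sets $M = Z_G(\fh^w)$ so that $[w]_M$ is elliptic in $W_M$, notes $\KL_M(\BO_M) = [w]_M$ by uniqueness of the most elliptic class, applies the Proposition inside $M$, and transports $\cl_W$ and $\cl_n$ along $W_M \subset W$ and $L \subset M \subset G$ via Lemma~\ref{lem:clW-in-Levi} and the Bala--Carter bookkeeping. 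Your proposal contains all these pieces but presents them as alternatives rather than as a two-step pipeline.

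Second, your calibration of which inequality is delicate is backwards, and this is where the real technical content hides. The lower bound $\min_\alpha r_{\min}(\alpha) \ge 1/(a+1)$ (i.e.\ $\cl_W \le a+1$) follows from a clean lattice argument: one constructs an $\xi = h + 2(a+1)d$-graded lattice $\cL \subset \fg(F)$ with $(\ad\gamma)^{2a+3}\cL \subset t^2\cL$ for \emph{every} $\gamma \in e + t\fg(\cO)$, giving $\min r_{\min} \ge 2/(2a+3)$, which upgrades to $\ge 1/(a+1)$ by Lemma~\ref{lem:cl=root-val}(1) since $r_{\min}$ takes values of the form $1/d$. By contrast, the upper bound $\min_\alpha r_{\min}(\alpha) \le 1/(a+1)$ (i.e.\ $\cl_W \ge a+1$) does \emph{not} follow by simply exhibiting an explicit element: the natural candidate $\gamma^0 = e + tX_{-2a}$ (note: $X_{-2a}$, not $ft$; your suggested $e + ft$ is not $\xi$-homogeneous) has all nonzero $\ad\gamma^0$-eigenvalues of valuation exactly $1/(a+1)$, but $\gamma^0$ need not be regular semisimple, let alone shallow, so Lemma~\ref{lem:cl=root-val}(2) does not apply to it directly. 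The paper bridges this with an fp constructibility argument using the coefficient maps $\chi_m$, transferring the eigenvalue-valuation data from $\gamma^0$ across the closure of the shallow locus in $\BO + t\fg(\cO)$. This intermediate step is the crux of the proof, and your proposal does not see it.
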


In particular, $\cl_W$ is constant on fibers of $\RTmin$.

\smallskip

In the case where $\BO \in \ubar \cN$ is distinguished, there is a unique class $[w]$ in $\RTmin\inv(\BO)$, namely $[w] = \KL(\BO)$, and $[w]$ is elliptic \cite[Proposition 9.2]{KL:Fl}. In this case the theorem says $\cl_W([w]) = \cl_n(\BO)$. Interpreting $1/\cl_W([w])$ as a root valuation as in Lemma \ref{lem:cl=root-val}, this particular equality was proven in \cite[Proposition 9.11]{KL:Fl}. Our proof in the general case is a modification of their argument.

We will need the following lemma.

\begin{lemma}\label{lem:lattice}
Let $\fl \subset \fg$ be a Levi, let $\{h,e,f\} \subset \fl$ be an $\fsl_2$-triple where $h \in \fh$ and $e$ is distinguished in $\fl$. Let $2a$ be the highest $h$-weight on $\fl$. 
\begin{enumerate}
\item For $\gamma \in e + t \fg(\cO)$, any nonzero eigenvalue of $\ad(\gamma)$ acting on $\fg(\bar F)$ has valuation $\ge \frac 2{2a+3}$.
\item  There is an element $\gamma^0 \in e + t \fg(\cO)$ such that the operator $\ad \gamma^0 \acts \fg(\cO)$ has a nonzero eigenvalue, and all its nonzero eigenvalues have valuation equal to $\frac1{a+1}$.
\end{enumerate}
\end{lemma}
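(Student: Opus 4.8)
## Proof Proposal for Lemma~\ref{lem:lattice}

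\textbf{The plan.} The key is to use the grading on $\fg$ coming from the $\fsl_2$-triple $\{h,e,f\}$ to build a compatible ``mixed'' grading on the loop algebra $\fg(\bar F)$, and then read off valuations of eigenvalues of $\ad\gamma$ from the homogeneous structure. Let $\fg = \bigoplus_{j \in \BZ} \fg_j$ be the $\ad h$-eigenspace decomposition, so that $\fg_j = 0$ for $|j| > 2a$ by hypothesis, and $e \in \fg_2$. For a parameter $N$ to be chosen, consider the $\BQ$-grading on $\fg(\bar F) = \fg \otimes_\BC \bar F$ in which an element $x t^{p/q} \in \fg_j \otimes \bar F$ is assigned degree $\frac{j}{2} + \frac{N}{?} \cdot \frac{p}{q}$; more precisely, following \cite[\S 9]{KL:Fl}, the right normalization is to declare $\deg(x t^{s}) = j + (2a+3)s$ for $x \in \fg_j$ (for part (1)) so that $e = e t^0$ sits in degree $2$ and $t\fg(\cO)$ lands in degrees $\ge 2a+3 - 2a = 3 > 2$. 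Then $\gamma = e + (\text{higher terms})$, and $\ad\gamma = \ad e + (\text{strictly higher degree})$. Since $\ad e$ is nilpotent, the semisimple part of $\ad\gamma$ — equivalently its nonzero eigenvalues — can be controlled: the minimal degree of a nonzero eigenvalue of $\ad\gamma$ is bounded below by the smallest positive difference of degrees between homogeneous pieces, which after rescaling $t \mapsto t^{1/(2a+3)}$ translates into the valuation bound $\ge \frac{2}{2a+3}$. This is exactly the mechanism of \cite[Proposition~9.11]{KL:Fl}; I would cite and adapt their argument rather than redo it.

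\textbf{Part (1) in detail.} Set $q = 2a+3$ and rescale: write $\gamma \in e + t\fg(\cO)$ and pass to $\fg(\BC\laurent{t^{1/q}})$ via the substitution. Equip $\fg \otimes \BC\laurent{t^{1/q}}$ with the $\frac1q\BZ$-grading placing $\fg_j \otimes t^{s}$ in degree $j + q s$ (so $s \in \frac1q\BZ_{\ge 0}$ on the arc part). Under this grading $e t^0$ has degree $2$, every element of $t\fg(\cO)$ has degree $\ge -2a + q = 3$, so $\gamma$ is a sum of its degree-$2$ part $e$ plus terms of degree $\ge 3$. The operator $\ad\gamma$ therefore has the form $\ad e + R$ with $R$ raising degree by $\ge 1$. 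A standard argument (e.g. via the filtration by degree, or by a formal gauge transformation into a Kostant-section-type normal form) shows every eigenvalue of $\ad\gamma$ on $\fg(\bar F)$ is homogeneous of some degree $d$ with $d \ge 2$ whenever it is nonzero — indeed the nonzero eigenvalues of $\ad e + R$ are governed by the ``lowest'' nontrivial homogeneous behavior, and since $\ad e: \fg_j \to \fg_{j+2}$ has its action concentrated in the window $|j| \le 2a$, the minimal homogeneous degree carrying a nonzero eigenvalue is $2$. Converting the grading-degree $d \ge 2$ back to $t$-valuation via $\val = d/q$ gives $\val \ge \frac{2}{2a+3}$, which is the claim. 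The point where I expect to need care is making the phrase ``every nonzero eigenvalue is homogeneous of degree $\ge 2$'' precise; the cleanest route is to quote the relevant computation from \cite[\S 9]{KL:Fl}, where $\gamma$ is put into a standard form and the characteristic polynomial of $\ad\gamma$ is shown to be quasi-homogeneous.

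\textbf{Part (2): exhibiting $\gamma^0$.} Here I would take the most natural candidate: a cyclic/Kostant-type element. Write $\{h,e,f\}$ with $e$ distinguished in $\fl$, and let $\fl = \bigoplus \fl_j$ be its $\ad h$-grading; distinguishedness means $\dim \fl_0 = \dim \fl_2$ and $e$ is an open point of $\fl_2$ under the $L_0$-action. Choose $f'$ to be a ``lowest-weight'' companion so that $\gamma^0 := e + f' t$ lands in $e + t\fg(\cO)$, where $f' \in \fg_{-2a}$ is chosen (generic) in the lowest nonzero $\ad h$-eigenspace of $\fg$ realized inside $\fl$ (so that the relevant weight is exactly $-2a$, not $-2a-1$, using Lemma~\ref{lem:max-h-hts} to know the window). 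Now grade $\fg \otimes \BC\laurent{t^{1/(a+1)}}$ by placing $\fg_j \otimes t^s$ in degree $\frac{j}{2} + (a+1)s$ — equivalently rescale $t \mapsto t^{1/(a+1)}$ — so that $e$ has degree $1$ and $f' t$ has degree $-a + (a+1) = 1$ as well. Thus $\gamma^0$ is \emph{homogeneous} of degree $1$ for this grading, hence $\ad\gamma^0$ is homogeneous of degree $1$, so all its eigenvalues on the finite-dimensional graded pieces are homogeneous of degree $1$, i.e. of the form $c\, t^{1/(a+1)}$ — valuation exactly $\frac1{a+1}$. That $\ad\gamma^0$ has a \emph{nonzero} eigenvalue follows because $\gamma^0$ is regular semisimple: one checks $\gamma^0$ is regular in $\fg(\bar F)$ because its image under the substitution is a genuine Kostant-type cyclic element (conjugate, by $\ad$ of a suitable power of $t^{\ad h/2}$, to a constant-coefficient regular semisimple element of the form $e + f'$ twisted), and a regular semisimple element of $\fg(\bar F)$ certainly has $\rank\fg$ many nonzero eigenvalues of $\ad$. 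The main obstacle in part (2) is verifying that $\gamma^0$ can be chosen regular semisimple with all of $e, f'$ fitting into $\fl$ (so that $\gamma^0 \in e + t\fg(\cO)$ genuinely) — this is where Lemma~\ref{lem:max-h-hts} is used, guaranteeing the lowest $\ad h$-weight on all of $\fg$ is $-2a$ or $-2a-1$, so the homogeneity degree-$1$ computation goes through (in the $-2a-1$ case one places the extra piece in degree $\frac{-2a-1}{2}+(a+1)s$, still $\ge$ the relevant bound, not affecting the argument). Overall I expect part (1) to be the harder half, since it requires the inequality for \emph{all} $\gamma \in e + t\fg(\cO)$ rather than a single well-chosen one, and there the cleanest path is a direct appeal to the quasi-homogeneity computation of \cite[\S 9]{KL:Fl}.
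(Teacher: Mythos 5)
Your grading setup is in the right spirit, but part (1) has a concrete error that lands on the crux of the lemma, and part (2) asserts more than you justify.

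In part (1), you set $\deg(xt^s) = j + (2a+3)s$ for $x \in \fg_j$ and claim $t\fg(\cO)$ lands in degree $\ge (2a+3)-2a = 3$. That uses the lowest $\ad h$-weight on $\fg$ being $-2a$, but by Lemma~\ref{lem:max-h-hts} (which you cite later and ignore here) it can be $-2a-1$, so $t\fg_{-2a-1}$ has degree exactly $2$, the same as $e$. Thus $\ad\gamma$ is \emph{not} "$\ad e$ plus strictly higher degree," and your "lowest nontrivial homogeneous behavior" reasoning via nilpotency of $\ad e$ does not close; the conversion "$\val = d/q$" is also wrong as stated (the absolute degree of $xt^s$ is $j+(2a+3)s$, not $(2a+3)s$ — the correct statement is that multiplication by $t^r$ raises degree by $(2a+3)r$). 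This low-degree piece $tX_{-2a-1}$ is exactly what the paper's proof is organized around: the paper grades by $\xi = h + 2(a+1)d$ (coefficient $2a+2$, not your $2a+3$), so $tX_{-2a-1}$ has $\xi$-degree $1$; it takes the lattice $\cL = \fg(F)_{\ge 0}$, writes $\gamma = tX_{-2a-1} + \gamma'$ with $\gamma'$ in $\xi$-degree $\ge 2$, expands $(\ad\gamma)^{2a+3}$, and bounds each summand according to how often $\ad(tX_{-2a-1})$ occurs, obtaining $(\ad\gamma)^{2a+3}\cL \subseteq t^2\cL$. Falling back on \cite[\S 9]{KL:Fl} does not fill the gap either: the paper points out that \cite[Proposition 9.11]{KL:Fl} covers only the case where $\fl = \fg$ (i.e.\ $\BO$ distinguished). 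Worth noting, though: in your $(2a+3)$-grading, once the degree bound is corrected, every homogeneous piece of $\gamma$ still has degree $\ge 2$, so $\ad\gamma$ raises degree by $\ge 2$; then from $\ad\gamma v = \lambda v$, comparing minimal degrees (scalar multiplication by $\lambda$ raises degree by $(2a+3)\val(\lambda)$) gives $(2a+3)\val(\lambda) \ge 2$ directly, no lattice needed. That would be a genuine simplification of the paper's argument, but it is not what you wrote, and it requires getting the degree of $tX_{-2a-1}$ right.

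In part (2), your element $\gamma^0 = e + f't$ with $f' \in \fl_{-2a}$ and the degree-$1$ homogeneity computation reproduce the paper's argument. The only gap is your justification of a nonzero eigenvalue: you assert $\gamma^0$ is regular semisimple via a sketch of conjugating by a power of $t^{\ad h/2}$, which you do not carry out, and which is stronger than needed. The paper only shows $\ad\gamma^0$ is not nilpotent, by observing that $\gamma^0|_{t=1} = e + f'$ is not nilpotent (citing \cite[9.3(i)]{Springer:reg} for $e$ distinguished in $\fl$) and transporting this via the $t \mapsto 1$ specialization square on the graded pieces $\fg(F)_i$.
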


\begin{proof}
To prove (1), we show that for any $\gamma \in e + t \fg(\cO)$, there is an $\ad(\gamma)$-stable $\cO$-lattice $\cL\subset \fg(F)$ such that $\ad(\gamma)^{2a+3}(\cL)\subset t^2\cL$. Then 
under any $\cO$-basis of $\cL$, the matrix of $(\ad \gamma)^{2a+3} \acts \cL$, viewed as an element in $\Mat_{\dim \fg}(\cO)$, lies in the ideal $t^2 \Mat_{\dim \fg}(\cO)$. Since this matrix is also the matrix of $(\ad \gamma)^{2a+3} \acts \fg(\bar F)$, we see that any nonzero eigenvalue of $(\ad \gamma)^{2a+3} \acts \fg(\bar F)$ has valuation $\ge 2$. So any nonzero eigenvalue of $\ad \gamma \acts \fg(\bar F)$ has valuation $\ge \frac 2{2a+3}$.

\smallskip

To construct the lattice $\cL$, consider the element
	\begin{equation*}
		\xi = h + 2(a+1)d \in \fh \oplus \BC d 
	\end{equation*}
	where $\ad d$ acts on $\fg(F)$ by loop rotation (i.e. $\ad d = t \partial_t$). Then we have a decomposition
	\begin{equation*}
		\fg(F) = \mathop{\widehat{\bigoplus}}\limits_{i \in \BZ} \fg(F)_i
	\end{equation*}
	according to $\xi$-weights. More explicitly, writing $\fg_i$ for the $h$-weight space in $\fg$ for the weight $i$, we have
	\begin{align*}
		\fg(F)_0 &= \fg_0,\\
		\fg(F)_1 &= \fg_1 \oplus t \fg_{-2a-1},\\
		\fg(F)_2 &= \fg_2 \oplus t \fg_{-2a},\\
		&\vdots\\
		\fg(F)_{2a+1} &= \fg_{2a+1} \oplus t \fg_{-1},
	\end{align*}
	and 
	\begin{equation*}
		\fg(F)_{i+2(a+1)k} = t^k \fg(F)_i.
	\end{equation*}
	We take 
	\begin{equation*}
		\fg' 
		= \bigoplus_{0 \le i \le 2a+1} \fg(F)_i
		=
		\begin{array}{l}
			\fg_0 \oplus \fg_1 \oplus \cdots \oplus \fg_{2a} \oplus \fg_{2a+1} \\
			\oplus t \fg_{-2a-1} \oplus t \fg_{-2a} \oplus \cdots \oplus t \fg_{-2} \oplus t \fg_{-1},
		\end{array}
	\end{equation*}
	\begin{equation*}
		\cL
		= \cO \otimes_{\BC} \fg' = \fg(F)_{\ge 0} = \mathop{\widehat{\bigoplus}}\limits_{i \ge 0} \fg(F)_i.
	\end{equation*}
	It is clear that $\cL$ is a lattice in $\fg(F)$. It satisfies $t^3 \fg(\cO) \subset t^2 \cL$.
	
	To show that $\cL$ has the desired property $(\ad \gamma)^{2a+3} \cL \subseteq t^2 \cL$, write $\gamma = e + t \sum_i X_i + t^2 \sum_i Y_i + \cdots$ with $X_i, Y_i \in \fg_i$. Then each term in the sum is $\xi$-homogeneous with weight
	\begin{align*}
		\deg_\xi(e) &= 2,\\
		\deg_\xi(t X_{-2a-1}) &= -2a-1+ 2(a+1)  = 1,\\
		\deg_\xi(t X_i) &= i + 2(a+1) \ge -2a + 2(a+1) = 2 \quad (\text{for } i \ge -2a),\\
		\deg_\xi(t^2 Y_i) &= i + 4(a+1) \ge -2a + 4(a+1) = 2a+4,
	\end{align*}
	and so on. In particular, if we set $\gamma' := e + t \sum_{i \ge -2a} X_i + t^2 \sum_i Y_i + \cdots$, then $\gamma = tX_{-2a-1} + \gamma'$ and $\gamma' \in \fg(F)_{\ge 2}$. Expand $(\ad \gamma)^{2a+3} = (\ad tX_{-2a-1} + \ad \gamma')^{2a+3}$ into a sum of products of $\ad t X_{-2a-1}$ and $\ad \gamma'$, and let $P$ be a summand. If $\ad t X_{-2a-1}$ appears at most twice in $P$, then $P \in \fg(F)_{\ge 4(a+1)}$ (here $4(a+1) = 2(2a+1) + 2$, where $2(2a+1)$ comes from the $2a+1$ occurrences of $\ad \gamma'$ and the $2$ at the end comes from the at most two occurrences of $\ad t X_{-2a-1}$). Hence
	\begin{align*}
		P \fg' &= \sum_{0 \le i \le 2a+1} P \fg(F)_i \\
		&\subset \sum_{0 \le i \le 2a+1} \fg(F)_{\ge i+4(a+1)}\\
		&= \sum_{0 \le i \le 2a+1} t^2 \fg(F)_{\ge i}\\
		&= t^2 \cL.
	\end{align*}
	If $\ad t X_{-2a-1}$ appears at least three times in $P$, then $P \in t^3 \fg(\cO)$, and hence 
	\begin{equation*}
		P \fg' \subset P(\fg \oplus t\fg) \subseteq t^3 \fg(\cO) \subset t^2 \cL.
	\end{equation*}
	Therefore each summand $P$ of $(\ad \gamma)^{2a+3}$, and thus $(\ad \gamma)^{2a+3}$ itself sends $\fg'$ into $t^2 \cL$, as required.

	Next, we construct the desired element $\gamma^0$ in (2). Let $\gamma^0 = e + t X_{-2a}$ where $X_{-2a} \in \fl_{-2a}$ is any nonzero element. Then $\gamma^0|_{t=1}$ is not nilpotent by \cite[9.3(i)]{Springer:reg}. From the commutative diagram 
	\begin{equation*}
		\begin{tikzcd}
			\fg(F)_i \ar[r, "t \mapsto 1"] \ar[d, "\ad \gamma^0"']
			& \fg \ar[d, " \ad(\gamma^0|_{t=1})"]\\
			\fg[t] \ar[r, "t \mapsto 1"]
			& \fg
		\end{tikzcd}
	\end{equation*}
	($i \ge 0$), we see that $\ad \gamma^0$ cannot be nilpotent either, and it has a nonzero eigenvalue. Note that $\gamma^0 \in \fg(F)_2$, so
	\begin{align*}
		(\ad \gamma^0)^{a+1} \fg'
		&= \sum_{0 \le i \le 2a+1} (\ad \gamma^0)^{a+1} \fg(F)_i\\
		&\subseteq \sum_{0 \le i \le 2a+1} \fg(F)_{i+2(a+1)}\\
		&= \sum_{0 \le i \le 2a+1} t \fg(F)_i
		= t \fg'.
	\end{align*}
	It follows that as a map from $\fg'$ to $t \fg'$, $(\ad \gamma^0)^{a+1} = t \phi$ for some $\BC$-linear map $\phi: \fg' \to \fg'$. Hence eigenvalues of $(\ad \gamma^0)^{a+1} \acts \fg(F)$ are of the form $t \lambda$ where $\lambda$ are eigenvalues of $\phi$. In particular, all nonzero eigenvalues of $\ad \gamma^0$ have valuation $= \frac1{a+1}$, as desired.

\end{proof}

We are now ready to prove the next proposition, which is a key step to proving Theorem \ref{thm:cl-triangle}.

\begin{proposition}\label{prop:rt-val-of-lift}
	For any $\BO \in \ubar \cN$, we have
	\begin{equation*}
		\cl_W(\KL(\BO)) = \cl_n(\BO).
	\end{equation*}
\end{proposition}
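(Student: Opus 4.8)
The plan is to compare two root valuations attached to $\BO$: one computed via the Kazhdan–Lusztig class $\KL(\BO)$ and Lemma \ref{lem:cl=root-val}, the other computed directly from a generic lift of a nilpotent element $e \in \BO$ using the lattice estimates of Lemma \ref{lem:lattice}. Write $\BO = \Sat_L^G \BO_L$ with $\BO_L$ distinguished in $\fl$, pick an $\fsl_2$-triple $\{h,e,f\} \subset \fl$ with $e \in \BO_L$, and let $2a$ be the top $h$-weight on $\fl$, so that $\cl_n(\BO) = a+1$ by Definition \ref{def:cln}. Let $[w] = \KL(\BO)$; I must show $\cl_W([w]) = a+1$, equivalently (by Lemma \ref{lem:cl=root-val}(2)) that the minimal root valuation $\min_\alpha r_{min}(\alpha)$ for $[w]$ equals $\frac{1}{a+1}$.

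First I would establish the inequality $\cl_W([w]) \le a+1$. By Proposition \ref{prop:yun6.1}, there is an open dense subset of $\BO + t\fg(\cO)$ consisting of shallow elements of type $[w]$, i.e.\ elements of $\fg(F)_{(w,r_{min})}$. Such an element $\gamma$, after $G(F)$-conjugation into $\fh_w(\cO)$, realizes the minimal root valuations, and $\alpha(\gamma_{diag})$ is (up to sign) an eigenvalue of $\ad(\gamma)$ on $\fg(\bar F)$. By Lemma \ref{lem:lattice}(1), any nonzero eigenvalue of $\ad(\gamma)$ for $\gamma \in e + t\fg(\cO)$ has valuation $\ge \frac{2}{2a+3}$; combined with Lemma \ref{lem:max-h-hts} (which controls how much larger the top $h$-weight on $\fg$ can be than on $\fl$) one gets a clean lower bound, and in fact using part (2) of Lemma \ref{lem:lattice} — which exhibits $\gamma^0 \in e + t\fg(\cO)$ all of whose nonzero $\ad$-eigenvalues have valuation exactly $\frac{1}{a+1}$ — together with genericity (the shallow locus is the open stratum, hence has root valuations $\le$ those of any particular lift) one obtains $\min_\alpha r_{min}(\alpha) \le \frac{1}{a+1}$, i.e.\ $\cl_W([w]) \le a+1$. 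The point is that $\gamma^0$ is a specific topologically nilpotent regular semisimple lift whose root valuations are all $\ge \frac{1}{a+1}$ with equality achieved, so the open (shallow) stratum for $[w] = \KL(\BO)$ can only do better, forcing some $r_{min}(\alpha) \le \frac{1}{a+1}$.

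For the reverse inequality $\cl_W([w]) \ge a+1$, i.e.\ $\min_\alpha r_{min}(\alpha) \ge \frac{1}{a+1}$: here I would use that every $\gamma \in \fg(F)_{(w,r_{min})}$ is $G(F)$-conjugate to some $\gamma' \in \BO + t\fg(\cO) = e' + t\fg(\cO)$ for suitable $e' \in \BO$ (this is part of how $\KL$ and $\RTmin$ interact — the shallow stratum of type $\KL(\BO)$ meets $e+t\fg(\cO)$ densely by Proposition \ref{prop:yun6.1}, and conversely reduction type considerations pin the reduction to $\BO$). Then Lemma \ref{lem:lattice}(1) applied at $e$ gives that all nonzero $\ad(\gamma')$-eigenvalues have valuation $\ge \frac{2}{2a+3}$ — but this is weaker than $\frac{1}{a+1}$, so a cruder bound is not enough and one needs the sharper lattice: I would refine the weight grading argument in the proof of Lemma \ref{lem:lattice}, using $\xi = h + 2(a+1)d$ and the lattice $\cL = \fg(F)_{\ge 0}$, to show directly that for $\gamma \in e + t\fg(\cO)$ the operator $\ad(\gamma)$ satisfies $(\ad \gamma)^{a+1}\cL \subseteq t\cL$ modulo the "bad" term $tX_{-2a-1}$; controlling that bad term (which vanishes when $e$ is even, and otherwise contributes in a way that still keeps valuations $\ge \frac{1}{a+1}$ on the shallow stratum) gives the bound. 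Equivalently, one observes that $\alpha(\gamma_{diag})$ for the diagonalization lies in $t^{1/d_\alpha}\BC\series{t^{1/d_\alpha}}$ with $d_\alpha \le a+1$ by Lemma \ref{lem:cl=root-val}(1), and the $h$-grading shows $d_\alpha \le 2a+1 < \cdots$ — so one must match the arithmetic of $\charp(w)$ against the $h$-weights.

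The main obstacle I expect is precisely this reverse inequality: bounding $r_{min}(\alpha)$ from below by $\frac{1}{a+1}$ uniformly over all roots $\alpha$, since the naive eigenvalue bound $\frac{2}{2a+3}$ from Lemma \ref{lem:lattice}(1) is not sharp enough. The resolution should be to combine the $\xi$-weight decomposition with the fact that on the shallow stratum the generic element of $e + t\fg(\cO)$ has root valuations governed by the cyclotomic structure of $w = \KL(\BO)$ via Lemma \ref{lem:cl=root-val}, and to use Lemma \ref{lem:max-h-hts} to ensure that passing from $\fl$ to $\fg$ does not change $\lceil$top weight$/2\rceil$, so that the relevant $d_\alpha$ never exceeds $a+1$. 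This mirrors the strategy of \cite[Proposition 9.11]{KL:Fl} in the distinguished case, extended here by the Levi-reduction bookkeeping of Lemma \ref{lem:clW-in-Levi} and the induction compatibility of $\KL$.
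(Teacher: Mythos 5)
Your plan correctly identifies the two key inputs (Proposition \ref{prop:yun6.1}, Lemma \ref{lem:lattice}) but has a direction error and misses the two technical ideas that actually close the argument.

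First, you invert the translation between $\cl_W$ and root valuations both times. Since $\min_\alpha r_{min}(\alpha) = \frac{1}{\cl_W([w])}$ by Lemma \ref{lem:cl=root-val}(2), the statement $\min_\alpha r_{min}(\alpha) \le \frac{1}{a+1}$ is equivalent to $\cl_W([w]) \ge a+1$, not $\le$; likewise for the other bound. So your first paragraph, read literally, argues for the opposite conclusion of what it labels, and your second paragraph sets up the wrong target inequality. After correcting the labels, your argument using $\gamma^0$ and ``genericity'' is aimed at $\cl_W \ge a+1$, and your lattice-refinement argument is aimed at $\cl_W \le a+1$.

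Second, for $\cl_W([w]) \le a+1$ you assert that the bound $\frac{2}{2a+3}$ from Lemma \ref{lem:lattice}(1) is ``not sharp enough'' and that a sharper lattice estimate is needed. It is in fact sharp enough: by Lemma \ref{lem:cl=root-val}(1) the minimal root valuation is of the form $\frac{1}{d}$ for an integer $d = \cl_W([w])$, and $\frac{1}{a+1} = \frac{2}{2a+2}$ is the smallest such number that is $\ge \frac{2}{2a+3}$. So $\frac{1}{\cl_W([w])} \ge \frac{2}{2a+3}$ already forces $\frac{1}{\cl_W([w])} \ge \frac{1}{a+1}$. No refinement of the lattice is needed; the paper uses exactly this arithmetic.

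Third, and most seriously, your argument for $\cl_W([w]) \ge a+1$ via $\gamma^0$ and genericity has a genuine gap. The element $\gamma^0 \in e + t\fg(\cO)$ from Lemma \ref{lem:lattice}(2) is regular semisimple topologically nilpotent, but it need not be of type $[w] = \KL(\BO)$, so you cannot directly compare its root valuations to $r_{min}$ for the stratum $\fg(F)_{(w,r_{min})}$; the phrase ``shallow locus has root valuations $\le$ those of any particular lift'' only makes sense for lifts of the same type. The paper's resolution is a degeneration argument entirely absent from your plan: introduce the fp-closed level sets of the coefficient functions $\chi_m(a) = \mathrm{tr}(\Exterior^m \ad(a))$, show that the dense open $V \subset \BO + t\fg(\cO)$ of shallow elements of type $[w]$ lies in $\fg(\cO)_{\chi_m \val \ge m/\cl_W}$ (with strict inequality when $m$ exceeds the number $k$ of roots attaining the minimal valuation), and then pass to the fp-closure $\BO + t\fg(\cO)$ to pin $\gamma^0$. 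Comparing the resulting inequalities for $\chi_{k'}(\gamma^0)$, where $k'$ is the number of nonzero $\ad\gamma^0$-eigenvalues, forces $k' \le k$ and then $\frac{k'}{a+1} \ge \frac{k'}{\cl_W([w])}$, giving $\cl_W([w]) \ge a+1$ since $k'>0$. This is the missing idea: converting an eigenvalue comparison into a closure argument on $\fg(\cO)$ so that the specific non-generic element $\gamma^0$ can be played against the generic stratum without assuming they share a type.
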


\begin{proof}
Recall that $\cl_n(\BO)$ is defined by first choosing an $\fsl_2$-triple $\{h,e,f\} \subset \fl$ attached to $\BO$ in the Bala-Carter Levi $\fl$ of $\BO$, then setting $\cl_n(\BO) = a+1$ where $2a$ is the highest $h$-weight on $\fl$.
	
First, we show $\cl_W(\KL(\BO))\le a+1$.
Let $r_{min}: R \to \BZ$ be the minimal root valuation map attached to the class $\KL(\BO)$. Recall from Proposition \ref{prop:yun6.1} that  there is a dense fp open subset $V\subset \BO+t\fg(\cO)$ consisting of elements in $ \fg(\cO)_{(\KL(\BO),r_{min})}$. Each $\gamma$ in $V$ is regular semisimple, the smallest valuation of eigenvalues of $\ad\gamma$ is equal to $\frac1{\cl_W(\KL(\BO))}$ by Lemma \ref{lem:cl=root-val}. On the other hand, from 
Lemma \ref{lem:lattice}, for $\gamma\in \BO + t \fg(\cO)$, any nonzero eigenvalue of $\ad \gamma \acts \fg(\bar F)$ has valuation $\ge \frac 2{2a+3}$. So
\begin{equation*}
		\frac 1{\cl_W(\KL(\BO))} \ge \frac{2}{2a+3}.
	\end{equation*}
	Since $\frac1{a+1} = \frac2{2a+2}$ is the smallest number of the form $\frac1d$ that is $\ge \frac2{2a+3}$, we in fact have
	\begin{equation}\label{eqn:1/cl-1/a+1}
		\frac 1{\cl_W(\KL(\BO))} \ge \frac1{a+1} > \frac{2}{2a+3}
	\end{equation}
	and thus $\cl_W(\KL(\BO))\le a+1$.

\smallskip

To show the other inequality, for any integer $m$, consider the map
	\[\chi_m: \fg(\cO)\to \BA^1(\cO),\quad a \mapsto \mathrm{tr}\left(\Exterior^{m}(\ad (a): \fg(\cO) \to \fg(\cO))\right). \]
	For any $j>0$, the subsets $\BA^1(\cO)_{\val \ge j}$, resp. $\BA^1(\cO)_{\val > j}$, consisting of power series with valuation $\ge j$, resp $>j$, are fp closed subsets in $\BA^1(\cO)=\cO$.
Thus, their preimages $\fg(\cO)_{\chi_m \val \ge j}$, resp. $\fg(\cO)_{\chi_m \val > j}$,  under the morphism $\chi_m$ are  fp closed subsets of $\fg(\cO)$.

Now, let $k$ be the number of roots $\alpha \in R$ so that $r_{min}(\alpha)$ achieves the minimum value $\frac1{\cl_W(\KL(\BO))}$. Then any element $\gamma$ in $V$ has exactly $k$ eigenvalues with valuation $\frac1{\cl_W(\KL(\BO))}$, and all the others have valuation $> \frac1{\cl_W(\KL(\BO))}$. Thus, for any $m\le k$, the product of any $m$ eigenvalues of $\gamma$ has valuation $\geqslant \frac m{\cl_W(\KL(\BO))}$. Thus 
$$V\subset \fg(\cO)_{\chi_m \val \ge \frac m{\cl_W(\KL(\BO))}}.$$
Since $\fg(\cO)_{\chi_m \val \ge \frac m{\cl_W(\KL(\BO))}}$ is fp closed, and $\BO + t\fg(\cO)$ is the closure of $V$, we deduce
\begin{equation}\label{eq:m<=k}
\BO + t\fg(\cO)\subset \fg(\cO)_{\chi_m \val \ge \frac m{\cl_W(\KL(\BO))}},\quad \text{ if }m\le k.
\end{equation}
If $m>k$, then the product of any $m$ eigenvalues of $\gamma$ always has valuation $>\frac m{\cl_W(\KL(\BO))}$. A similar argument implies
$$\BO + t\fg(\cO)\subset \fg(\cO)_{\chi_m \val > \frac m{\cl_W(\KL(\BO))}},\quad \text{ if }m> k.$$

Consider the element $\gamma^0$ constructed in Lemma \ref{lem:lattice}	(2).
Let $k'$ be the number of nonzero eigenvalues of $\ad \gamma^0 \acts \fg(\bar F)$. It is nonzero.
Moreover, we have $k'\leqslant k$. Indeed, if $k' > k$, then	\begin{equation*}
		\BO + t \fg(\cO) \subset \fg(\cO)_{\chi_{k'}\val > \frac{k'}{\cl_W(\KL(\BO))}} \subset \fg(\cO)_{\chi_{k'}\val > \frac{k'}{a+1}},
	\end{equation*}
	where the second inclusion is deduced from \eqref{eqn:1/cl-1/a+1}.
	But $\gamma^0$ is an element that is both in $\BO + t \fg(\cO)$ and in $\fg(\cO)_{\chi_{k'}\val = \frac{k'}{a+1}}$, a contradiction. So $k'\leqslant k$. It follows from \eqref{eq:m<=k} that
	$$\BO + t\fg(\cO)\subset \fg(\cO)_{\chi_{k'} \val \ge \frac {k'}{\cl_W(\KL(\BO))}}.$$
In particular, $\gamma^0\in \fg(\cO)_{\chi_{k'} \val \ge \frac {k'}{\cl_W(\KL(\BO))}}$. On the other hand, $\gamma^0$ has exactly $k'$ nonzero eigenvalue whose valuation are all equal to $\frac{1}{a+1}$, thus $\val(\chi_{k'}(\gamma^0))=\frac{k'}{a+1}$. We deduce
$$\frac{k'}{a+1}\ge \frac {k'}{\cl_W(\KL(\BO))}.$$
	As $k'>0$, this implies $\cl_W(\KL(\BO))\ge a+1$. We are done.
\end{proof}

\begin{proof}[Proof of Theorem \ref{thm:cl-triangle}]
	For any Levi $M \subset G$, let $\RTmin_M: \ubar W_M \to \ubar \cN_M$, $\KL_M: \ubar \cN_M \to \ubar W_M$ be respectively the minimal reduction type and Kazhdan-Lusztig map for the group $M$ (so that $\RTmin_G= \RTmin$ and $\KL_G= \KL$). Recall from \cite[Corollary 3.4.(2)]{Yun:RTmin-arXiv} that the following diagram commutes
	\begin{equation*}
		\begin{tikzcd}
			\ubar W_M \ar[r] \ar[d, "\RTmin_M"']
			& \ubar W \ar[d, "\RTmin"]\\
			\ubar \cN_M \ar[r, "\Sat_M^G"]
			& \ubar \cN		
		\end{tikzcd}
	\end{equation*}	
	where the top horizontal map sends the conjugacy class of $w \in W_M$ to its conjugacy class in $W$.
	
	For $\BO\in\cN$, take any $[w] \in \RTmin\inv(\BO)$, and take any $w \in [w]$. Let $M$ be the centralizer of $\fh^w$ in $G$. It is a Levi subgroup in $G$ and $w$ is elliptic in the Weyl group $W_M$ of $M$. Write $[w]_M \in \ubar W_M$ for the corresponding conjugacy class, and write $\BO_M= \RTmin_M([w]_M)$. Then $\BO = \Sat_M^G \BO_M$ by the above diagram, and $\cl_W([w]) = \cl_W([w]_M)$ by Lemma \ref{lem:clW-in-Levi}. Let $L \subset M$ be a Bala-Carter Levi for $\BO_M$, and write $\BO_M = \Sat_L^M \BO_L$. Then $\BO= \Sat_L^G \BO_L$ and $L$ is a Bala-Carter Levi for $\BO$, and so by definition $\cl_n(\BO) = \cl_{n,M}(\BO_M) = \cl_{n,L}(\BO_L)$.
	
	We claim $\KL_M(\BO_M) = [w]_M$. Indeed, by Lusztig's definition of his map $\Psi$ \cite[Theorem 0.2]{Lusztig:Phi2} (which equals $\KL$ by \cite[1.14]{Yun:RTmin-arXiv}), $\KL_M(\BO_M)$ is the unique most elliptic class in $\RTmin_M\inv(\BO_M)$. Since $[w]_M \in \RTmin_M\inv(\BO_M)$ and $[w]_M$ is elliptic in $W_M$, the claim follows. By applying Proposition \ref{prop:rt-val-of-lift} to $\BO_M$, we see that 
	\begin{equation*}
		\cl_W([w]) = \cl_W([w]_M) = \cl_W(\KL_M(\BO)) \xeq{\mathrm{Prop} \ref{prop:rt-val-of-lift}} \cl_{n,M}(\BO_M) = \cl_n(\BO)=\cl_n(\RTmin([w])).
	\end{equation*}
	Thus the diagram in Theorem \ref{thm:cl-triangle} commutes.
	
	\smallskip
	
	Finally, we show that $\cl_n$ lands in $\BZ_{1 \le \bullet \le \Bh}$. For any orbit $\BO$, let $\{h_d,e_d,f_d\}$ be the corresponding $\fsl_2$-triple in $\fg$ with $h \in \fh$ dominant, and let $\fl$ be a Bala-Carter Levi of $\BO$ containing $\fh$. Then
	\begin{equation*}
		\cl_n(\BO) = 1+ \frac12 \max_{\alpha \in R(\fl,\fh)} \langle \alpha,h \rangle
		\le 1+ \frac12 \max_{\alpha \in R(\fg,\fh)} \langle \alpha,h \rangle
		= 1 + \frac12 \langle \theta, h_d \rangle
	\end{equation*}
	where $\langle \theta, h_d \rangle$ is a positive linear combination of the numbers $\langle \alpha_i, h_d \rangle$, where $\alpha_i$ are simple roots. As $\BO$ varies, the value $1 + \frac12 \langle \theta, h_d \rangle$ achieves maximum precisely when the $\langle \alpha_i, h_d \rangle$ are maximum. This happens precisely when $\BO$ is the regular orbit, where $\langle \alpha_i, h_d \rangle = 2$ for all $i$, in which case $1 + \frac12 \langle \theta, h_d \rangle = \Bh$.
\end{proof}

\begin{notation}\label{not:cl=cln}
	In the rest of the paper, we write $\cl$ for both $\cl_W$ and $\cl_n$ if there is no confusion.
\end{notation}

Finally, we record an immediate consequence from the calculations of $\BO(m)$ in \S \ref{subsec:cln}.

\begin{proposition}\label{prop:RTmin(reg-class)}
	Let $m$ be a regular number of $W$. Then $m \in \im \cl$, and $\BO(m) = \RTmin(W[m])$.
\end{proposition}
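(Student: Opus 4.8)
The plan is to combine the three inputs that are already in place: Lemma \ref{lem:clW(reg)}, which computes $\cl_W(W[m]) = m$ for a regular number $m$; Theorem \ref{thm:cl-triangle}, which gives $\cl_n(\RTmin(W[m])) = \cl_W(W[m])$; and the explicit classification of the image of $\cl_n$ together with the orbits $\BO(m)$ carried out in \S \ref{subsec:cln}. First I would note that $m \in \im \cl$ is immediate: by Theorem \ref{thm:cl-triangle} and Lemma \ref{lem:clW(reg)} we have $\cl_n(\RTmin(W[m])) = \cl_W(W[m]) = m$, so $m$ is hit by $\cl_n$ (hence by $\cl$ in the sense of Notation \ref{not:cl=cln}). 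It then remains to identify the orbit $\RTmin(W[m])$ with the distinguished orbit $\BO(m)$.

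The key point is that $\RTmin(W[m])$ is the \emph{maximal} orbit with cyclotomic level $\le m$, which characterizes it as $\BO(m)$ by Theorem \ref{thm:O(m)}(1). To see maximality, I would argue as follows. We already know $\cl_n(\RTmin(W[m])) = m$, so $\RTmin(W[m]) \in \cl_n\inv([1,m])$. Suppose $\BO \in \ubar\cN$ is any orbit with $\cl_n(\BO) \le m$; I claim $\BO \preccurlyeq \RTmin(W[m])$. The cleanest route is to compare with the explicit description of $\BO(m)$ in each type given in the named theorems \ref{values:A}--\ref{values:exc}: since those computations already establish Theorem \ref{thm:O(m)}, it suffices to check that $\RTmin(W[m])$ equals the orbit $\BO(m)$ described there. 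Because $\cl_n(\RTmin(W[m])) = m$ and $\BO(m)$ is the unique maximal element of $\cl_n\inv([1,m])$, it is enough to show $\RTmin(W[m]) \succcurlyeq \BO(m)$, i.e. that $\BO(m)$ lies in the closure of $\RTmin(W[m])$. For the classical types one can make this concrete: $W[m]$ is (a power of) a Coxeter-type element on a suitable sub-root-system, and by the reduction-type-in-Levi compatibility used in the proof of Theorem \ref{thm:cl-triangle} (the commuting square with $\Sat_M^G$), $\RTmin(W[m])$ is the saturation from the corresponding Levi of an elliptic class's reduction type, which one computes to be the ``most rectangular'' partition appearing in \ref{values:A}--\ref{values:D}. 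In the exceptional types this is a finite check against Figures \ref{values:E6}--\ref{values:FG}.

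Alternatively — and this is probably the slicker writeup — I would invoke the already-established equality $\cl_n \circ \RTmin = \cl_W$ together with the fact (from the structure of the proof of Theorem \ref{thm:cl-triangle}, last paragraph, and the regularity input of \cite[4.9]{GKM:rval}) that for a regular number $m$ the minimal root valuation of $W[m]$ is the constant function $1/m$, which pins down $\RTmin(W[m])$ via Yun's theorem \ref{thm:RTmin} as the reduction type of the open stratum; one then matches this against the orbit $\BO(m)$ using that $\cl_n(\BO(m)) = m$ and the uniqueness in Theorem \ref{thm:O(m)}(1). Either way, the substantive content is the identification $\RTmin(W[m]) = \BO(m)$, and the one genuine obstacle is verifying this identification in the exceptional types, where no uniform argument seems available and one must consult the explicit closure diagrams; in the classical types the obstacle is merely the bookkeeping of matching the partition of $\RTmin(W[m])$ (computed from the elliptic/Coxeter element on a sub-system) with the explicit $q(m)$ recorded in \ref{values:A}--\ref{values:D}. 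Since all of these computations have effectively been done already in \S \ref{subsec:cln} and \S \ref{sec:co}, the proof reduces to citing them.
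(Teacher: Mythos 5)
Your logical skeleton is sound, and the derivation $m \in \im\cl$ via Lemma \ref{lem:clW(reg)} and Theorem \ref{thm:cl-triangle} is a nice cleanup of something the paper leaves implicit. The reduction of the second claim to showing $\RTmin(W[m]) \succcurlyeq \BO(m)$ (using that $\cl_n(\RTmin(W[m])) = m$ plus Theorem \ref{thm:O(m)}) is also correct. However, both of your proposed routes ultimately bottom out in exactly the same place the paper does: you must determine $\RTmin(W[m])$ explicitly and compare it against the formulas for $\BO(m)$. Your ``slicker'' alternative via constant root valuation $1/m$ and Yun's Theorem \ref{thm:RTmin} identifies \emph{which stratum} to look at, but knowing the stratum does not name the orbit without further computation.

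The one genuine slip is the closing assertion that ``all of these computations have effectively been done already in \S\ref{subsec:cln} and \S\ref{sec:co}.'' Those sections compute only the orbit $\BO(m)$, not $\RTmin(W[m])$; the two sides of the asserted equality require independent inputs. The paper sources $\RTmin(W[m])$ externally, from \cite[Table 3]{Jakob-Yun} in classical type and from Lusztig's tables in \cite[\S 2]{Lusztig:Phi2} in exceptional type, and then verifies agreement with $\BO(m)$. Your proposal to instead re-derive $\RTmin(W[m])$ from scratch via the Levi-compatibility square and the ``Coxeter on a sub-system'' picture is a workable alternative, but it is more labor, not less, and you would need to actually carry it out rather than cite \S\ref{sec:co}. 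As written, the proposal gestures at the check without doing it or pointing to a place where it has been done.
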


\begin{proof}
	In classical types, the values of $\RTmin(W[m])$ have been computed in \cite[Table 3]{Jakob-Yun} (see the cases where $\nu = d/m$ is equal to $1/m$; the notation $\lambda^{a,b}$ denotes the unique partition of $a$ with $b$ parts each part is equal to either $\lfloor \frac ab \rfloor$ or $\lceil \frac ab \rceil$). They agree with the description of $\BO(m)$ computed in \S \ref{subsec:cln}. 
	
	In exceptional cases, this can be checked using the values of $\RTmin$ described in \cite[\textsection 2]{Lusztig:Phi2}.
\end{proof}

%==============
\section{Relation to two-sided cells}\label{sec:cells}

In this section, we establish a connection between the cyclotomic level map and two-sided cells in the affine Weyl group, see Theorem \ref{thm:O(m)-n-cells}. Our result is closely related to two bijections --- one classical and the other affine --- that link two-sided cells to nilpotent orbits. To provide context, we review these bijections in \S \ref{subsec:KL-cells}.

%To summarize in a few words, for a fixed number $m$ in the image of $\cl$, let $\xi_m$ be the unique dominant $W_{aff}$-translate of the affine weight $m \Lambda_0 + \rho \in \fh_{aff}^*$ (under the usual, i.e. non-dot, action), and let $w_m \in W_{aff}$ be the longest element stabilizing $\xi_m$. Then under Lusztig's bijection between $\ubar{\check \cN}$ and two-sided cells in $W_{aff}$ (see \ref{cls:L-bij}), the orbit $\check \BO(m)$ from Theorem \ref{thm:O(m)} corresponds to the cell $\ubar \bc(w_m)$, see Theorem \ref{thm:O(m)-n-cells}. The assignments $\check \BO(m) \mapsto \xi_m \mapsto \ubar \bc(w_m)$ is analogous to the classical construction of Barbasch-Vogan from even special orbits to two-sided cells in the finite Weyl group $W$ (this will be recalled in \S \ref{subsec:KL-cells}, and the analogy will be reiterated in \ref{cls:aff-duality}).

%----------
\subsection{Kazhdan-Lusztig cells and Barbasch-Vogan-Lusztig-Spaltenstein's construction}\label{subsec:KL-cells}

\begin{clause}[Cells in $W$ and $W_{aff}$]\label{cls:KL-cells}
	Let $W'$ be a Coxeter group, which for us equals $W$ or $W_{aff}$. Let $\cH'$ denote the Hecke algebra of $W'$ over $\BZ[q^{1/2},q^{-1/2}]$, that is a free $\BZ[q^{1/2},q^{-1/2}]$-module with basis $T_w$, $w \in W'$ satisfying
	\begin{equation*}
		T_x T_y = T_{xy} \text{ if } \ell(x) + \ell(y) = \ell(xy),
	\end{equation*}
	\begin{equation*}
		(T_s + 1)(T_s - q) = 0 \text{ for any simple reflection } s.
	\end{equation*}
	There is an involution $h \mapsto \bar h$ of $\cH'$ defined by
	\begin{equation*}
		\overline{q^{1/2}} = q^{-1/2},\quad 
		\overline{T_w} = T_{w\inv}\inv.
	\end{equation*}
	Let $C_w$ denote the Kazhdan-Lusztig basis element of $\cH$ given by
	\begin{equation*}
		C_w = q^{-\ell(w)/2} \sum_{y \le w} (-q)^{\ell(w) - \ell(y)} \overline{P_{y,w}} T_y,
	\end{equation*}
	for some $P_{y,w} \in \BZ[q]$ with degree $\le \frac12 (\ell(w) - \ell(y) - 1)$ satisfying $\overline{C_w} = C_w$, as in \cite[Theorem 1.1]{KL:Hecke}. 
	
	Whenever $s \in W'$ is a simple reflection, $w \in W'$, and $sw > w$, the product $C_s C_w$ is a $\BZ$-linear combination of $C_y$'s for some $y \le sw$. If $C_y$ appears, we say $w \ge_L y$. We use $\le_L$ to generate a preorder on $W'$. If $w \ge_L y$ and $w \le_L y$, we write $w \sim_L y$. The equivalence classes of $\sim_L$ are called \textit{left cells}
	\begin{equation*}
		\bc^L(w) = \{ y \in W' \mid y \sim_L w\}.
	\end{equation*}
	The same definition can be made using $C_w C_s$ ($ws > w$) instead, and the resulting relations are denoted by $\le_R$, $\sim_R$, respectively, and the equivalence classes are called \textit{right cells}, denoted by $\bc^R(w)$. The preorder generated by using both $\le_L$ and $\le_R$ is denoted by $\le_{LR}$, and we have the corresponding equivalence relation $\sim_{LR}$ and the equivalence classes $\ubar \bc(w)$, called the \textit{two-sided cells}. 
\end{clause}

Next, we would like to review the Barbasch-Vogan-Lusztig-Spaltenstein duality map and a bijection between two-sided cells and special orbits. Both constructions go through associated varieties of primitive ideals which we recall now.

\begin{clause}[Primitive ideals]
	For the moment, let $\fg$ be a semisimple Lie algebra. An \textit{infinitesimal character} is a ring map $\chi: \cZ(\fg) \to \BC$ from the center of the enveloping algebra $\cU(\fg)$ to $\BC$. Via the Harish-Chandra isomorphism $\cZ(\fg) \bij \Sym(\fh)^W$, infinitesimal characters are parameterized by $W$-orbits in $\fh^*$. 
	
	A \textit{primitive ideal} $J$ in $\cU(\fg)$ is the annihilator of a simple $\fg$-module. Each primitive ideal gives rise to a closed subvariety in $\fg^*$, the \textit{associated variety of $J$}, by
	\begin{equation*}
		\AV(J) := \AV(\cU(\fg)/J) \subset \fg^*
	\end{equation*}
	where $\AV(\cU(\fg)/J)$ is the associated variety of $\cU(\fg)/J$ defined at the beginning of \S \ref{subsec:AV}. By means of an invariant bilinear form on $\fg$, we may identify $\fg^*$ with $\fg$ and $\AV(J)$ with a subvariety of $\fg$. It is known that $\AV(J)$ is contained in the nilpotent cone $\cN$. In fact, by the work of Borho-Brylinski \cite{Borho-Brylsinki:diff-op-1}, Kashiwara-Tanisaki \cite{KT:ch} and Joseph \cite{Joseph:prim}, $\AV(J)$ is irreducible, and is hence is the closure of a single nilpotent orbit $\BO \in \ubar \cN$.
	
	We say $J$ has infinitesimal character $\chi$ if it contains the ideal $\cU(\fg) \cdot \ker \chi$. For any fixed $\chi$, there is a unique maximal primitive ideal with infinitesimal character $\chi$. 
\end{clause}

\begin{clause}[Barbasch-Vogan-Lusztig-Spaltenstein duality]\label{cls:BV}
	In \cite{Barbasch-Vogan:unipotent} Barbasch-Vogan constructed a map
	\begin{equation*}
		\bd: \ubar{\check \cN} \aro \ubar \cN
	\end{equation*}
	as follows. Let $\check \BO \in \ubar{\check \cN}$ be any orbit. Take an $\fsl_2$-triple $\{h,e,f\}$ with $e \in \check \BO$ and $h \in \check \fh = \fh^*$ dominant. Then the $W$-orbit of $\lambda := \frac h2 \in \fh^*$ determines an infinitesimal character $\chi_\lambda: \cZ(\fg) \to \BC$ via the Harish-Chandra homomorphism. Let $J_\lambda$ be the unique maximal primitive ideal with this infinitesimal character, and let $\BO$ be the nilpotent orbit whose closure equals $\AV(J_\lambda)$. Then the map $\bd$ sends
	\begin{equation*}
		\bd(\check \BO) = \BO.
	\end{equation*}
	The image of $\bd$ equals $\ubar \cN_{sp}$, the set of special nilpotent orbits in the sense of Lusztig \cite[\S 9]{Lusztig:sp-1}, and $\bd$ restricts to an order reversing bijection between special orbits of $\check \fg$ and those of $\fg$.
	
	The duality map $\bd$ was discovered earlier by Lusztig \cite{Lusztig:sp-1} and Spaltenstein \cite{Spaltenstein:unip}. 
\end{clause}

\begin{clause}[Two-sided cells in $W$ and special orbits]
	By the accumulated work of many people (Joseph, Vogan, Kazhdan-Lusztig, Brylinski-Kashiwara, Beilinson-Bernstein), there is an order-reversing bijection 
	\begin{equation}
		\{\text{two-sided cells in }W\} \bijects \ubar \cN_{sp}
	\end{equation}
	sending $\ubar \bc(w)$ to the unique orbit $\BO$ whose closure equals the associated variety of the annihilator $\Ann_{\cU(\fg)} L(w \rho - \rho)$. Here the left hand side is equipped with the order $\le_{LR}$ and the right side is equipped with the closure order $\preccurlyeq$.
	
	Together with the Barbasch-Vogan-Lusztig-Spaltenstein duality, we obtain a triangle of bijections
	\begin{equation}\label{diag:cells-bij-sp}
		\begin{tikzcd}
			& \{\text{two-sided cells in } W\} \ar[dr, leftrightarrow] \\
			\ubar{\check \cN}_{sp} \ar[rr, leftrightarrow, "\bd"] \ar[ur, dashed, "(*)"] && \ubar \cN_{sp}
		\end{tikzcd}
	\end{equation}
	where the dashed arrow $(*)$ is the composition of the other two arrows. In the special case where $\check \BO \in \ubar{\check \cN}_{sp}$ is \textit{even}, i.e. the infinitesimal character $\chi_\lambda$ constructed from $\check \BO$ is integral, the arrow $(*)$ sends
	\begin{equation*}
		(*): \check \BO \mapsto \ubar \bc(w_\lambda)
	\end{equation*}
	where $w_\lambda$ is the unique longest element in $W$ stabilizing $\lambda = \frac h2$; see \cite{Barbasch-Vogan:unipotent}.
\end{clause}

\begin{clause}[Lusztig's bijection, {\cite[Theorem 4.8]{Lusztig:aff-cells-4}}]\label{cls:L-bij}
	Lusztig constructed a bijection 
	\begin{equation*}
		\{ \text{two-sided cells in } W_{aff} \} \bijects \ubar{\check \cN}
	\end{equation*}
	which can be thought of as an affine analog to the bijection $(*)$ in (\ref{diag:cells-bij-sp}). This bijection is order preserving by Bezrukavnikov \cite[Theorem 4]{Bez:order-preserving}. Despite the similarity to the bijection in the classical case, the construction of Lusztig's bijection is very different. Roughly, it goes as follows. First, Lusztig defined the \textit{asymptotic Hecke algebra} $J$ which admits an algebra decomposition $J = \bigoplus_{\ubar \bc} J_{\ubar \bc}$ over all two-sided cells $\ubar \bc$ in $W_{aff}$. Hence for each simple $J$-module $E$ there is a unique two-sided cell $\ubar \bc$ so that $J_{\ubar \bc}$ does not act as zero on $E$. Second, there is a bijection
	\begin{equation*}
		\{\text{simple $J$-modules}\} \bijects \{\text{triples } (u,s,\rho)\}/\check G
	\end{equation*}
	where $u \in \check G$ is a unipotent element, $s \in Z_{\check G}(u)$ is a semisimple element, and $\rho$ is an irreducible representation of $Z_{\check G}(su)/Z_{\check G}(su)^\circ$ that appears in $H^*(\cB_u^s)$ where $\cB_u^s$ is the $s$-fixed locus in the Springer fiber $\cB_u$ of $u$. Here $\cB_u$ is the group version of the Springer fiber, that is
	\begin{equation*}
		\cB_u = \big\{ g \check B \in \check G/ \check B \mid \Ad(g)\inv u \in \check B \big\}.
	\end{equation*}
	Finally, it was shown that remembering only the two-sided cell $\ubar \bc$ attached to a simple $J$-module corresponds to remembering only the $\check G$-conjugacy class of $u$ of the triple $(u,s,\rho)$. This results in the desired bijection.
	
	%Lusztig's bijection satisfies the following property \cite[Theorem 8.1]{Lusztig:aff-cells-4}: a two-sided cell $\ubar \bc$ is finite (i.e. is a finite set) if and only if the corresponding nilpotent orbit is distinguished.
\end{clause}

%------------------
\subsection{Cyclotomic levels and two-sided cells}\label{subsec:cl-n-cells}

Let $\check \cl_n: \ubar{\check \cN} \to \BZ$ be the map $\cl_n$ defined on the dual side, and write $\check \BO(m)$ for the unique maximal orbit in $\check \cl_n\inv([1,m])$.

\begin{theorem}\label{thm:O(m)-n-cells}
	Let $\fg$ be of simply-laced type. For each integer $m \in [1, \Bh]$, define the following objects:
	\begin{itemize}
		\item $\xi_m$ is the unique dominant $W_{aff}$-translate of $m \Lambda_0 + \rho \in \fh_{aff}^*$ (see \ref{cls:aff-alg-notations} for notations), that is, the unique dominant element in the $W_{aff}$-orbit of $m \Lambda_0 + \rho$ under the usual (i.e. non-dot) action,
		\item $W_m := \Stab_{W_{aff}}(\xi_m)$, a proper standard parabolic subgroup of $W_{aff}$,
		\item $w_m \in W_m$ is the longest element, and
		\item $\ubar \bc(w_m)$ is the two-sided cell in $W_{aff}$ containing $w_m$.
	\end{itemize}
	Then under Lusztig's bijection, 
	\begin{equation*}
		\check \BO(m) \text{ corresponds to } \ubar \bc(w_m).	
	\end{equation*}
	In particular,
	\begin{itemize}
		\item if $m \in \im \check \cl_n$, then the following diagram commutes.
		\begin{equation*}
			\begin{tikzcd}
				m \ar[d, leftrightarrow, "\check \cl_n"'] \ar[r, leftrightarrow] & \ubar \bc(w_m) \ar[dl, leftrightarrow]\\
				\check \BO(m)
			\end{tikzcd}
		\end{equation*}
		
		\item Let $m_0 \in \im \check \cl_n$ be the largest number so that $m_0 \le m$. Then we have $\ubar \bc(w_m) = \ubar \bc(w_{m_0})$, that is, $w_m \sim_{LR} w_{m_0}$.
	\end{itemize}
\end{theorem}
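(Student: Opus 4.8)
The plan is to establish the displayed correspondence ``$\check\BO(m_0)$ corresponds to $\ubar\bc(w_m)$'' under Lusztig's bijection; the two itemized statements then follow at once --- the first is this correspondence applied with $m_0$ in place of $m$ (note $m_0\in\im\check\cl_n$, so the ``$m_0$'' attached to $m_0$ is $m_0$ itself), and the second follows because Lusztig's bijection is injective. I would prove the main correspondence in two steps: a reduction to the case $m=m_0\in\im\check\cl_n$, and the core case itself.

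\emph{Reduction.} Here one must show $w_m\sim_{LR}w_{m_0}$. Since $W_m$ is a \emph{proper} standard parabolic subgroup of $W_{aff}$ it is finite (every proper sub-diagram of an affine Dynkin diagram is of finite type), so by Lusztig's identity for longest elements of finite parabolic subgroups, $\mathbf a(w_m)=\ell(w_m)=\#R_m^+$, where $R_m$ is the finite sub-root-system cut out by the affine reflection hyperplanes through $\xi_m$; concretely $\xi_m$ corresponds to $\rho$ pushed into the $m$-dilated fundamental alcove, and $R_m$ is $W$-conjugate to the root subsystem of $\check\fg$ with positive roots $\{\alpha\in R^+(\check\fg): m\mid\operatorname{ht}(\alpha)\}$. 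Using the explicit list of $\im\check\cl_n$ from \S\ref{subsec:cln} together with this description of $R_m$, I would check that $\mathbf a(w_m)$ --- and in fact the whole two-sided cell $\ubar\bc(w_m)$ --- does not change as $m$ runs over the integers strictly between two consecutive elements of $\im\check\cl_n$; combined with a $\le_{LR}$-comparison of $w_m$ with $w_{m_0}$ and the standard fact that two elements of equal $\mathbf a$-value that are $\le_{LR}$-comparable lie in the same two-sided cell, this gives $w_m\sim_{LR}w_{m_0}$. In type $A$ one has $m_0=m$ for all $m$, so this step is vacuous there.

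\emph{Core case ($m=m_0\in\im\check\cl_n$).} Let $\{h_0,e_0,f_0\}$ be an $\fsl_2$-triple attached to $\check\BO(m_0)$ with $h_0$ dominant; by the definition of $\cl_n$ its Bala--Carter Levi $\check\fl$ has top $\ad h_0$-weight $2(m_0-1)$, and $\check\BO(m_0)$ is maximal with this property. The crucial point is to identify the parabolic $W_{m_0}=\Stab_{W_{aff}}(\xi_{m_0})$: its root subsystem is $W$-conjugate to $\{\alpha\in R(\check\fg): m_0\mid\operatorname{ht}(\alpha)\}$, which is exactly the root system of the centralizer $Z_{\check G}(s)$ of the torsion element $s=\exp(2\pi i\rho/m_0)\in\check G$. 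Thus the assignment $\check\BO(m_0)\mapsto\xi_{m_0}\mapsto\ubar\bc(w_{m_0})$ is the affine counterpart of the classical Barbasch--Vogan assignment $(*)$ in the even case, with $\xi_{m_0}$ (equivalently $s$, and the $\BZ/m_0$-grading of $\check\fg$ it induces) playing the role of $\lambda=h/2$. Feeding $s$ into the description of Lusztig's bijection recalled in \ref{cls:L-bij} --- simple modules of the asymptotic Hecke algebra versus triples $(u,s,\rho)/\check G$, the two-sided cell remembering only the class of $u$ --- the cell $\ubar\bc(w_{m_0})$ corresponds to a definite nilpotent orbit, whose Springer fiber has dimension $\mathbf a(w_{m_0})=\ell(w_{m_0})=\#R^+(\check\fg)-\#R_{m_0}^+$. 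I would first check the numerical identity $\#R^+(\check\fg)-\#R_{m_0}^+=\tfrac12\dim\check\BO(m_0)$ (this narrows the possibilities to finitely many orbits and uses only the explicit data for $\check\BO(m_0)$), and then pin down the orbit itself type by type: in classical types by comparing the partition for $\check\BO(m_0)$ from \S\ref{subsec:cln} with the combinatorial models for two-sided cells in affine Weyl groups of classical type (worked out by Lusztig and Shi) and the known form of Lusztig's bijection there; in exceptional types by matching against the tabulations of cells and of Lusztig's bijection in $\tilde E_6,\tilde E_7,\tilde E_8,\tilde F_4,\tilde G_2$, together with the values of $\cl_n$ and $\check\BO(m_0)$ recorded in Figures~\ref{values:E6}--\ref{values:FG}.

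The main obstacle is the core case: we are not aware of a uniform statement of this ``affine Barbasch--Vogan'' identification that is sharp enough for the purpose, so once the $\mathbf a$-value (equivalently the dimension of the Springer fiber) is matched and the order-preservation of Lusztig's bijection (Bezrukavnikov~\cite{Bez:order-preserving}) has been used to constrain the outcome, one is pushed into the type-by-type bookkeeping of identifying the combinatorial cell data with the partition (or Bala--Carter) description of $\check\BO(m_0)$. The reduction step is conceptually lighter but still rests on the explicit description of $\im\check\cl_n$ and of the alcove position of $\rho$ at level $m$.
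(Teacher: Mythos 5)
Your proposal and the paper's proof both ultimately resort to type-by-type verification, but they are structured quite differently, and the tool you are missing is the one that makes the paper's verification tractable.

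The paper's engine is Lemma \ref{lem:L-bij-as-jind}: under Lusztig's bijection $\ubar\bc(w_m)$ corresponds to the orbit whose Springer representation is $j_{W_m}^W(\sgn)$, and when $W_m$ is ``special'' (contains no special node) this is simply the Richardson orbit $\Ind_{\check P_m}^{\check G}\{0\}$. With that in hand, one only needs (a) the Kac diagram of $\xi_m$ to read off $W_m$, and (b) a routine induction-of-orbits computation in terms of partitions (plus the $\KL_{\check G}^{\check\bP_m}$ detour in type $D$ when no special node is labelled $1$). The paper carries this out for every $m\in[1,\check\Bh]$ in one pass; there is no separate ``reduction to $m=m_0$.'' Your proposal never invokes the $j$-induction characterization. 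You instead try to feed a torsion element $s=\exp(2\pi i\rho/m_0)$ directly into the description of Lusztig's bijection via triples $(u,s,\rho)$ and then pin down $u$ by dimension count plus combinatorial cell models. That route is much heavier: the triples description of Lusztig's bijection is not algorithmic without substantial extra input, and ``matching against tabulations of cells'' in classical type $D$ is a considerably harder bookkeeping problem than inducing partitions.

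Two concrete gaps. First, your reduction step asks to show $w_m\sim_{LR}w_{m_0}$ a priori by matching $\mathbf a$-values and establishing $\le_{LR}$-comparability. You never indicate why $w_m$ and $w_{m_0}$ should be $\le_{LR}$-comparable, nor why $\mathbf a(w_m)=\#R_m^+$ should be constant between consecutive values of $\im\check\cl_n$; in the paper this equivalence emerges only as a corollary of the computation, not as an input. Second, there is an internal inconsistency in the core case: you write $\ell(w_{m_0})=\#R^+(\check\fg)-\#R_{m_0}^+$, but the length of the longest element of the finite parabolic $W_{m_0}$ is $\#R_{m_0}^+$ (your earlier sentence has it right). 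What you actually want to check is $\#R_{m_0}^+=\dim\cB_{e_{m_0}}=\#R^+(\check\fg)-\tfrac12\dim\check\BO(m_0)$, which is Lemma \ref{lem:L-bij-as-jind}(3); the displayed identity in your proposal is equivalent to this, but the sentence preceding it is garbled. On the positive side, your identification of $R_m$ with $\{\alpha\in R(\check\fg):m\mid\operatorname{ht}(\alpha)\}$ is correct and is the conceptual content hiding behind the paper's Kac-diagram calculations; it is also the reason the analogy with the classical Barbasch--Vogan construction in \ref{cls:aff-duality} holds.

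In short: the proposal is a defensible high-level sketch with a correct key observation about $R_m$, but it takes a genuinely different (and harder) route than the paper and has two concrete gaps that would need to be filled before it constitutes a proof.
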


%\begin{remark}
%	This is not true in non-simply-laced types. An example is $F_4$, $m=4$.
%\end{remark}

\begin{clause}[Affine analog of the Barbasch-Vogan-Lusztig-Spaltenstein construction]\label{cls:aff-duality}
	Recall that Lusztig's bijection can be viewed as an affine analog to the map $(*)$ in (\ref{diag:cells-bij-sp}), though their constructions differ. However, Theorem \ref{thm:O(m)-n-cells} shows that if restricted to the subset of orbits of the form $\check \BO(m)$ for $m \in \im \check \cl_n$ in simply-laced types, Lusztig's bijection can be expressed as $\check \BO(m) \mapsto \xi_m \mapsto \ubar \bc(w_m)$, mirroring the restriction of the map $(*)$ to the set of even orbits, which sends $\check \BO \mapsto \lambda = \frac h2 \mapsto \ubar \bc(w_\lambda)$. 
\end{clause}

The proof of the theorem makes use of the following characterization of Lusztig's map taken from \cite[Lemma 4.6, Proposition 4.8]{Yun:Epi}. Recall that a simple reflection in $W_{aff}$ is said to be \textit{special} if the corresponding node in the extended Dynkin diagram of $\fg$ is conjugate to the $0$-th node under an automorphism of the diagram.

\begin{lemma}\label{lem:L-bij-as-jind}
	Let $W_m \subset W_{aff}$ be a proper parabolic subgroup, let $w_m \in W_m$ be the longest element, and let $\check \BO_m$ be the nilpotent orbit that corresponds to $\ubar \bc(w_m)$ under Lusztig's bijection. By means of the projection $W_{aff} \surj W$, we may identify $W_m$ as a subgroup of $W$. 
	\begin{enumerate}
		\item Under the Springer correspondence, $(\check \BO_m, \ubar \BC)$ corresponds to $j_{W_m}^W (\sgn)$, the truncated induction (or $j$-induction) of the sign representation.
		
		\item If $W_m$ is special, i.e. if $W_m$ does not contain a special simple reflection, then a conjugate of $W_m$ is identified with a standard parabolic subgroup of $W$ under the projection $W_{aff} \surj W$, which gives rise to a standard parabolic subgroup $\check P_m \subset \check G$. Then $\check \BO_m = \Ind_{\check P_m}^{\check G} \{0\}$ is the Richardson orbit associated to $\check P_m$.
		
		\item Let $e_m \in \check \BO_m$. Then $\dim \cB_{e_m} = \ell(w_m)$, where $\cB_{e_m}$ is the Springer fiber of $e_m$.
	\end{enumerate}
\end{lemma}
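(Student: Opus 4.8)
The plan is to reduce everything to assertion (1), which is the substantive point, and then deduce (2) and (3) by routine Springer-theoretic bookkeeping. Throughout, recall that a proper parabolic $W_m\subseteq W_{aff}$ is a \emph{finite} Coxeter group, and that (since $\xi_m$ is dominant) $W_m=\Stab_{W_{aff}}(\xi_m)$ is a \emph{standard} parabolic $W_J$ with $J$ the set of simple reflections fixing $\xi_m$.

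\textbf{Assertion (1).} The first step is to observe that $w_m$, being the longest element of a finite parabolic subgroup of $W_{aff}$, is a distinguished involution of $W_{aff}$ (Lusztig); equivalently all Kazhdan-Lusztig polynomials $P_{y,w_m}$ with $y\le w_m$ equal $1$, so $C_{w_m}=q^{-\ell(w_m)/2}\sum_{y\in W_m}T_y$ is the ``parabolic idempotent'' of $W_m$. From $C_sC_{w_m}=(q^{1/2}+q^{-1/2})C_{w_m}$ for all $s\in W_m$ one computes Lusztig's $a$-function directly, getting $a(w_m)=\ell(w_m)$ (the standard value at the longest element of a finite parabolic), hence $a\equiv\ell(w_m)$ on the whole two-sided cell $\ubar\bc(w_m)$. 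The second step is to feed this into Lusztig's construction of his bijection $\{\text{two-sided cells in }W_{aff}\}\leftrightarrow\ubar{\check\cN}$ via the asymptotic Hecke algebra $J=\bigoplus_{\ubar\bc}J_{\ubar\bc}$ and the classification of simple $J$-modules by triples $(u,s,\rho)$: the orbit $\check\BO_m$ attached to $\ubar\bc(w_m)$ is characterized among nilpotent orbits by $\dim\cB_{\check\BO_m}=a(\ubar\bc(w_m))=\ell(w_m)$ together with the fact that the Springer representation of the pair $(\check\BO_m,\ubar\BC)$ is read off from the left cell $\bc^L(w_m)$. Since $C_{w_m}$ is the parabolic idempotent of $W_m$, the low-degree part of the left-cell module $\bc^L(w_m)$ is $\Ind_{W_m}^{W_{aff}}(\sgn)$ truncated at degree $\ell(w_m)$, and matching this with the Springer-module description identifies $\text{Springer}(\check\BO_m,\ubar\BC)$ with the truncated induction $j_{W_m}^W(\sgn)$. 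This last matching is exactly \cite[Lemma 4.6]{Yun:Epi}; alternatively it can be extracted from Bezrukavnikov's realization of $J$ by equivariant coherent sheaves on the Steinberg variety of $\check G$, under which $J_{\ubar\bc(w_m)}$ is supported on $\overline{\check\BO_m}$ and the ``diagonal'' simple module matches $H^*(\cB_{e_m})$.

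\textbf{Assertion (2), granting (1).} If $W_m$ contains no special simple reflection then, since $s_0$ is special, we have $s_0\notin J$, so $W_m=W_J$ with $J\subseteq\{s_1,\dots,s_n\}$ is directly a standard parabolic of $W$, determining a standard parabolic $\check P_m\subseteq\check G$ with Levi of Weyl group $W_m$. By the compatibility of Lusztig-Spaltenstein induction of orbits with $j$-induction of Springer representations (\cite{Barbasch-Vogan:unipotent}, after Lusztig-Spaltenstein and Borho-MacPherson), the Richardson orbit $\Ind_{\check P_m}^{\check G}\{0\}$ has Springer representation $j_{W_m}^W(\text{Springer}(\{0\}_{\check L},\ubar\BC))=j_{W_m}^W(\sgn)$. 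Comparing with (1) and using injectivity of the Springer correspondence on pairs $(\BO,\ubar\BC)$ yields $\check\BO_m=\Ind_{\check P_m}^{\check G}\{0\}$. Assertion (3) is likewise formal: by Borho-MacPherson, for $e_m\in\check\BO_m$ the dimension $\dim\cB_{e_m}$ equals the lowest degree in which $\text{Springer}(\check\BO_m,\ubar\BC)=j_{W_m}^W(\sgn)$ occurs in the coinvariant algebra of $W$; truncated induction preserves this invariant, and for the sign representation of $W_m$ it equals the number of positive roots of $W_m$, namely $\ell(w_m)$. (Equivalently, $\dim\cB_{e_m}=a(\ubar\bc(w_m))=\ell(w_m)$ from the computation in the proof of (1).)

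\textbf{Main obstacle.} Everything outside (1) is routine combinatorics of $j$-induction together with standard Springer theory. The genuine difficulty is the identification in (1) of $\text{Springer}(\check\BO_m,\ubar\BC)$ with $j_{W_m}^W(\sgn)$: this requires unwinding how Lusztig's abstract bijection (through $J$ and the triples $(u,s,\rho)$) interacts with the Kazhdan-Lusztig basis and the left cell $\bc^L(w_m)$, and is where one must invoke either \cite{Yun:Epi} or Bezrukavnikov's geometric description of $J$.
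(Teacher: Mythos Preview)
The paper does not give its own proof of this lemma: it is stated as a quotation from \cite[Lemma 4.6, Proposition 4.8]{Yun:Epi} (see the sentence immediately preceding the lemma), so there is nothing to compare against beyond that citation. Your sketch is consistent with this, since you correctly identify that the substantive content of (1) is exactly what one must import from \cite{Yun:Epi} (or from Bezrukavnikov's coherent realization of $J$), and you do not claim to reprove it from scratch.

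Your derivations of (2) and (3) from (1) are the standard ones and are fine. One small comment on (2): your argument ``$s_0$ is special, hence $s_0\notin J$, hence $W_m$ is already a standard parabolic of $W$'' is valid because in the paper $W_m$ is always a \emph{standard} parabolic of $W_{aff}$ (it is the stabilizer of the dominant weight $\xi_m$); the phrase ``a conjugate of $W_m$'' in the lemma is there only to cover the general situation where one starts from an arbitrary proper parabolic. So no gap, just be aware that in other contexts one may need to first pass through an element of the group $\Omega$ of diagram automorphisms.
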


In view of the lemma, the proof of \ref{thm:O(m)-n-cells} reduces to the following two steps:
\begin{enumerate}[label=(\alph*)]
	\item Compute the dominant translate $\xi_m$ and determine the parabolic subgroup $W_m$ from $\xi_m$,
	
	\item Find the orbit $\check \BO_m$ by tracing through the Springer correspondence $j_{W_m}^W(\sgn) \mapsto (\check \BO_m, \underline{\BC})$ and verify that $\check \BO_m = \check \BO(m)$.
\end{enumerate}
Both steps are done by an explicit type-by-type calculation. Details in type $A$ and $D$ are contained in \S \ref{subapp:Kac}, and the calculations in type $E$ are done by utilizing the \textsf{atlas} software. 

We have recorded the values of $\xi_m$ in simply-laced types for any $m \in [1,\check \Bh]$ in Tables \ref{tbl:Kac-A}-\ref{tbl:Kac-E8}, as they seem to be of independent interest. These values are recorded in the form of Kac diagrams:

\begin{definition}
	Let $\fg$ be a simple Lie algebra and let $\xi \in \fh_{aff}^* = \fh^* \oplus \BC \Lambda_0$ be an affine weight. The \textbf{Kac diagram} of $\xi$ is the weighted extended Dynkin diagram of $\fg$ where the $i$-th node is labeled by the number $\langle \check \alpha_i, \xi \rangle$. Here $\check \alpha_i$ denotes the $i$-th affine simple coroot.
\end{definition}

We remark that the notion of Kac diagrams originated from the work of Kac \cite{Kac:aut}. The problem of computing the Kac diagrams of weights of the form $\xi_m$ was considered in \cite{stable-grading, Reeder-Yu} in their studies of periodic gradings on semisimple Lie algebras and their applications to representations of $p$-adic groups. In fact the authors of \cite{stable-grading} have computed the Kac diagrams of $\xi_m$ for elliptic regular numbers $m$, i.e. regular numbers $m$ so that the conjugacy class $W[m]$ is elliptic, see \S 7 of \textit{loc. cit.} The Kac diagrams in type $E$ was computed earlier in \cite{deGraaf:nilp-theta}.

\begin{table}[p]
	\caption{Kac diagrams of $\xi_m$ in $A_n$ ($1 \le m \le \Bh$)}\label{tbl:Kac-A}
	Let $b$ be the largest integer with $bm \le n+1$; $v = n+1-bm$.
	\begin{tabular}{c|c|c}
		condition on $m$ & Kac diagram of $\xi_m$ & type of $W_m$ \\ \hline
		%----------------------
		$m \mid n+1$ (i.e. $v = 0$)
		& $\underbrace{\underbrace{0 \cdots 01}_{b \text{ terms}} \cdots \underbrace{0 \cdots 01}_{b \text{ terms}}}_{m \text{ copies}}$
		& $m A_{b-1}$\\ \hline
		%------------------------
		$m \not\mid n+1$ (i.e. $v >0$)
		& $\underbrace{\underbrace{0 \cdots 01}_{b \text{ terms}} \cdots \underbrace{0 \cdots 01}_{b  \text{ terms}}}_{m-v \text{ copies}} \underbrace{\underbrace{0 \cdots 01}_{b+1 \text{ terms}} \cdots \underbrace{0 \cdots 01}_{b+1 \text{ terms}}}_{v \text{ copies}}$
		& $v A_b + (m-v) A_{b-1}$
	\end{tabular}
	\vspace{2\baselineskip}
	%--------------
	\caption{Kac diagrams of $\xi_m$ in $D_n$ ($n \ge 4$, $1 \le m \le \Bh$)}\label{tbl:Kac-D}
	Let $c$ be the largest integer so that $cm \le n-1$.
	\begin{tabular}{c|c|c}
		condition on $m$ & Kac diagram of $\xi_m$ & type of $W_m$ \\ \hline
		%----------------------
		$\substack{m=1}$
		& \begin{dynkinDiagram}[%
			labels={1,0,0,0,0,0,0,0},
			extended,
			edge length= .5cm]
			D{***...****}
		\end{dynkinDiagram}
		& $\substack{D_n}$\\ \hline
		%--------------------------
		$\substack{%
			m>n-1 \;(\text{i.e. } c=0)\\%
			m \text{ odd}}$
		& 
		\begin{dynkinDiagram}[%
			labels={1,0,1,0,1,0,1,0,1,1,1,1}, 
			extended, 
			edge length = .5cm] 
			D{***...*****...***}
			\dynkinBrace*[D_{2n-m-1}]07
			\dynkinBrace*[D_{m-n+2}]8{10}
		\end{dynkinDiagram}
		& $\substack{(n-\frac{m+1}2) A_1}$\\ \hline
		%----------------------
		$\substack{%
			m>n-1 \;(\text{i.e. } c=0)\\%
			m \text{ even}}$
		&
		\begin{dynkinDiagram}[%
			labels={1,1,0,1,0,1,0,1,0,1,1,1,1}, 
			extended, 
			edge length = .5cm] 
			D{****...*****...***}
			\dynkinBrace*[D_{2n-m-1}]08
			\dynkinBrace*[D_{m-n+2}]9{11}
		\end{dynkinDiagram}
		& $\substack{(n-\frac {m+2}2) A_1}$\\ \hline
		%--------------------------
		$\substack{%
			m \le n-1\;(\text{i.e. } c\ge1)\\%
			m \text{ odd}\\%
			(2c+1)m \le 2n-2}$
		& 
		$\substack{%
			\begin{dynkinDiagram}[%
				labels={1,0,0,0,1,0,0,1,0,0,1,0,0,1,0,0,1,0,0,0,0}, 
				extended, 
				%edge length = .5cm
				] 
				D{**...***...**...*...***...**...*...***...***}
				\dynkinBrace*[D_{2c+3}]04
				\dynkinBrace*[A_{2c+2}]57
				\dynkinBrace*[A_{2c+2}]8{10}
				\dynkinBrace*[A_{2c+1}]{11}{13}
				\dynkinBrace*[A_{2c+1}]{14}{16}
				\dynkinBrace*[D_{c+1}]{17}{19}
			\end{dynkinDiagram}\\%
			n-(2c+1)\frac{m-1}2 - c-2 \text{ copies of } A_{2c+2},\quad
			(c+1)m-n \text{ copies of } A_{2c+1}}$
		& $\substack{%
			D_{c+1} + \big((c+1)m-n\big) A_{2c}\\%
			+ \big(n-(2c+1)\frac{m-1}2 -c-1 \big) A_{2c+1}
		}$\\ \hline
		%------------------------------
		$\substack{%
			m \le n-1\;(\text{i.e. } c\ge1)\\%
			m \text{ even}\\%
			(2c+1)m \le 2n-2}$
		& 
		$\substack{%
			\begin{dynkinDiagram}[%
				labels={0,0,0,0,1,0,0,1,0,0,1,0,0,1,0,0,1,0,0,0,0}, 
				extended, 
				%edge length = .5cm
				] 
				D{**...***...**...*...***...**...*...***...***}
				\dynkinBrace*[D_{c+2}]04
				\dynkinBrace*[A_{2c+2}]57
				\dynkinBrace*[A_{2c+2}]8{10}
				\dynkinBrace*[A_{2c+1}]{11}{13}
				\dynkinBrace*[A_{2c+1}]{14}{16}
				\dynkinBrace*[D_{c+1}]{17}{19}
			\end{dynkinDiagram}\\%
			n-(2c+1)\frac{m}2 - 1 \text{ copies of } A_{2c+2},\quad
			(c+1)m-n \text{ copies of } A_{2c+1}}$
		& $\substack{%
			2 D_{c+1} + \big((c+1)m-n\big) A_{2c}\\%
			+ \big(n-(2c+1)\frac{m}2 - 1 \big) A_{2c+1}
		}$\\ \hline
		%-----------------------
		$\substack{%
			m \le n-1 \;(\text{i.e. } c\ge1)\\%
			m>1 \text{ odd},\; m \not\mid 2n-1\\%
			(2c+1)m > 2n-2}$
		& 
		$\substack{%
			\begin{dynkinDiagram}[%
				labels={1,0,0,0,1,0,0,1,0,0,1,0,0,1,0,0,1,0,0,0,0}, 
				extended, 
				%edge length = .5cm
				] 
				D{**...***...**...*...***...**...*...***...***}
				\dynkinBrace*[D_{2c+1}]04
				\dynkinBrace*[A_{2c}]57
				\dynkinBrace*[A_{2c}]8{10}
				\dynkinBrace*[A_{2c+1}]{11}{13}
				\dynkinBrace*[A_{2c+1}]{14}{16}
				\dynkinBrace*[D_{c+1}]{17}{19}
			\end{dynkinDiagram}\\%
			(2c+1)\frac{m+1}2-n-c-1 \text{ copies of } A_{2c},\quad
			n-cm-1 \text{ copies of } A_{2c+1}}$
		& $\substack{%
			D_{c+1} + (n-cm-1) A_{2c} \\%
			+ \big((2c+1)\frac{m+1}2-n-c\big) A_{2c-1} 
		}$\\ \hline
		%-------------------------
		$\substack{%
			m \le n-1 \;(\text{i.e. } c\ge1)\\%
			m>1 \text{ odd},\; m \mid 2n-1\\%
			(2c+1)m > 2n-2\\%
			(\Rightarrow (2c+1)m=2n-1)}$ 
		& 
		$\substack{%
			\begin{dynkinDiagram}[%
				labels={1,0,0,0,1,0,0,1,0,0,1,0,0,0,0}, 
				extended, 
				edge length = .5cm] 
				D{**...***...**...*...***...***}
				\dynkinBrace*[D_{2c+2}]04
				\dynkinBrace*[A_{2c+1}]57
				\dynkinBrace*[A_{2c+1}]8{10}
				\dynkinBrace*[D_{c+1}]{11}{13}
			\end{dynkinDiagram}\\%
			n-cm-2 \text{ copies of }A_{2c+1}}$
		& $\substack{(n-cm-1) A_{2c} + D_{c+1}}$\\ \hline
		%-----------------------
		$\substack{%
			2m \le n-1 \;(\text{i.e. } c>1)\\%
			m \text{ even}\\%
			(2c+1)m > 2n-2}$
		& 
		$\substack{%
			\begin{dynkinDiagram}[%
				labels={0,0,0,0,1,0,0,1,0,0,1,0,0,1,0,0,1,0,0,0,0}, 
				extended, 
				%edge length = .5cm
				] 
				D{**...***...**...*...***...**...*...***...***}
				\dynkinBrace*[D_{c+1}]04
				\dynkinBrace*[A_{2c}]57
				\dynkinBrace*[A_{2c}]8{10}
				\dynkinBrace*[A_{2c+1}]{11}{13}
				\dynkinBrace*[A_{2c+1}]{14}{16}
				\dynkinBrace*[D_{c+1}]{17}{19}
			\end{dynkinDiagram}\\%
			(2c+1)\frac{m}2-n \text{ copies of } A_{2c},\quad
			n-cm-1 \text{ copies of } A_{2c+1}}$
		& $\substack{%
			D_{c+1} + (n-cm-1)A_{2c}\\%
			+ \big((2c+1)\frac{m}2-n\big) A_{2c-1} + D_c
		}$\\ \hline
		%-----------------------
		$\substack{%
			m \le n-1 < 2m \;(\text{i.e. } c=1)\\%
			m \text{ even}\\%
			3m = (2c+1)m > 2n-2}$
		& 
		$\substack{%
			\begin{dynkinDiagram}[%
				labels={1,1,0,1,0,1,0,0,1,0,0,1,0,0}, 
				extended, 
				edge length = .5cm] 
				D{***...*****...*****}
				\dynkinBrace*[A_2]23
				\dynkinBrace*[A_2]45
				\dynkinBrace*[A_3]68
				\dynkinBrace*[A_3]9{11}
			\end{dynkinDiagram}\\%
			\frac{3m}2-n \text{ copies of } A_2,\quad
			n-m-1 \text{ copies of } A_3}$
		& $\substack{\big( \frac{3m}2 -n +2 \big) A_1 + (n-m-1) A_2}$
	\end{tabular}		
\end{table}

\begin{table}[p]
	\caption{Kac diagrams of $\xi_m$ in $E_6$ ($1 \le m \le \Bh$)}\label{tbl:Kac-E6}
	The parentheses $(m)$ around $m$ means $m \not\in \im \cl$.
	\begin{tabular}{c|c|c||c|c|c}
		\hline
		$m$ & Kac diagram of $\xi_m$ & $W_m$
		&$m$ & Kac diagram of $\xi_m$ & $W_m$ \\ \hline
		%-------------------
		$12$ &	\dynkin[labels={1,1,1,1,1,1,1}, extended] E6 &  $1$
		&$6$ &	\dynkin[labels={1,1,0,0,1,0,1}, extended] E6 &	$3A_1$\\
		%-------------------
		$(11)$ &	\dynkin[labels={0,1,1,1,1,1,1}, extended] E6 &  $A_1$
		&$5$ &   \dynkin[labels={0,1,0,0,1,0,1}, extended] E6 &  $A_2+2A_1$\\
		%-----------------------------
		$(10)$ &	\dynkin[labels={1,1,0,1,1,1,1}, extended] E6 &  $A_1$
		&$4$ &   \dynkin[labels={1,0,0,0,1,0,0}, extended] E6 &  $2A_2+A_1$\\
		%-------------
		$9$ &	\dynkin[labels={1,1,1,1,0,1,1}, extended] E6 &	$A_1$
		&$3$ &	\dynkin[labels={0,0,0,0,1,0,0}, extended] E6 &	$3A_2$\\
		%-----------
		$8$ &	\dynkin[labels={1,1,1,0,1,0,1}, extended] E6 &  $2A_1$
		&$2$ &   \dynkin[labels={0,0,1,0,0,0,0}, extended] E6 &  $A_5+A_1$\\
		%-----------
		$(7)$ &   \dynkin[labels={1,0,1,1,0,1,0}, extended] E6 &  $3A_1$
		&$1$ &   \dynkin[labels={1,0,0,0,0,0,0}, extended] E6 &  $E_6$
	\end{tabular}
	\vspace{2\baselineskip}
%\end{table} 
%
%\begin{table}
	\caption{Kac diagrams of $\xi_m$ in $E_7$ ($1 \le m \le \Bh$)}\label{tbl:Kac-E7}
	The parentheses $(m)$ around $m$ means $m \not\in \im \cl$.
	\begin{tabular}{c|c|c||c|c|c}
		\hline
		$m$ & Kac diagram of $\xi_m$ & $W_m$ 
		&$m$ & Kac diagram of $\xi_m$ & $W_m$ \\ \hline
		%------------------
		$18$ & \dynkin[labels={1,1,1,1,1,1,1,1}, extended] E7 & $1$
		&$9$ & \dynkin[labels={0,1,0,0,1,0,1,1}, extended] E7 & $4A_1$\\
		%------------
		$(17)$ & \dynkin[labels={0, 1, 1, 1, 1, 1, 1, 1}, extended] E7 & $A_1$
		&$8$ & \dynkin[labels={1,0,0,0,1,0,1,1}, extended] E7 & $A_2+2A_1$\\
		%---------------------
		$(16)$ & \dynkin[labels={1, 0, 1, 1, 1, 1, 1, 1}, extended] E7 & $A_1$
		&$7$ & \dynkin[labels={0,1,0,0,1,0,0,1}, extended] E7 & $A_2+3A_1$\\
		%--------------------------
		$(15)$ & \dynkin[labels={1, 1, 1, 0, 1, 1, 1, 1}, extended] E7 & $A_1$
		&$6$ & \dynkin[labels={1,0,0,0,1,0,0,1}, extended] E7 & $2A_2+A_1$\\
		%---------------------
		$14$ & \dynkin[labels={1,1,1,1,0,1,1,1}, extended] E7 & $A_1$
		&$5$ & \dynkin[labels={0,0,0,0,1,0,0,1}, extended] E7 & $A_3+A_2+A_1$\\
		%--------------
		$(13)$ & \dynkin[labels={1, 1, 0, 1, 1, 0, 1, 1 }, extended] E7 & $2A_1$
		&$4$ & \dynkin[labels={1,0,0,0,0,1,0,0}, extended] E7 & $A_4+A_2$\\
		%-----------------
		$12$ & \dynkin[labels={1,1,1,1,0,1,0,1}, extended] E7 & $2A_1$
		&$3$ & \dynkin[labels={0,0,0,0,0,1,0,0}, extended] E7 & $A_5+A_2$\\
		%-----------------
		$(11)$ & \dynkin[labels={1, 1, 1, 0, 1, 0, 1, 0}, extended] E7 & $3A_1$
		&$2$ & \dynkin[labels={0,0,1,0,0,0,0,0}, extended] E7 & $A_7$\\
		%-----------------
		$10$ & \dynkin[labels={1,0,1,1,0,1,0,1}, extended] E7 & $3A_1$
		&$1$ & \dynkin[labels={1,0,0,0,0,0,0,0}, extended] E7 & $E_7$
	\end{tabular}
\end{table}

\begin{table}[p]
	\caption{Kac diagrams of $\xi_m$ in $E_8$ ($1 \le m \le \Bh$)}\label{tbl:Kac-E8}
	The parentheses $(m)$ around $m$ means $m \not\in \im \cl$.
	\begin{tabular}{c|c|c||c|c|c}
		\hline
		$m$ & Kac diagram of $\xi_m$ & $W_m$
		&$m$ & Kac diagram of $\xi_m$ & $W_m$ \\ \hline
		%----------------
		$30$ &	\dynkin[labels={1,1,1,1,1,1,1,1,1}, extended] E8 & $1$
		&$15$ &	\dynkin[labels={1,1,0,0,1,0,1,0,1}, extended] E8 & $4A_1$\\
		%-----------------
		$(29)$ &  \dynkin[labels={0, 1, 1, 1, 1, 1, 1, 1, 1}, extended] E8 & $A_1$
		&$14$ &	\dynkin[labels={1,1,0,0,1,0,0,1,1}, extended] E8 & $A_2+2A_1$\\
		%-------------
		$(28)$ &	\dynkin[labels={1, 1, 1, 1, 1, 1, 1, 1, 0}, extended] E8 & $A_1$
		&$(13)$ &	\dynkin[labels={1, 0, 0, 0, 1, 0, 1, 0, 1}, extended] E8 & $A_2+3A_1$\\
		%-------------------
		$(27)$ &	\dynkin[labels={1, 1, 1, 1, 1, 1, 1, 0, 1 }, extended] E8 & $A_1$
		&$12$ &	\dynkin[labels={1,1,0,0,1,0,0,1,0}, extended] E8 & $A_2+3A_1$\\
		%--------------------
		$(26)$ &	\dynkin[labels={1, 1, 1, 1, 1, 1, 0, 1, 1}, extended] E8 & $A_1$
		&$(11)$ &	\dynkin[labels={0, 1, 0, 0, 1, 0, 0, 1, 0}, extended] E8 & $2A_2+2A_1$\\
		%-----------------
		$(25)$ &	\dynkin[labels={1, 1, 1, 1, 1, 0, 1, 1, 1}, extended] E8 & $A_1$
		&$10$ &	\dynkin[labels={1,0,0,0,1,0,0,1,0}, extended] E8 & $2A_2+2A_1$\\
		%----------------
		$24$ &  \dynkin[labels={1,1,1,1,0,1,1,1,1}, extended] E8 & $A_1$
		&$9$ &	\dynkin[labels={1,0,0,0,1,0,0,0,1}, extended] E8 & $A_3+A_2+A_1$\\
		%-------------
		$(23)$ &	\dynkin[labels={1, 1, 0, 0, 1, 1, 1, 1, 1}, extended] E8 & $2A_1$
		&$8$ & 	\dynkin[labels={0,0,0,0,1,0,0,0,1}, extended] E8 & $A_3+A_2+2A_1$\\
		%-----------------
		$(22)$ &	\dynkin[labels={1, 0, 1, 1, 0, 1, 1, 1, 1}, extended] E8 & $2A_1$
		&$7$ &	\dynkin[labels={0,0,0,0,0,1,0,0,1}, extended] E8 & $A_4+A_2+A_1$\\
		%-----------------
		$(21)$ &  \dynkin[labels={ 1, 1, 1, 0, 1, 0, 1, 1, 1}, extended] E8 & $2A_1$
		&$6$ &	\dynkin[labels={1,0,0,0,0,1,0,0,0}, extended] E8 & $A_4+A_3$\\
		%-------------------
		$20$ &	\dynkin[labels={1,1,1,1,0,1,0,1,1}, extended] E8 & $2A_1$
		&$5$ &	\dynkin[labels={0,0,0,0,0,1,0,0,0}, extended] E8 & $2A_4$\\
		%--------------------
		$(19)$ &	\dynkin[labels={1, 1, 0, 1, 1, 0, 1, 0, 1}, extended] E8 & $3A_1$
		&$4$ &	\dynkin[labels={0,0,0,0,0,0,1,0,0}, extended] E8 & $D_5+A_3$\\
		%-----------------
		$18$ &	\dynkin[labels={1,1,1,1,0,1,0,1,0}, extended] E8 & $3A_1$
		&$3$ &	\dynkin[labels={0,0,1,0,0,0,0,0,0}, extended] E8 & $A_8$\\
		%----------------
		$(17)$ &	\dynkin[labels={0, 1, 1, 0, 1, 0, 1, 0, 1}, extended] E8 & $4A_1$
		&$2$ & 	\dynkin[labels={0,1,0,0,0,0,0,0,0}, extended] E8 & $D_8$\\
		%-------------
		$(16)$ &	\dynkin[labels={1, 0, 1, 1, 0, 1, 0, 1, 0}, extended] E8 & $4A_1$
		&$1$ &	\dynkin[labels={1,0,0,0,0,0,0,0,0}, extended] E8 & $E_8$\\
	\end{tabular}
\end{table}

\clearpage

\section{Combinatorics in classical types}\label{sec:co}

%----------------
\subsection{Highest $h$-weights and values of $\cl$}\label{subapp:partition}

In this subsection, we provide proofs of Lemma \ref{lem:max-h-hts} and \ref{values:A}-\ref{values:B}. Our notations for partitions follows \ref{notn:partition}.

\begin{lemma}[{\ref{lem:max-h-hts}}]\label{lem:max-h-hts:pf}
	Let $\fl \subset \fg$ be a Levi, let $\{h,e,f\} \subset \fl$ be an $\fsl_2$-triple where $h \in \fh$ and $e$ is distinguished in $\fl$. Then
	\begin{equation*}
		\max_{\alpha \in R(\fl,\fh)} \langle \alpha, h \rangle \le 
		\max_{\beta \in R(\fg,\fh)} \langle \beta, h \rangle \le 
		1+ \max_{\alpha \in R(\fl,\fh)} \langle \alpha, h \rangle .
	\end{equation*}
	In other words, if the highest $h$-weight on $\fl$ is $2a$, then the highest $h$-weight on $\fg$ is either $2a$ or $2a+1$.
\end{lemma}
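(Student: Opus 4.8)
The statement is a comparison between the largest $\ad h$-weight on $\fl$ and the largest $\ad h$-weight on $\fg$, where $h$ is the semisimple element of an $\fsl_2$-triple whose nilpositive part is distinguished in $\fl$. The plan is to reduce immediately to the case $\fl = \fg$ with $e$ distinguished: indeed, if $\fl \subsetneq \fg$ one replaces the inequality to prove by asking, for a distinguished $e$ in a semisimple $\fl'$ with highest weight $2a'$ on $\fl'$, whether the highest weight on the next larger reductive piece can only grow by at most one each time a simple root outside $\fl$ is added. So it suffices to understand, for each simple type, the list of possible ``highest $h$-weights'' of distinguished orbits and how $\langle\theta,h\rangle$ compares with $\max_{\alpha\in R(\fl,\fh)}\langle\alpha,h\rangle$ for every Bala–Carter Levi $\fl$. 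Concretely I would argue as follows: write $h = \sum_i c_i \check\alpha_i^\vee$ (equivalently record the weighted Dynkin diagram $(\langle\alpha_i,h\rangle)_i$ of $\BO$), so that $\max_{\beta\in R(\fg,\fh)}\langle\beta,h\rangle = \langle\theta,h\rangle = \sum_i n_i \langle\alpha_i,h\rangle$ with $n_i$ the coefficients of $\theta$ in the simple roots; and $\max_{\alpha\in R(\fl,\fh)}\langle\alpha,h\rangle$ is the analogous sum over the sub-root-system $R(\fl,\fh)$, i.e. the highest root of the component of $\fl$ on which $e$ is distinguished.

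The left inequality $\max_{R(\fl,\fh)}\langle\alpha,h\rangle \le \max_{R(\fg,\fh)}\langle\beta,h\rangle$ is immediate since $R(\fl,\fh)\subseteq R(\fg,\fh)$ and one is taking a max over a larger set; the only content is the right inequality. For that, the plan is a type-by-type check using the classification of distinguished nilpotent orbits and their weighted Dynkin diagrams.

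In classical types this is cleanest partition-theoretically. A distinguished orbit in $\fl$ corresponds (on each factor) to a partition with distinct parts of the appropriate parity; the highest $h$-weight is $2(\text{largest part}) - 2$ in type $A$, and a similar explicit expression in types $B$, $C$, $D$ (controlled by the two largest parts, as in the formulas in \ref{values:B}--\ref{values:D}). Saturating to $\fg$ replaces the partition of a proper sub-datum by a partition of the full rank; because the ``largest part'' of an orbit and of its saturation to a Bala–Carter Levi coincide (the Bala–Carter Levi of $\BO_q$ is built from a partition whose largest part equals $q_1$), one checks directly that $\langle\theta,h\rangle - \max_{R(\fl,\fh)}\langle\alpha,h\rangle \in \{0,1\}$ by comparing the two explicit formulas. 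The single unit of slack comes exactly from the discrepancy between an even-part and odd-part constraint, i.e. from the fact that in types $B$, $D$ the highest root pairs to $q_1 - 2$ or $q_1 - 1$ with $h$ depending on whether $q_1 = q_2$. For the exceptional types one simply consults the tables of distinguished orbits (e.g. the weighted Dynkin diagrams recorded in the figures of \S\ref{subsec:cln}, or Bala–Carter / Carter's tables) and reads off, for each distinguished orbit in each Levi subalgebra, the value $\langle\theta,h\rangle$; the inequality $\langle\theta,h\rangle \le 1 + 2a'$ is then a finite verification.

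\textbf{Main obstacle.} There is no genuine conceptual difficulty — the only real work is the bookkeeping in the exceptional types, where one must run through all distinguished orbits of every Levi subalgebra of $E_6, E_7, E_8, F_4, G_2$ and check the numerical inequality; this is where an appeal to software (or to the published weighted-Dynkin-diagram tables) is essentially unavoidable, and where the authors explicitly say a uniform proof eludes them. A secondary subtlety, worth stating carefully, is that $h$ is taken \emph{dominant for $\fg$}, so that $\langle\theta,h\rangle$ really is $\max_{\beta\in R(\fg,\fh)}\langle\beta,h\rangle$; one should note that the weighted Dynkin diagram of a distinguished orbit in $\fl$, read inside $\fg$, need not be dominant, but it becomes so after a Weyl-group adjustment that does not change either maximum — this point I would spell out explicitly before launching the case check.
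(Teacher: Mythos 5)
Your core plan --- dismiss the left inequality as a max over a larger set, then verify the right inequality type by type (explicit partition formulas for $h_d$ and the relevant highest roots in classical types, tables or software in exceptional types, with attention to the fact that $h$ must be made $\fg$-dominant before pairing with $\theta$) --- is exactly the proof the paper gives. So the approach is correct and essentially identical.

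The one thing I would strike is the opening suggestion to ``reduce to $\fl=\fg$'' by adding simple roots outside $\fl$ one at a time and asking whether each step grows the maximum by at most one. That is not a usable reduction: if each single-root extension could add $1$, a chain $\fl \subsetneq \fl_1 \subsetneq \cdots \subsetneq \fg$ of length $k$ would only bound the maximum by $2a+k$, far weaker than $2a+1$; and moreover $e$ ceases to be distinguished in the intermediate Levis, so the statement being ``iterated'' no longer applies to them. The paper sidesteps this entirely by comparing $\langle\theta,h_d\rangle$ directly with $\max_i\langle\theta_i,h_{i,d}\rangle$ over the simple factors $L_i$ of the Bala--Carter Levi, which is what your plan in fact settles on by the end of the first paragraph. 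The phrasing in your classical-type discussion (``the highest root pairs to $q_1-2$ or $q_1-1$'') is also a bit off --- in type $D$ the actual value is $\langle\theta,h_d\rangle = (q_1-1)+\max\{q_1-3,\,q_2-1\}$ --- but the case split on $q_1=q_2$ versus $q_1>q_2$ is indeed where the $+1$ of slack arises, so the mechanism you identify is the right one.
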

	
\begin{proof}
	The first inequality is clear. The second inequality is verified type by type. The exceptional cases are verified using the \textsf{atlas} software. It remains to consider classical types. 
	
	Let $\BO_q \in \ubar \cN$ be any orbit and let $q = (q_1,q_2,\ldots)$ be its partition. Let $\{h_d,e_d,f_d\} \subset \fg$ be an $\fsl_2$-triple with $e \in \BO_q$ and $h_d \in \fh$ dominant. Then 
	\begin{equation*}
		\max_{\beta \in R(\fg,\fh)} \langle \beta,h \rangle = \langle \theta, h_d \rangle.
	\end{equation*}
	Let $L$ be a Bala-Carter Levi of $\BO$, let $L_1,\ldots, L_s$ be simple factors of $L$, let $\BO_i$ be the distinguished orbit in $\fl_i$ so that $\BO = \Sat_L^G (\BO_1 \times \cdots \times \BO_s)$, and let $p^i$ be the partition of $\BO_i$. Then
	\begin{equation*}
		\max_{\alpha \in R(\fl,\fh)} \langle \alpha,h \rangle
		= \max\{ \langle \theta_1, h_{1,d} \rangle, \ldots, \langle \theta_s, h_{s,d}\rangle  \}
	\end{equation*}
	where $\theta_i$ is the highest root of $L_i$ with respect to some choice of Borel containing $\fh$ and $\{h_{i,d},e_{i,d}, f_{i,d}\}$ is an $\fsl_2$-triple for $\BO_i$ with $h_{i,d} \in \fh$ dominant with respect to this Borel. Therefore, it is enough to compare $\langle \theta, h_d \rangle$ with the largest $\langle \theta_i, h_{i,d} \rangle$. Embed $\fg$ into some $\fgl(N)$, choose $\fh$ to be the diagonal Cartan, and choose a Borel in $\fg$ as in \cite[5.2]{Collingwood-McGovern}, then the description of $h_d$ in terms of the partition $q$ is written down in \cite[3.6, 5.3]{Collingwood-McGovern}, and the formula for the highest root $\theta$ can be found in \cite[Plate I-IV]{Bourbaki:Lie:4-6}. The Bala-Carter Levi and the partitions $p^i$ of $\BO_i$ can be found following \cite[2.3, 2.6.5]{LMBM:unip}. We make use of these formulas in the calculations below without further citing these works.
	
	\textbf{Type $A_n$}. We have
	\begin{equation}\label{eqn:h-in-BCD}
		h = D :=
		\diag(\underbrace{q_1-1, q_1-3,\ldots, 1-q_1}_{q_1 \text{ parts}}, \underbrace{q_2-1, q_2-3, \ldots, 1-q_2}_{q_2 \text{ parts}},\ldots, \underbrace{q_s-1, q_s-3, \ldots , 1-q_s}_{q_s \text{ parts}})
	\end{equation}
	and $h_d$ is the dominant Weyl group translate of $h$, where dominance means the entries are weakly descending. The largest (resp. smallest) entry in $h_d$ is $q_1-1$ (resp. $1-q_1$). Hence $h_d = \diag(q_1-1,  \ldots,  1-q_1)$. The highest root is $\theta = \eps_1 - \eps_n$, where $\eps_i$ takes the $i$-th entry. Therefore $\langle \theta, h_d \rangle = (q_1-1)-(1-q_1) = 2q_1-2$. On the other hand,	the Bala-Carter Levi for $\BO_q$ is $\fl = \fgl(q_1) \times \cdots \times \fgl(q_s)$, where $\BO_i$ is the regular orbit in $\fgl(q_i)$ with partition $p^i = (q_i)$. Applying the formula for $\langle \theta, h_d \rangle$ to $\langle \theta_i, h_{i,d} \rangle$, we see $\langle \theta_i, h_{i,d} \rangle = 2q_i-2$. Therefore
	\begin{equation*}
		\langle \theta, h_d \rangle = 2q_1-2 = \max\{ 2q_1-2,\ldots, 2q_s-2 \} = \max\{ \langle \theta_i, h_{i,d} \rangle\}
	\end{equation*}
	as required.
	
	\textbf{Type $C_n$}. In this case $h = \diag(D,-D)$, where $D$ is equal to (\ref{eqn:h-in-BCD}). So $h_d = \diag(q_1-1, \ldots)$, i.e. the first entry is $q_1-1$. The highest root is $\theta= 2 \eps_1$. So $\langle \theta, h_d \rangle = 2(q_1-1) = 2q_1-2$. On the other hand, $q$ can be decomposed uniquely as $q = (a_1{}^2,\ldots, a_t{}^2) \cup \gamma$, where the sequence $a_i$ is weakly descending, and $\gamma$ has only even parts, each occurring once. Then the Bala-Carter Levi of $\BO_q$ is $\fl_1 \times \cdots \times \fl_t \times \fl_0 = \fgl(a_1) \times \cdots \times \fgl(a_t) \times \fsp(|\gamma|)$, the partition for $\BO_i$ in $\fl_i = \fgl(a_i)$ is $p^i = (a_i)$, and the partition for $\BO_0$ in $\fl_0 = \fsp(|\gamma|)$ is $\gamma$. Hence $\langle \theta_i, h_{i,d} \rangle = 2a_i-2$ for $1 \le i \le t$ (applying the formula for $\langle \theta,h_d \rangle$ in type $A$), $\langle \theta_0, h_{0,d} \rangle = 2 \gamma_1-2$ (applying the formula for $\langle \theta, h_d \rangle$ in type $C$), and $\max\{ \langle \theta_i, h_{i,d} \rangle\} = \max\{ 2a_1-2, 2\gamma_1-2\}$.
	
	We have two cases. If $q_1 = q_2$, then $a_1 = q_1 \ge \gamma_1$ since for any repeated part, at least two of its occurrence must come from the $a_i$'s. Then
	\begin{equation*}
		\langle \theta, h_d \rangle = 2q_1 -2 = \max\{ 2q_1-2 = 2a_1-2,\; 2\gamma_1-2 \} = \max\{ \langle \theta_i, h_{i,d} \rangle\}.
	\end{equation*}
	If $q_1 > q_2$, then $q_1$ is non-repeated, and $\gamma_1 = q_1 > a_1$. So 
	\begin{equation*}
		\langle \theta, h_d \rangle = 2q_1 -2 = \max\{ 2a_1-2,\; 2q_1-2 = 2\gamma_1-2 \} = \max\{ \langle \theta_i, h_{i,d} \rangle\}.
	\end{equation*}
	In either case, the desired inequality holds.
	
	\textbf{Type $D_n$}. If $e$ is not very even, then $h = \diag(D,-D)$, where $D$ is again equal to (\ref{eqn:h-in-BCD}). If $e$ is very even, then we may need to replace $h$ by a translate under an outer automorphism of $\fg$, but that does not affect the numbers we are computing. So $\eps_1(h_d) = q_1-1$, and $\eps_2(h_d) = \max\{q_1-3, q_2-1\}$. The highest root is $\theta = \eps_1+\eps_2$. Hence $\langle \theta, h_d \rangle = q_1-1 + \max\{q_1-3, q_2-1\}$. Decompose $q$ uniquely as $q = (a_1{}^2,\ldots, a_t{}^2) \cup \gamma$ where $\gamma$ has only odd parts, each occurring once. Then the Bala-Carter Levi of $\BO_q$ is $\fl_1 \times \cdots \times \fl_t \times \fl_0 = \fgl(a_1) \times \cdots \times \fgl(a_t) \times \fso(|\gamma|)$, the partition for $\BO_i$ in $\fl_i = \fgl(a_i)$ is $p^i = (a_i)$, and the partition for $\BO_0$ in $\fl_0 = \fso(|\gamma|)$ is $\gamma$. Hence $\langle \theta_i, h_{i,d} \rangle = 2a_i-2$ for $1 \le i \le t$ (applying the formula for $\langle \theta,h_d \rangle$ in type $A$), and $\langle \theta_0, h_{0,d} \rangle = \gamma_1-1 + \max\{\gamma_1-3, \gamma_2-1\}$ (applying the formula for $\langle \theta,h_d \rangle$ in type $D$). Note that since parts in $\gamma$ are odd and non-repeating, we have $\gamma_2 \le \gamma_1 -2$. Hence $\gamma_2-1 \le \gamma_1-3$, and so $\langle \theta_0, h_{0,d} \rangle = \gamma_1-1+\gamma_1-3 = 2\gamma_1-4$. Therefore $\max\{ \langle \theta_i, h_{i,d} \rangle\} = \max\{ 2a_1-2, 2\gamma_1-4\}$.
	
	We again have several cases. If $q_2 = q_1$, then $a_1 = q_1 \ge \gamma_1$, and $2\gamma_1-4 < 2a_1-2$. Hence
	\begin{multline*}
		\langle \theta, h_d \rangle = q_1-1+ \max\{q_1-3, q_2-1\} = q_1-1+ q_1-1\\
		= 2q_1 -2 = \max\{ 2q_1-2 = 2a_1-2,\; 2\gamma_1-4 \} = \max\{ \langle \theta_i, h_{i,d} \rangle\}.
	\end{multline*}
	If $q_2 = q_1-1$, then $q_1$ is non-repeating, and so $\gamma_1 = q_1> a_1$. We also see that $\max\{q_1-3,q_2-1\} = \max\{q_1-3, q_1-2\} = q_1-2$. So
	\begin{equation*}
		\langle \theta, h_d \rangle = q_1-1 + \max\{q_1-3,q_2-1\}
		= q_1-1 + q_1-2 = 2q_1-3.
	\end{equation*}
	Also, $q_1 > a_1$ implies $a_1 \le q_1-1$ and hence $2a_1-2 \le 2(q_1-1)-2 = 2q_1-4 = 2 \gamma_1-4$, and so $\max\{ \langle \theta_i, h_{i,d} \rangle\} = \max\{ 2a_1-2, 2\gamma_1-4\} = 2 q_1-4$. Therefore
	\begin{equation*}
		\langle \theta, h_d \rangle = 2q_1-3 = 1+ (2q_1-4) = 1+ \max\{ \langle \theta_i, h_{i,d} \rangle\}.
	\end{equation*}
	If $q_2 < q_1-1$, then again $\gamma_1 = q_1 > a_1$. However in this case $\max\{q_1-3,q_2-1\} = q_1-3$, and so $\langle \theta, h_d \rangle = q_1-1 + q_1-3 = 2q_1-4$. So
	\begin{equation*}
		\langle \theta, h_d \rangle = 2q_1-4 = \max\{ \langle \theta_i, h_{i,d} \rangle\}.
	\end{equation*}
	In all cases, the desired inequality holds.
	
	\textbf{Type $B_n$}. The argument is identical to type $D_n$ (the only change is now $h_d = \diag(0,D, -D)$, where $D$ is equal to (\ref{eqn:h-in-BCD}) with a zero entry removed. But this does not affect rest of the argument).
\end{proof}

%We now turn to the formulas for $\cl(\BO)$ and $\BO(m)$ in classical types.

Note that the above proof in particular contains explicit calculations of the quantity $\max_{\alpha \in R(\fl,\fh)} \langle \alpha, h \rangle = \max\{ \langle \theta_i, h_{i,d}\rangle\}$, and as an immediate byproduct, the values of $\cl_n(\BO) = 1 + \frac12 \max_{\alpha \in R(\fl,\fh)} \langle \alpha, h \rangle$ for any orbit $\BO$ in classical types. %We hence omit their proofs below.

\subsection{Kac diagrams and Lusztig's bijection}\label{subapp:Kac}

This subsection contains the calculation of Kac diagrams of $\xi_m$ and the proof of Theorem \ref{thm:O(m)-n-cells} for type $A$ and $D$.

\begin{clause}[Proof of \ref{thm:O(m)-n-cells} in type $A_n$]\label{cls:Kac-A}
	In type $A_n$, every node in the extended Dynkin diagram is a special node. Moreover, for every $m \in [1,\check \Bh] = [1,n+1]$, there is at least one simple root not orthogonal to $\xi_m$. So there is at least one node in the Kac diagram of $\xi_m$ that is labeled by $1$. Treating that as a special node, we may invoke Lemma \ref{lem:L-bij-as-jind}(2) which says that $\check \BO_m$ is the Richardson orbit induced from the parabolic corresponding to $W_m$. The induction of orbits can be computed easily in terms of partitions. Hence it remains to compute $W_m$.
	
	Recall that in the finite case, we have an identification
	\begin{equation*}
		\{\text{integral weights in } \fh^*\} \bijects
		\{ \ba = (a_0, a_1, \ldots, a_{n-1}, a_n) \in \BZ^{n+1} \} /\BZ 
	\end{equation*}
	where $a \in \BZ$ acts by adding $a$ to each entry. The Weyl group can be identified with the set of all permutations of the $a_i$'s. We have $\langle \check \alpha_i, \ba \rangle = a_{i-1}-a_i$, and $\ba$ is dominant if and only if $a_i \ge a_{i+1}$ for any $i$. In the affine case, we have instead
	\begin{multline}\label{eqn:aff-wts-An}
		\{\text{integral weights in } \fh_{aff}^*\} \bijects\\
		\sP := \{ (\ldots, a_{i-1},a_i, a_{i+1},\ldots) \in \BZ^\BZ \mid  a_{i-1} - a_i = a_{n+1+i-1} - a_{n+1+i} \text{ for any i}\} / \BZ
	\end{multline}
	where $a \in \BZ$ acts by adding $a$ to each entry.	The extended Weyl group can be identified with the set of permutations of the indexing set $\BZ$ commuting with the $(n+1)$-shifting operator \cite[1.2]{Shi:some}. In particular, two tuples $\ba=(a_i)_{i \in \BZ}$, $\bc=(c_i)_{i \in \BZ}$ in $\sP$ are in the same $W_{ext}$-orbit if and only if for each integer $x$, the multiplicity of $x$ (i.e. the number of occurrences of $x$) in $\ba$ and in $\bc$ are the same. Again we have $\langle \check \alpha_i, \ba \rangle = a_{i-1}-a_i$, and a tuple $\ba$ is dominant if and only if $a_{i-1} \ge a_i$ for any $i$. 
	
	Consider the integral weight $m \Lambda_0 + \rho$, and let $\ba=(a_i)_{i \in \BZ} \in \sP$ be the corresponding tuple. This weight satisfies $\langle \check \alpha_0, m \Lambda_0 + \rho \rangle = m- \check \Bh+1 = m-n$ and $\langle \alpha_i, m \Lambda_0 + \rho \rangle = 1$ for any $1 \le i \le n$. So we have $a_n = 0$, $a_{k(n+1)-1} - a_{k(n+1)} = m-n$ for any $k \in \BZ$, and $a_{i-1} - a_i = 1$ for $i \not \in (n+1)\BZ$, i.e. 
	\begin{align*}
		\ba &= 
		\big( \ldots,\; 2m,\;
		\underbrace{m + n,\; m +n-1,\; \ldots,\; m +1 ,\;m}_{n+1 \text{ parts}},\\
		&\qquad \underbrace{n,\; n-1,\;\ldots,\; 1,\;0}_{n+1 \text{ parts}},\; 
		\underbrace{-m+n,\; -m+n -1,\;\ldots,\; -m +1,\; -m }_{n+1 \text{ parts}},\;
		-2m+n ,\; \ldots \big)\\
		&= \cdots \cup [n+2m, 2m] \cup [n+m,m] \cup [n,0] \cup [n-m, -m] \cup [n-2m,-2m] \cup \cdots
	\end{align*}
	where $[i,j]$ denotes the sequence $(i,i-1,\ldots,j+1,j)$ and $\cup$ denotes concatenation of sequences.	From the above discussion, the dominant translate $\xi_m$ of $m \Lambda_0 + \rho$ corresponds to the following tuple which we again denote by $\xi_m$ by an abuse of notation
	\begin{equation*}
		\xi_m = 
		\big( \ldots, \underbrace{2,\ldots,2}_{m_2 \text{ copies}}, \underbrace{1, \ldots,1}_{m_1 \text{ copies}}, \underbrace{0,\ldots,0}_{m_0 \text{ copies}}, \underbrace{-1,\ldots,-1}_{m_{-1} \text{ copies}},\ldots \big).
	\end{equation*}
	Here we put the last $0$ at the $n$-th position, and $m_x$ denotes the multiplicity $x$ in $\ba = (a_i)_{i \in \BZ}$. It remains to compute the numbers $m_x$. Let $b$ be the largest integer so that $bm \le n+1$, and let $v = n+1-bm$ (so that $(m^b,v)$ is a partition of $n+1$). Among the sequences $[n+km,-km]$ ($k \in \BZ$), it is easy to see that the numbers $0,1,\ldots, v-1$ and $m$ appear in and only in $[n,0], [n-m,-m], \ldots, [n-bm,-bm]$, and the numbers $v, v +1, \ldots, m-1$ appear in and only in $[n,0], [n-m,-m], \ldots, [n-(b-1)m,-(b-1)m]$. Hence
	\begin{equation*}
		m_0 = m_1 = \cdots = m_{v -1} = m_m = b+1,
	\end{equation*}
	\begin{equation*}
		m_v = m_{v+1} = \cdots = m_{m-1} = b.
	\end{equation*}
	Therefore
	\begin{equation*}
		\xi_m = 
		\big( \ldots, \underbrace{m,\ldots,m}_{b+1 \text{ copies}},
		\underbrace{%
			\underbrace{m-1,\ldots,m-1}_{b \text{ copies}},\ldots, \underbrace{v, \ldots,v}_{b \text{ copies}}, %
			\underbrace{v-1, \ldots,v-1}_{b+1 \text{ copies}}, \ldots, \underbrace{0,\ldots,0}_{b+1 \text{ copies}},%
		}_{n+1 \text{ parts in total}} 
		\underbrace{-1,\ldots,-1}_{b \text{ copies}},\ldots \big).
	\end{equation*}
	By the periodic nature of $\xi_m$, the parts described above completely determines $\xi_m$. Thus
	\begin{equation}\label{eqn:Kac-diag-A}
		\text{Kac diagram of } \xi_m =
		\begin{cases}
			\underbrace{\underbrace{0 \cdots 01}_{b \text{ parts}} \cdots \underbrace{0 \cdots 01}_{b  \text{ parts}}}_{m-v \text{ copies}} \underbrace{\underbrace{0 \cdots 01}_{b+1 \text{ parts}} \cdots \underbrace{0 \cdots 01}_{b+1 \text{ parts}}}_{v \text{ copies}}, & \text{if } v \neq 0;\\
			\underbrace{\underbrace{0 \cdots 01}_{b \text{ parts}} \cdots \underbrace{0 \cdots 01}_{b \text{ parts}}}_{m \text{ copies}}, & \text{if } v = 0
		\end{cases}
	\end{equation}
	where we have written the Kac diagram as a string for convenience (instead of as a circle). The Levi $\check \fl_m \subset \check \fg$ corresponding to $W_m = \Stab_{W_{aff}}(\xi_m)$ is
	\begin{equation*}
		\check L_m = 
		\begin{cases}
			\fgl(b+1)^v \times \fgl(b)^{m-v}, & v \neq 0;\\
			\fgl(b)^m, & v = 0.
		\end{cases}
	\end{equation*}
	The zero orbit of $\fgl(b+1)$ (resp. $\fgl(b)$) has partition $(1^{b+1})$ (resp. $(1^b)$), and the induction of the product of these orbits to $\check G$ (i.e. the Richardson orbit corresponding to $\check \fl_m$) is given by the partition which is the sum of the $(1^{b+1})$ and the $(1^b)$'s (see Notation \ref{notn:partition} for the meaning of a sum of partitions $p+q$), which equals $q(m) = (m^b,v)$. Hence $\check \BO_m = \check \BO(m)$ by Lemma \ref{lem:L-bij-as-jind}(2). This concludes the proof in type $A_n$.
\end{clause}

\begin{clause}[Kac diagram of $\xi_m$ in type $D_n$]
	In this discussion we sketch our approach for finding the Kac diagram of $\xi_m$ (Table \ref{tbl:Kac-D}).
	
	As in type $A$, we would like to represent affine weights of type $D_n$ as a tuple of integers. In the finite case, we have
	\begin{equation*}
		\fh^* \bijects \{ \ba = ( a_1,a_2,\ldots, a_n,-a_n, \ldots, -a_2,-a_1) \mid a_i \in \BC\}
	\end{equation*}
	and the Weyl group is generated by transpositions $s_i$ ($1 \le i \le n-1$) which swaps $a_i \leftrightarrow a_{i+1}$ and $-a_i \leftrightarrow -a_{i+1}$, and $s_n$ which swaps $a_{n-1} \leftrightarrow -a_n$ and $a_n \leftrightarrow -a_{n-1}$. We have $\langle \check \alpha_i, \ba \rangle = a_i - a_{i+1}$ for $1 \le i \le n-1$, and $\langle \check \alpha_n, \ba \rangle = a_{n-1} + a_n = a_{n-1} - (-a_n)$. In the affine case, we have
	\begin{multline*}
		\fh_{aff}^* \bijects
		\Big\{ \ba = ( \ldots, b_n^{(2)}, a_1^{(1)}, \ldots, a_n^{(1)}, b_n^{(1)}, \ldots, b_1^{(1)}, a_1^{(0)}, \ldots, a_n^{(0)}, b_n^{(0)}, \ldots, b_1^{(0)}, a_1^{(-1)}, \ldots) \in \BC^\infty \mid \\
		a_i^{(j)} - a_{i+1}^{(j)} = b_{i+1}^{(j)} - b_i^{(j)} \text{ for any } 1 \le i, i+1 \le n \text{ and any } j,\\
		a_n^{(j)} - b_{n-1}^{(j)} = a_{n-1}^{(j)} - b_n^{(j)} \text{ for any } j,\;
		b_1^{(j)} - a_2^{(j-1)} = b_2^{(j)} - a_1^{(j-1)} \text{ for any } j,\\
		a_i^{(0)} = -b_i^{(0)} \text{ for any } i \qquad\Big\}.
	\end{multline*}
	The simple coroots pair by
	\begin{equation}\label{eqn:<crt,Zinf>-Dn-a}
		\langle \check \alpha_i, \ba \rangle = a_i^{(j)} - a_{i+1}^{(j)} = b_{i+1}^{(j)} - b_i^{(j)} \text{ for any } 1 \le i \le n-1 \text{ for any } j,
	\end{equation}
	\begin{equation}\label{eqn:<crt,Zinf>-Dn-b}
		\langle \check \alpha_n, \ba \rangle  = a_n^{(j)} - b_{n-1}^{(j)} = a_{n-1}^{(j)} - b_n^{(j)} \text{ for any } j,
	\end{equation}
	\begin{equation}\label{eqn:<crt,Zinf>-Dn-c}
		\langle \check \alpha_0, \ba \rangle  = b_1^{(j)} - a_2^{(j-1)} = b_2^{(j)} - a_1^{(j-1)} \text{ for any } j.
	\end{equation}
	The affine Weyl group is generated by transpositions $s_i$ ($1 \le i \le n-1$) which swaps $a_i^{(j)} \leftrightarrow a_{i+1}^{(j)}$ and $b_i^{(j)} \leftrightarrow b_{i+1}^{(j)}$, $s_n$ which swaps $a_n^{(j)} \leftrightarrow b_{n-1}^{(j)}$ and $a_{n-1}^{(j)} \leftrightarrow b_n^{(j)}$, and $s_0$ which swaps $b_1^{(j)} \leftrightarrow a_2^{(j-1)}$ and $b_2^{(j)} \leftrightarrow a_1^{(j-1)}$. 
	
	Note that the tuples $\ba$ described above satisfy the periodic conditions for affine weights of type $A_{2n-1}$ (see (\ref{eqn:aff-wts-An})), hence it makes sense to say whether $\ba$ is dominant as an affine weight for $A_{2n-1}$. The following claim states the relationship between two notions of dominance. Let $\Omega(D_n)$ be the automorphism group of the extended Dynkin diagram of type $D_n$, so that $W_{ext}(D_n) = W_{aff}(D_n) \rtimes \Omega(D_n)$. The group $\Omega(D_n)$ contains the elements $\sigma_{02}$ which swaps the $0$th and the $2$nd nodes, and $\sigma_{n-1,n}$ which swaps the $(n-1)$-th and the $n$-th nodes. Then $\sigma_{02}$ acts on $\ba$ by swapping $b_1^{(j)} \leftrightarrow a_1^{(j-1)}$ for any $j$, and $\sigma_{n-1,n}$ acts by swapping $a_n^{(j)} \leftrightarrow b_n^{(j)}$ for any $j$. 
	
	\begin{claim}\label{clm:dom-wt-Dn-vs-An}
		Let $\ba \in \fh_{aff}^*$ and let $\ubar\ba = (\ubar a_i^{(j)}, \ubar b_i^{(j)})_{i,j \in \BZ} \in \fh_{aff}^*$ be its dominant translate under $W_{aff}(D_n)$. Up to the action of an element of $\Omega(D_n)$, $\ubar \ba$ is dominant as an affine weight of type $A$ and the entries of $\ubar \ba$ satisfy $\ubar a_i^{(j)} = -\ubar b_i^{(-j)}$ for any $i$ and $j$.
	\end{claim}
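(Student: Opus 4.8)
\emph{Strategy.} The claim is a \textbf{folding} statement relating $\widetilde D_n$ to $\widetilde A_{2n-1}$: affine weights of type $D_n$ sit inside affine weights of type $A_{2n-1}$ as a symmetric subspace, and the assertion compares the two dominance chambers there. Indeed, the relations cutting out $\fh_{aff}^*(D_n)$ force, exactly as in \ref{cls:Kac-A}, that every $\ba$ is a bi-infinite $\BZ$-indexed sequence whose consecutive differences are $2n$-periodic --- i.e.\ an affine weight of type $A_{2n-1}$ --- and the condition $a_i^{(0)}=-b_i^{(0)}$ propagates along the periodicity to $a_i^{(j)}=-b_i^{(-j)}$ for all $i,j$; writing $\iota$ for the order-two diagram automorphism of $\widetilde A_{2n-1}$ that fixes no node, this says $\ba$ is \emph{anti-invariant}, $-\iota\cdot\ba=\ba$. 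From the double-transposition descriptions of $s_0,\dots,s_n$ given just before the claim, each generator of $W_{aff}(D_n)$ acts on the $A_{2n-1}$-presentation as a product of two $\iota$-conjugate affine reflections of $W_{aff}(A_{2n-1})$; and one checks directly (using the descriptions of $\sigma_{02},\sigma_{n-1,n}$) that $\Omega(D_n)$ preserves the anti-invariant locus. Consequently the ``$\ubar a_i^{(j)}=-\ubar b_i^{(-j)}$'' half of the conclusion is automatic once we exhibit an $\Omega(D_n)$-translate of $\ubar\ba$ which is $A_{2n-1}$-dominant, and this is all that must be shown.

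\emph{Plan.} Step (1): \emph{$A_{2n-1}$-dominance implies $D_n$-dominance.} If the coordinates of a tuple in $\fh_{aff}^*(D_n)$ are weakly decreasing in the $A_{2n-1}$ reading order $\cdots\ge a_1^{(j)}\ge\cdots\ge a_n^{(j)}\ge b_n^{(j)}\ge\cdots\ge b_1^{(j)}\ge a_1^{(j-1)}\ge\cdots$, then every inequality $\langle\check\alpha_i,\ba\rangle\ge0$ ($0\le i\le n$), read off from the coroot pairings displayed above, is a transitivity consequence of that chain (e.g.\ $a_n^{(j)}\ge b_n^{(j)}\ge b_{n-1}^{(j)}$ gives the $\check\alpha_n$-inequality and $b_2^{(j)}\ge b_1^{(j)}\ge a_1^{(j-1)}\ge a_2^{(j-1)}$ the $\check\alpha_0$-inequality). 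Hence the $A_{2n-1}$-dominant cone lies inside the $D_n$-dominant cone, so the latter is the union of the $\Omega(D_n)$-translates of the former. Step (2): \emph{locating the defect.} Plugging the $D_n$-dominance inequalities for $\ubar\ba$ back into the chain shows that the $A_{2n-1}$ reading order of $\ubar\ba$ is weakly decreasing in every slot except possibly at the fork ($\ubar a_n^{(j)}$ versus $\ubar b_n^{(j)}$) and at the seam ($\ubar b_1^{(j)}$ versus $\ubar a_1^{(j-1)}$); moreover $\sigma_{n-1,n}$ repairs the former and $\sigma_{02}$ the latter --- \emph{provided each defect has constant sign over all $j$.} Step (3): \emph{uniformity in $j$.} The relation $\ubar a_i^{(j)}=-\ubar b_i^{(-j)}$ makes the fork defect $\ubar a_n^{(j)}-\ubar b_n^{(j)}$ an even function of $j$ (and the seam defect symmetric about $j=\tfrac12$); combined with the identities defining $\fh_{aff}^*(D_n)$ --- which, together with the weak monotonicity of the coordinates forced by $D_n$-dominance, pin the behaviour of the gaps across levels --- one concludes the defects are in fact independent of $j$. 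Applying the appropriate product of $\sigma_{02}$ and $\sigma_{n-1,n}$ then makes $\ubar\ba$ $A_{2n-1}$-dominant, and the second assertion follows since $\Omega(D_n)$ preserves anti-invariance. (One can cross-check the outcome against Table \ref{tbl:Kac-D}: the stabiliser types $W_m$ recorded there are exactly what one reads off from the resulting $A_{2n-1}$-dominant tuple.)

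\emph{Main obstacle.} The heart of the matter is step (3) --- the $j$-uniformity of the two defects --- which is where the type-by-type bookkeeping enters (and, presumably, what is meant by the ``explicit type-by-type calculation''). A more conceptual route that sidesteps it: let $\ba'$ be the unique $A_{2n-1}$-dominant element of the $W_{aff}(A_{2n-1})$-orbit of $\ba$; since $\iota$ reverses the reading order while negation restores it, $-\iota\cdot\ba'$ is again $A_{2n-1}$-dominant, and since $\iota$ normalises $W_{aff}(A_{2n-1})$ and $-\iota\cdot\ba=\ba$, the tuple $-\iota\cdot\ba'$ lies in the same $W_{aff}(A_{2n-1})$-orbit as $\ba'$, whence $-\iota\cdot\ba'=\ba'$ by uniqueness; thus $\ba'\in\fh_{aff}^*(D_n)$ and, by Step (1), $\ba'$ is $W_{aff}(D_n)$-dominant. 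Now $\ba'$ and $\ubar\ba$ are both $W_{aff}(D_n)$-dominant and lie in a common $W_{aff}(A_{2n-1})$-orbit, and it remains to see they differ by an element of $\Omega(D_n)$; this is a folding statement identifying the action of $W_{ext}(D_n)$ on the anti-invariant locus with that of the subgroup of $W_{ext}(A_{2n-1})$ stabilising it. The genuine difficulty for this route is that $\ba=m\Lambda_0+\rho$ usually has a nontrivial stabiliser in $W_{aff}(A_{2n-1})$ (precisely the $W_m$'s, for $m\neq\check\Bh$), so the identification cannot be checked on regular weights alone and needs either a small cohomological vanishing for $\langle\iota\rangle$ acting on $W_m$, or --- once more --- a case check; either way, step (3) or its analogue is the crux.
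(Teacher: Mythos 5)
Your Steps (1) and (2) are exactly the paper's argument: reading $\ubar\ba$ in the $A_{2n-1}$ order, every consecutive inequality is forced by $D_n$-dominance except possibly at the fork ($\ubar a_n^{(j)}$ versus $\ubar b_n^{(j)}$) and at the seam ($\ubar b_1^{(j)}$ versus $\ubar a_1^{(j-1)}$), and $\sigma_{n-1,n}$, $\sigma_{02}$ repair those defects; the anti-invariance $\ubar a_i^{(j)}=-\ubar b_i^{(-j)}$ is preserved because each generator $s_0,\dots,s_n$ and each of $\sigma_{02},\sigma_{n-1,n}$ visibly preserves it.

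But your Step (3) — the $j$-uniformity of the two defects, which you flag as the ``crux'' requiring bookkeeping or a folding argument — is not a real obstacle, and your hand-wave there is the one genuine gap in the write-up. The defects are \emph{automatically} constant in $j$ because they are differences of affine coroot pairings, which are scalars attached to the weight $\ubar\ba$ and do not depend on $j$. Concretely, from \eqref{eqn:<crt,Zinf>-Dn-a}--\eqref{eqn:<crt,Zinf>-Dn-c},
\begin{equation*}
\ubar b_n^{(j)}-\ubar a_n^{(j)}
=\bigl(\ubar a_{n-1}^{(j)}-\ubar a_n^{(j)}\bigr)-\bigl(\ubar a_{n-1}^{(j)}-\ubar b_n^{(j)}\bigr)
=\langle\check\alpha_{n-1},\ubar\ba\rangle-\langle\check\alpha_n,\ubar\ba\rangle,
\end{equation*}
\begin{equation*}
\ubar a_1^{(j-1)}-\ubar b_1^{(j)}
=\bigl(\ubar b_2^{(j)}-\ubar b_1^{(j)}\bigr)-\bigl(\ubar b_2^{(j)}-\ubar a_1^{(j-1)}\bigr)
=\langle\check\alpha_1,\ubar\ba\rangle-\langle\check\alpha_0,\ubar\ba\rangle,
\end{equation*}
and the right-hand sides are fixed numbers. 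So ``the defect has the same sign for all $j$'' is a one-line identity, not a place where the type-by-type analysis enters; the explicit calculation in the paper only starts downstream of the claim, when one actually computes multiplicities in $\ba$. Once you substitute this identity for your Step (3), your proof and the paper's coincide. (Your alternative ``conceptual route'' via uniqueness of $A_{2n-1}$-dominant representatives would also work once one supplies the folding statement you defer, but it is more machinery than the claim needs.)
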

	
	Therefore, to find the dominant translate of a weight, it is enough to find its $W_{ext}(A_{2n-1})$-translate which is dominant for affine $A_{2n-1}$. The latter is easy to do, see \ref{cls:Kac-A}.
	
	\begin{proof}
		According to the formulas (\ref{eqn:<crt,Zinf>-Dn-a})-(\ref{eqn:<crt,Zinf>-Dn-c}), each entry in $\ubar \ba$ is greater than or equal to the entry on its right, except it is possible to have $\ubar a_n^{(j)} < \ubar b_n^{(j)}$ for all $j$, and/or $\ubar b_1^{(j)} < \ubar a_1^{(j)}$ for all $j$. If we are in the first situation, the conditions $\langle \check \alpha_{n-1}, \ubar \ba \rangle \ge 0$ and $\langle \check \alpha_n, \ubar \ba \rangle \ge 0$ force $\ubar a_{n-1}^{(j)} \ge \ubar b_n^{(j)} > \ubar a_n^{(j)} \ge \ubar b_{n-1}^{(j)}$. Apply $\sigma_{n-1,n} \in \Omega(D_n)$ to $\ubar \ba$. Then $\ubar a_n^{(j)}$ and $\ubar b_n^{(j)}$ are swapped, and the entries of the resulting tuple satisfy $\ubar a_{n-1}^{(j)} \ge \ubar a_n^{(j)} > \ubar b_n^{(j)} \ge \ubar b_{n-1}^{(j)}$. Similarly, if we are in the second situation, then we may apply $\sigma_{02} \in \Omega(D_n)$ to $\ubar \ba$. Then each entry in the resulting tuple is greater than or equal to entries on its right, which is dominant as an affine weight of type $A$.
		
		The equality $a_i^{(j)} = - b_i^{(-j)}$ is satisfied by $\ba$, and this property is preserved by the simple reflections $s_i$ of $W_{aff}(D_n)$ and the elements $\sigma_{02}$, $\sigma_{n-1,n}$ of $\Omega(D_n)$. Hence the entries of $\ubar \ba$ satisfy $\ubar a_i^{(j)} = -\ubar b_i^{(-j)}$ as well. 
	\end{proof}
	
	Consider the weight $m \Lambda_0 + \rho$ ($1 \le m \le \check \Bh = 2n-2$), and let $\ba \in \BC^\infty$ be the corresponding tuple. Then $\langle \check \alpha_i, \ba \rangle = 1$ for $1 \le i \le n$, and $\langle \check \alpha_0, \ba \rangle = m - \check \Bh +1 = m-2n+3$. Hence
	\begin{multline*}
		\ba =
		\cdots \cup [2m+n-1, 2m] \cup [2m, 2m-n+1] 
		\cup [m+n-1,m] \cup [m, m-n+1]\\
		\cup [n-1,0] \cup [0,-n+1] 
		\cup [-m+n-1, -m] \cup [-m, -m-n+1] \\
		\cup [-2m+n-1,-2m] \cup [-2m,-2m-n+1] \cup \cdots
	\end{multline*}
	where $[c,d]$ denotes the sequence $(c,c-1,\ldots, d+1,d)$, and $\cup$ denotes concatenation of sequences. By Claim \ref{clm:dom-wt-Dn-vs-An} and the argument in \ref{cls:Kac-A}, its dominant translate $\xi_m$ can be easily constructed once the multiplicity of each integer in $\ba$ is found. We do this next.
	
	Observe that we can instead find the multiplicity of each integer in the following sequence
	\begin{multline*}
		\ba' = \cdots \cup [2m+n-1, 2m-n+1] \cup [m+n-1,m-n+1] \cup [n-1,-n+1] \\
		\cup [-m+n-1,-m-n+1] \cup [-2m+n-1, -2m-n+1] \cup \cdots.
	\end{multline*}
	The multiplicity of an integer $x$ in $\ba'$ is equal to the multiplicity of $x$ in $\ba$, except if $x \in m \BZ$ then $x$ appears one more time in $\ba$ than in $\ba'$. Note the intervals in $\ba'$ are simply $[n-1,-n+1]$ shifted by multiples of $m$. Let
	\begin{align*}
		c_x^+ &= \text{the largest integer with } c_x^+ m \le n-1 + x,\\
		c_x^- &= \text{the largest integer with } c_x^- m \le n-1 - x,\\
		c_x &= c_x^+ + c_x^-
	\end{align*}
	(for $x =0$, $c_x^+ = c_x^- = c$, where $c$ is the integer that appears in Table \ref{tbl:Kac-D}). Then $x$ appears $c_x^+ +1$ times in $\cdots \cup [m+n-1,m-n+1] \cup [n-1,-n+1]$, and appears $c_x^-$ times in $[-m+n-1,-m-n+1] \cup [-2m+n-1, -2m-n+1] \cup \cdots$. So $x$ appears $c_x^+ + c_x^- + 1 = c_x + 1$ times in $\ba'$. Let
	\begin{equation*}
		m_x := c_x + 1 + \delta_{x,m\BZ}, \text{ where }
		\delta_{x,m\BZ} = 
		\begin{cases}
			1 & x \in m \BZ\\
			0 & x \not\in m \BZ.
		\end{cases}
	\end{equation*}
	Then $x$ appears $m_x$ times in $\ba$. 
	
	Our next task is to compute $c_x$ inductively, and ultimately write down a formula for $c_x$ for any non-negative integer $x$. Summing up the inequalities
	\begin{align*}
		c_x^+ m \le n-1 + x < (c_x^+ +1)m\\
		c_x^- m \le n-1-x < (c_x^- +1)m
	\end{align*}
	we obtain
	\begin{equation*}
		c_x m \le 2n-2 < (c_x +2)m.
	\end{equation*}
	Hence $c_x \in \{\ell, \ell-1\}$ for all $x$, where 
	\begin{center}
		$\ell :=$ the largest integer so that $\ell m \le 2n-2$.
	\end{center}
	The following lemma describes precisely when the value of $c_x$ changes as $x$ increases. The proof is straightforward combinatorics.
	
	\begin{lemma}~
		\begin{itemize}
			\item If $c_x = \ell-1$, then $c_x = c_{x+1} = \cdots = c_{(c_x^+ +1)m - n} = \ell-1$, and $c_{(c_x^+ +1)m - n+1} = \ell$.
			
			\item If $c_x = \ell$, then $c_x = c_{x+1} = \cdots = c_{n- c_x^- m-1} = \ell$, and
			\begin{equation*}
				c_{n - c_x^- m} = 
				\begin{cases}
					\ell & \text{if } m \mid 2n-1 ;\\
					\ell-1 & \text{otherwise}.
				\end{cases}
			\end{equation*}
		\end{itemize}
	\end{lemma}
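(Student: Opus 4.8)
The plan is to argue directly from the closed forms $c_x^+ = \lfloor (n-1+x)/m \rfloor$ and $c_x^- = \lfloor (n-1-x)/m \rfloor$, which hold for all integers $x$ (with the floor allowed to be negative at extreme values of $x$, which causes no trouble). The two elementary inputs are: (i) $c_x^+$ is weakly increasing and $c_x^-$ weakly decreasing in $x$; and (ii) the identity $\lfloor a \rfloor + \lfloor b \rfloor \in \{\lfloor a+b \rfloor -1,\ \lfloor a+b\rfloor\}$, which, applied to $a=(n-1+x)/m$ and $b=(n-1-x)/m$, reproves $c_x = c_x^+ + c_x^- \in \{\ell-1,\ell\}$ for $\ell = \lfloor (2n-2)/m\rfloor$ (the quantity already isolated just above the lemma).

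For the first bullet, suppose $c_x = \ell-1$ and put $p := c_x^+$, so $pm \le n-1+x < (p+1)m$; in particular $(p+1)m \ge n+x$, so the interval $[x,\ (p+1)m-n]$ is nonempty. For $x \le y \le (p+1)m-n$ we still have $n-1+y < (p+1)m$, hence $c_y^+ = p$, while $c_y^- \le c_x^-$ by monotonicity; thus $c_y = p + c_y^- \le p + c_x^- = \ell-1$, and combined with $c_y \ge \ell-1$ this forces $c_y = \ell-1$ on the whole interval. At $y_0 := (p+1)m - n + 1$ one has $n-1+y_0 = (p+1)m$, so $c_{y_0}^+ = p+1$, and $n-1-y_0 = 2n-2-(p+1)m$, so $c_{y_0}^- = \lfloor (2n-2)/m\rfloor - (p+1) = \ell - p - 1$; hence $c_{y_0} = \ell$, as asserted.

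The second bullet is symmetric, with the roles of $c^+$ and $c^-$ exchanged. Assume $c_x = \ell$ and put $q := c_x^-$, so $qm \le n-1-x < (q+1)m$ and the interval $[x,\ n-qm-1]$ is nonempty. For $x \le y \le n-qm-1$ we have $n-1-y \ge qm$, hence $c_y^- = q$, while $c_y^+ \ge c_x^+$; thus $c_y = c_y^+ + q \ge \ell$, forcing $c_y = \ell$ on the interval. At $y_1 := n-qm$ one has $n-1-y_1 = qm-1$, so $c_{y_1}^- = q-1$, and $n-1+y_1 = 2n-1-qm$, so $c_{y_1}^+ = \lfloor (2n-1)/m\rfloor - q$; hence $c_{y_1} = \lfloor (2n-1)/m\rfloor - 1$. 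Finally, from $\ell m \le 2n-2 < (\ell+1)m$ we get $2n-1 \le (\ell+1)m$, so $\lfloor (2n-1)/m\rfloor$ equals $\ell+1$ exactly when $2n-1 = (\ell+1)m$, i.e. when $m \mid 2n-1$, and equals $\ell$ otherwise; this yields $c_{y_1} = \ell$ in the divisible case and $c_{y_1} = \ell-1$ otherwise, matching the statement.

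No real obstacle is anticipated: the proof is pure floor-function bookkeeping, as the paper signals by calling it ``straightforward combinatorics''. The only mild subtleties — that the two displayed intervals are nonempty, and that the $2n-1$ divisibility dichotomy gets invoked exactly once — are dispatched above, and no positivity of the individual $c_x^{\pm}$ is needed.
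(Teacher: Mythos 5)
Your proof is correct. The paper omits the argument entirely (remarking only that it is ``straightforward combinatorics''), so there is no paper proof to compare against; your floor-function bookkeeping via the closed forms $c_x^{\pm} = \lfloor (n-1\pm x)/m\rfloor$, the monotonicity of $c^+$ and $c^-$, the shift identity $\lfloor (a-km)/m\rfloor = \lfloor a/m\rfloor - k$, and the dichotomy $\lfloor (2n-1)/m\rfloor \in \{\ell,\ell+1\}$ is exactly the routine verification the authors had in mind, and every step checks out.
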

	
	The following corollary records the values of $c_x$ for any $x$. It is obtained by keeping track of the changes of $c_x$ as $x$ increases starting from $x = 0$. The conditions $2c = \ell$ and $2c = \ell-1$ below correspond to whether $c_0 = \ell$ or $c_0 = \ell-1$. Recall that $c = c_0^+ = c_0^-$ (so $2c = c_0$).
	
	\begin{corollary}\label{cor:cx}~
		\begin{itemize}
			\item Suppose $2c = \ell$ (equivalently $(2c+1)m > 2n-2$) and $m \not\mid 2n-1$. Let $x_0 = 0$ and let
			\begin{equation*}
				x_i = 
				\begin{cases}
					(c + \tfrac i2) m - n+1 & \text{if } i \text{ even},\\
					n - (c - \tfrac{i-1}2)m & \text{if } i \text{ odd}.
				\end{cases}
			\end{equation*}
			Then
			\begin{equation*}
				c_x = 
				\begin{cases}
					\ell & \text{if } x \in [x_0, x_1-1] \cup [x_2,x_3-1] \cup \cdots\\
					\ell-1 & \text{if } x \in [x_1,x_2-1] \cup [x_3,x_4-1] \cup \cdots
				\end{cases}
			\end{equation*}
			
			\item Suppose $2c = \ell$ (equivalently $(2c+1)m > 2n-2$) and $m \mid 2n-1$. Then
			\begin{equation*}
				c_x = \ell \text{ for all } x \ge 0.
			\end{equation*}
			
			\item Suppose $2c = \ell-1$ (equivalently $(2c+1)m \le 2n-2$). Let $x_0 = 0$ and let
			\begin{equation*}
				x_i = 
				\begin{cases*}
					n - (c- \tfrac{i-2}2)m & \text{if } i \text{ even},\\
					(c+ \tfrac{i+1}2)m -n + 1 & \text{if } i \text{ odd}.
				\end{cases*}
			\end{equation*}
			Then 
			\begin{equation*}
				c_x = 
				\begin{cases}
					\ell-1 & \text{if } x \in [x_0, x_1-1] \cup [x_2,x_3-1] \cup \cdots\\
					\ell & \text{if } x \in [x_1,x_2-1] \cup [x_3,x_4-1] \cup \cdots.
				\end{cases}
			\end{equation*}
		\end{itemize}
	\end{corollary}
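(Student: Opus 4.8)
The plan is to obtain Corollary \ref{cor:cx} by iterating the preceding lemma, starting at $x = 0$ and following the sequence of values of $x$ at which $c_x$ switches between $\ell$ and $\ell-1$. First I would pin down the base value: by definition $c_0 = c_0^+ + c_0^- = 2c$, and since $c_0 m \le 2n-2 < (c_0+2)m$ one always has $c_0 \in \{\ell-1,\ell\}$, with $c_0 = \ell$ precisely when $(c_0+1)m = (2c+1)m > 2n-2$; this is exactly the dichotomy separating the three bullets. It is also worth recording at the outset that $2n-1$ is odd, so $m \mid 2n-1$ forces $m$ odd and $\ell = (2n-1)/m - 1$ even; hence $m \mid 2n-1$ can occur only in the case $2c = \ell$, which is why the third bullet carries no sub-case on divisibility.

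Next I would run the induction. When $c_0 = \ell$, the second bullet of the lemma gives $c_x = \ell$ on $[0, n-cm-1]$ and then $c_{n-cm} = \ell$ if $m \mid 2n-1$ and $c_{n-cm} = \ell-1$ otherwise. In the divisible case, reapplying the second bullet at $x = n-cm$ (where $c_x^- = c-1$), then at $x = n-(c-1)m$, and so on, pushes the right endpoint up by $m$ at each stage and forces $c_x = \ell$ for all $x \ge 0$; this is the second bullet of the corollary. In the non-divisible case we arrive at $c_{n-cm} = \ell-1$ and switch to the first bullet of the lemma; the heart of the inductive step is the floor computation $c_{n-cm}^+ = c$, equivalently $cm \le 2n-1-cm < (c+1)m$, whose left half is $\ell m \le 2n-2 < 2n-1$ and whose right half is $(2c+1)m > 2n-1$, this last inequality following from $(2c+1)m > 2n-2$ together with $m \nmid 2n-1$. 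This produces the switch point $(c_{n-cm}^+ + 1)m - n + 1 = (c+1)m - n + 1 = x_2$. Iterating, the lemma alternately yields switch points of the two shapes occurring in the definition of the $x_i$, and an induction on $i$ --- at each step evaluating $c_{x_i}^+ = \lfloor(n-1+x_i)/m\rfloor$ or $c_{x_i}^- = \lfloor(n-1-x_i)/m\rfloor$ from the defining inequalities of $\ell$ and $c$ and the case hypothesis --- reproduces the listed intervals. The case $c_0 = \ell-1$ (third bullet) is entirely parallel, starting from the first bullet of the lemma: one checks $c_0^+ = c$ so that $x_1 = (c+1)m - n + 1$, and the alternation then follows the even/odd formula for $x_i$.

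The routine but slightly delicate part, which I expect to be the main obstacle, is exactly this bookkeeping: matching the switch points generated by successive applications of the lemma with the closed forms $x_i$, and checking that consecutive $x_i$ are strictly increasing so that the listed unions of intervals genuinely tile $[0,\infty)$. Each such check reduces to a comparison of the type ``$(2c+1)m$ versus $2n-1$'' or ``$\ell m$ versus $2n-2$'', using the case hypothesis and the observation above that $m \mid 2n-1$ cannot occur when $2c = \ell-1$; in particular no interval in the list is empty. One must also confirm that the identity $c_x = c_x^+ + c_x^-$ and the lemma remain valid once $x > n-1$, where $n-1-x < 0$ and $c_x^-$ becomes negative --- a harmless extension of the same combinatorics. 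I would therefore organize the write-up as a single induction on $i$, with base case and inductive step each reducing to one floor computation, and the finitely many boundary comparisons recorded separately.
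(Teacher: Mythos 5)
Your proposal is correct and takes the same route the paper sketches: the paper itself gives no proof beyond the single sentence that the corollary ``is obtained by keeping track of the changes of $c_x$ as $x$ increases starting from $x=0$,'' i.e.\ iterating the preceding lemma, which is exactly what you spell out. Your auxiliary observation that $m\mid 2n-1$ forces $m$ and $(2n-1)/m$ odd, hence $\ell$ even, hence $2c=\ell$ (since $2c$ is always even), correctly explains why the divisibility dichotomy only appears in the $2c=\ell$ case, and your floor computations at the switch points match the formulas for $x_i$.
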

	
	To obtain a formula for $m_x = c_x + 1 + \delta_{x,m\BZ}$ (the multiplicity of $x$ in $\ba$), we need to determine the values of $\delta_{x,m\BZ}$, which amounts to finding the positions of each integer multiple $ym$ of $m$ in the intervals $[x_i, x_{i+1}-1]$. The following lemma does not describe the precise location of the $ym$'s, but it describes the intervals in which the $ym$'s lie.
	
	\begin{lemma}\label{lem:ym}
		For any $y \in \BZ_{\ge 0}$, we have $ym \in [x_{2y}, x_{2y+1}-1]$, where the $x_i$'s are defined as in the preceding corollary. If $2c = \ell$ and $m \mid 2n-1$, we define the $x_i$'s as in the case $2c = \ell$, $m \not\mid 2n-1$.
	\end{lemma}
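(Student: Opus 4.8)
The plan is to derive the containment $ym\in[x_{2y},x_{2y+1}-1]$ directly from the closed formulas for the $x_i$ recorded in Corollary~\ref{cor:cx}, reducing everything to two elementary inequalities relating $c$, $m$ and $n$. Recall that $\ell$ is the largest integer with $\ell m\le 2n-2$ and that $c$ is the largest integer with $cm\le n-1$; thus $cm\le n-1$ always holds. Moreover, maximality of $\ell$ gives $(\ell+1)m>2n-2$, hence $(\ell+1)m\ge 2n-1$; in the regime $\ell=2c+1$ this yields $(2c+2)m\ge 2n-1$, so $(c+1)m\ge n-\tfrac12$ and therefore $(c+1)m\ge n$ since $(c+1)m\in\BZ$. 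These two facts --- namely $cm\le n-1$, and $(c+1)m\ge n$ when $2c=\ell-1$ --- are all that will be needed. Note also that $2c\in\{\ell,\ell-1\}$, so the two cases treated below are exhaustive.

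In the case $2c=\ell$ (including the subcase $m\mid 2n-1$, where by the convention in the statement the $x_i$ are taken as in the case $m\not\mid 2n-1$) Corollary~\ref{cor:cx} gives $x_{2y}=(c+y)m-n+1$ and $x_{2y+1}=n-(c-y)m$, with the override $x_0=0$. For $y\ge 1$ the two inequalities $x_{2y}\le ym$ and $ym\le x_{2y+1}-1$ each simplify immediately to $cm\le n-1$, which holds; for $y=0$ one uses instead $x_0=0=ym$ and $x_1-1=n-cm-1\ge 0$. In the case $2c=\ell-1$, i.e.\ $(2c+1)m\le 2n-2$, Corollary~\ref{cor:cx} gives, for $y\ge 1$, $x_{2y}=n-(c-y+1)m$ and $x_{2y+1}=(c+y+1)m-n+1$, and again $x_0=0$; here both inequalities reduce to $(c+1)m\ge n$, established above, and the case $y=0$ follows from $x_0=0$ together with $x_1-1=(c+1)m-n\ge 0$. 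In every case the interval is nondegenerate, since $x_{2y+1}-1-x_{2y}$ equals $2(n-1-cm)\ge 0$ in the first case and $2((c+1)m-n)\ge 0$ in the second.

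I expect no real obstacle here. The only mild bookkeeping point is the override $x_0=0$ in Corollary~\ref{cor:cx}: in both cases the raw formula value for $x_0$ is $\le 0$ (it equals $cm-n+1$, respectively $n-(c+1)m$), so clamping to $0$ does not disturb the $y=0$ instance of the lemma, which then follows from the same two inequalities. Once these substitutions are carried out the proof is complete, and the remainder of the subsection can invoke Lemma~\ref{lem:ym} to locate the multiples of $m$ inside the blocks described by Corollary~\ref{cor:cx}, and hence determine the correction terms $\delta_{x,m\BZ}$ appearing in the multiplicities $m_x$.
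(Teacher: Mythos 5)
Your proof is correct. The two inequalities $x_{2y}\le ym$ and $ym\le x_{2y+1}-1$ do indeed collapse, after substituting the closed formulas from Corollary~\ref{cor:cx}, to $cm\le n-1$ in the case $2c=\ell$ and to $(c+1)m\ge n$ in the case $2c=\ell-1$; the first is the defining property of $c$ and the second follows from the maximality of $\ell$ exactly as you argue. Your treatment of the $y=0$ override $x_0=0$ is also right: the raw formula values $cm-n+1$ (case $2c=\ell$) and $n-(c+1)m$ (case $2c=\ell-1$) are both $\le 0$, so clamping to $0$ does not interfere with $0\in[x_0,x_1-1]$, and $x_1-1\ge 0$ by the same two inequalities. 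The paper states this lemma without proof, treating it as a routine consequence of Corollary~\ref{cor:cx}, so there is no argument of the authors to compare against; your direct substitution-and-verification is the natural way to fill in the omitted details.
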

	
	Using these facts, one can write down $\xi_m$ as an element $\ubar \ba \in \BC^\infty$. Namely, 
	\begin{equation*}
		\ubar \ba = 
		\Big( \ldots,
		\underbrace{2,\ldots,2}_{m_2 \text{ copies}}
		\underbrace{1,\ldots,1}_{m_1 \text{ copies}},
		\underbrace{0,\ldots,\boxed{0}}_{c_0^+ +1 \text{ copies}}, \underbrace{0,\ldots,0}_{c_0^+ +1 \text{ copies}},\ldots \Big)
	\end{equation*}
	where the boxed $0$ is the entry $\ubar a_n^{(0)}$.	Our ultimate goal is to describe the Kac diagram of $\xi_m$. To this end, we only need the values of the entries $\ubar b_1^{(1)}, \ubar a_1^{(0)}, \ldots, \ubar a_n^{(0)}$ ($n+1$ parts in total). It turns out that all these entries are $< m$, as the following lemma shows. Hence the for these entries $x \neq 0$, we have $\delta_{x,m\BZ} = 0$ by Lemma \ref{lem:ym}, and hence $m_x = c_x+1$.
	
	\begin{lemma}
		In the sequence 
		\begin{equation*}
			\underbrace{x_2-1, \ldots, x_2-1}_{c_{x_2-1}+1 \text{ copies}}, \ldots, 
			\underbrace{1,\ldots,1}_{c_1+1 \text{ copies}},
			\underbrace{0,\ldots,\boxed{0}}_{c_0^+ +1 \text{ copies}}
		\end{equation*}
		the number of parts is at least $n+1$. By the preceding lemma, all these entries are less than $m$.
	\end{lemma}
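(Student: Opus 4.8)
The plan is to reduce the claimed inequality to an explicit counting of how many times each small integer occurs among the entries $\ubar a_i^{(0)},\dots$ strictly to the left of (and including) the boxed entry $\ubar a_n^{(0)}$. Recall that the dominant translate $\xi_m$ was obtained by sorting the multiset $\ba$ weakly decreasingly (Claim \ref{clm:dom-wt-Dn-vs-An} and \ref{cls:Kac-A}), and that $\xi_m$, as a periodic tuple of type $A_{2n-1}$, has $n+1$ consecutive entries from the ``$a$-block'' $\ubar a_1^{(0)},\dots,\ubar a_n^{(0)}$ together with the adjacent $\ubar b_1^{(1)}$; we must show these $n+1$ entries are accounted for by the displayed sequence. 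Concretely, I would argue as follows. The displayed sequence lists, in decreasing order, the value $x_2-1$ with multiplicity $c_{x_2-1}+1$, then $x_2-2$, and so on down to $1$ with multiplicity $c_1+1$, and finally $0$ with multiplicity $c_0^+ + 1$ (the ``half'' of the $0$-block lying weakly left of the boxed $0$). So the total number of parts is
\begin{equation*}
	(c_0^+ + 1) + \sum_{x=1}^{x_2-1} (c_x+1).
\end{equation*}
The point of the lemma is that this sum is $\ge n+1$, i.e. these parts already fill the whole $a$-block (and spill into $\ubar b_1^{(1)}$).

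The key computation is to evaluate $\sum_{x=0}^{x_2-1}(c_x+1)$ — treating the $0$ entry as contributing $c_0^+ + 1$ rather than $c_0+1 = 2c+1$ — using Corollary \ref{cor:cx}. First I would observe that $x_2$ is exactly the smallest positive multiple-of-$m$ shifted index at which $c_x$ first returns to its ``high'' value after the initial run; more precisely, from the definitions $x_2 = (c+1)m - n + 1$ in the case $2c=\ell$, and $x_2 = n - cm$ (i.e.\ $c + \tfrac{i-2}{2} = c$) in the case $2c = \ell - 1$. On each subinterval $[x_i, x_{i+1}-1]$ the value $c_x$ is constant (equal to $\ell$ or $\ell-1$), so $\sum_{x=0}^{x_2-1}(c_x+1)$ telescopes into at most two blocks: the contribution from $[x_0,x_1-1]$ and from $[x_1,x_2-1]$. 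Plugging in the explicit endpoints $x_0=0$, $x_1$, $x_2$ from Corollary \ref{cor:cx} and the relation $c_0^+ = c$, one gets a closed-form expression in $c$, $m$, $n$, $\ell$; the inequality ``$\ge n+1$'' then follows from the defining inequalities $\ell m \le 2n-2 < (\ell+2)m$ and $cm \le n-1 < (c+1)m$ by elementary manipulation. I would organize the verification into the two cases $2c = \ell$ and $2c = \ell - 1$ (with the sub-case $m \mid 2n-1$ folded into the first, as the lemma's parenthetical instructs, since there $c_x \equiv \ell$ and the same bookkeeping applies).

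The ``all these entries are less than $m$'' half is the easy part and follows immediately from Lemma \ref{lem:ym}: that lemma says every nonnegative multiple $ym$ of $m$ lies in the interval $[x_{2y},x_{2y+1}-1]$, so in particular $m = 1\cdot m$ lies in $[x_2, x_3 - 1]$, hence $m \ge x_2$, hence every value $x$ appearing in the displayed sequence satisfies $x \le x_2 - 1 < m$. (For $x = 0$ this is trivial since $m \ge 1$, and the case $x_2 = x_3$, which cannot occur for $m \ge 2$, is vacuous anyway.) Combining: the displayed sequence consists of $n+1$ or more entries, all $< m$, so by Lemma \ref{lem:ym} each such entry $x \ne 0$ has $\delta_{x,m\BZ} = 0$ and thus multiplicity $m_x = c_x + 1$ in $\ba$, consistent with the display.

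The main obstacle I anticipate is purely bookkeeping: getting the endpoint indices $x_1, x_2$ right in each case of Corollary \ref{cor:cx} (the parity-dependent formulas are easy to misapply), and making sure the ``$0$ contributes $c_0^+ + 1$, not $2c+1$'' convention is handled consistently — this is where the boxed entry $\ubar a_n^{(0)}$ comes in, marking the midpoint of the $0$-block. A secondary subtlety is the edge case where $c = 0$ (i.e. $m > n-1$), where the $a$-block contains no value $\ge 1$ at all and the sequence degenerates to just the $0$'s; there one checks directly that $c_0^+ + 1 = c + 1 = 1$ is not enough on its own, but in that regime $x_2$ and the structure of Corollary \ref{cor:cx} still force the count, and one can alternatively appeal to the $c=0$ rows of Table \ref{tbl:Kac-D} as a sanity check. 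None of these steps is deep; the content is entirely in the interlocking inequalities among $m, n, c, \ell$ already assembled in the preceding lemmas and corollary.
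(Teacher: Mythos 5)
Your plan — express the number of parts as $(c_0^++1)+\sum_{x=1}^{x_2-1}(c_x+1)$, evaluate via Corollary \ref{cor:cx}, and compare against $n+1$ case by case — is the right framework, and your reduction of the ``all entries $<m$'' claim to Lemma \ref{lem:ym} (every displayed value is $\le x_2-1$ and $m\ge x_2$) is correct. But the step where you wave off the sub-case $2c=\ell$, $m\mid 2n-1$ with ``the same bookkeeping applies'' is exactly where the argument breaks, and in fact the lemma is false there. In that regime $c_x\equiv\ell=2c$ for all $x\ge 0$, and the inequalities $\ell m\le 2n-2<(\ell+1)m$ and $cm\le n-1$, together with $m\mid 2n-1$, force $(2c+1)m=2n-1$, hence $x_2=(c+1)m-n+1=\tfrac{m+1}{2}$, and your own count gives
\begin{equation*}
	(c_0^++1)+\sum_{x=1}^{x_2-1}(c_x+1)
	=(c+1)+(x_2-1)(2c+1)
	=(c+1)+\tfrac{m-1}{2}(2c+1)
	=(c+1)+(n-c-1)=n,
\end{equation*}
one short of $n+1$. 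A concrete instance is $n=5$, $m=3$: then $c=1$, $\ell=2$, $x_2=2$, and the displayed sequence is $1,1,1,0,\boxed{0}$, with $5=n$ parts, not $6$. So the stated bound ``$\ge n+1$'' cannot be established; it holds (with equality or slack) in the cases $2c=\ell$ with $m\nmid 2n-1$ and $2c=\ell-1$, but fails by exactly one in the case you tried to fold in.

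This off-by-one does not threaten the paper's use of the lemma, because the downstream equality \eqref{eqn:ubar-a} only needs the last $n+1$ entries of $(\ldots,2^{c_2+1},1^{c_1+1},0^{c+1})$ to involve values $x$ with $m_x=c_x+1$, and the one extra value $x_2=\tfrac{m+1}{2}$ forced in this sub-case is still $<m$ whenever $m\ge 2$, while $m=1$ is degenerate and can be checked directly. But a correct fix must either weaken the bound to ``$\ge n$'' together with a separate observation about the $(n+1)$-st entry, or extend the displayed sequence to run down from $m-1$ instead of $x_2-1$. Your proposal, as written, does not surface this, because it asserts rather than carries out the Case-2 computation; had you evaluated the telescoped sum there as you do in the other two cases, the discrepancy would have been immediate.
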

	
	Therefore
	\begin{align}
		\Big( \ubar b_1^{(1)}, \ubar a_1^{(0)}, \ldots, \boxed{\ubar a_n^{(0)}} \Big)
		&= \text{the last } n+1 \text{ parts in } 
		\Big( \ldots,
		\underbrace{2,\ldots,2}_{c_2+1 \text{ copies}}
		\underbrace{1,\ldots,1}_{c_1+1 \text{ copies}},
		\underbrace{0,\ldots,\boxed{0}}_{c_0^+ +1 \text{ copies}} \Big) \nonumber \\ 
		&= \text{the last } n+1 \text{ parts in } 
		\Big( \ldots,
		\underbrace{2,\ldots,2}_{c_2+1 \text{ copies}}
		\underbrace{1,\ldots,1}_{c_1+1 \text{ copies}},
		\underbrace{0,\ldots,\boxed{0}}_{c +1 \text{ copies}} \Big) \label{eqn:ubar-a}
	\end{align}
	(the last equality is because $c = c_0^+$).
	
	The remaining task is to determine the precise values of the entries $\ubar b_1^{(1)}, \ubar a_1^{(0)}, \ldots, \ubar a_n^{(0)}$, which amounts to finding the position of $b_1^{(n)}$ in the right side of (\ref{eqn:ubar-a}). The answer depends on the parity of $m$, and on whether $c$ is small (equal to $0$ or $1$). We present the details for the case $c \ge 1$ and $2c = \ell-1$. The other cases are similar and omitted.
	
	\begin{example}
		Suppose $c \ge 1$ (equivalently $m \le n-1$) and $2c = \ell-1$ (equivalently $(2c+1) m \le 2n-2$). By the formulas for $c_x$ in Corollary \ref{cor:cx}, the right hand side of (\ref{eqn:ubar-a}) contains the sequence
		\begin{equation*}
			\be := 
			\Big( 
			\underbrace{x_2-1,\ldots,x_2-1}_{2c+2 \text{ copies}}, \ldots,
			\underbrace{x_1,\ldots,x_1}_{2c+2 \text{ copies}}
			\underbrace{x_1-1,\ldots,x_1-1}_{2c+1 \text{ copies}}, \ldots
			\underbrace{1,\ldots,1}_{2c+1 \text{ copies}},
			\underbrace{0,\ldots,\boxed{0}}_{c +1 \text{ copies}} \Big).
		\end{equation*}
		The number of parts in this sequence is
		\begin{equation*}
			N := (x_2-x_1)(2c+2) + (x_1-1)(2c+1) + (c+1).
		\end{equation*}
		The previous lemma guarantees $N \ge n+1$, so the sequence
		\begin{equation*}
			\bd := 
			\Big( \ubar b_1^{(1)}, \ubar a_1^{(0)}, \ldots, \boxed{\ubar a_n^{(0)}} \Big)
		\end{equation*}
		is contained in $\be$. Moreover, it can be shown that the number of parts in the subsequence
		\begin{equation*}
			\mathbf{f} := 
			\Big(
			\underbrace{x_1-1,\ldots,x_1-1}_{2c+1 \text{ copies}}, \ldots
			\underbrace{1,\ldots,1}_{2c+1 \text{ copies}},
			\underbrace{0,\ldots,\boxed{0}}_{c +1 \text{ copies}} \Big)
		\end{equation*}
		is $\le n+1$. Hence the sequence $\bd$ contains $\mathbf{f}$. 
		
		Let us discuss various possibilities for the labels on the $0$-th and the $1$st nodes on the Kac diagram. Write
		\begin{equation*}
			r := N - (n+1)
			= (c+1) (2n-(2c+1)m) - c - 2 \ge 0.
		\end{equation*}
		This is the number of parts left if we remove $\bd$ from $\be$.
		\begin{itemize}
			\item If $r \in -1 + (2c+2)\BZ_{>0}$, then 
			\begin{equation*}
				\bd = \Big( y, y-1, y-1, \ldots, 0, \boxed{0} \Big)
			\end{equation*}
			for some integer $y$, and so $\langle \check \alpha_0, \ubar \ba \rangle = \ubar b_1^{(1)} - \ubar a_1^{(0)} = 1$ and $\langle \check \alpha_0, \ubar \ba \rangle = \ubar a_1^{(0)} - \ubar a_2^{(0)} = 0$. Hence the Kac diagram is of the form
			\begin{equation*}
				\begin{dynkinDiagram}[%
					labels={1,0,0,0,1,0,0,1,0,0,1,0,0,1,0,0,1,0,0,0,0}, 
					extended, 
					edge length = .5cm
					] 
					D{**...***...**...*...***...**...*...***...***}
					\dynkinBrace*[D_{?}]04
					\dynkinBrace*[A_{2c+2}]57
					\dynkinBrace*[A_{2c+2}]8{10}
					\dynkinBrace*[A_{2c+1}]{11}{13}
					\dynkinBrace*[A_{2c+1}]{14}{16}
					\dynkinBrace*[D_{c+1}]{17}{19}
				\end{dynkinDiagram}.
			\end{equation*}
			Here the subdiagram
			\begin{equation*}
				\begin{dynkinDiagram}[%
					labels={0,0,1,0,0,1,0,0,0,0}, 
					edge length = .5cm
					] 
					D{*...**...*...***...***}
					\dynkinBrace*[A_{2c+1}]{1}{3}
					\dynkinBrace*[A_{2c+1}]{4}{6}
					\dynkinBrace*[D_{c+1}]{7}{9}
				\end{dynkinDiagram}
			\end{equation*}
			is calculated from the subsequence $\mathbf{f}$ of $\bd$, and the number of $A_{2c+1}$'s is the same as the length of the interval $[x_1-1,1]$ which equals $x_1-1 = (c+1)m-n$. It is straightforward to find that there are $n-(2c+1) \frac {m-1}2 -c-2$ copies of $A_{2c+2}$, and the leftover part forms a $D_{2c+3}$.			
			
			The condition $r \in -1 + (2c+2)\BZ_{>0}$ in particular implies $r + 1 \equiv 0 \modulo 2(c+1)$, which can be simplified to
			\begin{equation*}
				(c+1)((2c+1)m-1) \equiv 0 \modulo 2(c+1).
			\end{equation*}
			This forces $(2c+1)m-1$ to be even, which forces $m$ to be odd. Conversely, if $m$ is odd, then
			\begin{equation*}
				r+1 = (c+1) (2n-(2c+1)m) - c - 1
				= (c+1) (2n-(2c+1)m-1)
			\end{equation*}
			where the second factor $(2n-(2c+1)m-1)$ is even and positive (positivity follows from $(2c+1)m \le 2n-2$ which follows from $2c = \ell-1$). Hence $r+1 \in (2c+2)\BZ_{>0}$. Therefore the condition $r \in -1 + (2c+2)\BZ_{>0}$ is equivalent to $m$ being odd.
			
			\item If $r \in -2 + (2c+2)\BZ_{>0}$, then 
			\begin{equation*}
				\bd = \Big( y, y, y-1, \ldots, 0, \boxed{0} \Big)
			\end{equation*}
			for some $y$, and so $\langle \check \alpha_0, \ubar \ba \rangle = \langle \check \alpha_1, \ubar \ba \rangle = 1$. However, the condition $r \in -2 + (2c+2)\BZ_{>0}$ implies $r+2 \equiv 0 \modulo 2(c+1)$, which simplifies to
			\begin{equation*}
				(c+1) (m+1) \equiv 1 \modulo 2(c+1).
			\end{equation*}
			Further reducing modulo $c+1$, we get $0 \equiv 1 \modulo (c+1)$. Since we have assumed $c \ge 1$, this is impossible. Therefore this case cannot happen.
			
			\item If $r \not\in -1 + (2c+2)\BZ_{>0}$ or $-2 + (2c+2)\BZ_{>0}$, then 
			\begin{equation*}
				\bd = \Big( y, y, y, \ldots, 0, \boxed{0} \Big)
			\end{equation*}
			for $y$ (i.e. the first three parts of $\bd$ are the same), and hence $\langle \check \alpha_0, \ubar \ba \rangle = \langle \check \alpha_1, \ubar \ba \rangle =0$. So the Kac diagram has the form
			\begin{equation*}
				\begin{dynkinDiagram}[%
					labels={0,0,0,0,1,0,0,1,0,0,1,0,0,1,0,0,1,0,0,0,0}, 
					extended, 
					edge length = .5cm
					] 
					D{**...***...**...*...***...**...*...***...***}
					\dynkinBrace*[D_{?}]04
					\dynkinBrace*[A_{2c+2}]57
					\dynkinBrace*[A_{2c+2}]8{10}
					\dynkinBrace*[A_{2c+1}]{11}{13}
					\dynkinBrace*[A_{2c+1}]{14}{16}
					\dynkinBrace*[D_{c+1}]{17}{19}
				\end{dynkinDiagram}
			\end{equation*}
			Similar to the first case, there are $(c+1)m-n$ copies of $A_{2c+1}$. It is straightforward to find that there are $n-(2c+1) \frac m2 -1$ copies of $A_{2c+2}$, and the leftover part forms a $D_{c+2}$.
			
			From the discussions of the previous two cases, we see that the condition $r \not\in -1 + (2c+2)\BZ_{>0}$ or $-2 + (2c+2)\BZ_{>0}$ is equivalent to $m$ being even.
		\end{itemize}
		
		Thus, we obtain the following two lines of Table \ref{tbl:Kac-D}:
		\begin{center}
			\begin{tabular}{c|c|c}
				condition on $m$ & Kac diagram of $\xi_m$ & type of $W_m$ \\ \hline
				%--------------------------
				$\substack{%
					m \le n-1\;(\text{i.e. } c\ge1)\\%
					m \text{ odd}\\%
					(2c+1)m \le 2n-2}$
				& 
				$\substack{%
					\begin{dynkinDiagram}[%
						labels={1,0,0,0,1,0,0,1,0,0,1,0,0,1,0,0,1,0,0,0,0}, 
						extended, 
						%edge length = .5cm
						] 
						D{**...***...**...*...***...**...*...***...***}
						\dynkinBrace*[D_{2c+3}]04
						\dynkinBrace*[A_{2c+2}]57
						\dynkinBrace*[A_{2c+2}]8{10}
						\dynkinBrace*[A_{2c+1}]{11}{13}
						\dynkinBrace*[A_{2c+1}]{14}{16}
						\dynkinBrace*[D_{c+1}]{17}{19}
					\end{dynkinDiagram}\\%
					n-(2c+1)\frac{m-1}2 - c-2 \text{ copies of } A_{2c+2},\quad
					(c+1)m-n \text{ copies of } A_{2c+1}}$
				& $\substack{%
					D_{c+1} + \big((c+1)m-n\big) A_{2c}\\%
					+ \big(n-(2c+1)\frac{m-1}2 -c-1 \big) A_{2c+1}
				}$\\ \hline
				%------------------------------
				$\substack{%
					m \le n-1\;(\text{i.e. } c\ge1)\\%
					m \text{ even}\\%
					(2c+1)m \le 2n-2}$
				& 
				$\substack{%
					\begin{dynkinDiagram}[%
						labels={0,0,0,0,1,0,0,1,0,0,1,0,0,1,0,0,1,0,0,0,0}, 
						extended, 
						%edge length = .5cm
						] 
						D{**...***...**...*...***...**...*...***...***}
						\dynkinBrace*[D_{c+2}]04
						\dynkinBrace*[A_{2c+2}]57
						\dynkinBrace*[A_{2c+2}]8{10}
						\dynkinBrace*[A_{2c+1}]{11}{13}
						\dynkinBrace*[A_{2c+1}]{14}{16}
						\dynkinBrace*[D_{c+1}]{17}{19}
					\end{dynkinDiagram}\\%
					n-(2c+1)\frac{m}2 - 1 \text{ copies of } A_{2c+2},\quad
					(c+1)m-n \text{ copies of } A_{2c+1}}$
				& $\substack{%
					2 D_{c+1} + \big((c+1)m-n\big) A_{2c}\\%
					+ \big(n-(2c+1)\frac{m}2 - 1\big) A_{2c+1}
				}$
			\end{tabular}
		\end{center}
	\end{example}
	
	We end by summarizing the how various conditions in Table \ref{tbl:Kac-D} come up in the above calculations.
	\begin{itemize}
		\item The conditions $(2c+1) m > 2n-2$ and $(2c+1) m \le 2n-2$ correspond to whether $c = c_0^+$ is equal to $\ell$ or $\ell-1$, and the condition $m \mid 2n-1$ and $m \not\mid 2n-1$ affects the multiplicities of various numbers in $\xi_m$ (see Corollary \ref{cor:cx}).
		
		\item The parity of $m$ affect the labels on the $0$-th and the $1$st nodes of the Kac diagram.
		
		\item Special care may be needed for small values of $c$. This results in the conditions $c=0$, $c=1$, and $c\ge 1$. 
	\end{itemize}
\end{clause}

\begin{clause}[Proof of Theorem \ref{thm:O(m)-n-cells} in type $D_n$]
	The Kac diagrams of $\xi_m$ and the types of $W_m$ have been collected in Table \ref{tbl:Kac-D}. It remains to verify that $j_{W_m}^W (\sgn)$ corresponds to $\check \BO(m)$ under the Springer correspondence. There is only one case where $m \not\in \im \check \cl$, i.e. when $m>n-1$ (i.e. $c=0$) and $m$ is odd. The largest number $m_0 \in \im \check \cl$ that is $\le m$ is $m_0 = m-1$. In all other cases, $m \in \im \check \cl$.
	
	Recall 
	\begin{itemize}
		\item $c$ is the largest integer so that $cm \le n-1$, that is $cm \le n-1 < (c+1)m$.
		\item $b$ is the largest integer so that $bm \le 2n$, that is $bm \le 2n < (b+1)m$.
		\item $k$ is the largest integer so that $(m+1) + 2km \le 2n$, that is $(m+1) + 2km \le 2n < (m+1) + (2k+2)m$.
	\end{itemize}
	
	We first deal with the cases where a special node of the Kac diagram is labeled by $1$. In this case $W_m$ is special, it corresponds to a parabolic $\check P_m = \check L_m \check U_m \subset \check G_m$, and $\check \BO_m = \Ind_{\check L_m}^{\check G} \{0\}$ by Lemma \ref{lem:L-bij-as-jind}(2). Recall from \cite[\textsection 2.3]{LMBM:unip} that a Levi $L$ in $\SO(2n)$ is of the form $L = \GL(a_1) \times \cdots \times \GL(a_t) \times \SO(2a_0)$ where the $a_i$'s form a partition of $n$. Given orbits $\BO_i$ of $\GL(a_i)$ and $\BO_0$ of $\SO(2a_0)$, write $p^i$ and $p^0$ for the corresponding partitions. Then the induction of $\prod_i \BO_i$ from $L$ to $\SO(2n)$ is given by the partition $B(p^0 + 2 \sum_{j=1}^t p^j)$ where the sum of partitions is defined in Notation \ref{notn:partition}, and $B(p)$ takes the unique maximal orthogonal partition dominated by $p$. Here a partition is orthogonal if each even part appears with even multiplicity.
	\begin{itemize}
		\item $m=1$. In this case $W_m = D_n = W$ and $\check L_m = \check G$, and $\check \BO_m = \Ind_{\check G}^{\check G} \{0\} = \{0\} = \check \BO(1)$.
		
		\item $m>n-1$ (i.e. $c=0$), $m$ odd. From the Kac diagram, we have $W_m = (n- \frac{m+1}2) A_1$ and $\check L_m = \GL(2)^{n-\frac{m+1}2} \times \GL(1)^{m-n+1}$. So
		\begin{align*}
			\Ind_{\check L_m}^{\check G} \{0\}
			&= B \Big( 2(n- \tfrac{m+1}2) \cdot (1,1) + 2(m-n+1) \cdot (1) \Big)\\
			&= B \Big( (m+1, 2n-m-1) \Big) = (m, 2n-m).
		\end{align*}
		
		We have two subcases. Suppose $m \in \im \cl$. Since $\im \cl = [1,n] \cup (2\BZ \cap [n+1,2n-2])$ by \ref{values:D}, we must have $m=n$, which implies $b=2$, i.e. $q(m) = (m,m) = (n,n)$. This agrees with $\Ind_{\check L_m}^{\check G} \{0\}$. Now suppose $m \not\in \im \cl$. Then $m_0 = m-1$, and $\Ind_{\check L_m}^{\check G}\{0\} = (m,2n-m) = q(m-1) = q(m_0)$, as required.		
		
		\item $m>n-1$, (i.e. $c=0$), $m$ even. In this case $W_m = (n- \frac m2 -1) A_1$ and $\check L_m = \GL(2)^{n-\frac m2-1} \times \GL(1)^{m-n+2}$. Hence
		\begin{align*}
			\Ind_{\check L_m}^{\check G} \{0\}
			&= B \Big( 2(n-\tfrac m2-1) \cdot (1,1) + 2(m-n+2) \cdot (1) \Big)\\
			&= B \Big( (m+2, 2n-m-2) \Big)
			= (m+1, 2n-m-1).
		\end{align*}
		This equals $q(m) = (m+1, v)$ (we have $k=0$).
		
		\item $m \le n-1$ (i.e. $c \ge 1$), $m$ odd, $(2c+1) m \le 2n-2$. In this case $W_m = D_{c+1} + ((c+1)m-n) A_{2c} + (n - (2c+1) \frac{m-1}2 - c-1) A_{2c+1}$ and $\check L_m = \GL(2c+1)^{(c+1)m-n} \times \GL(2c+2)^{n-(2c+1) \frac{m-1}2 -c-1} \times \SO(2c+2)$. So
		\begin{align*}
			\Ind_{\check L_m}^{\check G} \{0\}
			&= B \Big( (1^{2c+2}) + 2((c+1)m-n) \cdot (1^{2c+1}) + 2(n-(2c+1) \tfrac{m-1}2 -c-1) \cdot (1^{2c+2}) \Big)\\
			&= B \Big( (m^{2c+1}, 2n-(2c+1)m) \Big)
			= (m^{2c+1}, 2n-(2c+1)m).
		\end{align*}
		We next find the value of $b$. From the condition $(2c+1)m \le 2n-2 < 2n$ and the condition $bm \le 2n$, we see that $2c+1 \le b$. If $2c+1 <b$, then $2c+2 \le b$, which implies
		\begin{equation*}
			2n-2 < (2c+2)m \le bm \le 2n
		\end{equation*}
		where the first and the last inequality are from the definition of $c$ and $b$, respectively. Since $(2c+2)m$ is even, this forces $2c+2=b$ and $(m^{2c+1}, 2n-(2c+1)m) = (m^{2c+2}) = (m^b) = q(m)$ as required. If $2c+1=b$, then $(m^{2c+1}, 2n-(2c+1)m) = (m^b, v) = q(m)$.
		
		\item $m \le n-1$ (i.e. $c \ge 1$), $m > 1$ odd, $m \not\mid 2n-1$, $(2c+1)m> 2n-2$. In this case $W_m = D_{c+1} + (n-cm-1) A_{2c} + ((2c+1) \frac{m+1}2-n-c) A_{2c-1}$ and $\check L_m = \GL(2c+1)^{n-cm-1} \times \GL(2c)^{(2c+1) \frac{m+1}2-n-c} \times \SO(2c+2)$. So
		\begin{align*}
			\Ind_{\check L_m}^{\check G} \{0\}
			&= B \Big( (1^{2c+2}) + 2(n-cm-1) \cdot (1^{2c+1}) + 2((2c+1) \tfrac{m+1}2-n-c) \cdot (1^{2c}) \Big)\\
			&= B \Big( (m^{2c}, 2n-2cm-1,1) \Big)
			= (m^{2c}, 2n-2cm-1,1).
		\end{align*}
		We next find the value of $b$. The condition $(2c+1)m > 2n-2$ implies $2n-1 \le (2c+1)m$. If $(2c+1)m \le 2n$, then since $(2c+1)m$ is odd, we must have $(2c+1)m = 2n-1$, which implies $m \mid 2n-1$, contradicting our condition on $m$. So we must have $2n < (2c+1)m$ and hence $b <2c+1$. On the other hand the definition of $c$ reads $cm \le n-1$ which implies $2c \le 2n-2 < 2n$, and hence $2c \le b$. Therefore $b = 2c$ an $q(m) = (m^b, v, 1) = (m^{2c},v,1)$. This partition equals that of $\Ind_{\check L_m}^{\check G} \{0\}$, as required.
		
		\item $m \le n-1$ (i.e. $c \ge 1$), $m > 1$ odd, $m \mid 2n-1$, $(2c+1)m> 2n-2$. These conditions imply $2n-1 = (2c+1)n$, and hence $b = 2c+1$. We have $W_m = D_{c+1} + (n-cm-1) A_{2c}$ and $\check L_m = \GL(2c +1)^{n-cm-1} \times \SO(2c+2)$. Hence
		\begin{align*}
			\Ind_{\check L_m}^{\check G} \{0\}
			&= B \Big( (1^{2c+2}) + 2(n-cm-1) \cdot (1^{2c+1}) \Big)\\
			&= B \Big( (m^{2c+1},1) \Big)
			= (m^{2c+1},1) 
			= (m^b,1) = q(m).
		\end{align*}
		
		\item $m \le n-1 < 2m$ (i.e. $c=1$), $m$ even, $3m = (2c+1)m > 2n-2$. In this case $W_m = (\frac{3m}2-n+2) A_1 + (n-m-1) A_2$ and $\check L_m = \GL(2)^{\frac{3m}2-n} \times \GL(3)^{n-m-1} \times \GL(1) \times \SO(4)$. So
		\begin{align*}
			\Ind_{\check L_m}^{\check G} \{0\}
			&= B \Big( (1^4) + 2(\tfrac{3m}2-n) \cdot (1^2) + 2(n-m-1) \cdot (1^3) + 2 \cdot (1) \Big)\\
			&= B \Big( (m+1,m-1,2n-2m-1,1) \Big)
			= (m+1,m-1,2n-2m-1,1) 
		\end{align*}
		The condition $n-1<2m$ on $m$ implies $2n < 3m+2$. Since both sides of this inequality are even, $2n \le 3m = (m+1) + 2m-1$. So by the definition of $k$, we must have $k=0$. So $(m+1,m-1,2n-2m-1,1) = (m+1,m^{2k},m-1,v,1) = q(m)$.
	\end{itemize}
	
	In remaining two cases, all special nodes in the Kac diagram of $\xi_m$ are labeled by $0$, and we need to show that the representation $j_{W_m}^W \sgn$ corresponds to the pair $(\check \BO(m), \ubar \BC)$ under the Springer correspondence. Instead of computing this directly, we use the following commutative diagram \cite[Theorem 14]{Chua}
	\begin{equation*}
		\begin{tikzcd}
			& {\ubar W}\\
			{\ubar{\check \cN}_m} \ar[ur, "\KL_{\check G}^{\check \bP_m}"] 
			&& {\ubar{\check \cN}} \ar[ul, "\KL"']\\
			\Irr\Rep_{sp}(W_m) \ar[u, "\operatorname{Spr}"] \ar[rr, "j_{W_m}^W"]
			&& \Irr\Rep(W) \ar[u, "\operatorname{Spr}"']
		\end{tikzcd}.
	\end{equation*}
	Here we are writing $\check \bP_m \subset \check G(F)$ for the standard parahoric subgroup corresponding to $W_m \subset W_{aff}$, $\check \bL_m$ for its Levi factor containing the Cartan $\check H$, $\check \bU_m$ for the unipotent part of $\check \bP_m$, and $\check \cN_m$ for the nilpotent cone of $\check \fl_m$. The set $\Irr\Rep_{sp}(W_m)$ is the set of special representations of $W_m$. The maps $\operatorname{Spr}$ send a representation $E$ to $\BO$ if $E$ corresponds to a pair $(\BO, \tau)$ under Springer correspondence. The map $\KL_{\check G}^{\check \bP_m}$ sends a nilpotent orbit $\BO_m \in \ubar{\check \cN}_m$ to the unique conjugacy class $[w] \in \ubar W$ such that for any $e \in \BO_m$, there is a dense open set $U \subset e + \check \fu_m$ ($\check \fu_m$ is the Lie algebra of $\check \bU_m$) so that any $\gamma \in U$ is regular semisimple topologically nilpotent of type $[w]$ (see \cite[Lemma 10]{Chua}). Recall that we aim to check whether $\operatorname{Spr} j_{W_m}^W \sgn = (\check \BO(m), \ubar \BC)$. In fact, it is enough to check that the underlying orbit of $\operatorname{Spr} j_{W_m}^W \sgn$ is equal to $\check \BO(m)$ since Lemma \ref{lem:L-bij-as-jind}(1) guarantees that the local system on the orbit is trivial. Because of the commutative diagram and the fact that both $\KL_{\check G}^{\check \bP_m}$ and $\KL$ are injective, it is enough to check whether $\KL_{\check G}^{\check \bP_m} \operatorname{Spr} \sgn = \KL_{\check G}^{\check \bP_m} \{0\}$ is equal to $\KL(\check \BO(m))$.
	
	To compute $\KL_{\check G}^{\check \bP_m}$ and $\KL$, we use the formula given in \cite[Theorem 8.11]{Spaltenstein:order}. First, for any maximal standard parahoric subgroup $\check \bP_{max} \subset \check G(F)$ containing $\check \bP_m$, we have $\KL_{\check G}^{\check \bP_m} = \KL_{\check G}^{\check \bP_{max}} \Ind_{\check \bL_m}^{\check \bL_{max}}$ where $\Ind$ denotes the Lusztig-Spaltenstein induction of nilpotent orbits. Also, $\KL = \KL_{\check G}^{\check G(\cO)}$. The Lusztig-Spaltenstein induction can easily be computed in terms of partitions (see the beginning of this proof for the rule). So it remains to explain the procedure for computing $\KL_{\check G}^{\check \bP_{max}}$. Note that the Levi $\check \bL_{max}$ has at most two simple factors $\check \bL_1$ and $\check \bL_2$. Hence $\KL_{\check G}^{\check \bP_{max}}$ is a map
	\begin{equation*}
		\KL_{\check G}^{\check \bP_{max}}: \ubar \cN_{\check \bL_1} \times \ubar \cN_{\check \bL_2} \aro \ubar W.
	\end{equation*}
	Write $\cP$ for the set of partitions, and $\cP^+$ for the set of orthogonal partitions. In the cases considered below, both $\check \bL_1$ and $\check \bL_2$ are of type $D$, and orbits of $\bL_1$ and $\bL_2$ are not both very even. So we may view $\ubar \cN_{\check \bL_1}$ and $\ubar \cN_{\check \bL_2}$ are subsets of $\cP^+$. For a partition $p$, write $C(p)$ for the unique maximal symplectic partition (i.e. a partition where each odd part appears with even multiplicity) dominated by $p$. Then the map $\KL_{\check G}^{\check \bP_{max}}$ factors as
	\begin{multline*}
		\KL_{\check G}^{\check \bP_{max}}:
		\big\{ \BO_1 \times \BO_2 \in \ubar \cN_{\check \bL_1} \times \ubar \cN_{\check \bL_2} \mid \BO_1 \text{ and } \BO_2 \text{ are not both very even} \big\}
		\subset \cP^+ \times \cP^+
		\xrightarrow{\eta} \cP
		\xrightarrow{\varpi} \ubar W
	\end{multline*}
	\cite[4.5]{Spaltenstein:order}. Here 
	\begin{equation*}
		\eta(p_1,p_2) = C(p_1+p_2)
	\end{equation*}
	and $\varpi$ is some map we don't need.
	
	We are ready to verify the remaining cases.
	\begin{itemize}
		\item $m \le n-1$, (i.e. $c \ge 1$), $m$ even, $(2c+1)m \le 2n-2$. The Kac diagram of $\xi_m$ is 
		\begin{equation*}
			\begin{dynkinDiagram}[%
				labels={0,0,0,0,1,0,0,1,0,0,1,0,0,1,0,0,1,0,0,0,0}, 
				extended, 
				edge length = .5cm
				] 
				D{**...***...**...*...*x*...**...*...***...***}
				\dynkinBrace*[D_{c+2}]04
				\dynkinBrace*[A_{2c+2}]57
				\dynkinBrace*[A_{2c+2}]8{10}
				\dynkinBrace*[A_{2c+1}]{11}{13}
				\dynkinBrace*[A_{2c+1}]{14}{16}
				\dynkinBrace*[D_{c+1}]{17}{19}
			\end{dynkinDiagram}
		\end{equation*}
		with $n-(2c+1)\frac{m}2 - 1$ copies of $A_{2c+2}$ and $(c+1)m-n$ copies of $A_{2c+1}$. 
		
		We first compute $\KL_{\check G}^{\check \bP_m} \{0\}$. We let $\check \bP_{max}$ be the standard parahoric subgroup obtained by removing the crossed node, and write its Levi factor as $\check \bL_1 \times \check \bL_2$ where $\check \bL_1$ (resp. $\check \bL_2$) corresponds to the nodes to the left (resp. right) of the crossed node. We are being sloppy on the center of the Levis since they do not matter when calculating nilpotent orbits. Let $\check \bL_i' \subset \check \bL_i$ denote the Levi corresponding to those nodes marked by $0$. Then
		\begin{equation*}
			\check \bL_1' 
			= \GL(2c+2)^{n-(2c+1)\frac m2-1} \times \SO(2c+2)
		\end{equation*}
		\begin{equation*}
			\check \bL_2'
			= \GL(2c+1)^{(c+1)m-n} \times \SO(2c+2).
		\end{equation*}
		Hence
		\begin{align*}
			\BO_1 &:= \Ind_{\check \bL_1'}^{\check \bL_1} \{0\}\\
			&= B \Big( (1^{2c+2}) + 2(n-(2c+1)\tfrac m2 -1) \cdot (1^{2c+2}) \Big)\\
			&= B \Big( ((2n-(2c+1)m-1)^{2c+2}) \Big)
			= ((2n-(2c+1)m-1)^{2c+2}),
		\end{align*}
		\begin{align*}
			\BO_2 &:= \Ind_{\check \bL_2'}^{\check \bL_2} \{0\}\\
			&= B \Big( (1^{2c+2}) + 2((c+1)m-n) \cdot (1^{2c+1}) \Big)\\
			&= B \Big( ( (2(c+2)m-2n+1)^{2c+1},1 ) \Big)
			= ( (2(c+2)m-2n+1)^{2c+1},1 ).
		\end{align*}
		Hence
		\begin{align*}
			\KL_{\check G}^{\check \bP_m} \{0\}
			&= \KL_{\check G}^{\check \bP_{max}}(\BO_1 \times \BO_2)\\
			&= \varpi \eta \Big( ((2n-(2c+1)m-1)^{2c+2}), ( (2(c+2)m-2n+1)^{2c+1},1 ) \Big)\\
			&= \varpi C\Big( ((2n-(2c+1)m-1)^{2c+2}) + ( (2(c+2)m-2n+1)^{2c+1},1 ) \Big) \\
			&= \varpi C \Big( (m^{2c+1}, 2n-(2c+1)m) \Big)
			= \varpi (m^{2c+1}, 2n-(2c+1)m) .
		\end{align*}
		
		Next we compute $\KL(\check \BO(m))$. We first find $k$. First, the condition $(2c+1)m \le 2n-2$ can be rewritten as $(m+1) + 2cm + 1 \le 2n$ which, by the definition of $k$, implies $k \ge c$. Suppose $k \ge c+1$. By the definition of $c$, $n-1 < (c+1)m$, which implies $2n-1 < (2c+2)m +1$. Adding $m$ to both sides we get
		\begin{equation*}
			2n-1+m < (2c+2)m +1+m = (2c+3)m +1 = (m+1) + 2(c+1)m \le (m+1) +2km \le 2n.
		\end{equation*}
		This forces $1 < m$, which is absurd. Hence $k < c+1$ and $k = c$. Moreover we have
		\begin{equation*}
			(2k+2)m = (2c+2)m = 2(c+1)m > 2(n-1) = 2n-2.
		\end{equation*}
		Since both sides are even, it implies $(2k+2)m \ge 2n$. Therefore 
		\begin{equation*}
			q(m) = (m+1, m^{2k}, v) = (m+1,m^{2c}, v)
		\end{equation*}
		and hence
		\begin{equation*}
			\KL(\check \BO(m))
			= \varpi \eta (q(m))
			= \varpi C \Big( (m+1,m^{2c}, v) \Big)
			= \varpi (m^{2c+1}, v-1).
		\end{equation*}
		This equals $\KL_{\check G}^{\check \bP_m} \{0\}$, as required.
		
		\item $2m \le n-1$ (i.e. $c > 1$), $m$ even, $(2c+1) m > 2n-2$. The Kac diagram of $\xi_m$ is 
		\begin{equation*}
			\begin{dynkinDiagram}[%
				labels={0,0,0,0,1,0,0,1,0,0,1,0,0,1,0,0,1,0,0,0,0}, 
				extended, 
				edge length = .5cm
				] 
				D{**...***...**...*...*x*...**...*...***...***}
				\dynkinBrace*[D_{c+1}]04
				\dynkinBrace*[A_{2c}]57
				\dynkinBrace*[A_{2c}]8{10}
				\dynkinBrace*[A_{2c+1}]{11}{13}
				\dynkinBrace*[A_{2c+1}]{14}{16}
				\dynkinBrace*[D_{c+1}]{17}{19}
			\end{dynkinDiagram}
		\end{equation*}
		with $(2c+1)\frac{m}2-n$ copies of $A_{2c}$ and
		$n-cm-1$ copies of $A_{2c+1}$.
		
		We first compute $\KL_{\check G}^{\check \bP_m} \{0\}$. Let $\check \bP_{max}$ be the standard parahoric subgroup obtained by removing the crossed node, and write the semisimple part of its Levi factor $\check \bL_{max} = \check \bL_1 \times \check \bL_2$ where $\check \bL_1$ (resp. $\check \bL_2$) corresponds to the nodes to the left (resp. right) of the crossed node. Let $\check \bL_i' \subset \check \bL_i$ denote the Levi corresponding to those nodes marked by $0$. Then
		\begin{equation*}
			\check \bL_1' 
			= \GL(2c)^{(2c+1)\frac m2 -n} \times \SO(2c),
		\end{equation*}
		\begin{equation*}
			\check \bL_2'
			= \GL(2c+1)^{n-cm-1} \times \SO(2c+2).
		\end{equation*}
		Hence 
		\begin{align*}
			\BO_1 &:= \Ind_{\check \bL_1'}^{\check \bL_1} \{0\}\\
			&= B \Big( (1^{2c}) + 2((2c+1) \tfrac m2 -n) \cdot (1^{2c}) \Big)\\
			&= B \Big( ( ((2c+1)m - 2n+1)^{2c} ) \Big)
			=  ( ((2c+1)m - 2n+1)^{2c} ),
		\end{align*}
		\begin{align*}
			\BO_2 &:= \Ind_{\check \bL_2'}^{\check \bL_2} \{0\}\\
			&= B \Big( (1^{2c+2}) + 2(n-cm-1) \cdot (1^{2c+1}) \Big)\\
			&= B \Big( ( (2n-2cm-1)^{2c+1}, 1 ) \Big)
			= ( (2n-2cm-1)^{2c+1}, 1 ).
		\end{align*}
		Therefore
		\begin{align*}
			\KL_{\check G}^{\check \bP_m} \{0\}
			&= \KL_{\check G}^{\check \bP_{max}}(\BO_1 \times \BO_2)\\
			&= \varpi \eta \Big( ( ((2c+1)m - 2n+1)^{2c} ), ( (2n-2cm-1)^{2c+1}, 1 ) \Big)\\
			&= \varpi C \Big( ((2c+1)m - 2n+1)^{2c} )+ ( (2n-2cm-1)^{2c+1}, 1 ) \Big)\\
			&= \varpi C \Big( (m^{2c}, 2n-2cm-1,1) \Big)\\
			&= 
			\begin{cases}
				\varpi (m^{2c}, 2n-2cm-2, 2) & \text{if } cm < n-1;\\
				\varpi (m^{2c}, 1^2) & \text{if } cm = n-1.
			\end{cases}
		\end{align*}
		
		Next we compute $\KL(\check \BO(m))$. We first find $k$. By our conditions on $c$ and $m$, 
		\begin{equation*}
			2cm \le 2n-2 < (2c+1) m
		\end{equation*}
		which implies
		\begin{multline*}
			(m+1) + 2(c-1)m < (m+1) + 2(c-1)m + m+1 = 2cm+2 \le 2n\\
			< (2c+1)m + 2 = (m+1) + 2cm +1.
		\end{multline*}
		The left half reads $(m+1) + 2(c-1)m < 2n$. By definition of $k$, this implies $c-1 \le k$. The right half reads $2n < (m+1) + 2cm +1$. Since both sides are even, we have $2n \le (m+1) + 2cm -1$ which implies $k < c$. So $k = c-1$, and 
		\begin{equation*}
			(2k+2) m = 2cm \le 2n-2 < 2n.
		\end{equation*} 
		Hence
		\begin{equation*}
			q(m) = (m+1, m^{2k}, m-1, v, 1) = (m+1, m^{2c-2}, m-1, v, 1)
		\end{equation*}
		and
		\begin{equation*}
			\KL(\check \BO(m))
			= \varpi\eta(q(m))
			= \varpi C \Big( (m+1, m^{2c-2}, m-1, v, 1) \Big)
			=
			\begin{cases}
				\varpi (m^{2c}, v-1,2) & \text{if } v \neq 1;\\
				\varpi (m^{2c}, 1^2) & \text{if } v = 1.
			\end{cases}
		\end{equation*}
		Note that the condition $v \neq 1$ (resp. $v =1$) is equivalent to $cm < n-1$ (resp. $cm = n-1$). Hence $\KL(\check \BO(m))$ equals $\KL_{\check G}^{\check \bP_m} \{0\}$ as required. 
	\end{itemize}
	This completes the verification in all cases.
\end{clause}

\printbibliography
\listoftables
\listoffigures

\end{document}